\newcommand{\nc}{\newcommand}
\nc{\rnc}{\renewcommand}
\rnc{\P}{\mathbf P}
\nc{\R}{\mathbf R}
\rnc{\rm}{\mathrm}
\nc{\C}{\mathbf C}
\nc{\Q}{\mathbf Q}
\nc{\Z}{\mathbf Z}
\nc{\N}{\mathbf N}
\nc{\A}{\mathbf A}
\nc{\an}{\operatorname{an}}
\nc{\htt}{\operatorname{ht}}
\nc{\Nm}{\operatorname{Nm}}
\nc{\Ker}{\operatorname{Ker}}
\nc{\mmod}{\operatorname{mod}}
\nc{\End}{\operatorname{End}}
\nc{\Aut}{\operatorname{Aut}}
\nc{\cont}{\text{cont}}
\nc{\sep}{\text{sep}}
\nc{\Hom}{\mathrm{Hom}}
\nc{\Gal}{\mathrm{Gal}}
\nc{\Spec}{\operatorname{Spec}}
\nc{\Spv}{\operatorname{Spv}}
\nc{\supp}{\text{supp}}
\nc{\rad}{\operatorname{rad}}
\nc{\cal}{\mathcal}
\rnc{\frak}{\mathfrak}
\nc{\RZ}{\operatorname{RZ}}
\rnc{\t}{\tau}
\nc{\mm}{\pmb{\mu}}
\rnc{\a}{\alpha}
\nc{\n}{\mathfrak n}
\nc{\m}{\mathfrak m}
\nc{\mfs}{\mathfrak s}
\nc{\p}{\mathfrak p}
\nc{\q}{\mathfrak q}
\nc{\Sym}{\operatorname{Sym}}
\nc{\codim}{\operatorname{codim}}
\nc{\rk}{\operatorname{rk}}
\nc{\GL}{\operatorname{GL}}
\nc{\SL}{\operatorname{SL}}
\nc{\Lie}{\operatorname{Lie}}
\nc{\Ind}{\operatorname{Ind}}
\nc{\Div}{\underline{Div}}
\nc{\Pic}{\mathbf{Pic}}
\nc{\uPic}{\underline{ \mathbf{Pic}}}
\nc{\rH}{\mathrm{H}}
\nc{\Spf}{\operatorname{Spf}}
\nc{\Frac}{\operatorname{Frac}}
\nc{\colim}{\operatorname{colim}}
\nc{\Spa}{\operatorname{Spa}}
\nc{\Sp}{\operatorname{Sp}}
\nc{\Tor}{\operatorname{Tor}}
\rnc{\an}{\operatorname{an}}
\nc{\xr}{\xrightarrow}
\nc{\eps}{\epsilon}
\nc{\ov}{\overline}
\nc{\ud}{\underline}
\nc{\wdh}{\widehat}
\nc{\F}{\mathcal F}
\nc{\G}{\mathcal G}
\nc{\E}{\mathcal E}
\nc{\X}{\mathfrak X}
\nc{\sZ}{\mathfrak Z}
\nc{\Y}{\mathfrak Y}
\nc{\T}{\mathfrak T}
\nc{\sU}{\mathfrak U}
\nc{\V}{\mathfrak V}
\nc{\LL}{\mathcal{L}}
\rnc{\S}{\mathfrak S}
\nc{\ra}{\rangle}
\nc{\os}{\overset}
\rnc{\O}{\mathcal O}
\nc{\J}{\mathcal J}
\theoremstyle{definition}
\newtheorem{thm}{Theorem}[subsection]
\newtheorem{lemma}[thm]{Lemma}
\newtheorem{defn}[thm]{Definition}
\newtheorem{Set-up}[thm]{Set-up}
\newtheorem{prop}[thm]{Proposition}
\newtheorem{rmk}[thm]{Remark}
\newtheorem{exmpl}[thm]{Example}
\newtheorem{cor}[thm]{Corollary}
\begin{document}

\title{Quotients Of Admissible Formal Schemes and Adic Spaces by Finite Groups}
\bibliographystyle{halpha-abbrv}
\author{Bogdan Zavyalov}
\maketitle

\begin{abstract}
In this paper we give a self-contained treatment of finite group quotients of admissible (formal) schemes and adic spaces that are locally topologically finite type over a locally strongly noetherian adic space.  
\end{abstract}
\tableofcontents

\section{Introduction}
\subsection{Overview}
This paper studies ``geometric quotients'' in different geometric setups. Namely, we work in three different situations: flat and locally finite type schemes over a typically non-noetherian valuation ring, admissible formal schemes over a complete microbial valuation ring (see Definition~\ref{defn:formal-microbial-valuation}), and locally topologically finite type adic spaces over a locally strongly noetherian analytic adic space. These $3$ different contexts occupy Chapters~\ref{section:alg}, \ref{section:formal}, and \ref{section:adic} respectively.\medskip

The motivation to study these quotients comes from our paper~\cite{Z1}, where we show a refined version of Temkin's local alteration theorem. Our result roughly says that any smooth rigid-space $X$ over an algebraically closed non-archimedean field $C$ locally admits a formal $\O_C$-model $\X$ such that $\X$ is a quotient of a polystable admissible formal $\O_C$-model $\X'$ by a finite group $G$ (acting freely on the generic fiber). This refined uniformization result is an important technical input in the author's proof of $p$-adic Poincar\'e Duality in \cite{Zav-duality}. \smallskip

The actual formulation of this uniformization result is quite technical, and we refer to \cite[Theorem 1.4]{Z1} for the precise formulation. We only mention that, in order to formulate {\it and} prove this theorem, we had to make sure that a quotient of an admissible formal $\O_C$-scheme by an $\O_C$-action of a finite group exists as an admissible formal $\O_C$-scheme. This result seems to be missing in the literature, the main difficulty being that the ring $\O_C$ is never noetherian.

\subsubsection{Scheme Case} Before we deal with quotients of formal schemes and adic spaces, we first discuss quotients of schemes over the base scheme $S=\Spec \O_C$. Even this question is already non-trivial and demonstrates an important source of difficulties in the question of studying quotient spaces in the non-noetherian situation. The same difficulty will arise in every other setup treated in this paper. \smallskip

We fix an $S$-scheme $X$ with an $S$-action of a finite group $G$. Then a standard argument constructs the quotient $X/G$ as an $S$-scheme (under some assumptions); this is carried out in \cite[Exp. V, \textsection 1]{SGA1} (see also Definition~\ref{defn:geometric-quotient} and Theorem~\ref{thm:alg-main}). However, the question of whether, for a finite type $S$-scheme $X$, the quotient $X/G$ is of finite type is quite non-trivial. \smallskip

To explain the main issue, we briefly recall what happens in the classical situation of a finite type $R$-scheme $X$ with an $R$-action of a finite group $G$ for some {\it noetherian} ring $R$. Under some mild assumptions\footnote{In particular, if $X$ is quasi-projective over $R$.} on $X$, one can rather easily reduce to the affine situation $X=\Spec A$, where the main work is to show that $A^G$ is of finite type over $R$. This is done in two steps: one firstly checks that $A$ is a finite $A^G$-module, and then one uses the Artin-Tate Lemma:

\begin{lemma}\label{lemma:intro-Artin-Tate}\cite[Proposition 7.8]{AM} Let $R$ be a noetherian ring, and $B\subset C$ an inclusion of $R$-algebras. Suppose that $C$ is a finite type $R$-algebra, and $C$ is a finite $B$-module. Then $B$ is finitely generated over $R$.
\end{lemma}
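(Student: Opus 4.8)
The plan is to produce a noetherian intermediate ring $B_0$ with $R \subseteq B_0 \subseteq B$ that is finitely generated over $R$ and over which $C$ is a finite module; finiteness of $B$ over $R$ will then follow formally. The whole idea is that although $B$ itself may fail to be noetherian or finitely generated, only finitely many elements of $B$ are actually needed to encode how the algebra generators and the module generators of $C$ interact.

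First I would fix two finite generating sets. Write $C = R[x_1,\dots,x_n]$ as an $R$-algebra, and choose $y_1,\dots,y_m$ generating $C$ as a $B$-module, arranging that $y_1 = 1$. These sets interact through finitely many coefficients in $B$: expand each algebra generator in the module generators, $x_i = \sum_j b_{ij} y_j$, and each product of module generators, $y_i y_j = \sum_k b_{ijk} y_k$, with all $b_{ij}, b_{ijk} \in B$. I then set $B_0 := R[\,b_{ij}, b_{ijk}\,] \subseteq B$, the $R$-subalgebra generated by this finite collection of structure constants. Being finitely generated over the noetherian ring $R$, the ring $B_0$ is itself noetherian by the Hilbert basis theorem.

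Next I would show that $C$ is a finite $B_0$-module. Consider the $B_0$-submodule $M := \sum_j B_0 y_j \subseteq C$. It contains $1 = y_1$ and every $x_i$, and the relations $y_i y_j = \sum_k b_{ijk} y_k$ show that $M$ is stable under multiplication by each $y_j$; combined with $x_i = \sum_j b_{ij} y_j \in M$, this makes $M$ stable under multiplication by each $x_i$ as well. Since the $x_i$ generate $C$ as an $R$-algebra and $R \subseteq B_0$, it follows that $M$ is a subring containing all the generators, hence $M = C$. Thus $C$ is generated by $y_1,\dots,y_m$ as a $B_0$-module.

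Finally, since $B_0$ is noetherian and $C$ is a finite $B_0$-module, $C$ is a noetherian $B_0$-module, so its $B_0$-submodule $B$ is also finite over $B_0$. As $B_0$ is a finitely generated $R$-algebra and $B$ is a finite $B_0$-module, $B$ is a finitely generated $R$-algebra, as claimed. I expect the only genuinely substantive step to be the construction of $B_0$ together with the verification that $M = C$: this is where the argument leverages noetherianity of $R$ by confining all the relevant data to a noetherian subring, and the remaining implications are routine module-theoretic bookkeeping.
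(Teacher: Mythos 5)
Your proof is correct and is exactly the classical Artin--Tate argument that the paper invokes by citing \cite[Proposition 7.8]{AM}: confine the structure constants $b_{ij}, b_{ijk}$ to a finitely generated (hence noetherian) subalgebra $B_0$, deduce that $C$ is a finite $B_0$-module, and conclude that the submodule $B$ is finite over $B_0$ and therefore of finite type over $R$. Note that this is also precisely the template the paper later adapts in its non-noetherian variants (Lemma~\ref{lemma:valuation-Artin-Tate} and Lemma~\ref{lemma:formal-Artin-Tate}), where the final noetherian step is replaced by a saturatedness argument.
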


One may think that probably the Artin-Tate lemma can hold, more generally, over a non-noetherian base $R$ if $C$ is finitely presented over $R$. However, this is not the case and the Artin-Tate Lemma fails over any non-noetherian base: 

\begin{exmpl}\label{intro:Artin-Tate-fails} Let $R$ be a non-noetherian ring with an ideal $I$ that is not finitely generated. Consider the $R$-algebra $C\coloneqq R[\varepsilon]/(\varepsilon^2)$, and the $R$-subalgebra $B=R\oplus I\varepsilon$. So $C$ is a finitely presented $R$-algebra, and $C$ is finite as a $B$-module since it is already finite over $R$. However, $B$ is not finitely generated $R$-algebra as that would imply that $I$ is a finitely generated ideal.
\end{exmpl}

Example~\ref{intro:Artin-Tate-fails} shows that the strategy should be appropriately modified in the non-noetherian situations like schemes over $\O_C$. We deal with this issue by proving a weaker version of the Artin-Tate Lemma over any valuation ring $k^+$ (see Lemma~\ref{lemma:valuation-Artin-Tate}). That proof crucially exploits features of finitely generated algebras over a valuation ring. We emphasize that our argument does use the $k^+$-flatness assumption in a serious way; we do not know if the quotient of a finitely presented affine $k^+$-scheme by a finite group action is finitely presented (or finitely generated) over $k^+$. 

\subsubsection{Formal Schemes and Adic Spaces} The strategy above can be appropriately modified to work in the world of admissible formal schemes and strongly noetherian adic spaces. In both situations, the main new input is a corresponding version of the Artin-Tate Lemma (see Lemma~\ref{lemma:formal-Artin-Tate} and Lemma~\ref{lemma:adic-Artin-Tate}). However, there are issues that are not seen in the scheme case. We explain a few of the main new technical difficulties that arise while proving the result in the world of adic spaces. \medskip

Compared to the affine (formal) schemes, the underlying topological space of an affinoid space $\Spa(A, A^+)$ is harder to express in terms of the pair $(A, A^+)$. It is a set of all {\it valuations} on $A$ with corresponding continuity and integrality conditions. In particular, even if one works with rigid spaces over a non-archimedean field $K$, one has to take into account points of higher rank that do not have any immediate geometric meaning. Hence, it takes extra care to identify $\Spa(A^G, A^{+, G})$ with $\Spa(A, A^+)/G$ even on the level of {\it underlying topological spaces}. \medskip

Furthermore, the notion of a topologically finite type (resp. finite) morphism of Tate-Huber pairs is more subtle than its counterpart in the algebraic setup for two different reasons.  Firstly, it has a topological aspect that takes some care to work with. Secondly, the notion involves conditions on {\it both} $A$ and $A^+$ (see Definition~\ref{defn:huber-topologically-finite-type} and Definition~\ref{defn:Huber-finite}). Usually, $A^+$ is non-noetherian, so it requires some extra work to check the relevant condition on it.

\subsubsection{Generality} In the case of adic spaces, we consider spaces that are locally topologically finite type over a strongly noetherian analytic adic space in Section~\ref{section:adic}. One reason for this level of generality is to include adic spaces that are topologically finite type over $\Spa(k, k^+)$ for a microbial valuation ring $k^+$ (see Definition~\ref{defn:formal-microbial-valuation}). These spaces naturally arise while studying fibers of morphisms of rigid spaces\footnote{Considered as adic spaces} $X \to Y$ over points of $Y$ of {\it higher rank}. We think that the category of strongly noetherian analytic adic spaces is the natural one to consider\footnote{It may be also reasonable to consider some class of {\it non-analytic} adic spaces. However, it is not clear what should be the correct uniform condition on an adic space that would imply the Artin-Tate lemma in both analytic and non-analytic non-noetherian setups. Since we never need to use non-analytic adic spaces in our intended applications, we prefer to work only with analytic adic spaces in this paper.}. One of its advantages is that it contains both topologically finite type morphisms and morphisms coming from the (not necessary finite) base field extension in rigid geometry. \medskip

In the case of formal schemes, the results of Section~\ref{section:formal} are written in the generality of admissible formal schemes over a complete, microbial valuation ring $k^+$ (see Definition~\ref{defn:formal-microbial-valuation}). We want to point out that Appendix~\ref{appendix-A} contains versions of the main results of  Section~\ref{section:formal} for a topologically universally adhesive base (see Definition~\ref{defn:adhesive-admissible}). These results are more general and include {\it both} the cases of formal schemes topologically finite type (and flat) over some $k^+$ and noetherian formal schemes. However, we prefer to formulate and prove the results in the main body of the paper for admissible formal schemes over $k^+$ since it simplifies the exposition a lot. We only refer to Appendix~\ref{appendix-A} for the necessary changes that have to be made to make the arguments work in the more general adhesive situation. \medskip

Likewise, Appendix~\ref{appendix-A} has versions of the results of Section~\ref{section:alg} over a universally adhesive base (see Definition~\ref{defn:adhesive}). But we want to point out that a valuation ring $k^+$ is universally adhesive only if it is microbial (see Lemma~\ref{lemma:adhesive=microbial}), so the results of Appendix~\ref{appendix-A} do not fully subsume the results of Section~\ref{section:valuation}.

%\subsubsection{Possible generalizations} We recall that schemes and (noetherian) formal schemes embed fully faithfully into the category of adic spaces with the associated adic spaces being non-analytic. So it is a reasonable question if the results of this paper extend to (a class of) {\it not necessarily analytic} adic spaces unifying the three different situations considered in this paper. \medskip

%We do not discuss this question at all in this paper for the following three reasons. Firstly, it is not clear what should be the correct uniform condition that would imply the Artin-Tate lemma in both the analytic and non-analytic situations. Without such a condition, any argument would still be a case-by-case consideration. Secondly, in the applications we had in mind, we never had to consider schemes or formal schemes as 

%we never had to use non-analytic adic spaces in the applications we had in mind; in particular, it was always more convenient to work  

\subsection{Comparison with \cite{Han}} While writing this paper, we found that similar results for adic spaces were already obtained in \cite{Han}. We briefly discuss the main similarities and differences in our approaches. \medskip

David Hansen separately discusses two different situations: rigid spaces~\footnote{Defined as locally topologically finite type adic spaces over $\Spa(K, \O_K)$} over a non-archimedean field $K$, and general analytic adic spaces. In the former case, he shows that (under some assumptions on $X$) $X/G$ exists as a rigid space over $K$ for any finite group $G$. He crucially uses \cite[Proposition 6.3.3/3]{BGR} that states that for a $K$-affinoid affinoid $A$ with a $K$-action of a finite group $G$, the ring of invariants $A^G$ is a $K$-affinoid algebra. The proof of this result uses analytic input: the Weierstrass preparation theorem. In the latter case, he shows that the quotient of $X$ exists as an analytic adic space if the order of $G$ is invertible in $\O_X(X)$. The argument there is based on an averaging trick, so it uses the invertibility assumption in order to be able to divide by $\# G$. We note that if $X$ is a perfectoid space over a perfectoid field, he can drop this invertibility assumption by some other argument. The whole point of the latter case is to be able to work with ``big'' adic spaces such as perfectoid spaces.  \medskip

In contrast with Hansen's approach, our methods neither use any non-trivial input from non-archimedean analysis, nor the averaging trick. What we do is try to imitate the classical algebraic argument based on the Artin-Tate Lemma in the setup of strongly noetherian adic spaces. More precisely, we show that if $X$ is a locally topologically finite type adic space (with some other conditions) over a locally strongly noetherian adic space $S$ with an $S$-action of a finite group $G$ then the quotient $X/G$ exists as a locally topologically finite type adic $S$-space. Our result does not recover Hansen's result as we do not allow ``big'' adic spaces such as perfectoid spaces, but it proves a stronger statement in the case of adic spaces locally of finite type over a locally strongly noetherian $S$ as we do not have any assumptions on the order of $G$. Moreover, even in the case of rigid spaces, it gives a new proof of the existence of $X/G$ as a rigid space that does not use much of analytic theory. 

\subsection{Our results} We firstly study the case of a flat, locally finite type scheme $X$ over a valuation ring $k^+$ and a $k^+$-action of a finite group $G$. We show that $X/G$ exists as a flat, locally finite type $k^+$-scheme under a mild assumption on $X$:

\begin{thm}\label{thm:intro-main-scheme}(Theorem~\ref{thm:alg-main} and Theorem~\ref{thm:valuation-main}) Let $X$ be a flat, locally finite type $k^+$-scheme with a $k^+$-action of a finite group $G$. Suppose that each point $x\in X$ admits an affine neighborhood $V_x$ containing $G.x$. Then $X/G$ exists as a flat, locally finite type $k^+$-scheme. Moreover, it satisfies the following properties:
\begin{enumerate}
\item $\pi\colon X \to X/G$ is universal in the category of $G$-invariant morphisms to locally ringed $S$-spaces. 
\item $\pi: X \to X/G$ is a finite, finitely presented morphism (in particular, it is closed). 
\item Fibers of $\pi$ are exactly the $G$-orbits.
\item\label{thm:intro-main-scheme-4} The formation of the quotient $X/G$ commutes with flat base change (see Theorem~\ref{thm:alg-main}(\ref{thm:alg-main-4}) for the precise statement).
\end{enumerate}
\end{thm}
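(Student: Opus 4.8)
The plan is to reduce to the affine case, dispatch it with the valuation-ring substitute for the Artin--Tate lemma, and then glue. First I would exploit the hypothesis that every orbit $G.x$ lies in an affine open $V$ to cover $X$ by $G$-stable affine opens: the intersection $\bigcap_{g\in G} gV$ is a $G$-stable open neighborhood of the orbit, and inside it one finds a $G$-stable affine open containing $x$ by taking a principal open $D(f)$ around the finite set $G.x$ and replacing $f$ by the product $\prod_{g\in G} g\cdot f$ of its translates, which is $G$-invariant, so that $D\bigl(\prod_g g\cdot f\bigr)=\bigcap_g D(g\cdot f)$ is a $G$-stable principal (hence affine) open still containing $G.x$. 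This standard reduction turns the construction of $X/G$ into a gluing problem from the affine pieces.

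For the affine core, write $U=\Spec A$ with $G$ acting by $k^+$-algebra automorphisms and set $U/G:=\Spec A^G$. Every $a\in A$ is a root of the monic polynomial $\prod_{g\in G}(T-g\cdot a)\in A^G[T]$, so $A$ is integral over $A^G$; since $A$ is a finite type $k^+$-algebra, it is then a finite $A^G$-module. The crucial point is that $A^G$ is itself finite type over $k^+$, and here I would use that over a valuation ring flatness is equivalent to torsion-freeness: the $k^+$-submodule $A^G\subseteq A$ of the flat (torsion-free) module $A$ is again torsion-free, hence $k^+$-flat. This flatness is precisely the extra input that makes the valuation-ring Artin--Tate lemma applicable, so invoking Lemma~\ref{lemma:valuation-Artin-Tate} with $A^G\subseteq A$ yields that $A^G$ is finite type, and therefore (by Lemma~\ref{lemma:valuations-general}) finitely presented and flat over $k^+$.

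The listed properties then follow in the affine case. The bijection between $k^+$-algebra maps $A^G\to R$ and $G$-equivariant $k^+$-algebra maps $A\to R$ (with $R$ carrying the trivial action) gives the universal property (1) for affine targets; promoting it to locally ringed $S$-spaces, $S=\Spec k^+$, requires identifying $|\Spec A^G|$ with the topological quotient $|\Spec A|/G$ and $\O_{X/G}$ with $(\pi_\ast\O_X)^G$, both of which are classical facts about integral invariant extensions, using that $\pi$ is finite, surjective, and closed and that $G$ acts transitively on the fibres of $\Spec A\to\Spec A^G$. This transitivity yields (3), that fibres are exactly the orbits, and the finiteness of $A$ over $A^G$ together with the finite presentation results over $k^+$ yields (2). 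Finally, for $f\in A^G$ one has $(A^G)_f=(A_f)^G$ since inverting a $G$-invariant element commutes with taking invariants, and this compatibility on $G$-stable principal opens lets the affine quotients glue to a flat, locally finite type $k^+$-scheme $X/G$ with finite, finitely presented $\pi$, all properties descending from the affine case.

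For the base-change assertion (4), I would present $A^G=\Ker\bigl(A\to\bigoplus_{g\in G}A\bigr)$, where the map sends $a$ to $(g\cdot a-a)_{g\in G}$; a flat base change $k^+\to R$ preserves this kernel, giving $(A\otimes_{k^+}R)^G=A^G\otimes_{k^+}R$, so the quotient commutes with flat base change. The main obstacle throughout is the finite generation of $A^G$: the classical Artin--Tate lemma genuinely fails over the non-noetherian ring $k^+$, as Example~\ref{intro:Artin-Tate-fails} shows, so everything hinges on the flatness-enabled replacement of Lemma~\ref{lemma:valuation-Artin-Tate} and on the observation that torsion-freeness of the invariants is automatic and thus available to feed into it.
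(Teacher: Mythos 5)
Your overall architecture (reduce to $G$-stable affines, handle the affine case via the valuation-ring Artin--Tate lemma, then glue) matches the paper, but the step on which the entire theorem hinges --- finite generation of $A^G$ over $k^+$ --- is fed the wrong hypothesis. Lemma~\ref{lemma:valuation-Artin-Tate} does not ask for flatness (torsion-freeness) of the subring $A^G$; it asks that $A^G$ be a \emph{saturated} $k^+$-submodule of $A$ in the sense of Definition~\ref{defn:valuation-saturated}, i.e.\ that the quotient $A/A^G$ be $k^+$-torsion free. These conditions are genuinely different, and the one you verify (torsion-freeness of $A^G$, which indeed follows from $A^G\subset A$ with $A$ torsion-free) is insufficient: instantiating Example~\ref{intro:Artin-Tate-fails} with $R=k^+$ a non-noetherian valuation ring and $I$ a non-finitely-generated ideal gives $B=k^+\oplus I\varepsilon\subset C=k^+[\varepsilon]/(\varepsilon^2)$, a finite injective map of \emph{flat} (torsion-free) $k^+$-algebras with $C$ finitely presented, yet $B$ not of finite type over $k^+$. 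So the claim that ``this flatness is precisely the extra input that makes the valuation-ring Artin--Tate lemma applicable'' is exactly the fallacy that example is designed to rule out, and your invocation of the lemma is unjustified as written.

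The gap is fixable in one line, but the line uses the group action and not merely the inclusion $A^G\subset A$: if $a\in A$ and $0\neq\lambda\in k^+$ satisfy $\lambda a\in A^G$, then for every $g\in G$ one has $\lambda(g\cdot a-a)=g\cdot(\lambda a)-\lambda a=0$, hence $g\cdot a=a$ by torsion-freeness of $A$, so $a\in A^G$; this saturatedness of $A^G$ in $A$ is what Corollary~\ref{cor:valuation-invariants-finitely-generated} actually feeds into Lemma~\ref{lemma:valuation-Artin-Tate}. Two smaller points. First, your $G$-stable affine $\bigcap_{g\in G}g(D(f))$ is affine not because it is ``principal'' --- the invariant function $\prod_{g}g\cdot f$ lives only on the possibly non-affine open $\bigcap_{g}gV$, where principal loci need not be affine --- but because it is a finite intersection of affine opens all contained in the affine (hence separated) scheme $V$. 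Second, your base-change argument is carried out for flat $k^+$-algebras, whereas Theorem~\ref{thm:alg-main}(\ref{thm:alg-main-4}) concerns flat morphisms $Z\to X/G$, i.e.\ flat $A^G$-algebras; the same ``invariants are a kernel, flatness preserves kernels'' argument works, but it must be applied over $A^G$ (this is Lemma~\ref{lemma:invariants-commute-flat-base-change}, which is also what identifies $(A_f)^G$ with $(A^G)_f$ in the gluing step).
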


We then consider quotients of admissible formal schemes $\X$ over a complete microbial valuation ring $k^+$ by a $k^+$-action of a finite group $G$. Under similar conditions, we show that $\X/G$ exists as an admissible formal $k^+$-scheme and satisfies the expected properties:

\begin{thm}\label{thm:intro-main-formal}(Theorem~\ref{thm:formal-main}) Let $\X$ be an admissible formal $k^+$-scheme with a $k^+$-action of a finite group $G$. Suppose that each point $x\in \X$ admits an affine neighborhood $\V_x$ containing $G.x$. Then $\X/G$ exists as an admissible formal $k^+$-scheme. Moreover, it satisfies the following properties:
\begin{enumerate}
\item $\pi\colon \X \to \X/G$ is universal in the category of $G$-invariant morphisms to topologically locally ringed spaces over $\S$. 
\item $\pi: \X \to \X/G$ is a surjective, finite, topologically finitely presented morphism (in particular, it is closed). 
\item Fibers of $\pi$ are exactly the $G$-orbits.
\item The formation of the geometric quotient commutes with flat base change (see Theorem~\ref{thm:formal-main}(\ref{thm:formal-main-4}) for the precise statement).
\end{enumerate}
\end{thm}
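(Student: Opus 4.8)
The plan is to imitate the algebraic argument behind Theorem~\ref{thm:intro-main-scheme}, performing the construction affine-locally and gluing, with the formal Artin--Tate Lemma (Lemma~\ref{lemma:formal-Artin-Tate}) playing the role of its algebraic counterpart. First I would reduce to the affine case: since every orbit $G.x$ lies in an affine open $\V_x$, a standard argument (as in the scheme case) produces a covering of $\X$ by $G$-stable affine opens contained in the $G$-stable open $\bigcap_{g\in G}g\V_x$. It then suffices to treat $\X=\Spf A$, with $A$ an admissible $k^+$-algebra carrying a continuous $k^+$-linear $G$-action, to set $\X/G\coloneqq\Spf A^G$, and to verify the asserted properties together with their compatibility under passing to basic opens. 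That $A$ is integral over $A^G$ is immediate: each $a\in A$ is a root of the monic polynomial $\prod_{g\in G}(T-g(a))$, whose coefficients are elementary symmetric functions of the orbit and hence lie in $A^G$.

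The heart of the argument is to upgrade integrality to module-finiteness and then to show that $A^G$ is again admissible. Choose topological generators $a_1,\dots,a_n$ of $A$ over $k^+$, and let $B\subseteq A^G$ be the $\varpi$-adic completion of the $k^+$-subalgebra generated by the finitely many coefficients of the monic equations $P_i(T)=\prod_{g\in G}(T-g(a_i))$; this $B$ is a topologically finitely presented $k^+$-algebra. The surjection $B\langle X_1,\dots,X_n\rangle\twoheadrightarrow A$ factors through $B\langle X_1,\dots,X_n\rangle/(P_1(X_1),\dots,P_n(X_n))$, which is the $\varpi$-adic completion of the finite free $B$-module on the monomials of degree $<|G|$ in each variable, hence itself finite free over $B$. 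Therefore $A$ is a finite $B$-module, and since $B\subseteq A^G\subseteq A$ it is also a finite $A^G$-module. Feeding the inclusion $A^G\subseteq A$ into Lemma~\ref{lemma:formal-Artin-Tate} then shows that $A^G$ is topologically finitely generated over $k^+$, and flatness upgrades this to topological finite presentation. Flatness itself is cheap: $A^G$ is a $k^+$-submodule of the $\varpi$-torsion-free algebra $A$, hence torsion-free, hence flat over the valuation ring $k^+$; and $A^G$ is $\varpi$-adically complete because it is a closed submodule of the complete finite $B$-module $A$. Thus $\Spf A^G$ is an admissible formal $k^+$-scheme.

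It then remains to verify properties (1)--(4) and the gluing. Finiteness and topological finite presentation of $\pi$ follow from $A$ being a finite, topologically finitely presented $A^G$-module; finiteness forces $\pi$ to be closed, and surjectivity follows from lying-over for the integral extension $A^G\subseteq A$. For the universal property (1), a $G$-invariant morphism to an affine target $\Spf C$ is a continuous $k^+$-homomorphism $C\to A$ whose image lands in $A^G$, hence factors uniquely through $A^G$; the general case reduces to affine targets once one knows, on underlying spaces, that $\pi$ identifies $|\Spf A^G|$ with the orbit space, which is exactly property (3). That the fibers are the orbits is argued as in the scheme case, using integrality together with the fact that $A^G$ separates distinct orbits by suitable invariant functions. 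Finally, the gluing of the affine quotients and property (4) both reduce to the statement that forming $A^G$ commutes with flat base change, and in particular with the completed localization at an element of $A^G$; this follows because taking invariants is the finite limit $A^G=\ker\big(A\to\prod_{g\in G}A\big)$ defined by $a\mapsto(g(a)-a)_g$, flat base change is exact, and the completion is controlled by the module-finiteness of $A$ over $A^G$.

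The step I expect to be the main obstacle is not the integral/finiteness algebra, which is formal once $B$ has been introduced, but the topological bookkeeping: verifying that $A^G$ is genuinely $\varpi$-adically complete and topologically finitely presented rather than merely topologically finitely generated, and that the identifications of fibers, localizations, and the completed tensor product in the flat-base-change statement all survive completion. This is exactly the extra layer of difficulty, absent in the scheme case, that the completeness hypotheses and the formal Artin--Tate Lemma are designed to absorb.
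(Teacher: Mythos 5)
Your proposal is correct, and at the global level it follows the same strategy as the paper: reduce to a $G$-stable affine covering (Lemma~\ref{lemma:formal-G-basis}), prove the affine case by showing $A^G$ is admissible and $\Spf A^G$ is the quotient, and deduce gluing and property (4) from the fact that invariants commute with flat base change plus the observation that completed tensor products over $A^G$ with the finite module $A$ are ordinary tensor products. Where you genuinely diverge from the paper is in the affine heart, namely in how finiteness of $A$ over $A^G$ and $\varpi$-adic completeness of $A^G$ are obtained. You run a Noether-style argument: take topological generators $a_i$ of $A$, form the characteristic polynomials $P_i(T)=\prod_{g\in G}(T-g(a_i))$, let $B$ be (the completion of) the $k^+$-subalgebra generated by their coefficients, and use monic division with remainder in $B\langle X_1,\dots,X_n\rangle$ to see that $A$ is a finite $B$-module, hence a finite $A^G$-module; completeness of $A^G$ then follows because it is a closed $B$-submodule of the finite $B$-module $A$, where the subspace topology agrees with the $\varpi$-adic one by Lemma~\ref{lemma:formal-general}(\ref{lemma:formal-general-new-2}). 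The paper instead proves completeness of $A^G$ \emph{without any finiteness input} (Lemma~\ref{lemma:formal-invariants-topology}, a direct Cauchy-sequence argument using only that the ideal of definition is finitely generated), and proves finiteness of $A$ over $A^G$ by reducing mod $\varpi$ (where integral plus finite type equals finite) and invoking successive approximation (Lemma~\ref{lemma:formal-invariants-saturated}). Both routes are valid; yours is arguably more self-contained in the microbial valuation setting and produces an explicit module generating set, while the paper's completeness lemma is deliberately finiteness-free because that is what survives the generalization to topologically universally adhesive bases in Appendix~\ref{appendix-A} (cf.\ Remark~\ref{rmk:tu-adhesive-invariants-finitely-generated}).

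One point where your write-up is loose rather than wrong: the universal property in the theorem is against arbitrary topologically locally ringed $\S$-spaces, not just formal schemes, and your reduction ``to affine targets $\Spf C$'' does not literally make sense for such targets. The paper sidesteps this by defining $\X/G$ tautologically as a topologically locally ringed space (Definition~\ref{defn:geometric-quotient-formal}), for which universality is automatic (Remark~\ref{rmk:universal-topologically-locally-ringed}), and then proving that this object \emph{is} $\Spf A^G$; that identification is exactly the sheaf-level statement $(A^G)_{\{f\}}\xrightarrow{\ \sim\ }(A_{\{f\}})^G$, which you do prove in your flat-base-change paragraph (via exactness of flat base change applied to $A^G=\ker\bigl(A\to\prod_{g\in G}A\bigr)$, flatness of $(A^G)_{\{f\}}$ over $A^G$, and module-finiteness of $A$ over $A^G$ to kill the completion). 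So you have all the needed ingredients; you should simply reorganize the argument so that the universal object is constructed first and then identified with $\Spf A^G$, rather than trying to verify universality of $\Spf A^G$ directly.
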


Finally, we consider the case of locally topologically finite type adic spaces over a locally strongly noetherian adic space.

\begin{thm}\label{thm:intro-main-adic}(Theorem~\ref{thm:adic-main}) Let $S$ be a locally strongly noetherian analytic adic space (see Definition~\ref{defn:strongly-noetherian-space}), and $X$ a locally topologically finite type adic $S$-space with an $S$-action of a finite group $G$. Suppose that each point $x\in X$ admits an affinoid open neighborhood $V_x$ containing $G.x$. Then $X/G$ exists as a locally topologically finite type adic $S$-space. Moreover, it satisfies the following properties:
\begin{enumerate}
\item $\pi\colon X \to X/G$ is universal in the category of $G$-invariant morphisms to topologically locally $v$-ringed $S$-spaces (see Definition~\ref{defn:valuative-spaces}). 
\item $\pi: X \to X/G$ is a finite, surjective morphism (in particular, it is closed). 
\item Fibers of $\pi$ are exactly the $G$-orbits.
\item The formation of the geometric quotient commutes with flat base change (see Theorem~\ref{thm:adic-main}(\ref{thm:adic-main-4}) for the precise statement).
\end{enumerate}
\end{thm}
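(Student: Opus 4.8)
The plan is to reduce the whole statement to an explicit affinoid computation and then glue, imitating the classical argument via the Artin--Tate Lemma. First I would reduce to the affinoid case: using the hypothesis that some affinoid $V_x$ contains the orbit $G.x$, the $G$-stable quasi-compact opens $W_x \coloneqq \bigcap_{g\in G} g\cdot V_x$ cover $X$, and a standard argument with rational subdomains inside $W_x$ produces a cover of $X$ by $G$-stable affinoid opens $U = \Spa(A, A^+)$ on which $G$ acts continuously. On such a chart I would take the candidate quotient to be $\Spa(A^G, A^{+,G})$, where $A^{+,G} \coloneqq A^+ \cap A^G$, and glue the resulting spaces along the universal property below, uniqueness of which forces the local pieces to agree on overlaps.

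The ring-theoretic core is to prove that $(A^G, A^{+,G})$ is a topologically finite type Tate-Huber pair over $S$ and that $A$ is module-finite over $A^G$. Finiteness is easy: each $a \in A$ satisfies the monic equation $\prod_{g\in G}(T - g\cdot a) = 0$ with coefficients in $A^G$, so $A$ is integral over $A^G$, and being topologically finite type it is module-finite. The real input is the adic Artin--Tate Lemma~\ref{lemma:adic-Artin-Tate}, which then forces $A^G$ to be topologically finite type over the base. Here I expect the subtle point, already emphasized in the introduction, to be the condition on $A^{+,G}$ rather than on $A^G$ (Definition~\ref{defn:huber-topologically-finite-type}): the ring $A^{+,G}$ is typically non-noetherian, and I would control it by relating it to the integral closure of the appropriate order and by checking that $A^+$ is finite over $A^{+,G}$, so that $\pi$ becomes finite in the sense of Definition~\ref{defn:Huber-finite}.

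\emph{The main obstacle} is the identification of underlying topological spaces: I must show that $\Spa(A, A^+) \to \Spa(A^G, A^{+,G})$ is a topological quotient for the $G$-action, i.e.\ surjective with fibers exactly the $G$-orbits. Because points are valuations and not primes, the usual ``lying over'' and ``going up'' statements have to be recast valuation-theoretically. Given a point $v \in \Spa(A^G, A^{+,G})$, that is, a continuous valuation on $A^G$ bounded by $1$ on $A^{+,G}$, I would first extend it to a valuation on $A$ using integrality of $A$ over $A^G$ (a Chevalley-type extension along an integral ring map), and then verify that continuity and the $A^+$-integrality condition persist; this gives surjectivity, hence that $\pi$ is surjective and closed. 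The harder half is transitivity: any two valuations on $A$ restricting to $v$ should differ by an element of $G$. I expect to deduce this from the classical transitivity of a finite group action on the fiber of an integral extension, applied to the supports and value rings of the valuations, together with a rank induction to treat the higher-rank points that carry no geometric meaning. This simultaneously establishes property~(3), that the fibers of $\pi$ are exactly the $G$-orbits.

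The remaining properties are then formal. For the universal property~(1), I would check that a $G$-invariant morphism from $\Spa(A, A^+)$ to a valuative topologically locally ringed $S$-space factors uniquely through $\Spa(A^G, A^{+,G})$, reducing to the universal property of the invariant ring on global sections, and then glue over the charts $U$; the resulting $\pi$ is finite and surjective by the affinoid analysis, giving~(2). Finally, flat base change~(4) reduces through the affinoid charts to the algebraic fact that forming $G$-invariants commutes with a flat completed base change $A^G \to B$, namely $(A \,\widehat{\otimes}_{A^G} B)^G = B$, which holds because invariants are a finite limit and hence commute with flatness; the precise statement is the one recorded in Theorem~\ref{thm:adic-main}(\ref{thm:adic-main-4}).
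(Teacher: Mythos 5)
Your skeleton does match the paper's strategy: reduce to $G$-stable affinoid charts, prove $(A^G, A^{+,G})$ is topologically of finite type over the base via finiteness of $A^G \to A$ plus the adic Artin--Tate Lemma~\ref{lemma:adic-Artin-Tate}, identify the underlying topological space of $\Spa(A^G,A^{+,G})$ with $|\Spa(A,A^+)|/G$ by lifting valuations along the integral extension and proving transitivity of $G$ on fibers, then glue. However, two steps you dismiss as routine are genuine gaps.

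First, the reduction to $G$-stable affinoid charts is not a ``standard argument with rational subdomains''; it is one of the hard points of the paper. The intersection $W_x=\bigcap_{g\in G}g(V_x)$ is an intersection of affinoids, which for non-separated $X$ need not be affinoid (nor even quasi-compact without quasi-separatedness), and a rational subdomain of $V_x$ around $x$ is not $G$-stable, while intersecting its $G$-translates re-raises exactly the same non-affinoid problem. The paper needs two nontrivial inputs here: Lemma~\ref{lemma:adic-quasi-affinoid}, asserting that a finite set of points in an open subspace of an affinoid pre-adic space lies in an affinoid open, whose proof goes through the theory of $U$-admissible modifications (formal models) rather than rational subdomains; and Corollary~\ref{lemma:intersection-affinoids}, asserting that intersections of affinoids are affinoid in the \emph{separated} case, which rests on the closed-immersion theory of Appendix~\ref{defn-adic}. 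Lemma~\ref{lemma:adic-G-basis} combines them by first replacing $X$ with the separated $G$-stable open $\bigcap_{g}g(V_x)$ and only then forming intersections. Without these inputs your covering step does not go through.

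Second, you never prove the sheaf-theoretic identification in the affinoid case: for a rational subdomain, the natural map
\[
A^G\left\langle \frac{f_1}{s},\dots,\frac{f_n}{s}\right\rangle \to \left(A\left\langle \frac{f_1}{s},\dots,\frac{f_n}{s}\right\rangle\right)^G
\]
must be shown to be a topological isomorphism. This is exactly what makes the geometric quotient of Definition~\ref{defn:geometric-quotient-adic}, with structure sheaf $(\pi_*\O_X)^G$, an adic space isomorphic to $\Spa(A^G,A^{+,G})$; it is also what your universal-property and gluing arguments secretly use, since a $G$-invariant morphism to a general valuative topologically locally ringed space has to be factored sheaf-locally, not on global sections. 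The paper's Step~4 of Theorem~\ref{thm:adic-geometric-quotient-affine} proves this using the Banach Open Mapping Theorem (to reduce a topological isomorphism to an algebraic surjectivity statement), Corollary~\ref{cor:complete-localization} (the completed tensor product along the finite map $A^G \to A$ agrees with the ordinary one, which uses strong noetherianness), and then Lemma~\ref{lemma:invariants-commute-flat-base-change} together with flatness of rational localization. Your flat base change paragraph gestures at the last ingredient, but the completed-versus-ordinary tensor comparison and the topological (openness) step are absent, and they are needed already to construct $X/G$ as an adic space, not only for property~(4).
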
 

The condition that each point $x\in X$ admits an affinoid open neighborhood $V_x$ containing $G.x$ is a much milder in the adic world than in the scheme world. For example, we show that it is automatic if $X$ is a separated rigid-analytic space. This, in particular, implies that a quotient of a separated rigid-analytic space by a finite group action always exists as a rigid-analytic space. In case of a free finite group action, a similar result has been previously obtained in \cite[Theorem 5.1.1]{conrad-temkin} in the world of Berkovich spaces. 

\begin{lemma}\label{lemma:intro-mild-assumption}(Lemma~\ref{lemma:mild-assumption}) Let $K$ be a non-archimedean field with the residue field $k$, $X$ a separated, locally finite type adic $\Spa(K, \O_K)$-space, and $\{x_1, \dots, x_n\}$ is a finite set of points of $X$. Then there is an open affinoid subset $U\subset X$ containing all $x_i$.
\end{lemma}

\begin{rmk} It is reasonable to expect that the assumption of Theorem~\ref{thm:intro-main-adic} is automatic as long as $X$ is $S$-separated (see Remark~\ref{rmk:referee}). However, the proof of this claim would seem to require a generalization of the main results of \cite{temkin-local} to more general adic spaces. This is beyond the scope of this paper.
\end{rmk}

The natural question is whether these quotients commute with certain functors like formal completion, analytification, and adic generic fiber. We show that this is indeed the case, i.e. the formation of the geometric quotients commutes with the functors mentioned above whenever they are defined. We informally summarize the results below:

\begin{thm}\label{thm:intro-comparison}(Theorem~\ref{thm:comparison-formal-alg}, Theorem~\ref{thm:comparison-formal-adic}, and Theorem~\ref{thm:comparison-alg-adic}) 
\begin{enumerate}
    \item Let $k^+$ be a microbial valuation ring, and $X$ a flat, locally finite type $k^+$-scheme with a $k^+$-action of a finite group $G$. Suppose $X$ satisfies the assumption of Theorem~\ref{thm:intro-main-scheme}. The natural morphism $\wdh{X}/G \to \wdh{X/G}$ is an isomorphism.  
    \item Let $k^+$ be a complete, microbial valuation ring with fraction field $k$, and $\X$ an admissible formal $k^+$-scheme with a $k^+$-action of a finite group $G$. Suppose $\X$ satisfies the assumption of Theorem~\ref{thm:intro-main-formal}. The natural morphism $\X_k/G \to 
    (\X/G)_k$ is an isomorphism. 
    \item Let $K$ be a complete rank-$1$ valued field, and $X$ a locally finite type $K$-scheme with a $K$-action of a finite group $G$. Suppose $X$ satisfies the assumption of Theorem~\ref{thm:alg-main}. The natural morphism $X^{\rm{an}}/G \to (X/G)^{\rm{an}}$ is an isomorphism.  
\end{enumerate}
\end{thm}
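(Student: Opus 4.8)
The three parts share the same architecture, so I would treat them uniformly, writing $F$ for the comparison functor in each case: formal completion $\wdh{(-)}$ along a pseudo-uniformizer $\varpi$ in (1), the adic generic fiber $(-)_k$ in (2), and analytification $(-)^{\an}$ in (3). In every case the plan is to produce both quotients, build the comparison morphism from a universal property, and then check it is an isomorphism by a local, essentially algebraic computation. The quotient of $X$ (resp.\ $\X$) by $G$ exists by Theorem~\ref{thm:intro-main-scheme} (resp.\ Theorem~\ref{thm:intro-main-formal}, Theorem~\ref{thm:alg-main}), so the target $F(X/G)$ is defined; and $F(X)$ is again an object in the relevant geometry with an induced $G$-action, since $\wdh{X}$ is an admissible formal $k^+$-scheme (as $k^+$ is microbial), $\X_k$ is locally topologically finite type over the strongly noetherian $\Spa(k,k^+)$, and $X^{\an}$ is a rigid space over the strongly noetherian $\Spa(K, \O_K)$. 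Checking that the affine/affinoid-neighborhood hypothesis passes from $X$ to $F(X)$—the functors only shrink the underlying space to a $G$-stable closed subset or open subspace and carry the covering $\{V_x\}$ to an admissible one—one concludes that $F(X)/G$ exists by Theorem~\ref{thm:intro-main-formal} or Theorem~\ref{thm:intro-main-adic}.

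\emph{The comparison morphism.} Applying the functor $F$ to $\pi\colon X \to X/G$ yields $F(\pi)\colon F(X) \to F(X/G)$, which is $G$-invariant since $\pi$ is. By the universal property recorded in part (1) of each main quotient theorem—universality among $G$-invariant morphisms to the appropriate class of locally ringed $S$-spaces—the morphism $F(\pi)$ factors uniquely through the quotient $F(X) \to F(X)/G$, producing the natural comparison morphism $F(X)/G \to F(X/G)$.

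\emph{Isomorphism via a local computation.} Both $F$ and the formation of quotients are local on the target, so I would reduce to the affine case $X = \Spec A$ (resp.\ $\X = \Spf A$), where $X/G = \Spec A^G$ and the assertion becomes the ring-theoretic identity $F(A)^G \cong F(A^G)$. The mechanism is that $A^G$ is a finite limit, namely the equalizer $A^G = \Ker\big(A \to \prod_{g\in G} A,\ a \mapsto (ga-a)_{g\in G}\big)$, while the ring operation underlying $F$—$\varpi$-adic completion in (1), inverting $\varpi$ in (2), base change along the flat map $X^{\an}\to X$ in (3)—is flat, hence exact, and therefore commutes with this kernel. Combined with the finiteness of $A$ over $A^G$ from the main theorems (which guarantees that the completions and localizations are computed as expected and that the outputs are again of the correct finite type), this yields $F(A)^G = F(A^G)$, and hence the desired isomorphism.

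\emph{Main obstacle.} The subtle point sits in the flatness/exactness claim for case (1): over the non-noetherian ring $k^+$ one must know that $\varpi$-adic completion is exact on the finitely presented flat algebras $A$ and $A^G$, and that $\wdh{M} \cong M \otimes_A \wdh{A}$ for finite $A$-modules. Exactness of completion is exactly what fails for arbitrary non-noetherian rings, so here I expect to have to invoke the adhesive structure of $k^+$ (in the spirit of Appendix~\ref{appendix-A}) in order to preserve the exact sequence defining $A^G$. In cases (2) and (3) the underlying operation is a flat localization or a flat base change, so exactness is transparent; the genuine work there is instead to match the integral subrings $A^+$ on the two sides, i.e.\ to promote the isomorphism of underlying rings to an isomorphism of Huber pairs, which I would carry out using the finiteness of $A$ over $A^G$ to identify the relevant integral closures.
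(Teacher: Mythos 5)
Your treatment of cases (1) and (2) is essentially the paper's own argument. For (1), the paper writes $\wdh{A}\cong A\otimes_{A^G}\wdh{A^G}$ (valid by \cite[Lemma 7.3/14]{B} because Corollary~\ref{cor:valuation-invariants-finitely-generated} makes $A^G$ of finite type over $k^+$ and $A$ finite over $A^G$) and then applies Lemma~\ref{lemma:invariants-commute-flat-base-change} together with flatness of $A^G\to\wdh{A^G}$; your ``completion is exact on finite $A^G$-modules and commutes with the equalizer defining $A^G$'' is the same content with the same inputs, and you correctly identify that the Artin--Tate finiteness is what rescues the non-noetherian completion. For (2), the paper likewise does the flat localization $A^G[\frac{1}{\varpi}]\cong A[\frac{1}{\varpi}]^G$ and then matches the integral subrings by an integrality argument, as you propose.

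Case (3), however, has a genuine gap. Your reduction ``to the affine case $X=\Spec A$, where the assertion becomes the ring-theoretic identity $F(A)^G\cong F(A^G)$'' does not typecheck for analytification: as the paper emphasizes (see the display (\ref{eqn:affine-union-affinoid})), $(\Spec A)^{\an}$ is \emph{not} an affinoid but a rising union of affinoid balls, so ``$F(A)$'' is not a Huber ring, $X^{\an}/G$ is not the adic spectrum of invariants of anything, and there is no single flat ring map $A\to F(A)$ along which to base change the equalizer. (Taking global sections does not help, since a non-quasi-compact rigid space is not recovered from them.) The same oversight infects your verification of the orbit hypothesis for $X^{\an}$: a $G$-stable affine $U_i\subset X$ analytifies to the non-affinoid $U_i^{\an}$, and one needs the exhaustion argument of the paper's Step~1 (a finite orbit lies in some affinoid $U_i^{(n)}$). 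The paper's proof of Theorem~\ref{thm:comparison-alg-adic} is structured around exactly this obstruction: it first shows the comparison map $\phi$ is finite and surjective (via Proposition~\ref{prop:quot-prop-adic}), reduces to classical points and then to \emph{completed local rings}, and computes both sides using Lemma~\ref{lemma:invariants-local}, Corollary~\ref{cor:invariants-local}, and the identification of completed local rings under analytification \cite[Lemma A.1.2(2)]{C-irreducible}. One could instead salvage an affinoid-local version of your plan---cover $(X/G)^{\an}$ by affinoids $W$, note that $V=(\pi^{\an})^{-1}(W)$ is a $G$-stable affinoid with $\O(V)\cong\O(W)\otimes_{A^G}A$ by finiteness, and conclude by flatness of $A^G\to\O(W)$ plus Lemma~\ref{lemma:invariants-commute-flat-base-change}---but that requires the flatness of analytification and the fact that finite morphisms pull affinoids back to affinoids, neither of which appears in your write-up; as stated, your case-(3) argument does not go through.
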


\subsection*{Acknowledgements}
We are grateful to B. Bhatt, B. Conrad and S. Petrov for fruitful conversations. We heartfully thank M.\,Temkin for suggesting the argument of Lemma~\ref{lemma:mild-assumption}. We express additional gratitude to B. Conrad for reading the first draft of this paper and making lots of suggestions on how to improve the exposition of this paper. We are also very grateful to the anonymous referee who read the paper very carefully and made lots of useful suggestions.

\section{Quotients of Schemes}\label{section:alg}

\subsection{Review of Classical Theory}
We review the classical theory of quotient of schemes by an action of a finite group. This theory was developed in \cite[Exp. V, \textsection 1]{SGA1}. We review the main results from there, and present some proofs in a way that will be useful for our later purposes. This section is mostly expository. \medskip

For the rest of this section, we fix a base scheme $S$. 

\begin{defn}\label{defn:geometric-quotient} Let $G$ be a finite group, and $X$ a locally ringed space over $S$ with a right $S$-action of $G$. The {\it geometric quotient} $X/G=(|X/G|, \O_{X/G}, h)$ consists of:
\begin{itemize}\itemsep0.5em
\item the topological space $|X/G|\coloneqq |X|/G$ with the quotient topology. We denote by $\pi:|X| \to |X/G|$ the natural projection,
\item the sheaf of rings $\O_{X/G}\coloneqq (\pi_*\O_X)^G$,
\item the morphism $h:X/G \to S$ defined by the pair $(h, h^{\#})$, where $h:|X|/G \to S$ is the unique morphism induced by $f\colon X \to S$ and $h^{\#}$ is the natural morphism 
\[
\O_{S} \to h_*\left(\O_{X/G}\right)=h_*\left(\left(\pi_*\O_{X}\right)^G\right)=\left(h_*\left(\pi_*\O_{X}\right)\right)^G=\left(f_*\O_X\right)^G
\]
that comes from $G$-invariance of $f$.
\end{itemize}
\end{defn}

We note that $X/G$ is, a priori, only a ringed space. In the lemma below, we show that it is actually always a locally ringed space:

\begin{lemma}\label{lemma:geometric-quotient-locally-ringed} Let $X$ be a locally ringed space over $S$ with a right $S$-action of a finite group $G$. Then $X/G$ is a locally ringed space, and $\pi\colon X \to X/G$ is a map of locally ringed spaces (so $X/G \to S$ is too). 
\end{lemma}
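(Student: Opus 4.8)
The plan is to reduce the statement to a claim about stalks and then verify that claim by a non-vanishing locus argument that exploits the $G$-invariance. Since $\pi\colon |X| \to |X|/G$ is surjective, every point of $|X/G|$ has the form $\ov x \coloneqq \pi(x)$ for some $x\in X$. Write $A\coloneqq \O_{X/G,\ov x}$ and let $\phi\colon A \to \O_{X,x}$ be the stalk map induced by $\pi^{\#}$. To prove that $X/G$ is locally ringed and that $\pi$ is a morphism of locally ringed spaces, it suffices to show, for each such $x$, that $A$ is local and $\phi$ is a local homomorphism. I would first record the elementary locality criterion: if $\phi\colon A \to B$ is a ring map with $B$ local of maximal ideal $\m_B$, and every element of $A \setminus \phi^{-1}(\m_B)$ is a unit, then $A$ is local with maximal ideal $\phi^{-1}(\m_B)$ and $\phi$ is local. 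Applying this with $B=\O_{X,x}$ reduces everything to one assertion: if $a\in A$ has $\phi(a)\in \O_{X,x}^{\times}$, then $a\in A^{\times}$.

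The crux is this assertion, and this is where the argument really uses the group action. A germ $a\in A$ is represented by a $G$-invariant section $s\in \O_X(U)^G=\O_{X/G}(\pi(U))$ for some $G$-invariant open $U\supseteq G\cdot x$, and $\phi(a)$ is the germ $s_x$. Assume $s_x$ is a unit, and consider the non-vanishing locus $U_s\coloneqq \{\,y\in U: s_y\in \O_{X,y}^{\times}\,\}$, which is open since an inverse germ spreads out to an inverse on a neighborhood. The key points are: (i) $U_s$ is $G$-invariant, because $g$ induces an isomorphism $\O_{X,y}\xrightarrow{\sim}\O_{X,gy}$ carrying $s_y$ to $s_{gy}$ by $G$-invariance of $s$, and isomorphisms preserve units; and (ii) since $x\in U_s$ and $U_s$ is $G$-invariant, the whole orbit $G\cdot x$ lies in $U_s$. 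On $U_s$ the section $s$ is invertible, and its inverse $t\in \O_X(U_s)$ is again $G$-invariant, by uniqueness of inverses together with the $G$-invariance of $s$ and of $U_s$. Hence $s$ is a unit in $\O_X(U_s)^G=\O_{X/G}(\pi(U_s))$ — here $\pi(U_s)$ is an open neighborhood of $\ov x$ because $U_s$ is $G$-invariant, so $\pi^{-1}(\pi(U_s))=U_s$ — and therefore its germ $a$ is a unit in $A$, as wanted.

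With the assertion in hand, the locality criterion shows that every stalk $\O_{X/G,\ov x}$ is local and every $\phi$ is a local homomorphism, so $X/G$ is a locally ringed space and $\pi$ is a morphism of locally ringed spaces. Finally, for $h\colon X/G\to S$ I would use that, on stalks at $x$, the map $f^{\#}\colon \O_{S,f(x)}\to \O_{X,x}$ of the given morphism $f$ factors as $\O_{S,f(x)}\xrightarrow{h^{\#}}\O_{X/G,\ov x}\xrightarrow{\phi}\O_{X,x}$, directly from the definition of $h^{\#}$ and the identity $f=h\circ\pi$. Since $\phi$ is local and the composite $f^{\#}$ is local, comparing preimages of maximal ideals shows $h^{\#}$ is local as well, so $X/G\to S$ is a morphism of locally ringed spaces. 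The main obstacle is the crux step of the second paragraph: passing from ``$s_x$ is a unit'' to ``$s$ is invertible on a $G$-\emph{invariant} neighborhood with $G$-invariant inverse,'' which is precisely what turns the invariant section $s$ into a unit of the quotient structure sheaf; everything else is formal.
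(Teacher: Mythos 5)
Your proof is correct, and its skeleton matches the paper's: reduce to the stalkwise unit criterion (an element of $\O_{X/G,\ov x}$ whose image in $\O_{X,x}$ is a unit must itself be a unit), then get $G$-invariance of the inverse from uniqueness of inverses. Where you genuinely diverge is in how you produce a $G$-stable open on which the invariant representative $s$ becomes invertible. The paper does this by also writing $\O_{X,x}$ as a colimit over $G$-stable opens (equation (\ref{eqn:local-ring-above})), justified by the claim that $G$-stable opens containing $x$ are cofinal among all opens around $x$. That cofinality claim is in fact false in general: a $G$-stable open containing $x$ necessarily contains the whole orbit $G.x$, so no such open fits inside a neighborhood of $x$ that omits another orbit point (e.g.\ $X$ two disjoint copies of a space swapped by $G=\Z/2\Z$), and consequently (\ref{eqn:local-ring-above}) computes a ``stalk along the orbit'' rather than $\O_{X,x}$. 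Your argument sidesteps this entirely: you use only the correct description (\ref{eqn:local-ring-below}) of $\O_{X/G,\ov x}$, spread the inverse germ out to the open non-vanishing locus $U_s$, and observe that $U_s$ is $G$-stable precisely because $s$ is $G$-invariant, so both $s$ and its (automatically $G$-invariant) inverse live on a $G$-stable open containing the orbit. In other words, your route is exactly the patch that makes the paper's argument airtight, at the cost of a slightly longer write-up; the paper's intended shortcut buys brevity but rests on a flawed intermediate claim. Your closing remark, that locality of $h^{\#}$ follows from locality of $\phi$ and of $f^{\#}$ by comparing preimages of maximal ideals, correctly fills in the parenthetical ``so $X/G \to S$ is too,'' which the paper leaves implicit.
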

\begin{rmk} This lemma must be well-known, but we do not know any particular reference. We decided to include the proof as it will be a convenient technical tool for us. \smallskip

Lemma~\ref{lemma:geometric-quotient-locally-ringed} allows us to construct quotients entirely in the category of locally ringed spaces and not merely in the category of all ringed spaces. The main technical issue with the category of ringed spaces is that locally ringed spaces do not form a full subcategory of it. 
\end{rmk}
\begin{proof}[Proof of Lemma~\ref{lemma:geometric-quotient-locally-ringed}]
We note that the action of $G$ induces a family of ring isomorphisms
    \[
    \O_{X}(g(U)) \xr{a^U_g} \O_X(U)
    \]
for $g\in G$ and open $U\subset X$. Furthermore, for any inclusion of open subsets $V\subset U\subset X$, the diagram
    \begin{equation}\label{eqn:action}
    \begin{tikzcd}
        \O_{X}\left(g\left(U\right)\right) \arrow{d}{r^{g(U)}_{g(V)}}\arrow{r}{a^U_g} & \O_{X}(U) \arrow{d}{r^U_V} \\
        \O_X\left(g\left(V\right)\right) \arrow{r}{a^V_g} & \O_X(V)
    \end{tikzcd}
    \end{equation}
is commutative. In particular, $G$ acts on $\O_X(U)$ for any $G$-stable open $U\subset X$. We describe the stalk $\O_{X/G, \ov{x}}$ for a point $\ov{x}\in X/G$ with a lift $x\in X$ as follows: 
\begin{equation}\label{eqn:local-ring-below}
    \O_{X/G, \ov{x}} \simeq \colim_{\{x\in U\subset X \ | \ g(U)=U \ \forall g\in G\}} \O_X(U)^G.
\end{equation}

That being said, we wish to show that 
\[
    \pi^\sharp_{\ov{x}}\colon \O_{X/G, \ov{x}} \to \O_{X, x}
\]
is a local homomorphism of local rings. This is equivalent to saying that\footnote{In what follows, we slightly abuse the notation and write $T\cap \O_{X/G, \ov{x}}$ as an abbreviation for $\left(\pi_{\ov{x}}^\sharp\right)^{-1}\left(T\right)\subset \O_{X/G, \ov{x}}$ for any subset $T\subset \O_{X, x}$.} $\m_{x} \cap \O_{X/G, \ov{x}}$ is the unique maximal ideal in $\O_{X/G, \ov{x}}$ or, equivalently, that any $f\in \O_{X, x}^\times \cap \O_{X/G, \ov{x}}$ lies in $\O_{X/G, \ov{x}}^\times$. \smallskip

We use $(\ref{eqn:local-ring-below})$ to find a $G$-stable open $x\in U \subset X$ such that $f$ comes from an element $f_U\in \O_{X}(U)^G$. The condition that $f$ becomes invertible in $\O_{X, x}$ means that there is an open $x\in V \subset U$ and a function $k_V\in \O_{X}(V)$ such that 
\[
k_V\cdot f_U|_V=1 \in \O_{X}(V).
\]
We set $k_{g(V)}\coloneqq (a^V_g)^{-1}(k_V) \in \O_{X}(g(V))$ for $g\in G$. Then $G$-invariance of $f_U$ and (\ref{eqn:action}) imply that $k_{g(V)}\cdot f_U|_{g(V)}=1\in \O_{X}(g(V))$. Uniqueness of the inverse element and the sheaf axioms imply that $k_{g(V)}$ glue to a section
\[
k\in \O_X\left(W\right),
\]
where $W=\cup_{g\in G} g(V)$ is a $G$-stable open subset of $X$. Then $k\cdot f_{U}|_W=1\in \O_{X}(W)$ since this can be checked locally. In particular, $k$ is an inverse in $f_U|_W$, so $G$-invariance of $f_U|_{W}$ implies $G$-invariance of $k$. In particular, $f_{U}|_W \in \left(\O_{X}\left(W\right)^G\right)^\times$ implying that $f\in \O_{X/G, \ov{x}}^\times$.
\end{proof}

\begin{rmk}\label{rmk:universal-locally rings} It is straightforward to see that the pair $(X/G, \pi)$ is a universal object in the category of $G$-invariant morphisms to locally ringed spaces over $S$.
\end{rmk}

\begin{rmk}\label{rmk:Hironaka-example} We warn the reader that $X/G$ might not be a scheme even if $S=\Spec \C$ and $X$ is a smooth and proper, connected $\C$-scheme with a $\C$-action of $G=\Z/2\Z$. Namely, Hironaka's example \cite[Example 5.3.2]{olsson-stacks} is a smooth and proper, connected $3$-fold over $\C$ with a $\C$-action of $\Z/2\Z$ such that there is an orbit $G.x$ that is not contained in any open affine subscheme $U \subset X$. Lemma~\ref{lemma:Hiranaka-example} below implies that $X/G$ is not a scheme.
\end{rmk}

\begin{lemma}\label{lemma:alg-open-G-orbit-preserved} Let $X$ be an $S$-scheme with an $S$-action of a finite group $G$. Suppose that each point $x\in X$ admits an open affine subscheme $V_x$ that contains the orbit $G.x$. Then the same holds with $X$ replaced by any $G$-stable open subscheme $U \subset X$.
\end{lemma}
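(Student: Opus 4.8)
The plan is to reduce immediately to a statement about finitely many points in a single affine chart and then invoke prime avoidance. So fix a $G$-stable open subscheme $U \subseteq X$ and a point $x \in U$; I must produce an affine open $W \subseteq U$ with $G.x \subseteq W$. Applying the hypothesis to $x$ viewed as a point of $X$, I obtain an affine open $V_x \subseteq X$ with $G.x \subseteq V_x$. Since $U$ is $G$-stable and $x \in U$, the entire orbit $G.x$ lies in $U$ as well, hence $G.x \subseteq V_x \cap U$.

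Next I would work entirely inside the affine scheme $V_x = \Spec A$. The orbit $G.x$ is a finite set of distinct points, corresponding to finitely many primes $\p_1, \dots, \p_n \in \Spec A$, all contained in the open subset $V_x \cap U$. Let $Z \coloneqq V_x \setminus (V_x \cap U)$ be the closed complement, and write $Z = V(I)$ for an ideal $I \subseteq A$, where $V(I)$ denotes the vanishing locus. Because each $\p_i$ lies in $V_x \cap U$, i.e. $\p_i \notin V(I)$, we have $I \not\subseteq \p_i$ for every $i$.

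The key step is to pass from these finitely many non-containments to a single element. By prime avoidance, an ideal contained in a finite union of primes is contained in one of them; since $I \not\subseteq \p_i$ for all $i$, it follows that $I \not\subseteq \bigcup_{i} \p_i$, so I may choose $f \in I \setminus \bigcup_i \p_i$. For this $f$ the distinguished open $D(f) \subseteq V_x$ satisfies $\p_i \in D(f)$ for all $i$ (as $f \notin \p_i$), while $D(f) \subseteq V_x \setminus V(I) = V_x \cap U$ (since $f \in I$ forces $V(I) \subseteq V(f)$). Thus $G.x \subseteq D(f) \subseteq U$, and $D(f)$ is an affine open of $V_x$, hence an affine open subscheme of $U$; taking $W = D(f)$ finishes the argument.

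I expect the only real content to be the prime-avoidance step: everything else is formal, but one must ensure that a \emph{single} principal open simultaneously captures the whole finite orbit and stays inside $U$. The finiteness of $G$, and hence of the orbit, is exactly what makes prime avoidance applicable here.
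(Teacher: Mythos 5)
Your proof is correct, and it reaches the goal by a more elementary route than the paper. Both arguments begin identically: intersect the given affine $V_x \subseteq X$ with $U$ to get the open set $U \cap V_x$ containing $G.x$ inside the affine $V_x$. At that point the paper simply cites \cite[Corollaire 4.5.4]{EGA2}, using that the structure sheaf of an affine scheme is ample, to conclude that \emph{any} finite set of points of an open subscheme of an affine scheme lies in an affine open. You instead prove exactly the special case needed by hand: letting $I$ cut out the closed complement $V_x \setminus (U \cap V_x)$, prime avoidance (applicable because the orbit is finite, i.e.\ because $G$ is finite) produces $f \in I$ outside every prime $\p_i$ in the orbit, and then $D(f)$ is an affine open with $G.x \subseteq D(f) \subseteq U \cap V_x$. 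The containment checks ($\p_i \in D(f)$ since $f \notin \p_i$, and $D(f) \subseteq V_x \setminus V(I)$ since $f \in I$) are all correct, and the fact that some orbit primes might be comparable under inclusion is harmless for prime avoidance. What each approach buys: the paper's citation is shorter and invokes a general principle (ample invertible sheaf implies finite subsets lie in affine opens) that it reuses elsewhere, e.g.\ in Proposition~\ref{prop:example-alg}; your argument is self-contained, is essentially the proof of that EGA result in the affine-ambient case, and gives the slightly sharper conclusion that the affine open can be taken to be a principal open $D(f)$ of $V_x$. Note also that your argument, applied verbatim to an arbitrary finite set of primes in $U \cap V_x$ rather than just the orbit, recovers the stronger claim the paper gets from EGA.
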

\begin{proof}
Let $x$ be a point in $U$, and $V_x$ an open affine in $X$ that contains $G.x$. Consider $W_x\coloneqq U\cap V_x$ that is an open (possibly non-affine) neighborhood of $x\in U$ containing $G.x$. It suffices to show the stronger claim that {\it any} finite set of points in $W_x$ is contained in an open affine. This follows from \cite[Corollaire 4.5.4]{EGA2}  as $W_x$ is an open subscheme inside the affine scheme $V_x$\footnote{To use this result, we recall that the structure sheaf $\O_Y$ is ample on any affine scheme $Y$.}.
\end{proof}

\begin{lemma}\label{lemma:Hiranaka-example} Let $R$ be a noetherian ring, and $X$ a separated, finite type $R$-scheme with an $R$-action of a finite group $G$. Suppose that there is a point $x\in X$ such that the orbit $G.x$ is not contained in any open affine subscheme $U\subset X$. Then $X/G$ is a not a scheme.
\end{lemma}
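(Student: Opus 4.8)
The plan is to argue by contradiction: assume that $Y\coloneqq X/G$ is a scheme and produce an affine open of $X$ containing $G.x$, contradicting the hypothesis. First I would reduce to an affine base. Set $\ov{x}\coloneqq \pi(x)$ and choose an affine open neighborhood $W=\Spec B\subseteq Y$ of $\ov{x}$. Its preimage $U\coloneqq \pi^{-1}(W)$ is a $G$-stable open subscheme of $X$ containing the orbit $G.x$, and the construction of the quotient (together with the cofinality of $G$-stable opens used in the proof of Lemma~\ref{lemma:geometric-quotient-locally-ringed}) shows that the induced map identifies $W$ with the geometric quotient $U/G$; in particular $B=\O_X(U)^G$. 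Hence it suffices to prove that $U$ is affine, since such a $U$ would be an affine open containing $G.x$, contradicting the assumption on $x$.

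The core of the argument is therefore to show that the quotient morphism $\pi|_U\colon U\to W$ is finite: a finite morphism is affine, and the total space of an affine morphism over an affine scheme is affine. I would first record the cheap properties of $\pi|_U$. It is separated, since $U$ is an open subscheme of the separated $S$-scheme $X$ and $W\to S$ is affine; it is of finite type, by the cancellation property of finite type morphisms applied to $U\to W\to S$ (here $U$ is noetherian because $R$ is noetherian and $X$ is of finite type over $R$); and it is quasi-finite, because its fibers are exactly the $G$-orbits, which are finite. The point I want to stress is that one cannot upgrade quasi-finiteness to finiteness by the naive ``characteristic polynomial'' computation: that argument shows $\O_X(U)$ is integral over $\O_X(U)^G=B$, but an integral morphism is affine by definition, so invoking it would already presuppose the affineness of $U$ that we are trying to establish.

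To break this circularity I would invoke Zariski's Main Theorem. As a separated quasi-finite morphism of finite type with noetherian source $U$, the map $\pi|_U$ factors as $U\xrightarrow{j}T\xrightarrow{g}W$ with $j$ an open immersion and $g$ finite. Because the factorization can be taken canonical (the affine hull $T=\Spec\ov{A}$, with $\ov{A}$ the integral closure of $B$ in $\O_X(U)$, which the theorem guarantees to be finite over $B$), and $G$ acts on $\O_X(U)$ fixing $B$, the construction is automatically $G$-equivariant. Then $T$ is affine, and since $\ov{A}$ is finite over $B\subseteq \ov{A}^G$ it is finite over $\ov{A}^G$, so the quotient $g'\colon T\to T/G=\Spec \ov{A}^G$ is a finite morphism. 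Passing to quotients, the $G$-stable open immersion $j$ induces an open immersion $W\cong U/G\hookrightarrow T/G$; and since $U$ is $G$-stable while the fibers of $g'$ are $G$-orbits, $U$ is saturated, i.e. $U=g'^{-1}(W)$. Therefore $\pi|_U$ is the base change of the finite morphism $g'$ along $W\hookrightarrow T/G$, hence finite, and $U$ is affine—the desired contradiction.

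The main obstacle is exactly the step that replaces quasi-finiteness by finiteness of $\pi|_U$ without assuming affineness of $U$: the elementary integrality argument is circular, and the honest route goes through (an equivariant form of) Zariski's Main Theorem together with the observation that a $G$-stable open is automatically saturated for the finite quotient map of its affine hull. A secondary technical point to watch is that $Y$, and hence $W$, is not known to be locally noetherian, so one should apply the version of Zariski's Main Theorem with noetherian source (supplied by $U$) rather than noetherian target, and verify that the canonical affine hull is genuinely $G$-stable.
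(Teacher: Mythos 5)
Your opening reduction (pull back an affine open $W\ni\pi(x)$, set $U=\pi^{-1}(W)$ and $B=\O_X(U)^G$, observe that $\pi|_U$ is separated, of finite type and quasi-finite, and that the naive integrality argument is circular) coincides with the paper's, and your instinct to break the circularity via Zariski's Main Theorem is also the paper's. But there is one genuine gap: the claim that ZMT guarantees that your canonical choice $T=\Spec \ov{A}$, with $\ov{A}$ the integral closure of $B$ in $\O_X(U)$, is \emph{finite} over $B$. No form of ZMT valid here says this. The forms that apply over the (possibly non-noetherian) ring $B$ give either that $U\to \Spec\O_X(U)$ is a quasi-compact open immersion (the quasi-affineness form, \cite[Proposition 18.12.12]{EGA4_4}, which is what the paper cites), or that there exists \emph{some} finite $B$-subalgebra $C\subset\O_X(U)$ with $U\to\Spec C$ an open immersion; the canonical relative normalization is only guaranteed to be \emph{integral} over $B$. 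In fact, in your situation $\ov{A}$ is all of $\O_X(U)$: every $f\in\O_X(U)$ is a root of the monic polynomial $\prod_{g\in G}\left(T-g(f)\right)\in B[T]$, as in Lemma~\ref{lemma:alg-finite-integral}(\ref{lemma:alg-finite-integral-1}). So your finiteness claim is exactly the assertion that $\O_X(U)$ is a finite $B$-module, which is not available at this point: $B$ is not known to be noetherian, $\O_X(U)$ is not known to be a finitely generated $R$-algebra (rings of global functions of quasi-affine schemes need not be finitely generated), and your proposed fix---a ``noetherian source'' version of ZMT---does not exist in a form that yields finiteness of the full integral closure; noetherianness of $U$ says nothing about the $B$-module structure of $\O_X(U)$.

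The gap is fortunately inessential, because your argument never needs finiteness: integrality suffices, and integrality is free. Take $T=\Spec\O_X(U)$ (which, by the observation above, \emph{is} your $\Spec\ov{A}$); by \cite[Proposition 18.12.12]{EGA4_4} the canonical map $j\colon U\to T$ is an open immersion, and it is $G$-equivariant by canonicity. By Lemma~\ref{lemma:alg-finite-integral}, the morphism $\pi_T\colon T\to \Spec B$ induced by $B\subset \O_X(U)$ is integral with fibers exactly the $G$-orbits, and $\pi_T\circ j=\pi|_U$, since both correspond to the inclusion $B\subset\O_X(U)$ under the adjunction for morphisms into affine schemes. Now $j(U)$ is $G$-stable, hence saturated for $\pi_T$; and $\pi_T(j(U))=\Spec B$ because $\pi|_U$ is surjective onto $W$ (the projection $X\to X/G$ is surjective by construction). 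Hence $j(U)=\pi_T^{-1}(\Spec B)=T$, so $j$ is a surjective open immersion, i.e.\ an isomorphism, and $U$ is affine---the desired contradiction, in the strong form $U=\Spec\O_X(U)$. This (repaired) endgame is genuinely different from the paper's: after the same ZMT step, the paper never shows $U$ is affine; it instead applies Lemma~\ref{lemma:alg-open-G-orbit-preserved} (resting on \cite[Corollaire 4.5.4]{EGA2}) to the $G$-stable open $U\subset \Spec\O_X(U)$ to produce an affine open of $U$ containing $G.x$. Your route, once corrected, gives a stronger conclusion by a purely equivariant saturation argument; the paper's avoids that bookkeeping at the cost of invoking the EGA result on finite sets of points in quasi-affine schemes.
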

\begin{rmk} Lemma~\ref{lemma:Hiranaka-example} must have been known to experts for a long time. However, we are not aware of any reference for this fact. For example, \cite[Exp. V, Proposition 1.8]{SGA1} discusses only a (rather straightforward) statement that it is impossible for $X/G$ to be a scheme {\it and} for $\pi\colon X \to X/G$ to be affine\footnote{Affineness of $X \to X/G$ is part of the definition of an ``admissible'' action of $G$ on $X$ introduced in \cite[Exp. V, Definition 1.7]{SGA1}.}. We strengthen the result and show that $X/G$ is not a scheme without the affineness requirement on $\pi$. 
\end{rmk}
\begin{proof}
Suppose that $X/G$ is an $R$-scheme, and consider the image $\ov{x}\coloneqq \pi(x) \in X/G$. It admits an affine neighborhood $\ov{U}\subset X/G$; this defines an open $G$-stable subscheme $U\coloneqq \pi^{-1}(\ov{U})\subset X$ containing the orbit $G.x$. \smallskip

Now we note that the morphism $\pi|_U\colon U \to \ov{U}$ is quasi-finite and separated. Indeed, it is separated of finite type since $U$ is separated of finite type over $R$ and $\ov{U}$ is separated; its fibers are finite by construction. Therefore, Zariski's main theorem \cite[Proposition 18.12.12]{EGA4_4} implies that $\pi|_{U}$ is quasi-affine, i.e. the natural morphism
\[
U \to \Spec \O_U(U)
\]
is a quasi-compact open immersion. We note that $\Spec \O_U(U)$ naturally admits an action of the group $G$ induced by the action of $G$ on $\O_U$. Trivially, any point $y \in \Spec \O_U(U)$ admits an affine neighborhood containing $G.y$. Thus, Lemma~\ref{lemma:alg-open-G-orbit-preserved} applied to $\Spec \O_U(U)$ and its open subscheme $U$ implies that the same holds for $U$. As a result, the orbit $G.x$ is contained in some open affine subscheme of $X$.  
\end{proof}

Definition~\ref{defn:geometric-quotient} is useless unless we can verify that $X/G$ is a scheme if $X$ is. The main goal of the rest of the section is to review when this is the case under some (mild) assumptions on $X$.\smallskip

We start with the case of an affine scheme $X=\Spec A$ and an affine scheme $S=\Spec R$. Then the natural candidate for the geometric quotient is $Y=\Spec A^G$. There is an evident $G$-invariant $S$-map $p\colon X \to Y$ that induces a commutative triangle
\[
\begin{tikzcd}
X \arrow{d}{\pi} \arrow{dr}{p} & \\
X/G \arrow{r}{\phi} & Y.
\end{tikzcd}
\]
We wish to show that $\phi$ is an isomorphism. Before doing this, we need to recall certain (well-known) properties of $G$-invariants. We include some proofs for the convenience of the reader.

\begin{lemma}\label{lemma:alg-finite-integral} Let $A$ be an $R$-algebra with an $R$-action of a finite group $G$. Then 
\begin{enumerate}
    \item\label{lemma:alg-finite-integral-1} the inclusion $A^G \to A$ is integral. In particular, the morphism $\Spec A \to \Spec A^G$ is closed. 
    \item\label{lemma:alg-finite-integral-2} $\Spec A \to \Spec A^G$ is surjective, the fibers are exactly $G$-orbits.
    \item\label{lemma:alg-finite-integral-3} If $A$ is of finite type over $R$. Then $A^G \to A$ is finite. 
\end{enumerate}
\end{lemma}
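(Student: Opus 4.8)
The plan is to handle the three parts in order, all resting on a single ``orbit polynomial'' construction; throughout I write $g\cdot a$ for the left action of $G$ on $A$ induced by the given right action on $\Spec A$, and I note at the outset that the structure map $R \to A$ factors through $A^G$ (the automorphisms $g$ are $R$-linear, so they fix the image of $R$).

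For part (\ref{lemma:alg-finite-integral-1}), I would fix $a \in A$ and form the monic polynomial
\[
P_a(T) \coloneqq \prod_{g \in G} (T - g\cdot a) \in A[T].
\]
Since $G$ permutes the multiset $\{g\cdot a\}_{g\in G}$, each coefficient of $P_a$ is a symmetric function of the $g\cdot a$ and hence lies in $A^G$; and $P_a(a)=0$ because the factor indexed by the identity vanishes. Thus every $a\in A$ is integral over $A^G$, so $A^G \to A$ is integral, and $\Spec A \to \Spec A^G$ is closed by the standard going-up property of integral extensions.

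For part (\ref{lemma:alg-finite-integral-2}), surjectivity is immediate from lying-over, which applies because $A^G \hookrightarrow A$ is an injective integral map. That each orbit lies in a single fiber is easy: if $\q = \p \cap A^G$ then $(g\p)\cap A^G = g(\p\cap A^G) = \q$, since $G$ fixes $A^G$ pointwise. The real content is the converse: if $\p_1,\p_2$ both lie over $\q$, then $\p_2 = g\p_1$ for some $g$. I would argue by contradiction. If $\p_2 \neq g\p_1$ for every $g$, then since all the $g\p_1$ and $\p_2$ lie over $\q$ they are pairwise incomparable (incomparability for integral extensions), so $\p_2 \not\subseteq g\p_1$ for each $g$; prime avoidance then yields $x \in \p_2$ with $x \notin g\p_1$ for all $g$. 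The norm $y \coloneqq \prod_{g\in G} g\cdot x$ is $G$-invariant, hence in $A^G$, and lies in $\p_2$ (the identity factor is $x$), so $y \in \q \subseteq \p_1$; primality of $\p_1$ forces $g_0\cdot x \in \p_1$ for some $g_0$, i.e. $x \in g_0^{-1}\p_1$, contradicting the choice of $x$. This prime-avoidance-plus-norm step is the one genuinely delicate point of the lemma; the rest is formal.

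For part (\ref{lemma:alg-finite-integral-3}), write $A = R[a_1,\dots,a_n]$. Because $R \to A$ factors through $A^G$, the ring $A$ is a finite-type $A^G$-algebra, generated by $a_1,\dots,a_n$, and by part (\ref{lemma:alg-finite-integral-1}) each $a_i$ is integral over $A^G$ (satisfying the monic $P_{a_i}$ of degree $\#G$). A finite-type integral extension is module-finite, so the finitely many monomials $a_1^{e_1}\cdots a_n^{e_n}$ with each $e_i < \#G$ span $A$ as an $A^G$-module, proving finiteness. I would emphasize that this last argument uses no noetherian hypothesis on $R$ and makes no assertion that $A^G$ is finite type over $R$ --- precisely the Artin--Tate phenomenon that must be treated by separate means.
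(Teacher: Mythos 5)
Your proof is correct, and it is essentially the paper's own argument: the paper simply cites \cite[Exp. V, Proposition 1.1(i), (ii) and Corollaire 1.5]{SGA1} and \cite[Exercises 5.12, 5.13]{AM} together with the observation that an integral morphism of finite type is finite, and your orbit-polynomial integrality, norm-plus-prime-avoidance transitivity on fibers, and bounded-monomial finiteness argument are precisely the standard proofs behind those citations. Nothing is missing; you have just written out in full what the paper outsources to the references.
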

\begin{proof}
This is \cite[Expose V, Proposition 1.1(i), (ii) and Corollaire 1.5]{SGA1}. We also point out that the results follow from \cite[Exercise 5.12, 5.13]{AM}, and the observation that an integral, finite type morphism is finite.
\end{proof}

\begin{rmk}\label{rmk:no-finite-type} We warn the reader that Lemma~\ref{lemma:alg-finite-integral} does not imply that, for a finite type $R$-algebra $A$, $A^G$ is of finite type over $R$ (since we allow non-noetherian $R$ as needed later).  
\end{rmk}

\begin{lemma}\label{lemma:invariants-commute-flat-base-change}  Let $R$ be a ring and $A$ an $R$-algebra with an $R$-action of a finite group $G$. Then the formation of invariants $A^G$ commutes with flat base change, i.e. for any flat $R$-algebra morphism $A^G \to B$ the natural homomorphism $B \to (B\otimes_{A^G} A)^G$ is an isomorphism. 
\end{lemma}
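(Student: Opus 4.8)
The plan is to prove that $G$-invariants commute with flat base change by reducing the statement to a concrete identity about a finite limit and exploiting exactness of flat tensor product. First I would write $A^G$ as the equalizer (kernel of a difference map) of the two natural maps
\[
A \rightrightarrows \prod_{g\in G} A, \qquad a \mapsto (ga)_{g}, \quad a\mapsto (a)_g,
\]
so that $A^G = \ker\left(A \xrightarrow{d} \prod_{g\in G} A\right)$ where $d(a) = (ga - a)_{g\in G}$. Since $G$ is \emph{finite}, the product $\prod_{g\in G} A$ is finite, hence the same as the (finite) direct sum, and this is a genuine finite limit of $A^G$-modules.

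Next I would base change along the flat map $A^G \to B$. The key point is that tensoring with a flat module is exact and therefore commutes with finite limits; in particular it commutes with kernels of maps between finite products. Applying $-\otimes_{A^G} B$ to the left-exact sequence
\[
0 \to A^G \to A \xrightarrow{d} \prod_{g\in G} A
\]
and using flatness of $B$ over $A^G$ yields an exact sequence
\[
0 \to A^G\otimes_{A^G} B \to A\otimes_{A^G} B \xrightarrow{d\otimes 1} \left(\prod_{g\in G} A\right)\otimes_{A^G} B.
\]
I would then identify the left-hand term with $B$ via the canonical isomorphism $A^G\otimes_{A^G} B \simeq B$, and identify $\left(\prod_{g\in G} A\right)\otimes_{A^G} B \simeq \prod_{g\in G}\left(A\otimes_{A^G} B\right)$ using finiteness of $G$ again. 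Writing $A_B \coloneqq A\otimes_{A^G} B$, the resulting exact sequence exhibits $B$ as the kernel of $d\otimes 1\colon A_B \to \prod_{g\in G} A_B$, i.e.\ as $(A_B)^G$ for the induced $G$-action, which is exactly the claim that $B \to (A\otimes_{A^G} B)^G$ is an isomorphism.

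The one genuinely substantive step is making sure the $G$-action on $A_B = A\otimes_{A^G} B$ is the correct one and that the difference map $d\otimes 1$ really computes its invariants. Since $B$ sits over $A^G$, the $G$-action on $A_B$ is via the first factor, and $B$ maps into $A_B$ through the invariant subring $A^G$; thus $b\mapsto 1\otimes b$ lands in $(A_B)^G$ and this is precisely the map the sequence identifies as an isomorphism. I would note that flatness is used in exactly one place---to keep the sequence left exact after base change---and that it is genuinely needed, since without it the functor $(-)^G$ need not commute with tensoring. No noetherian or finite-type hypotheses enter, which is why the lemma holds at this level of generality.
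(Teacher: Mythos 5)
Your proof is correct, and it is essentially the argument the paper relies on: the paper's proof simply defers to the outline following \cite[Exp. V, Proposition 1.9]{SGA1}, which is precisely this realization of $A^G$ as the kernel of the difference map $A \to \prod_{g\in G} A$, followed by flat base change preserving that kernel and finiteness of $G$ allowing the product to pass through the tensor product.
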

\begin{proof}
The proof is outlined just after \cite[Exp. V, Proposition 1.9]{SGA1}. 
\end{proof}

\begin{prop}\label{prop:alg-geometric-quotient-affine} Let $X=\Spec A$ be an affine $R$-scheme with an $R$-action of a finite group $G$. Then the natural map $\phi\colon X/G \to Y=\Spec A^G$ is an $R$-isomorphism of locally ringed spaces. In particular, $X/G$ is an $R$-scheme.
\end{prop}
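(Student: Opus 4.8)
The plan is to prove that $\phi$ is an isomorphism of locally ringed spaces by checking separately that it is a homeomorphism on underlying topological spaces and that the induced map on structure sheaves $\phi^\#\colon \O_Y \to \phi_*\O_{X/G}$ is an isomorphism. Note first that $\phi$ exists and is a morphism of locally ringed $R$-spaces by the universal property of the quotient recorded in Remark~\ref{rmk:universal-locally rings}, applied to the $G$-invariant map $p\colon X \to Y$.

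For the topological statement, the underlying continuous map of $\phi$ is the unique map $|X|/G \to |Y|$ through which $|p|$ factors, so $\phi$ is continuous by the universal property of the quotient topology. By Lemma~\ref{lemma:alg-finite-integral}(\ref{lemma:alg-finite-integral-2}), the map $|p|$ is surjective with fibers exactly the $G$-orbits; since the $G$-orbits are precisely the fibers of $\pi\colon |X| \to |X|/G$, the induced map $\phi$ is a continuous bijection. To upgrade this to a homeomorphism I would show $\phi$ is closed: for a closed set $Z \subseteq |X|/G$ the preimage $\pi^{-1}(Z)$ is closed, and $\phi(Z) = |p|(\pi^{-1}(Z))$ is closed because $|p|$ is a closed map by Lemma~\ref{lemma:alg-finite-integral}(\ref{lemma:alg-finite-integral-1}). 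Hence $\phi$ is a closed continuous bijection, i.e.\ a homeomorphism.

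For the sheaf statement, I would first rewrite the target. Since $\phi_*$ is a right adjoint, it commutes with the formation of $G$-invariants, so
\[
\phi_*\O_{X/G} = \phi_*\big((\pi_*\O_X)^G\big) = (\phi_*\pi_*\O_X)^G = (p_*\O_X)^G.
\]
As both $\O_Y$ and $(p_*\O_X)^G$ are sheaves, it suffices to check that $\phi^\#$ is an isomorphism on the basis of distinguished opens $D(f)$ with $f \in A^G$. For such $f$ one has $p^{-1}(D(f)) = D(f) \subseteq \Spec A$, so taking $G$-invariants of global sections gives
\[
(p_*\O_X)^G(D(f)) = \big(\O_X(D(f))\big)^G = (A_f)^G,
\]
while $\O_Y(D(f)) = (A^G)_f$. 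The required statement is therefore that the natural map $(A^G)_f \to (A_f)^G$ is an isomorphism, which is exactly Lemma~\ref{lemma:invariants-commute-flat-base-change} applied to the flat (localization) map $A^G \to (A^G)_f$, since $(A^G)_f \otimes_{A^G} A = A_f$.

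The main obstacle is this sheaf-theoretic identification: its content is that forming $G$-invariants commutes with localization at elements of $A^G$, which is precisely where flatness (hence Lemma~\ref{lemma:invariants-commute-flat-base-change}) is needed; the topological part is comparatively formal once Lemma~\ref{lemma:alg-finite-integral} is in hand. Having established that $\phi$ is both a homeomorphism and an isomorphism on structure sheaves, $\phi$ is an isomorphism of locally ringed spaces, and since $Y = \Spec A^G$ is a scheme, so is $X/G$.
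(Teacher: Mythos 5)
Your proof is correct and follows essentially the same route as the paper's: the homeomorphism is deduced from Lemma~\ref{lemma:alg-finite-integral} (closedness, surjectivity, fibers being $G$-orbits), and the sheaf isomorphism is checked on distinguished opens $D(f)$, $f\in A^G$, via Lemma~\ref{lemma:invariants-commute-flat-base-change} applied to the flat localization $A^G \to (A^G)_f$. Your write-up merely spells out in more detail the quotient-map comparison and the identification $\phi_*\O_{X/G} = (p_*\O_X)^G$, which the paper leaves implicit.
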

\begin{proof} This is shown in \cite[Exp. V, Proposition 1.1(iv)]{SGA1}. We review this argument here as this type of reasoning will be adapted to more sophisticated situtions later in the paper. \medskip

{\it Step 1. $\phi$ is a homeomorphism}: We note that Lemma~\ref{lemma:alg-finite-integral} ensures that $p\colon X \to \Spec A^G$ is a closed, surjective map with fibers being exactly $G$-orbits. Thus, $\pi \colon X \to X/G$ and $p\colon X \to \Spec A^G$ are both topological quotient morphisms with the same fibers (namely, $G$-orbits). So the induced map $f$ is clearly a homeomorphism.\medskip 

{\it Step 2. $\phi$ is an isomorphism of locally ringed spaces}: We use Lemma~\ref{lemma:alg-finite-integral} again to check that the morphism of sheaves $\phi^\#\colon \O_{Y} \to \phi_*\O_{X/G}$ is an isomorphism. Using the base of basic affine opens in $Y$, it suffices to show that the map
\[
\left(A^G\right)_f\to \left(A_f\right)^G\simeq \left(A\otimes_{A^G} \left(A^G\right)_f\right)^G
\]
is an isomorphism for any $f\in A^G$. This follows from Lemma~\ref{lemma:invariants-commute-flat-base-change} as $(A^G)_f$ is $A^G$-flat.
\end{proof}

Now we want to discuss when $X/G$ exists as a scheme in the global set-up without a separatedness assumption. Roughly, we want to cover $X$ by $G$-stable affines and then deduce the claim from Proposition~\ref{prop:alg-geometric-quotient-affine}. In order to do this, we need the following lemma:

\begin{lemma}\label{lemma:alg-G-basis} Let $X$ be an $S$-scheme with an $S$-action of a finite group $G$. Suppose that for any point $x\in X$ there is an open affine subscheme $V_x \subset X$ that contains the orbit $G.x$. Then each point $x\in X$ has a $G$-stable open affine neighborhood $U_x \subset X$.
\end{lemma}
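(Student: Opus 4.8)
The plan is to first stabilize the given affine chart and then carve out a $G$-stable affine basic open inside it. Write $O \coloneqq G.x$ for the (finite) orbit and let $V \coloneqq V_x = \Spec B$ be an affine open containing $O$. Since each group element acts by an automorphism $\sigma_g \colon X \to X$, the translates $gV \coloneqq \sigma_g(V)$ are again affine opens, so I would form $V' \coloneqq \bigcap_{g\in G} gV$. This is an open subscheme, it is manifestly $G$-stable, and it contains $O$ since $O = \sigma_g(O) \subseteq \sigma_g(V) = gV$ for every $g$ (as the orbit is $G$-stable). Thus $V'$ is a $G$-stable open neighborhood of $O$ lying inside the affine scheme $V$; the problem is that $V'$ itself need not be affine.

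Next I would separate the finite set $O$ from the closed complement $Z \coloneqq V\setminus V'$ by a single function. Writing $Z = V(I)$ and noting that each $\p \in O$ lies in $V'$ and hence satisfies $I \not\subseteq \p$, prime avoidance yields $f \in I$ with $f \notin \p$ for all $\p \in O$. Then $O \subseteq D_V(f) \subseteq V'$, and $D_V(f) = \Spec B_f$ is affine --- but not yet $G$-stable.

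To repair $G$-stability I would average $f$ multiplicatively over the group. As $V'$ is $G$-stable, $G$ acts on $\O_{V'}(V')$ via $g \mapsto \sigma_g^*$, and I set $F \coloneqq \prod_{g\in G} \sigma_g^*\!\big(f|_{V'}\big)$. Permuting the factors shows $F$ is $G$-invariant, so its non-vanishing locus $D_{V'}(F)$ is $G$-stable. Unwinding the definitions, a point $p\in V'$ lies in $D_{V'}(F)$ iff $f(\sigma_g(p)) \neq 0$ for all $g$, i.e. iff $p \in \bigcap_{g} g\,D_V(f)$; taking $g = e$ shows this locus is contained in $D_V(f)$. Moreover $O \subseteq D_{V'}(F)$, because for $p \in O$ one has $\sigma_g(p) \in O \subseteq D_V(f)$, so every factor of $F$ is nonzero at $p$.

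The final step is affineness. Since $D_{V'}(F) \subseteq D_V(f) = \Spec B_f$, this locus is exactly the basic open $D(F|_{D_V(f)})$ of the affine scheme $\Spec B_f$, hence is itself affine; I would then take $U_x \coloneqq D_{V'}(F)$, a $G$-stable affine open containing $O = G.x$, and in particular $x$. I expect the main obstacle to be precisely this affineness: the naive stabilizations $\bigcap_g gV$ or $\bigcap_g g\,D_V(f)$ are intersections of affine opens, which need not be affine without a separatedness hypothesis. The device that rescues the argument is the choice of $f$ with $D_V(f)\subseteq V'$, which forces the whole $G$-stable intersection to collapse into a single basic open of the one affine chart $D_V(f)$.
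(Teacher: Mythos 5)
Your proof is correct, and it is a genuinely different argument from the one in the paper. The paper first reduces to an affine base $S$, then passes to the $G$-stable open $W=\bigcap_{g\in G} g(V_x)$, which is separated (being open in the affine $V_x$); it then invokes Lemma~\ref{lemma:alg-open-G-orbit-preserved} (whose proof rests on \cite[Corollaire 4.5.4]{EGA2}, i.e.\ on ampleness of $\O_Y$ on an affine $Y$) to find an affine open of $W$ containing the orbit, and finally uses that in a separated scheme over an affine base a finite intersection of affine opens is affine, so intersecting the translates once more produces the $G$-stable affine. You instead stay inside the single chart $V=\Spec B$: prime avoidance gives $f$ with $G.x\subseteq D_V(f)\subseteq V'=\bigcap_g gV$, and the multiplicative average $F=\prod_{g}\sigma_g^*(f|_{V'})$ is $G$-invariant, so its non-vanishing locus is $G$-stable, contains the orbit, and --- since it sits inside $D_V(f)$ --- is a basic open of $\Spec B_f$, hence affine. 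In effect you inline the only case of the EGA result that is needed (finitely many points of a quasi-affine scheme lie in a basic open) and achieve stabilization and affineness in one stroke, which makes your proof more elementary and self-contained; it also visibly needs the hypothesis only at the point $x$ in question. What the paper's route buys in exchange is reuse: Lemma~\ref{lemma:alg-open-G-orbit-preserved} is needed elsewhere anyway, and the pattern ``pass to a separated $G$-stable open, then intersect translates of an affine'' is the one that transports directly to the formal and adic settings (Lemma~\ref{lemma:formal-G-basis} and Lemma~\ref{lemma:adic-G-basis}, the latter via Corollary~\ref{lemma:intersection-affinoids}), where arguments with global functions and their non-vanishing loci are less immediately available.
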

\begin{proof}
The proof is outlined just after \cite[Exp V, Proposition 1.8]{SGA1}, we recall the key steps here. Firstly, Lemma~\ref{lemma:alg-open-G-orbit-preserved} ensures that one can reduce to the case of an affine base $S=\Spec R$. Then one shows the claim for a separated $X$, in which case $U_x\coloneqq \bigcap_{g\in G} g(V_x)$ is affine and does the job. In general, Lemma~\ref{lemma:alg-open-G-orbit-preserved} guarantees that one can replace $X$ with the {\it separated} open subscheme $\bigcap_{g\in G} g(V_x)$ and reduce to the separated case. 
\end{proof}

We recall one case where the condition of Lemma~\ref{lemma:alg-G-basis} is satisfied. 

\begin{prop}\label{prop:example-alg} Let $\phi \colon X \to S$ be a locally quasi-projective\footnote{I.e. there exists an open covering $S=\cup V_j$ such that each $\phi^{-1}(V_j)\to V_j$ factors through a quasi-compact immersion $\phi^{-1}(V_j) \to \P^N_{V_j}$ for some $N$.} $S$-scheme with an $S$-action of a finite group $G$. Then every point $x\in X$ admits an affine neighborhood containing the orbit $G.x$.
\end{prop}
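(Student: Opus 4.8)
The plan is to reduce the statement to the input already used in Lemma~\ref{lemma:alg-open-G-orbit-preserved}, namely that any finite set of points of a quasi-compact scheme carrying an ample line bundle is contained in an affine open. The first observation is that since $G$ acts on $X$ by $S$-automorphisms, one has $\phi(g.x)=\phi(x)$ for every $g\in G$; hence the finite orbit $G.x$ is contained in the single fibre $\phi^{-1}(s)$ over the point $s\coloneqq\phi(x)\in S$. Choosing an index with $s\in V_j$ from the given covering $S=\bigcup_j V_j$, the preimage $U\coloneqq\phi^{-1}(V_j)$ is a (necessarily $G$-stable) open subscheme of $X$ that contains $G.x$, and by hypothesis the structure morphism $U\to V_j$ factors through a quasi-compact immersion $\iota\colon U\hookrightarrow\P^N_{V_j}$.

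Next I would record that $U$ meets the hypotheses of \cite[Corollaire 4.5.4]{EGA2}. Since $V_j$ is affine, $\P^N_{V_j}$ is quasi-compact and separated, so $U$ is separated; moreover $U$ is itself quasi-compact, being the preimage of the whole (quasi-compact) target under the quasi-compact morphism $\iota$. The line bundle $\O_{\P^N_{V_j}}(1)$ is ample on $\P^N_{V_j}$ (it is relatively ample over $V_j$, and $V_j$ is affine), and ampleness is preserved under pullback along the immersion $\iota$, so $\iota^{*}\O_{\P^N_{V_j}}(1)$ is an ample invertible sheaf on $U$.

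With these verifications in place, \cite[Corollaire 4.5.4]{EGA2} applies to the quasi-compact scheme $U$ with its ample line bundle and produces an affine open subscheme of $U$ containing the finite set $G.x$; as $U$ is open in $X$ and $x\in G.x$, this is the desired affine neighborhood of $x$ in $X$. There is no genuine obstacle here: the argument is a direct globalization of Lemma~\ref{lemma:alg-open-G-orbit-preserved}, and the only points that require attention are the bookkeeping that the whole orbit lands in one fibre (hence in one $\phi^{-1}(V_j)$) and the verification of the ampleness and quasi-compactness hypotheses of \cite[Corollaire 4.5.4]{EGA2}; the mathematical content is entirely contained in that citation.
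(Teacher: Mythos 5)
Your argument follows the same route as the paper's proof: reduce to working over an affine piece of the base, where $X$ sits inside $\P^N$ via a quasi-compact immersion, and then invoke \cite[Corollaire 4.5.4]{EGA2} to get the stronger statement that \emph{any} finite set of points lies in an affine open. Your explicit observation that the orbit $G.x$ lands in a single fibre (hence in $\phi^{-1}(V_j)$) is exactly the justification behind the paper's terse opening ``the statement is local on $S$,'' and your verifications of quasi-compactness, separatedness, and ampleness of $\iota^*\O(1)$ are the hypotheses the paper leaves implicit.

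There is, however, one unjustified step: you write ``Since $V_j$ is affine,'' but the footnote defining local quasi-projectivity only asks for an \emph{open} covering $S=\bigcup V_j$; the $V_j$ are not assumed affine. For a general (possibly non-quasi-compact, non-separated) $V_j$, the scheme $\P^N_{V_j}$ need not be quasi-compact or separated, and $\O_{\P^N_{V_j}}(1)$ need not be ample (relative ampleness over $V_j$ does not give absolute ampleness unless the base itself carries an ample sheaf), so the hypotheses of \cite[Corollaire 4.5.4]{EGA2} can fail. The fix is immediate and is what the paper's localization step silently performs: choose an affine open $W\subset V_j$ containing $s=\phi(x)$, note that the quasi-compact immersion base-changes to a quasi-compact immersion $\phi^{-1}(W)\hookrightarrow \P^N_W$, and run your argument with $W$ in place of $V_j$; the orbit still lies in $\phi^{-1}(s)\subset\phi^{-1}(W)$. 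With that one-line refinement inserted, your proof is complete and agrees with the paper's.
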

\begin{proof}
The statement is local on $S$, so we may and do assume that $S=\Spec R$ is affine and there is a quasi-compact  $R$-immersion $X \subset \P^N_R$. Then it suffices to show a stronger claim that {\it any} finite set of points is contained in an open affine. This is shown in \cite[Corollaire 4.5.4]{EGA2}.
\end{proof}

Now, we are ready to explain the main existence result \cite[Exp V, Proposition 1.8]{SGA1}. For later needs, we give a slightly different proof. 

\begin{thm}\label{thm:alg-main} Let $X$ be an $S$-scheme with an $S$-action of a finite group $G$. Suppose that each point $x\in X$ admits an affine neighborhood $V_x$ containing $G.x$. Then $X/G$ is an $S$-scheme. Moreover, it satisfies the following properties:
\begin{enumerate}
\item $\pi\colon X \to X/G$ is universal in the category of $G$-invariant morphisms to locally ringed $S$-spaces. 
\item $\pi: X \to X/G$ is an integral, surjective morphism (in particular, it is closed). The morphism $\pi$ is finite if $X$ is locally of finite type over $S$.
\item Fibers of $\pi$ are exactly the $G$-orbits.
\item\label{thm:alg-main-4} The formation of the geometric quotient commutes with flat base change, i.e. for any flat morphism $Z\to X/G$, the geometric quotient $(X \times_{X/G} Z)/G$ is a scheme, and the natural morphism $(X \times_{X/G} Z)/G \to Z$ is an isomorphism.
\end{enumerate}
\end{thm}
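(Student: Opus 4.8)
The plan is to reduce everything to the affine case already settled in Proposition~\ref{prop:alg-geometric-quotient-affine}, by choosing a good cover of $X$ and using that the formation of $X/G$ is local on the target. First I would record the \emph{locality} of the geometric quotient: for a $G$-stable open $U \subset X$ the subset $\pi(U) \subset X/G$ is open (because $\pi^{-1}(\pi(U)) = U$), and it is canonically isomorphic to $U/G$ as a locally ringed space. This is the one point requiring care, and it is the technical heart of the globalization: for an open $V \subset \pi(U)$ the preimage $\pi^{-1}(V)$ is a $G$-stable open of $U$, and one checks $\O_{X/G}(V) = \O_X(\pi^{-1}(V))^G = \O_U(\pi^{-1}(V))^G = \O_{U/G}(V)$, so the structure sheaves match on the base of such $V$.

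Granting this, the hypothesis together with Lemma~\ref{lemma:alg-G-basis} provides, for each $x \in X$, a $G$-stable open affine neighborhood $U_x = \Spec A_x$. By the locality statement $\pi(U_x) \simeq U_x/G$, and Proposition~\ref{prop:alg-geometric-quotient-affine} identifies $U_x/G \simeq \Spec A_x^G$. Since the $\pi(U_x)$ form an open cover of $X/G$, this exhibits $X/G$ as a scheme, locally of the form $\Spec A_x^G$, and produces the structure morphism to $S$. Properties (2) and (3) are then local on $X/G$: on each chart $\pi|_{U_x}\colon \Spec A_x \to \Spec A_x^G$ is integral (hence closed) and surjective with fibers exactly the $G$-orbits by Lemma~\ref{lemma:alg-finite-integral}(\ref{lemma:alg-finite-integral-1}),(\ref{lemma:alg-finite-integral-2}), and is finite when $A_x$ is of finite type over the corresponding affine of $S$ by Lemma~\ref{lemma:alg-finite-integral}(\ref{lemma:alg-finite-integral-3}); these properties glue. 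The universal property (1) is exactly Remark~\ref{rmk:universal-locally rings}, now that $X/G$ is known to be a scheme and hence in particular a locally ringed $S$-space.

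For the flat base change (4), let $Z \to X/G$ be flat, set $X' \coloneqq X \times_{X/G} Z$ with $G$ acting through the first factor, and let $q\colon X' \to Z$ be the projection; since $G$ acts trivially on $Z$, the map $q$ is $G$-invariant and hence factors through a natural map $X'/G \to Z$ by universality. To show it is an isomorphism I would again argue locally: cover $Z$ by affine opens $\Spec C$ each mapping into some chart $\pi(U_x) = \Spec A_x^G$, so that $A_x^G \to C$ is flat. Because $U_x$ is $G$-stable, $X'$ over $\Spec C$ is $U_x \times_{\pi(U_x)} \Spec C = \Spec(A_x \otimes_{A_x^G} C)$, which is affine and $G$-stable; these charts give $X'$ a $G$-stable affine cover, so $X'/G$ exists as a scheme by the first part. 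Applying Proposition~\ref{prop:alg-geometric-quotient-affine} and then Lemma~\ref{lemma:invariants-commute-flat-base-change} (using flatness of $A_x^G \to C$) yields $(A_x \otimes_{A_x^G} C)^G \simeq C$, so this chart of $X'/G$ is $\Spec C$ and the natural map to $Z$ is the identity there. These local isomorphisms glue to the desired isomorphism $X'/G \simeq Z$.

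I expect the main obstacle to be organizational rather than deep: the only genuinely nontrivial step is the locality statement of the first paragraph, since the entire argument—both the existence of $X/G$ as a scheme and the base-change identity—rests on identifying quotients of $G$-stable opens with open subspaces of the total quotient. Everything else is a matter of transporting the affine statements of Lemma~\ref{lemma:alg-finite-integral} and Lemma~\ref{lemma:invariants-commute-flat-base-change} through this cover and checking that the constructions glue.
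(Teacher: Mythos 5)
Your proposal follows essentially the same route as the paper: reduce to $G$-stable affine charts via Lemma~\ref{lemma:alg-G-basis}, identify the quotient of each chart with $\Spec A_x^G$ by Proposition~\ref{prop:alg-geometric-quotient-affine} using the locality of the geometric quotient along $G$-stable opens, and then get (2)--(4) from Lemma~\ref{lemma:alg-finite-integral}, Remark~\ref{rmk:universal-locally rings}, and Lemma~\ref{lemma:invariants-commute-flat-base-change}; your spelled-out verification of the locality step and of the gluing in part (4) is exactly what the paper asserts more tersely.

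One point needs repair, in the finiteness clause of (2). You write that $\pi|_{U_x}$ is finite ``when $A_x$ is of finite type over the corresponding affine of $S$,'' but a $G$-stable affine chart $U_x$ produced by Lemma~\ref{lemma:alg-G-basis} applied to $X$ over a general base need not map into \emph{any} affine open of $S$, so ``the corresponding affine of $S$'' may not exist, and Lemma~\ref{lemma:alg-finite-integral}(\ref{lemma:alg-finite-integral-3}) cannot be invoked as stated. The paper avoids this by first observing that all claims are local on $S$ and restricting to $f^{-1}(W)$ for affine opens $W\subset S$; the hypothesis (orbits contained in affines) survives this restriction precisely by Lemma~\ref{lemma:alg-open-G-orbit-preserved} --- a lemma your argument never uses --- and only then applying Lemma~\ref{lemma:alg-G-basis}, so that every $G$-stable affine chart automatically lies over an affine of $S$ and carries a finite type coordinate ring over it. With that preliminary reduction inserted, your argument is complete; the rest (existence of $X/G$ as a scheme, integrality, surjectivity, fibers, universality, and flat base change) genuinely does not need the reduction, since those statements are either absolute or checked chart-by-chart on $X/G$ exactly as you do.
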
 
\begin{proof}
{\it Step 1. $X/G$ is an $S$-scheme}: We note that the claim is local on $S$, so we can use Lemma~\ref{lemma:alg-open-G-orbit-preserved} to reduce to the case where $S$ is affine. Now Lemma~\ref{lemma:alg-G-basis} allows to cover $X$ by $G$-stable open affine subschemes $U_i$. Then the construction of the geometric quotient implies that  
\[
\pi(U_i) \subset X/G
\] 
is an open subset that is naturally isomorphic to $U_i/G$, and $\pi^{-1}(U_i/G)$ coincides with $U_i$. This implies that it suffices to show that $U_i/G$ is a scheme. This was already shown in Proposition~\ref{prop:alg-geometric-quotient-affine}.\smallskip 

{\it Step 2. $\pi: X \to X/G$ is surjective, integral (resp. finite) and fibers are exactly the $G$-orbits}: Similar to Step~$1$, we can assume that $X$ and $S$ are affine. Then apply Lemma~\ref{lemma:alg-finite-integral}. \smallskip

{\it Step 3. $\pi\colon X \to X/G$ is universal and commutes with flat base change}: The universality is essentially trivial (Remark~\ref{rmk:universal-locally rings}). To show the latter claim, we can again assume that $X=\Spec A$ and $S=\Spec R$ are affine and it suffices to consider affine $Z$. Then the claim follows from Lemma~\ref{lemma:invariants-commute-flat-base-change} and the identification of $X/G$ with $\Spec A^G$. 
\end{proof}

\subsection{Schemes Over a Valuation Ring $k^+$}\label{section:valuation}

The main drawback of Theorem~\ref{thm:alg-main} is that if $R$ is not noetherian we do not know if $X/G$ is finite type over $S=\Spec R$ when $X$ is. This makes this theorem not so useful in practice as we often do not want to leave the realm of finite type morphisms. The main work left is to show that the ring of invariants $A^G$ is finite type over $R$ if $A$ is. If $R$ is noetherian, this problem is resolved using the Artin-Tate Lemma~\ref{lemma:intro-Artin-Tate}. The main goal of this section is to generalize it to certain non-noetherian situations. \smallskip

For the rest of the section, we fix a valuation ring $k^+$ with fraction field $k$ and maximal ideal $\m_k$. \smallskip

\begin{defn}\label{defn:valuation-saturated} Let $N \subset M$ be an inclusion of $k^+$-modules. We say that $N$ is {\it saturated} in $M$ if the quotient $M/N$ is $k^+$-torsion free.
\end{defn}

\begin{lemma}\label{lemma:valuations-general} Let $k^+$ be a valuation ring, $A$ a finite type $k^+$-algebra, and $M$ a finite $A$-module. Then 
\begin{enumerate}
    \item\label{lemma:valuations-general-1} A $k^+$-module $N$ is flat over $k^+$ if and only if it is torsion free. 
    \item\label{lemma:valuations-general-3} If $M$ is $k^+$-flat, it is a finitely presented $A$-module. 
    \item\label{lemma:valuations-general-2} If $A$ is $k^+$-flat, it is a finitely presented $k^+$-algebra.
    \item\label{lemma:valuations-general-4} Let $N\subset M$ be a saturated $A$-submodule of $M$. Then $N$ is a finite $A$-module.

 \end{enumerate}
\end{lemma}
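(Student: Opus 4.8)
The plan is to dispatch (1) directly and to funnel (2), (3), and (4) into a single finiteness statement about $k^+$-torsion, which will be the technical heart. For (1), flat $\Rightarrow$ torsion-free holds over any domain, so only the converse uses the hypothesis. Since the finitely generated ideals of $k^+$ are principal, I would check flatness of a torsion-free $N$ by computing $\Tor_1^{k^+}(k^+/(a),N)$ from $0\to k^+\xrightarrow{a}k^+\to k^+/(a)\to 0$: this $\Tor$ is the $a$-torsion of $N$, which vanishes, and testing against finitely generated ideals suffices for flatness.

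The common mechanism for the rest is as follows. Write $A_k:=A\otimes_{k^+}k$, a finite type algebra over the field $k$, hence \emph{noetherian}, and $M_k:=M\otimes_{k^+}k$, a finite $A_k$-module. For a saturated submodule $N\subseteq M$ (Definition~\ref{defn:valuation-saturated}), the submodule $N_k\subseteq M_k$ is finitely generated over $A_k$; lifting finitely many generators into $N$ produces a finite submodule $N'\subseteq N$ with $N'_k=N_k$, so that $N/N'$ is $k^+$-torsion. Since $M/N$ is torsion-free, one identifies $N/N'$ with the $k^+$-torsion submodule of the \emph{finitely presented} module $M/N'$. Hence (4) is equivalent to the assertion
\[
(\star)\qquad \text{the } k^+\text{-torsion submodule } T(Q) \text{ of a finite } A\text{-module } Q \text{ is finite over } A.
\]
Part (2) is then the special case of (4) applied to the kernel of a surjection $A^n\twoheadrightarrow M$, which is saturated precisely because $M$ is torsion-free; conversely (4) follows from (2) together with the standard fact that in $0\to N\to M\to M/N\to 0$ with $M$ finite and $M/N$ finitely presented (here $M/N$ is torsion-free, hence finitely presented by (2)), the kernel $N$ is finite. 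So (2) and (4) are two faces of $(\star)$.

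For (3), write $A=P/I$ with $P=k^+[t_1,\dots,t_n]$; $k^+$-flatness of $A$ says exactly that $I$ is saturated in $P$. Choosing $f_1,\dots,f_m\in I$ whose images generate $IP_k\subseteq P_k=k[t_1,\dots,t_n]$ (noetherian), a clearing-denominators argument—using that $P$ is $k^+$-free and $I$ is saturated—identifies $I$ with the contraction $IP_k\cap P$, i.e.\ with the saturation of $I_0:=(f_1,\dots,f_m)$. Thus $I/I_0=T(P/I_0)$, and (3) is once more an instance of $(\star)$, now for the finitely presented $k^+$-algebra $P/I_0$ regarded as a module over itself.

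The crux is therefore $(\star)$, and I expect it to be the main obstacle. Its content is genuinely non-formal: over an arbitrary non-noetherian base the torsion submodule of a finitely presented module need not be finitely generated, which is precisely the mechanism behind the failure in Example~\ref{intro:Artin-Tate-fails}. The approach I would take is a Noether-normalization/generic-projection argument: after a coordinate change, present $Q$ as a finite module over a polynomial subalgebra $k^+[x_1,\dots,x_d]$ with the relations in a ``monic'' position, reducing the computation of $T(Q)$ to an explicit division algorithm over $k^+$. Already in one variable this is visible: writing $g=(at-b)h\in k^+[t]$ with a unit-normalized leading (or constant) coefficient, the coefficientwise recursion forces $h\in k^+[t]$, so saturations stay finitely generated. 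The outcome to establish is that $T(Q)$ is annihilated by a single nonzero element of $k^+$ (\emph{bounded} torsion) and is finitely generated over $A$; the totally ordered value group of $k^+$ and the principality of finitely generated ideals are exactly what deliver this boundedness, and their absence over a general base is why the statement is special to valuation rings.
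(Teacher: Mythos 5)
Your part (1) and your web of reductions are all correct: (2) $\Leftrightarrow$ (4), both are equivalent to your statement $(\star)$ (indeed $T(Q)$ is a \emph{saturated} submodule of $Q$, since $Q/T(Q)$ is torsion free, so $(\star)$ is literally a special case of (4)), and your clearing-of-denominators argument correctly reduces (3) to $(\star)$ for the cyclic module $P/I_0$. This is also a genuinely different organization from the paper's: the paper proves (1) by the same principal-ideal flatness criterion, quotes \cite[\href{https://stacks.math.columbia.edu/tag/053E}{Tag 053E}]{stacks-project} wholesale for (2) and (3), and then deduces (4) formally ($M/N$ is torsion free, hence $k^+$-flat by (1), hence finitely presented over $A$ by (2), so $N$ is the kernel of a map from a finite module to a finitely presented one). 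Your restructuring, which makes a single torsion-finiteness assertion the pivot for all of (2), (3), (4), is a nice way to see the lemma.

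The genuine gap is that $(\star)$ is never proved. Because $(\star)$ is equivalent to (4) and implies (2) and (3), it carries the entire non-formal content of the lemma, so funneling everything into it does not by itself establish anything. What you offer for $(\star)$ is a plan---Noether normalization ``in monic position,'' a division algorithm, and a claimed boundedness of $T(Q)$---verified only in the easiest case: one variable, one linear relation whose leading or constant coefficient is a unit, where the coefficientwise recursion indeed closes up. The general case (several variables, non-principal relation modules, non-cyclic $Q$) is exactly where the difficulty lives, and neither the existence of such a normalization nor the boundedness and finite generation of $T(Q)$ is justified; this is precisely the substance of \cite[\href{https://stacks.math.columbia.edu/tag/053E}{Tag 053E}]{stacks-project}, whose proof is a multi-step d\'evissage rather than a direct division argument. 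Your proposal becomes a complete (and tidy) proof if the sketch of $(\star)$ is replaced by that citation, e.g.\ invoked in the form of part (2), which you have shown to be equivalent to $(\star)$; without it, the lemma is not proved. One small inaccuracy along the way: your $M/N'$ is only finite over $A$, not ``finitely presented'' as asserted, but your reduction uses only finiteness, so this is harmless.
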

\begin{proof}
By \cite[Theorem 7.7]{M1} a $k^+$-module $N$ is flat if and only if $I\otimes_{k^+} N \to N$ is injective for any {\it finitely generated} ideal $I\subset k^+$. But such $I$ is principal since $k^+$ is a valuation ring, so we are done (see also \cite[\href{https://stacks.math.columbia.edu/tag/0539}{Tag 0539}]{stacks-project} for a different proof).\smallskip

The second and third  claims are proven in \cite[\href{https://stacks.math.columbia.edu/tag/053E}{Tag 053E}]{stacks-project}.   \smallskip

Now we show the last claim. We consider the quotient module $M/N$. The saturatedness assumption says that it is $k^+$-flat, and it is clearly finite as an $A$-module. Thus, (\ref{lemma:valuations-general-3}) ensures that $M/N$ is finitely presented over $A$. So $N$ is a finite $A$-module as it is the kernel of a homomorphism from a finite module to a finitely presented one (see \cite[\href{https://stacks.math.columbia.edu/tag/0519}{Tag 0519}]{stacks-project}). 
\end{proof}

%\begin{lemma}\label{lemma:valuation-invariants-saturated} Let $A$ be a flat, finite type $k^+$-algebra with a $k^+$-action of a finite group $G$. Then $A^G$ is saturated in $A$. %$A^G$ is flat and finite type over $k^+$, and $A^G$ is saturated in $A$.
%\end{lemma}
%\begin{proof}
%We note that Lemma~\ref{lemma:alg-finite-integral} ensures that $A^G$ is finite type over $A$. Lemma~\ref{lemma:valuations-general}, in turn, guarantees that $A^G$ is $k^+$-flat. So we only need to show that $A^G$ is saturated in $A$.\smallskip

%We actually show more. Namely, we show that $\varpi^n A \cap A^G=\varpi^n A^G$ for any $\varpi \in \m_{k}$. This would clearly imply that $A^G$ is saturated in $A$ since $A$ is $\varpi$-torsion free. \smallskip

%Clearly, we have a trivial inclusion $\varpi^n A^G \subset \varpi^nA \cap A^G$. So, we only need to show the reverse inclusion. Suppose that $\varpi^nx \in A^G$ for some $x\in A$. Then we see that 
%\[
%\varpi^nx=g(\varpi^nx)=\varpi^ng(x)
%\]
%since $g$ acts on $A$ by $k^+$-linear automorphisms. This implies that $\varpi^n(x-g(x))=0$. Therefore, $x-g(x)=0$ as $A$ is $k^+$-flat. Hence, we have an equality $\varpi^nA^G \subset \varpi^nA \cap A^G$ that finishes the proof.
%\end{proof}

\begin{lemma}[Non-noetherian Artin-Tate]\label{lemma:valuation-Artin-Tate} Let $A \to B$ be a finite injective morphism of $k^+$-algebras. Suppose that $B$ is a finite type $k^+$-algebra and $A$ is a saturated $k^+$-submodule of $B$ (in the sense of Definition \ref{defn:valuation-saturated}). Then $A$ is a $k^+$-algebra of finite type.
\end{lemma}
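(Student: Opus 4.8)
The plan is to imitate the classical proof of the Artin--Tate Lemma~\ref{lemma:intro-Artin-Tate}, replacing its single use of noetherianity by Lemma~\ref{lemma:valuations-general}(\ref{lemma:valuations-general-4}), for which the saturatedness hypothesis turns out to be precisely tailored.

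First I would run the usual Artin--Tate construction. Since $B$ is of finite type over $k^+$, I choose generators so that $B=k^+[x_1,\dots,x_n]$. Since $A\to B$ is finite, I choose $y_1,\dots,y_m\in B$ generating $B$ as an $A$-module, arranged so that $y_1=1$. Then I would express
\[
x_i=\sum_j a_{ij}y_j,\qquad y_iy_j=\sum_k a_{ijk}y_k
\]
for suitable elements $a_{ij},a_{ijk}\in A$, and let $A_0\subset A$ be the $k^+$-subalgebra generated by the finite collection $\{a_{ij},a_{ijk}\}$. By construction $A_0$ is a finite type $k^+$-algebra. The relations above show that the $A_0$-submodule $M_0\coloneqq\sum_j A_0 y_j\subset B$ contains $1$, is closed under multiplication, and contains every $x_i$; hence $M_0$ is a $k^+$-subalgebra of $B$ containing all the $x_i$, so $M_0=B$. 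In particular $B$ is a finite $A_0$-module.

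Now comes the key step, where the valuation-ring hypotheses enter. Since $A$ is a subring of $B$ containing $A_0$, it is in particular an $A_0$-submodule of the finite $A_0$-module $B$. The assumption that $A$ is saturated in $B$ says exactly that $B/A$ is $k^+$-torsion free, which is precisely the condition of Definition~\ref{defn:valuation-saturated} that makes $A$ a \emph{saturated} $A_0$-submodule of $B$ (the saturatedness condition is about the $k^+$-module structure only, so nothing changes when we refine the coefficient ring from $k^+$ to $A_0$). Applying Lemma~\ref{lemma:valuations-general}(\ref{lemma:valuations-general-4}) with $A_0$ in the role of ``$A$'', $B$ in the role of ``$M$'', and $A$ in the role of ``$N$'', I conclude that $A$ is a finite $A_0$-module. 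Combining the two halves, $A$ is finite over $A_0$ and $A_0$ is of finite type over $k^+$, so $A$ is of finite type over $k^+$, as desired.

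The main obstacle is exactly the step that is immediate in the classical argument from noetherianity: that a submodule of a finite module over a finitely generated algebra is again finite. Over a general valuation ring this fails (cf.\ Example~\ref{intro:Artin-Tate-fails}), and the whole content of the lemma is that the saturatedness assumption is the extra input needed to salvage it through Lemma~\ref{lemma:valuations-general}(\ref{lemma:valuations-general-4}). I would also make sure to record where injectivity of $A\to B$ is used, namely to regard $A$ as a submodule of $B$ so that ``$A$ saturated in $B$'' is meaningful.
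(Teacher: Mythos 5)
Your proof is correct and is essentially identical to the paper's own argument: both run the classical Artin--Tate construction to produce the finite type subalgebra $A_0$ (called $A'$ in the paper) over which $B$ is finite, and then invoke Lemma~\ref{lemma:valuations-general}(\ref{lemma:valuations-general-4}) with the saturatedness hypothesis to conclude that $A$ is a finite $A_0$-module. The only difference is that you spell out the verification that $B = \sum_j A_0 y_j$ (which the paper asserts without proof), which is a welcome addition rather than a deviation.
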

\begin{proof}
%Lemma~\ref{lemma:valuations-general} ensures that $A$ is $k^+$-flat. Thus, we only need to show that $A$ is of finite type over $k^+$. \smallskip

By assumption, $B$ is of finite type over $k^+$, so there is a finite set of elements $x_i\in B$ such that the $k^+$-algebra homomorphism
\[
p:k^+[T_1, \dots, T_n] \to B
\] 
that sends $T_i$ to $x_i$ is surjective. Since $B$ is a finite $A$-module, we can choose some $A$-module generators $y_1, \dots, y_m \in B$. The choice of $x_1, \dots, x_n$ and $y_1, \dots, y_m$ implies that there are some $a_{i,j}, a_{i,j,l} \in A$ with the relations 
\[
x_i = \sum_j a_{i,j} y_i
\]
\[
y_iy_j=\sum_{l} a_{i,j,l}y_{l}.
\]
Now consider the $k^+$-subalgebra $A'$ of $A$ generated by all $a_{i,j}$ and $a_{i,j,l}$. Clearly, $A'$ is of finite type over $k^+$. Moreover, $B$ is finite over $A'$ as $y_1, \dots, y_m$ are $A'$-module generators of $B$.

We use Lemma~\ref{lemma:valuations-general}(\ref{lemma:valuations-general-4}) over $A'$ to ensure that $A$ is finite over $A'$ as it is a saturated $A'$-submodule of the finite $A'$-module $B$. Therefore, $A$ is of finite type over $k^+$.
\end{proof}

\begin{cor}\label{cor:valuation-invariants-finitely-generated} Let $A$ be a flat, finite type $k^+$-algebra with a $k^+$-action of a finite group $G$. Then $A^G$ is a finite type flat $k^+$-algebra, and the natural morphism $A^G \to A$ is finitely presented.
\end{cor}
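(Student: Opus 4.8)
The plan is to apply the non-noetherian Artin--Tate Lemma~\ref{lemma:valuation-Artin-Tate} to the inclusion $A^G \hookrightarrow A$. To invoke that lemma I must check three hypotheses: that $A^G \to A$ is finite and injective, that $A$ is a finite type $k^+$-algebra, and that $A^G$ is a \emph{saturated} $k^+$-submodule of $A$ in the sense of Definition~\ref{defn:valuation-saturated}. The first is immediate from Lemma~\ref{lemma:alg-finite-integral}(\ref{lemma:alg-finite-integral-3}) (injectivity is clear since $A^G \subset A$), and the second is part of the hypotheses. So the only real content is the saturatedness, and this is exactly the step where the $k^+$-flatness of $A$ will be used in an essential way.

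To prove that $A^G$ is saturated, i.e.\ that $A/A^G$ is $k^+$-torsion free, I would argue directly: suppose $a \in A$ and $\lambda \in k^+ \setminus \{0\}$ satisfy $\lambda a \in A^G$. Since the $G$-action is $k^+$-linear and $\lambda$ is a (fixed) scalar, for every $g \in G$ we get $\lambda\, g(a) = g(\lambda a) = \lambda a$, hence $\lambda\,(g(a) - a) = 0$. Because $A$ is $k^+$-flat, it is torsion free by Lemma~\ref{lemma:valuations-general}(\ref{lemma:valuations-general-1}), so $g(a) = a$ for all $g$, i.e.\ $a \in A^G$. This shows $A^G$ is saturated in $A$, and Lemma~\ref{lemma:valuation-Artin-Tate} then yields that $A^G$ is a finite type $k^+$-algebra.

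It remains to record the flatness and finite presentation assertions. For flatness, $A^G$ is a $k^+$-submodule of the torsion free module $A$, hence itself torsion free, so it is $k^+$-flat by Lemma~\ref{lemma:valuations-general}(\ref{lemma:valuations-general-1}). For finite presentation of the morphism $A^G \to A$: the ring $A$ is $k^+$-flat and finite type, hence a finitely presented $k^+$-algebra by Lemma~\ref{lemma:valuations-general}(\ref{lemma:valuations-general-2}); since we have just shown $A^G$ is of finite type over $k^+$, the standard cancellation property of finite presentation (applied to $k^+ \to A^G \to A$) gives that $A$ is finitely presented over $A^G$. Alternatively, one can apply Lemma~\ref{lemma:valuations-general}(\ref{lemma:valuations-general-3}) over the base ring $A^G$ to the finite, $k^+$-flat $A^G$-module $A$ to see that $A$ is finitely presented as an $A^G$-module, whence the finite morphism $A^G \to A$ is finitely presented.

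The main obstacle is the saturatedness verification, which is conceptually the crux: it is precisely where torsion-freeness (equivalently, $k^+$-flatness) of $A$ is indispensable, consistent with the earlier remark that the flatness hypothesis cannot be dropped. Everything else is a matter of assembling the previously established lemmas.
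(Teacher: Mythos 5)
Your proposal is correct and follows essentially the same route as the paper's proof: finiteness of $A^G \to A$ via Lemma~\ref{lemma:alg-finite-integral}, saturatedness of $A^G$ in $A$ from $k^+$-flatness (which you spell out explicitly where the paper only says ``clearly''), the non-noetherian Artin--Tate Lemma~\ref{lemma:valuation-Artin-Tate}, and then Lemma~\ref{lemma:valuations-general}(\ref{lemma:valuations-general-3}) applied to the finite, $k^+$-flat $A^G$-module $A$ to get finite presentation of the morphism. Your alternative ``cancellation'' argument for the last step (combining finite presentation of $A$ over $k^+$ with finite generation of $A^G$ over $k^+$) is also valid, and has the minor advantage of giving finite presentation of $A$ as an $A^G$-\emph{algebra} directly, bypassing the module-to-algebra passage that the paper handles via EGA~IV, Proposition~1.4.7.
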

\begin{proof}
Lemma~\ref{lemma:alg-finite-integral} gives that $A$ is a finite $A^G$-module, and $A^G$ is easily seen to be saturated in $A$ using that $A$ is $k^+$-torsionfree (because it is $k^+$-flat). Therefore, Lemma~\ref{lemma:valuations-general}~(\ref{lemma:valuations-general-1}) implies that $A^G$ is $k^+$-flat and Lemma~\ref{lemma:valuation-Artin-Tate} ensures that $A^G$ is finite type over $k^+$. Now Lemma~\ref{lemma:valuations-general}~(\ref{lemma:valuations-general-3}) guarantees that $A$ is a finitely presented $A^G$-module as it is $A^G$-finite and $k^+$-flat. Thus, it is finitely presented as an $A^G$-algebra by \cite[Proposition 1.4.7]{EGA4_1}. 
\end{proof}

\begin{rmk}\label{rmk:universally-adhesive-artin-tate} Lemma~\ref{lemma:valuation-Artin-Tate} and Corollary~\ref{cor:valuation-invariants-finitely-generated} have versions over a universally adhesive base (see Definition~\ref{defn:adhesive}). We refer to  Lemma~\ref{lemma:universally-adhesive-Artin-Tate} and Corollary~\ref{cor:universally-adhesive-invariants-finitely-generated} for the precise results. 
\end{rmk}

\begin{thm}\label{thm:valuation-main} Let $X$ be a flat, locally finite type $k^+$-scheme with a $k^+$-action of a finite group $G$. Suppose that each point $x\in X$ admits an affine neighborhood $V_x$ containing $G.x$. Then the scheme $X/G$ as in Theorem~\ref{thm:alg-main} is flat and locally finite type over $k^+$, and the integral surjection $\pi\colon X \to X/G$ is finite and finitely presented.
\end{thm}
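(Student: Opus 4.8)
The plan is to reduce everything to the affine case, where the statement is exactly Corollary~\ref{cor:valuation-invariants-finitely-generated}, and then glue. First I would observe that each of the four claimed properties --- flatness over $k^+$, local finite type over $k^+$, finiteness of $\pi$, and finite presentation of $\pi$ --- is local on the target $X/G$. So it suffices to produce an open cover of $X/G$ over which these properties hold.

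Next, since $X$ satisfies the orbit hypothesis, Lemma~\ref{lemma:alg-G-basis} provides, for each $x\in X$, a $G$-stable open affine $U_x=\Spec A_x \subset X$. As in the proof of Theorem~\ref{thm:alg-main}, the image $\pi(U_x)\subset X/G$ is open, is canonically identified with $U_x/G$, and satisfies $\pi^{-1}(\pi(U_x))=U_x$; by Proposition~\ref{prop:alg-geometric-quotient-affine} we have $U_x/G\cong \Spec A_x^G$, and $\pi|_{U_x}$ corresponds to the inclusion $A_x^G \hookrightarrow A_x$. The opens $\pi(U_x)$ cover $X/G$, so I have reduced to analyzing a single affine piece $\Spec A_x^G$ together with the map $\Spec A_x \to \Spec A_x^G$.

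Now the $k^+$-algebra $A_x$ is flat (flatness is inherited by open subschemes) and of finite type (an affine open of a locally finite type scheme is of finite type), so the hypotheses of Corollary~\ref{cor:valuation-invariants-finitely-generated} are met. That corollary directly yields that $A_x^G$ is flat and of finite type over $k^+$ --- giving the flatness and local finite type of $X/G$ --- and that $A_x^G \to A_x$ is finitely presented. Combined with the finiteness of $\pi$ from Theorem~\ref{thm:alg-main}(2) (which applies because $X$ is locally of finite type over $k^+$), this shows that $\pi$ is finite and finitely presented over each $\Spec A_x^G$, hence globally by locality on the target.

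The proof therefore carries no genuine obstacle of its own: all the essential difficulty is already packaged into Corollary~\ref{cor:valuation-invariants-finitely-generated}, whose proof rests on the non-noetherian Artin--Tate Lemma~\ref{lemma:valuation-Artin-Tate} and the valuation-ring finiteness statements of Lemma~\ref{lemma:valuations-general}. The only point requiring (minor) care is the gluing step --- verifying that the local quotients $\Spec A_x^G$ patch to the global $X/G$ with $\pi^{-1}(\pi(U_x))=U_x$, so that finiteness and finite presentation may legitimately be checked on the cover --- but this is precisely the compatibility already established while proving Theorem~\ref{thm:alg-main}, so it can be invoked rather than redone.
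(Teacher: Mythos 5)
Your proof is correct and follows essentially the same route as the paper: reduce to the affine case via the $G$-stable affine covering of Lemma~\ref{lemma:alg-G-basis} and the identifications from Theorem~\ref{thm:alg-main}, then apply Corollary~\ref{cor:valuation-invariants-finitely-generated}. The only cosmetic difference is that the paper obtains $k^+$-flatness of $X/G$ directly from the construction (the sheaf of invariants is torsion free, hence flat by Lemma~\ref{lemma:valuations-general}), while you extract it from the corollary on each affine piece; both are valid.
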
 

\begin{proof}
By construction, $X/G$ is clearly $k^+$-flat. To show that $X/G$ is locally of finite type and that $\pi$ is finitely presented, we reduce to the affine case by passing to a $G$-stable affine open covering of $X$ (see Lemma~\ref{lemma:alg-G-basis}). Now apply Corollary~\ref{cor:valuation-invariants-finitely-generated}. \smallskip
\end{proof}

\begin{rmk}\label{rmk:valuation-main-general} Theorem~\ref{thm:valuation-main} also has a version for the base scheme $S$ that is universally $\mathcal{I}$-adically adhesive for some quasi-coherent ideal of finite type $\mathcal{I}$ (see Definition~\ref{defn:adhesive-global}). We refer to Theorem~\ref{thm:universally-adhesive-main}.
\end{rmk}

%\newpage

\section{Quotients of Admissible Formal Schemes}\label{section:formal}

We discuss the existence of quotients for some class of formal schemes by an action of a finite group $G$. The strategy to construct the quotient spaces is close to the one used in Section~\ref{section:alg}. We first construct the candidate space $\X/G$ that is, a priori, only a topologically locally ringed space. This construction clearly satisfies the universal property, but it is not clear (and generally false) that $\X/G$ is a formal scheme. We resolve this issue by first showing that it is a formal scheme if $\X$ is affine. Then we argue by gluing to prove the claim for a larger class of formal schemes. \smallskip

There are two main complications compared to Section~\ref{section:alg}. The first one is that we cannot anymore firstly show that $\X/G$ is a formal scheme by a very general argument and {\it then} study its properties under further assumptions, e.g. show that it is flat or (topologically) finite type over the base. The problem can be seen even in the case of an affine formal scheme $\X=\Spf A$. The proof of Proposition~\ref{prop:alg-geometric-quotient-affine} crucially uses that the localization $(A^G)_f$ is $A^G$-flat for any $f\in A^G$. The analogue in the world of formal schemes would be that the {\it completed localization} $(A^G)_{\{f\}}=\wdh{(A^G)_f}$ is $A^G$-flat. However, this requires some finiteness assumption on $A^G$ in order to hold. Therefore, we need to verify algebraic properties of $A^G$ at {\it the same time} as constructing the isomorphism $\Spf A/G \simeq \Spf A^G$.  \smallskip

The second, related problem is that one needs to be more careful with certain topological aspects of the theory. For instance, the fiber product of affine formal schemes is given by the {\it completed} tensor product on the level of corresponding algebras. This is a more delicate functor as it is neither left nor right exact. So we pay extra attention to make sure that these complications do not cause any issues under suitable assumptions. 

\subsection{The Setup and the Candidate Space $\X/G$}

\begin{defn}\label{defn:formal-microbial-valuation} A valuation ring $k^+$ is {\it microbial} if it has a finitely generated (hence principal) ideal of definition $I$, i.e. any neighborhood $0 \in U \subset k^+$ open in the valuation topology contains $I^n$ for some $n$.
\end{defn}

\begin{defn} An element $\varpi \in k^+$ is a {\it pseudo-uniformizer} if $(\varpi) \subset k^+$ is an ideal of definition in $k^+$.
\end{defn}

\begin{exmpl} Any valuation ring $k^+$ of finite rank is microbial. This follows from the characterization of microbial valuations in \cite[Definition 1.1.4(e)]{H3} or \cite[Proposition 9.1.3(3)]{Seminar}. 

More generally, a valuation ring $k(x)^+\subset k(x)$ associated with any point $x\in X$ of an {\it analytic} adic space (see Definition \ref{defn:adic-analytic-spaces}) $X$ is microbial. This can be seen from \cite[Definition 1.1.4(c)]{H3} or \cite[Proposition 9.1.3(2)]{Seminar}. 
\end{exmpl}

For the rest of the section, we fix a complete, microbial valuation ring $k^+$ with a pseudo-uniformizer $\varpi$. We denote by $\mathfrak{S}$ the formal spectrum $\Spf k^+$. \smallskip

A {\it formal $k^+$-scheme} will always mean a $\varpi$-adic formal $k^+$-scheme. It is easy to see that this notion does not depend on the choice of an ideal of definition.

\begin{defn}\label{defn:formal-admissible} A $k^+$-algebra $A$ is called {\it admissible} if $A$ is $k^+$-flat and topologically of finite type (i.e. there is a surjection $k^+\langle t_1, \dots t_d\rangle \to A$). \medskip

A formal $k^{+}$-scheme $\X$ is called {\it admissible} if it is $k^+$-flat and locally topologically of finite type.
\end{defn}

%\begin{defn}\label{defn:formal-reduction} Let $(\X, \O_\X)$ be a formal $k^+$-scheme. We define the {\it reduction scheme} $X_n$ as the scheme $(|\X|, \O_{\X}/\pi^{n+1})$ for any $n\geq 0$.  
%\end{defn}
\begin{rmk}\label{rmk:formal-flat-issue}
\begin{enumerate}
    \item We note that there are many (non-equivalent) ways to define flatness in formal geometry. They are all equivalent for a morphism $f\colon \X \to \Y$ of locally topologically finite type formal $k^+$-schemes. \smallskip

We prefer to use the following as the definition: $f$ is {\it flat} if $f^\#_{f(x)} \colon \O_{\Y, f(x)} \to \O_{\X, x}$ is flat for all $x\in \X$ (i.e. $f$ is flat as a morphism of locally ringed spaces). We mention that in the case $f\colon \Spf B \to \Spf A$  a morphism of affine, topologically finite type formal $k^+$-schemes, this notion is equivalent to the flatness of $A\to B$. This follows from \cite[Proposition I.4.8.1]{FujKato} and Remark~\ref{rmk:adhesive-microb-tu-adhesive} (see \cite[\textsection I.2.1(a)]{FujKato} to relate adhesiveness to rigid-noetherianness). 
\item Similarly, a morphism $f\colon \Spf B \to \Spf A$ of formal $k^+$-schemes is topologically of finite type if and only if $A \to B$ is topologically of finite type (see \cite[Lemma I.1.7.4]{FujKato}). 
\item\label{rmk:formal-flat-issue-3} In particular, if $\X=\Spf A$ is an admissible formal $k^+$-scheme, the $k^+$-algebra $A$ is admissible. 
\end{enumerate}
\end{rmk}

We summarize the main properties of locally topologically finite type formal $k^+$-schemes in the lemma below:

\begin{lemma}\label{lemma:formal-general} Let $k^+$ be a complete, microbial valuation ring, $A$ a topologically finite type $k^+$-algebra, and $M$ a finite $A$-module. Then
\begin{enumerate}
    \item\label{lemma:formal-general-1}  $M$ is $\varpi$-adically complete. In particular, $A$ is $\varpi$-adically complete. 
    \item\label{lemma:formal-general-2}  If $A$ is $k^+$-flat, it is topologically finitely presented. 
    \item\label{lemma:formal-general-new}  If $M$ is $k^+$-flat, it is finitely presented over $A$. 
    \item\label{lemma:formal-general-3}  Let $N \subset M$ be a saturated (in the sense of Definition~\ref{defn:valuation-saturated}) $A$-submodule of $M$. Then $N$ is a finite $A$-module. 
    \item\label{lemma:formal-general-new-2}  Let $N \subset M$ be an $A$-submodule of $M$. Then the $\varpi$-adic topology on $M$ restricts to the $\varpi$-adic topology on $N$.     
    \item\label{lemma:formal-general-4}  For any element $f\in A$, the completed localization $A_{\{f\}}=\lim_n A_f/\varpi^n A_f$ is $A$-flat. 
\end{enumerate}
\end{lemma}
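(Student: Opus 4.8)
The entire lemma is powered by a single nontrivial fact: a topologically finite type algebra $A$ over a complete microbial valuation ring $k^+$ is topologically universally adhesive in the sense of Fujiwara--Kato. This is where all the real work sits, and it is recalled (and connected to microbiality) in Appendix~\ref{appendix-A}. The plan is to isolate two structural consequences of adhesiveness and then deduce the six assertions formally, mirroring the valuation-ring computation of Lemma~\ref{lemma:valuations-general}. The two consequences are: (a) every finite $A$-module is $\varpi$-adically complete; and (b) for every finite $A$-module $M$ the $\varpi$-power-torsion submodule $M[\varpi^\infty]$ is finitely generated --- hence, being finite, killed by a single power $\varpi^c$ --- and the quotient $M/M[\varpi^\infty]$ is finitely presented over $A$.

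Assertion (\ref{lemma:formal-general-1}) is precisely (a). For the finiteness statements I would first record that, because $A$ carries the $\varpi$-adic topology and $k^+$ is microbial, a finite $A$-module is $k^+$-flat if and only if it is $\varpi$-torsion-free (this combines Lemma~\ref{lemma:valuations-general}(\ref{lemma:valuations-general-1}) with the fact that every nonzero non-unit of a microbial $k^+$ divides a power of $\varpi$). Then (\ref{lemma:formal-general-new}) is immediate from (b): if $M$ is $k^+$-flat it is $\varpi$-torsion-free, so $M=M/M[\varpi^\infty]$ is finitely presented. Assertion (\ref{lemma:formal-general-3}) follows exactly as in the algebraic case: if $N\subset M$ is saturated then $M/N$ is finite and $\varpi$-torsion-free, hence finitely presented by (\ref{lemma:formal-general-new}), so the kernel $N$ of the surjection $M\to M/N$ from a finite module onto a finitely presented one is itself finite. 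Finally (\ref{lemma:formal-general-2}) is the instance of (\ref{lemma:formal-general-3}) for the ring $R=k^+\langle t_1,\dots,t_d\rangle$ and the ideal $N=\ker(R\to A)$, which is saturated in $R$ precisely because $A$ is $k^+$-flat; thus $N$ is a finitely generated ideal and $A$ is topologically finitely presented.

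For the comparison of topologies (\ref{lemma:formal-general-new-2}) I would argue directly from the boundedness in (b). Since $M$ is finite so is $M/N$, and by (b) its torsion is annihilated by some $\varpi^c$. Given $n$, take $x\in N\cap\varpi^{n+c}M$ and write $x=\varpi^{n+c}y$ with $y\in M$; then $\varpi^{n+c}\bar y=0$ in $M/N$, so $\bar y$ is $\varpi$-power-torsion and therefore $\varpi^c\bar y=0$, i.e. $\varpi^c y\in N$. Hence $x=\varpi^n(\varpi^c y)\in\varpi^n N$, so $N\cap\varpi^{n+c}M\subseteq\varpi^n N$; together with the trivial inclusion $\varpi^n N\subseteq N\cap\varpi^n M$ this shows that the subspace topology on $N$ coincides with its $\varpi$-adic topology.

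The flatness of the completed localization (\ref{lemma:formal-general-4}) I would reduce to the local flatness criterion. Modulo each power of $\varpi$ one has $A_{\{f\}}/\varpi^n A_{\{f\}}=(A/\varpi^n A)_f$, which is flat over $A/\varpi^n A$ since ordinary localization is flat; as $A_{\{f\}}$ is $\varpi$-adically complete and $\varpi$-torsion-free, the criterion (in the adhesive form recalled in Remark~\ref{rmk:formal-flat-issue}) upgrades flatness modulo $\varpi$ to genuine $A$-flatness. I expect the two genuinely delicate points to be the adhesiveness input itself --- the only place substantial work happens, and the reason one must restrict to microbial (rather than arbitrary non-noetherian) bases --- and making this last local criterion rigorous without noetherian hypotheses; both are supplied by the machinery of Appendix~\ref{appendix-A}.
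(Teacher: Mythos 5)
Your proposal is correct in substance, but it takes a genuinely different route from the paper. The paper proves this lemma purely by citation to Bosch's development of topologically finite type algebras over a complete adic valuation ring: parts (\ref{lemma:formal-general-1})--(\ref{lemma:formal-general-new-2}) are \cite[Proposition 7.3/8, Corollary 7.3/5, Theorem 7.3/4, Lemma 7.3/7]{B}, and part (\ref{lemma:formal-general-4}) is \cite[Proposition 7.3/11]{B} together with the Stacks Project lemma on completion along finitely generated ideals. You instead run everything through topological universal adhesiveness of $k^+$ in the sense of Fujiwara--Kato, isolate two structural consequences (completeness of finite modules; finiteness of $\varpi$-torsion and finite presentation of torsion-free finite modules), and deduce the six assertions formally. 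This is precisely the strategy the paper itself follows in Appendix~\ref{appendix-A} for the generalization of this lemma (Lemma~\ref{lemma:adhesive-formal-general}, proved by citing \cite{FujKato}), so the route is legitimate and your formal deductions are sound: the reduction of (\ref{lemma:formal-general-2}) to (\ref{lemma:formal-general-3}) via the kernel of $k^+\langle t_1,\dots,t_d\rangle \to A$, the torsion-free $=$ $\varpi$-torsion-free observation (correct, since every element of a microbial $k^+$ divides a power of $\varpi$), and the Artin--Rees-style argument for (\ref{lemma:formal-general-new-2}) are all correct. What your approach buys is uniformity with the adhesive setting and elementary derivations from a single deep input; what it costs is that this input, adhesiveness of $k^+\langle t_1,\dots,t_d\rangle$, is \cite[Theorem 0.9.2.1]{FujKato}, whose proof is substantial, whereas Bosch's results are proved directly for the valuation-ring case.

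Two caveats. First, you must take adhesiveness from \cite[Theorem 0.9.2.1]{FujKato} and not from the ``alternative'' argument mentioned in Remark~\ref{rmk:adhesive-microb-tu-adhesive}, since that alternative derives adhesiveness \emph{from} Lemma~\ref{lemma:formal-general} and would make your proof circular. Second, your treatment of (\ref{lemma:formal-general-4}) is the one imprecise step: Remark~\ref{rmk:formal-flat-issue} records the equivalence of stalkwise and ring-theoretic flatness, not a ``flat modulo $\varpi^n$ for all $n$ implies flat'' criterion; also your hypothesis that $A_{\{f\}}$ is $\varpi$-torsion-free is unavailable ($A$ is not assumed $k^+$-flat in this lemma), though it is also not needed. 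Keep in mind that ``flat modulo every power of $\varpi$ plus complete'' does \emph{not} imply flat for general non-noetherian rings --- this is exactly the failure the adhesive theory is designed to repair --- so this step genuinely requires a Fujiwara--Kato input. The clean fix is to cite \cite[Proposition I.2.1.2]{FujKato} directly, as the paper's Appendix does in Lemma~\ref{lemma:adhesive-formal-general}(\ref{lemma:adhesive-formal-general-4}), or equivalently to factor $A \to A_f \to A_{\{f\}}$ and invoke flatness of completion for rings with bounded torsion.
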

The first five results of this lemma are essentially due to Raynaud and Gruson \cite{Raynaud-Gruson}. 
\begin{proof}
The first claim is \cite[Proposition 7.3/8]{B}. The second is \cite[Corollary 7.3/5]{B}. The third in \cite[Theorem 7.3/4]{B}. The fourth and fifth are covered by \cite[Lemma 7.3/7]{B}. For the last claim, we note that \cite[Proposition 7.3/11]{B} ensures that it suffices to show that 
\[
A/f^nA \to A_{\{f\}}/f^nA_{\{f\}}
\]
is flat for any integer $n\geq 1$. Now \cite[\href{https://stacks.math.columbia.edu/tag/05GG}{Tag 05GG}]{stacks-project} implies that $A_{\{f\}}/f^nA_{\{f\}} \simeq A_f/f^nA_f$, so the desired statement follows from $A$-flatness of $A_f$. 
\end{proof}

\begin{defn}\label{defn:geometric-quotient-formal} Let $G$ be a finite group, and $\X$ a locally topologically ringed space over $\S$ with a right $\S$-action of $G$. The {\it geometric quotient} $\X/G=(|\X/G|, \O_{\X/G}, h)$ consists of:
\begin{itemize}\itemsep0.5em
\item the topological space $|\X/G|\coloneqq |\X|/G$ with the quotient topology. We denote by $\pi:|\X| \to |\X/G|$ the natural projection.
\item the sheaf of topological rings $\O_{\X/G}\coloneqq (\pi_*\O_\X)^G$ with the subspace topology.
\item the morphism $h:\X/G \to \S$ defined by the pair $(h, h^{\#})$, where $h:|\X|/G \to \S$ is the unique morphism induced by $f\colon \X \to \S$ and $h^{\#}$ is the natural morphism 
\[
\O_{\S} \to h_*\left(\O_{\X/G}\right)=h_*\left(\left(\pi_*\O_{\X}\right)^G\right)=\left(h_*\left(\pi_*\O_{\X}\right)\right)^G=\left(f_*\O_\X\right)^G
\]
that comes from $G$-invariance of $f$.
\end{itemize}
\end{defn}

\begin{rmk}\label{rmk:universal-topologically-locally-ringed} By construction, $\X/G$ is a topologically ringed $\S$-space, and $\pi\colon \X \to \X/G$ is a morphism of topologically ringed $\S$-spaces. Furthermore, Lemma~\ref{lemma:geometric-quotient-locally-ringed} ensures that $\X/G$ is a topologically {\it locally ringed} $\S$-space, and $\pi$ is a morphism of topologically {\it locally ringed} $\S$-spaces (so $\X/G \to \S$ is too). It is trivial to see that the pair $(\X/G, \pi)$ is a universal object in the category of $G$-invariant morphisms to topologically locally ringed $\S$-spaces.
\end{rmk}

Our main goal is to show that under some mild assumptions, $\X/G$ is an admissible formal $\S$-scheme when $\X$ is. We start with the case of affine formal schemes and then move to the general case. 

\subsection{Affine Case}

We show that the quotient $\X/G$ of an admissible affine formal $k^+$-scheme $\X=\Spf A$ is canonically isomorphic to $\Spf A^G$ that is, in turn, an admissible formal $k^+$-scheme. We point out that in contrast to the scheme case, we need firstly to establish that $A^G$ is an admissible $k^+$-algebra, and only then we can show that $\X/G$ is isomorphic to $\Spf A^G$. Therefore, we start the section with studying certain properties of the ring of invariants $A^G$.

\begin{rmk}\label{rmk:automatic-cont} Let $(R, I)$ be a ring with a finitely generated ideal $I$, $M$ an $R$-module with the $I$-adic topology, and $f\colon M \to M$ an $R$-linear homomorphism. Then $f$ is automatically continuous in the $I$-adic topology because $f^{-1}(I^nM) \supset I^nM$.
\end{rmk}

\begin{lemma}\label{lemma:formal-invariants-topology-prelim} Let $(R, I)$ be a ring with a finitely generated ideal $I$, and $M$ an $I$-adically complete $R$-module with a closed $R$-submodule $N\subset M$. Then $N$ is complete in the $I$-adic topology.
\end{lemma}
\begin{proof}
Since $N$ is closed in $M$, it is complete for the subspace topology. All this means by design is that 
\[
N\to \lim_n N/(I^nM\cap N)
\]
is an isomorphism, and we need to justify that this implies that 
\[
N \to \lim_n N/I^nN
\]
is an isomorphism. For this purpose, we will crucially use that $I$ is finitely generated. \smallskip

We start by considering the diagram
\begin{equation}\label{eqn:diagram-triangle}
\begin{tikzcd}
    N \arrow{r}{\alpha} \arrow{rd}{\gamma} & \lim_n N/I^n N \arrow{d}{\beta} \\
    & \lim_n N/(I^nM \cap N).
\end{tikzcd}
\end{equation}
Since $\gamma$ is an isomorphism, we conclude that $\alpha$ is injective. Therefore, it suffices only to justify that $\alpha$ is surjective. By \cite[\href{https://stacks.math.columbia.edu/tag/090S}{Tag 090S}]{stacks-project} (which uses that $I$ is finitely generated), it suffices to justify surjectivity of 
\[
N \to \lim_n N/f^n N
\] 
for each $f\in I$. Furthermore, \cite[\href{https://stacks.math.columbia.edu/tag/090T}{Tag 090T}]{stacks-project} ensures that $M$ is $f$-adically complete. Therefore, we may assume that $I=(f)$ and show that the natural morphism
\[
\alpha \colon N \to \lim_n N/f^nN
\]
is surjective. Now we use Diagram~(\ref{eqn:diagram-triangle}) and the fact that $\gamma$ is an isomorphism to reduce the question to showing that $\beta\colon \lim_n N/f^nN \to \lim_n N/(f^nM\cap N)$ is injective. \smallskip

Injectivity of $\beta$ boils down to showing that, for any sequence of elements $\{a_n\in N\}$ with $a_{n+1}-a_n \in f^nN$ and $a_n\in f^nM \cap N$, we have $a_n \in f^nN$. \smallskip

The assumption on $a_n$ implies that $a_{n}=a_{n+1}+f^nx_n$ for some $x_n\in N$. We note that the sum $x_n + fx_{n+1}+f^2x_{n+2} + \dots$ converges in $N$ because $f^mx_{n+m}\in f^m M \cap N$ for any $m\geq 1$. Let us denote the sum $x_n + fx_{n+1}+f^2x_{n+2} + \dots$ by $b_n\in N$.  Then we claim that 
\begin{equation*}
    a_n = f^n(x_n + fx_{n+1}+f^2x_{n+2} + \dots)=f^nb_n\in f^n N.
\end{equation*}
For this we observe that the partial sums of
\[
f^nb_n = f^n(x_n + fx_{n+1}+f^2x_{n+2} + \dots)
\]
are equal $a_n - a_{n+m}$. Since $a_{n+m}\in f^{n+m}M\cap N$ for any $m\geq 1$ and $N$ is complete in the subspace topology, we conclude 
\[
a_n = f^n(x_n + fx_{n+1}+f^2x_{n+2} + \dots) = f^nb_n
\]
finishing the proof.
\end{proof}

\begin{cor}\label{cor:formal-invariants-topology} Let $(R, I)$ be a ring with a finitely generated ideal $I$, and $A$ an $I$-adically complete $R$-algebra with an $R$-action\footnote{This action is automatically continuous by Remark~\ref{rmk:automatic-cont}.} of a finite group $G$. Then $A^G$ is complete in the $I$-adic topology.
\end{cor}
\begin{proof}
Note that $A^G$ is closed submodule of $A$ since it is the kernel of the continuous morphism $A \xr{\alpha-\rm{Id}} \prod_{g\in G} A$. Therefore, the result follows directly from Lemma~\ref{lemma:formal-invariants-topology-prelim}.
\end{proof}

\begin{lemma}\label{lemma:formal-invariants-saturated} Let $A$ be an admissible $k^+$-algebra with a $k^+$-action of a finite group $G$. Then
\begin{enumerate}
    \item $A^G$ is complete in the $\varpi$-adic topology.
    \item $A^G$ is saturated in $A$.
    \item $A$ is finite as an $A^G$-module.
\end{enumerate}
\end{lemma}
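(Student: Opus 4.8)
The plan is to treat the three assertions in order, drawing on the earlier results about the topology of invariants and about modules over topologically finite type $k^+$-algebras. The first assertion is essentially immediate: since $A$ is admissible it is in particular a finite module over itself, so Lemma~\ref{lemma:formal-general}(\ref{lemma:formal-general-1}) shows that $A$ is $\varpi$-adically complete, and I would then apply Lemma~\ref{lemma:formal-invariants-topology} with $(R,I)=(k^+,(\varpi))$ (here $(\varpi)$ is finitely generated, as required) to conclude that $A^G$ is $\varpi$-adically complete.

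For the second assertion I would argue directly that $A/A^G$ is $k^+$-torsion free. Suppose $\lambda\in k^+\setminus\{0\}$ and $a\in A$ satisfy $\lambda a\in A^G$. Then for every $g\in G$ we have $\lambda g(a)=g(\lambda a)=\lambda a$, since $g$ acts $k^+$-linearly, whence $\lambda\,(g(a)-a)=0$. As $A$ is $k^+$-flat, hence $k^+$-torsion free by Lemma~\ref{lemma:valuations-general}(\ref{lemma:valuations-general-1}), this forces $g(a)=a$ for all $g$, i.e. $a\in A^G$. Thus $A/A^G$ is torsion free and $A^G$ is saturated in $A$ in the sense of Definition~\ref{defn:valuation-saturated}.

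The third assertion is the main point, and the idea is that $A$ is integral over $A^G$, topologically finitely generated over $A^G$, and that completeness upgrades this to genuine module-finiteness. First, every $a\in A$ is integral over $A^G$: it is a root of the monic polynomial $\prod_{g\in G}(T-g(a))$, whose coefficients are $G$-invariant and hence lie in $A^G[T]$. Fixing a continuous surjection $k^+\langle t_1,\dots,t_d\rangle \to A$ with images $a_1,\dots,a_d$, set $B\coloneqq A^G[a_1,\dots,a_d]\subseteq A$. Since each $a_i$ is a root of a monic polynomial of degree $n\coloneqq \#G$, the algebra $B$ is spanned over $A^G$ by the finitely many monomials $a_1^{e_1}\cdots a_d^{e_d}$ with $0\le e_i<n$; in particular $B$ is a finite $A^G$-module. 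Moreover $B$ contains the image of $k^+[t_1,\dots,t_d]$, which is dense in $A$, so $A=B+\varpi A$.

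It remains to upgrade density to the equality $A=B$, which is where the completeness of $A^G$ established in the first assertion enters. Writing $B=\sum_j A^G b_j$ for the finite monomial generating set, I would take an arbitrary $a\in A$ and iterate $A=B+\varpi A$ to produce an expansion $a=\sum_{i\ge 0}\varpi^i b^{(i)}$ with $b^{(i)}\in B$ (the partial-sum remainders lying in $\varpi^{N+1}A$ and tending to $0$ in the complete ring $A$). Expanding each $b^{(i)}=\sum_j c_{ij}b_j$ with $c_{ij}\in A^G$ and regrouping, the coefficient of each $b_j$ becomes the series $\sum_i \varpi^i c_{ij}$, which converges to some $d_j\in A^G$ precisely because $A^G$ is $\varpi$-adically complete; hence $a=\sum_j d_j b_j\in B$. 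This gives $A=B$, a finite $A^G$-module. The main obstacle is exactly this last step: integrality together with density only produce a \emph{dense} finite submodule $B\subseteq A$, and converting this into honest module-finiteness over the (a priori non-noetherian) ring $A^G$ requires both the completeness of $A^G$ and the regrouping/convergence argument above; the remaining parts are formal.
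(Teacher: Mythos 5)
Your proposal is correct and follows essentially the same route as the paper: parts (1) and (2) match (the paper cites Lemma~\ref{lemma:formal-invariants-topology} and dismisses saturatedness as clear from $k^+$-flatness, which you simply spell out), and for part (3) both arguments rest on integrality of $A^G \to A$ via $\prod_{g\in G}(T-g(a))$, finite generation coming from the topological generators of $A$, and a successive-approximation step that crucially uses the completeness of $A^G$ from part (1). The only difference is presentational: the paper reduces mod $\varpi$ (integral $+$ finite type $\Rightarrow$ finite) and cites \cite[\href{https://stacks.math.columbia.edu/tag/031D}{Tag 031D}]{stacks-project}, whereas you build the dense finite submodule $B=A^G[a_1,\dots,a_d]$ and carry out the approximation/regrouping argument by hand.
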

\begin{proof}
The first claim is Corollary~\ref{cor:formal-invariants-topology}. The second claim is clear by $k^+$-flatness of $A$. Thus we only need to show the last claim. \smallskip

Lemma~\ref{lemma:alg-finite-integral}(\ref{lemma:alg-finite-integral-1}) guarantees that $A^G \to A$ is integral. However, the proof of finiteness in Lemma~\ref{lemma:alg-finite-integral}(\ref{lemma:alg-finite-integral-3}) is not applicable here since $A$ is not necessarily finite type over $k^+$: it is only topologically finite type. \smallskip

We now overcome this difficulty. Clearly, the morphism $A^G/\varpi A^G \to A/\varpi A$ is integral. But $A/\varpi A$ is a finite type $k^+/\varpi k^+$-algebra by our assumption, so $A^G/\varpi A^G \to A/\varpi A$ is a finite type morphism. Since an integral map of finite type is finite, we conclude that the morphism $A^G/\varpi A^G \to A/\varpi A$ is finite. Therefore, the successive approximation argument (or \cite[\href{https://stacks.math.columbia.edu/tag/031D}{Tag 031D}]{stacks-project}) implies that $A$ is finite as an $A^G$-module .
\end{proof}

\begin{lemma}[Adic Artin-Tate]\label{lemma:formal-Artin-Tate} Let $A \to B$ be a finite injective morphism of $\varpi$-adically complete $k^+$-algebras. Suppose that $B$ is a topologically finite type $k^+$-algebra and $A$ is a saturated $k^+$-submodule of $B$ (in the sense of Definition \ref{defn:formal-microbial-valuation}). Then $A$ is also a topologically finite type $k^+$-algebra.
\end{lemma}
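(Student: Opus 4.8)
The plan is to imitate the proof of the valuation-theoretic Artin--Tate Lemma~\ref{lemma:valuation-Artin-Tate}, replacing polynomial algebras by restricted power series and paying careful attention to completeness. Since $B$ is topologically of finite type, fix a surjection $k^+\langle T_1,\dots,T_n\rangle \to B$ with $T_i \mapsto x_i$, and since $B$ is a finite $A$-module, fix generators $y_1,\dots,y_m$ of $B$ as an $A$-module, which we may assume include $y_1 = 1$. As each $x_i$ and each product $y_iy_j$ lies in $B=\sum_j Ay_j$, there are elements $a_{i,j},a_{i,j,l}\in A$ with $x_i=\sum_j a_{i,j}y_j$ and $y_iy_j=\sum_l a_{i,j,l}y_l$. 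Let $A'\subset A$ be the image of the continuous $k^+$-algebra map $k^+\langle S\rangle \to A$ sending the finite set of variables $S$ onto the elements $a_{i,j},a_{i,j,l}$; thus $A'$ is a topologically finite type $k^+$-algebra and, by Lemma~\ref{lemma:formal-general}(\ref{lemma:formal-general-1}), it is $\varpi$-adically complete.

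Next I would show that $B$ is a \emph{finite} $A'$-module, which is the crucial new point: topological finite generation only gives that the $A'$-submodule $M:=\sum_j A'y_j$ is dense in $B$, and one must upgrade density to equality. Reducing modulo $\varpi$, the classes $\bar x_i$ generate $\bar B:=B/\varpi B$ as an algebra over $k^+/\varpi$ (because $k^+\langle T\rangle/\varpi \cong (k^+/\varpi)[T]$), while the relations above show that $\bar M=\sum_j \bar{A'}\,\bar y_j$ is a subring of $\bar B$ containing $1$ and all the $\bar x_i$; hence $\bar M=\bar B$, i.e. $\bar B$ is finite over $\bar{A'}$. A successive approximation argument (as in the proof of Lemma~\ref{lemma:formal-invariants-saturated}, cf. \cite[Tag 031D]{stacks-project}), using that both $A'$ and $B$ are $\varpi$-adically complete, then lifts this to the statement $B=\sum_j A'y_j$, so $B$ is a finite $A'$-module.

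Finally, $A$ is an $A'$-submodule of $B$, and it is saturated in $B$ in the sense of Definition~\ref{defn:valuation-saturated} because the hypothesis guarantees that $B/A$ is $k^+$-torsion free (the saturation condition depends only on the $k^+$-module structure). Applying Lemma~\ref{lemma:formal-general}(\ref{lemma:formal-general-3}) over the topologically finite type algebra $A'$, to the finite $A'$-module $B$ and its saturated submodule $A$, I conclude that $A$ is a finite $A'$-module. Since a finite algebra over a topologically finite type $k^+$-algebra is again topologically of finite type---adjoin finitely many $A'$-module generators of $A$ to a set of topological generators of $A'$, the resulting map $k^+\langle S,W_1,\dots,W_r\rangle\to A$ being well defined and surjective because $A$ is $\varpi$-adically complete---it follows that $A$ is a topologically finite type $k^+$-algebra.

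The main obstacle is the middle step: unlike the algebraic Artin--Tate argument, where the subring generated by the $y_j$ literally contains all polynomials in the $x_i$ and hence all of $B$, here the $x_i$ only generate $B$ topologically, so one genuinely needs the completeness of $A'$ (guaranteed by Lemma~\ref{lemma:formal-general}(\ref{lemma:formal-general-1})) together with a reduction modulo $\varpi$ and successive approximation to pass from density to finiteness.
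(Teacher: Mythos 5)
Your proof is correct and follows essentially the same route as the paper's: extract structure constants $a_{i,j}, a_{i,j,l}$ from topological generators of $B$ and $A$-module generators of $B$, form the topologically finite type algebra $A'$ from them, prove $B$ is module-finite over $A'$ by reduction mod $\varpi$ plus successive approximation, and then invoke Lemma~\ref{lemma:formal-general}(\ref{lemma:formal-general-3}) on the saturated submodule $A \subset B$. The only cosmetic difference is that you take $A'$ to be the image of $k^+\langle S\rangle$ inside $A$ (justifying its completeness via Lemma~\ref{lemma:formal-general}(\ref{lemma:formal-general-1})), whereas the paper works with the restricted power series ring itself mapping to $A$; this changes nothing of substance.
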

The proof imitates the proof of Lemma~\ref{lemma:valuation-Artin-Tate}; the  main new difficulty is that we need to keep track of topological aspects of our algebras in order to work with topologically finite type algebras in a meaningful way.
\begin{proof}
Since $B$ is topologically finite type over $k^+$, we can choose a finite set of elements $x_1, \dots, x_n$ such that the natural $k^+$-linear continuous homomorphism
\[
p\colon k^+\langle T_1, \dots, T_n \rangle  \to B
\]
that sends $T_i$ to $x_i$ is surjective. \smallskip

Since $B$ is a finite $A$-module, we can choose some $A$-module generators $y_1, \dots, y_m \in B$. The choice of $x_1, \dots, x_n$ and $y_1, \dots, y_m$ implies that there are some $a_{i,j}, a_{i,j,l} \in A$ and relations 
\[
x_i = \sum_j a_{i,j} y_i
\]
\[
y_iy_j=\sum_l a_{i,j,l}y_{l}.
\]
We consider the $k^+$-algebra $A'\coloneqq k^+\langle T_{i, j}, T_{i,j,l}\rangle$ with a continuous $k^+$-algebra homomorphism $A'\to A$ that sends $T_{i,j}$ to $a_{i,j}$, and $T_{i,j,l}$ to $a_{i,j,l}$. This map is well-defined as $A$ is $\varpi$-adically complete. \smallskip

By definition $A'$ is topologically finite type over $k^+$, and we claim that $B$ is finite over $A'$ since it is generated by $y_1, \dots, y_m$ as an $A'$-module. To see this we note that it suffices to show it mod $\varpi$ by successive approximation (or \cite[\href{https://stacks.math.columbia.edu/tag/031D}{Tag 031D}]{stacks-project}). However, it is easily seen to be finite mod $\varpi$ due to the relations above.\smallskip

We use Lemma~\ref{lemma:formal-general}(\ref{lemma:formal-general-3}) to conclude that $A$ is finite over $A'$ as a {\it saturated} submodule of a finite $A'$-module $B$. This finishes the proof since a finite algebra over a topologically finite type $k^+$-algebra is also topologically finite type.
\end{proof}

\begin{cor}\label{cor:formal-invariants-top-finitely-generated} Let $A$ be an admissible $k^+$-algebra with a $k^+$-action of a finite group $G$. Then $A^G$ is an admissible $k^+$-algebra, the induced topology on $A^G$ coincides with the $\varpi$-adic topology, and $A$ is a finitely presented $A^G$-module. 
\end{cor}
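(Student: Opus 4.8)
The plan is to deduce all three claims by applying the Adic Artin-Tate Lemma~\ref{lemma:formal-Artin-Tate} to the inclusion $A^G \hookrightarrow A$, in close analogy with the proof of Corollary~\ref{cor:valuation-invariants-finitely-generated} in the scheme case, and then to feed the resulting information back into the structural results of Lemma~\ref{lemma:formal-general}. The point is that the serious work has already been done in Lemma~\ref{lemma:formal-invariants-saturated} and Lemma~\ref{lemma:formal-Artin-Tate}, so the corollary is mostly an assembly; the only subtlety is the order in which the steps must be carried out.

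First I would verify that all the hypotheses of Lemma~\ref{lemma:formal-Artin-Tate} are met by the map $A^G \to A$. It is injective, being an inclusion, and finite by Lemma~\ref{lemma:formal-invariants-saturated}(3). Both rings are $\varpi$-adically complete: $A$ by admissibility together with Lemma~\ref{lemma:formal-general}(\ref{lemma:formal-general-1}), and $A^G$ by Lemma~\ref{lemma:formal-invariants-saturated}(1). The target $A$ is topologically finite type over $k^+$ by assumption, and $A^G$ is saturated in $A$ by Lemma~\ref{lemma:formal-invariants-saturated}(2). Hence Lemma~\ref{lemma:formal-Artin-Tate} shows that $A^G$ is a topologically finite type $k^+$-algebra. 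For $k^+$-flatness I would argue directly: $A$ is $k^+$-flat, hence $k^+$-torsion free by Lemma~\ref{lemma:valuations-general}(\ref{lemma:valuations-general-1}), and a $k^+$-submodule of a torsion free module is torsion free, so $A^G$ is $k^+$-flat. Together these show $A^G$ is admissible.

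With $A^G$ now known to be topologically finite type and $A$ finite over $A^G$, I would settle the topological claim by invoking Lemma~\ref{lemma:formal-general}(\ref{lemma:formal-general-new-2}) with base ring $A^G$, $M = A$, and $N = A^G$: the $\varpi$-adic topology on $A$ restricts to the $\varpi$-adic topology on the submodule $A^G$. Since $\varpi \in A^G$, this restricted (subspace) topology is exactly the induced topology that appears in the definition of $\O_{\X/G}$, so it coincides with the $\varpi$-adic topology of $A^G$, as claimed. Finally, the finite presentation of $A$ as an $A^G$-module follows from Lemma~\ref{lemma:formal-general}(\ref{lemma:formal-general-new}) applied over the topologically finite type ring $A^G$: $A$ is finite over $A^G$ and $k^+$-flat, hence finitely presented over $A^G$.

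The main (and essentially only) thing to be careful about is the ordering of the argument. The identification of the subspace topology on $A^G$ with its $\varpi$-adic topology rests on Lemma~\ref{lemma:formal-general}(\ref{lemma:formal-general-new-2}), which requires the base ring to be topologically finite type over $k^+$. One therefore cannot perform the topological comparison until the Artin-Tate step has already established that $A^G$ is topologically finite type; this is why the finiteness argument must come first and the topology statement second.
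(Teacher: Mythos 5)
Your proposal is correct and follows essentially the same route as the paper's proof: verify the hypotheses of the Adic Artin--Tate Lemma~\ref{lemma:formal-Artin-Tate} via Lemma~\ref{lemma:formal-invariants-saturated} (and Lemma~\ref{lemma:formal-invariants-topology}), conclude $A^G$ is topologically finite type, get $k^+$-flatness from torsion-freeness via Lemma~\ref{lemma:valuations-general}(\ref{lemma:valuations-general-1}), and then deduce the topological statement from Lemma~\ref{lemma:formal-general}(\ref{lemma:formal-general-new-2}) and the finite presentation of $A$ over $A^G$ from Lemma~\ref{lemma:formal-general}(\ref{lemma:formal-general-new}). Your remark about the necessary ordering---that the Artin--Tate step must precede the topological comparison because Lemma~\ref{lemma:formal-general}(\ref{lemma:formal-general-new-2}) needs a topologically finite type base---is exactly the logic implicit in the paper's argument.
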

\begin{proof}
We use Lemma~\ref{lemma:formal-invariants-saturated} to see that $A^G$ is $\varpi$-adically complete, and $A^G \to A$ is saturated. Then Lemma~\ref{lemma:formal-Artin-Tate} guarantees that $A^G$ is a topologically finitely generated $k^+$-algebra. Now $A$ is a finite module over a topologically finitely generated $k^+$-algebra $A^G$, so the induced topology on $A^G$ coincides with the $\varpi$-adic topology by Lemma~\ref{lemma:formal-general}(\ref{lemma:formal-general-new-2}). \smallskip

Now Lemma~\ref{lemma:valuations-general}~(\ref{lemma:valuations-general-1}) implies that $A^G$ is $k^+$-flat as it is torsion free. Therefore, Lemma~\ref{lemma:formal-general}~(\ref{lemma:formal-general-new}) guarantees that $A$ is a finitely presented $A^G$-module. 
\end{proof}

\begin{rmk}\label{rmk:tu-adhesive-invariants-finitely-generated} One can show that the $\varpi$-adic topology on $A^G$ coincides with the induced topology from first principles. But we prefer the proof above as it generalizes better to the topologically universally adhesive situation (see Definition~\ref{defn:tu-adhesive}). \smallskip

Namely, Lemma~\ref{lemma:formal-Artin-Tate} and Corollary~\ref{cor:formal-invariants-top-finitely-generated} hold over any $I$-adically complete base ring $R$ that is topologically universally adhesive (see Definition~\ref{defn:tu-adhesive}). We refer to Lemma~\ref{lemma:topologically-universally-adhesive-Artin-Tate} and Corollary~\ref{cor:topologically-universally-adhesive-invariants-top-finitely-generated} for the precise results. 
\end{rmk}

Finally, we are ready to show that $\X/G$ is an affine admissible formal $k^+$-scheme if $\X$ is so. 

\begin{prop}\label{prop:formal-geometric-quotient-affine} Let $\X=\Spf A$ be an affine admissible formal $k^+$-scheme with a $k^+$-action of a finite group $G$. Then the natural map $\phi\colon \X/G \to \Y=\Spf A^G$ is a $k^+$-isomorphism of topologically locally ringed spaces. In particular, $\X/G$ is an admissible formal $k^+$-scheme.
\end{prop}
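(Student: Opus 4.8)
The plan is to follow the template of Proposition~\ref{prop:alg-geometric-quotient-affine}: first check that $\phi$ is a homeomorphism on underlying spaces, then that $\phi^\#$ is an isomorphism of structure sheaves. Here $\phi$ is the map induced by the $G$-invariant morphism $\X\to\Spf A^G$ via the universal property of $\pi$ (Remark~\ref{rmk:universal-topologically-locally-ringed}). The essential difference from the scheme case is that I cannot separate these two points from the algebraic finiteness of $A^G$: the flatness of a completed localization over $A^G$ is available only because $A^G$ is topologically of finite type. So I first invoke Corollary~\ref{cor:formal-invariants-top-finitely-generated} to record that $A^G$ is an admissible $k^+$-algebra, that $A$ is a finitely presented $A^G$-module, and that the subspace topology on $A^G$ is $\varpi$-adic; in particular $\Y=\Spf A^G$ is already an admissible formal $k^+$-scheme, so it remains only to compare it with $\X/G$.

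For the topological statement, I would identify $|\Spf A|$ with the closed subset $V(\varpi)\subset\Spec A$ carrying the subspace topology, and similarly for $A^G$, so that $p\colon|\X|\to|\Y|$ is the restriction of $\Spec A\to\Spec A^G$ to the $V(\varpi)$-loci. By Lemma~\ref{lemma:alg-finite-integral} (applied with $R=k^+$) the map $\Spec A\to\Spec A^G$ is closed and surjective with fibers exactly the $G$-orbits; since $\varpi\in A^G$, this map carries $V(\varpi)$ onto $V(\varpi)$ and every fiber over a point of $V(\varpi)$ already lies in $V(\varpi)$, so $p$ is itself closed and surjective with fibers the $G$-orbits. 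As $\pi\colon|\X|\to|\X/G|$ and $p$ are then both closed surjections inducing the same equivalence relation, they are topological quotient maps, whence $\phi$ is a homeomorphism.

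For the structure sheaf, the opens $\Spf(A^G)_{\{f\}}$ with $f\in A^G$ form a basis of $\Y$, and it suffices to check $\phi^\#$ on them. The preimage $p^{-1}\big(\Spf(A^G)_{\{f\}}\big)$ is the $G$-stable basic open $\Spf A_{\{f\}}\subset\X$, so on sections $\phi^\#$ is the natural map $(A^G)_{\{f\}}\to (A_{\{f\}})^G$. Here Lemma~\ref{lemma:formal-general}(\ref{lemma:formal-general-4}), applied to the admissible algebra $A^G$, gives that $A^G\to (A^G)_{\{f\}}$ is flat, so Lemma~\ref{lemma:invariants-commute-flat-base-change} yields $(A^G)_{\{f\}}\xrightarrow{\ \sim\ }\big((A^G)_{\{f\}}\otimes_{A^G}A\big)^G$. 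It then remains to identify $(A^G)_{\{f\}}\otimes_{A^G}A$ with $A_{\{f\}}$: since $A$ is a finitely presented $A^G$-module, base change to the $\varpi$-adic completion of $(A^G)_f$ agrees with $\varpi$-adic completion of $A_f$, i.e. $\widehat{(A^G)_f}\otimes_{A^G}A\cong \widehat{A_f}=A_{\{f\}}$. Combining the two identifications gives the desired isomorphism on each basic open, so $\phi$ is an isomorphism of topologically locally ringed spaces and $\X/G\cong\Spf A^G$ is an admissible formal $k^+$-scheme.

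The routine-looking but genuinely delicate point I expect to be the main obstacle is this last identification $(A^G)_{\{f\}}\otimes_{A^G}A\cong A_{\{f\}}$: the completed tensor product is neither left nor right exact, so I must use the finite presentation of $A$ over $A^G$ (Corollary~\ref{cor:formal-invariants-top-finitely-generated}) to commute the uncompleted tensor product with $\varpi$-adic completion. This is exactly where the prior verification that $A^G$ is topologically of finite type, via the Adic Artin--Tate Lemma~\ref{lemma:formal-Artin-Tate}, is indispensable --- both for the flatness of $(A^G)_{\{f\}}$ and for this completion argument --- which is why, unlike in the scheme case, the algebraic and geometric steps cannot be decoupled.
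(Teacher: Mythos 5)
Your proposal is correct and follows essentially the same route as the paper's own proof: first establish that $A^G$ is admissible via Corollary~\ref{cor:formal-invariants-top-finitely-generated}, then prove $\phi$ is a homeomorphism by reducing to Lemma~\ref{lemma:alg-finite-integral} (finiteness, surjectivity, and $G$-orbit fibers, restricted to open primes), and finally verify the sheaf isomorphism on basic opens by combining flatness of $(A^G)_{\{f\}}$ over $A^G$ (Lemma~\ref{lemma:formal-general}(\ref{lemma:formal-general-4})), Lemma~\ref{lemma:invariants-commute-flat-base-change}, and the identification $(A^G)_{\{f\}}\otimes_{A^G}A\cong A_{\{f\}}$ coming from completeness of finite modules over topologically finite type algebras. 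The only point you leave implicit is that $\left(A_{\{f\}}\right)^G$ also carries the $\varpi$-adic topology (Corollary~\ref{cor:formal-invariants-top-finitely-generated} applied to the admissible algebra $A_{\{f\}}$), which is what lets one upgrade the algebraic isomorphism on basic opens to a topological one; the paper makes this explicit but the argument is otherwise identical.
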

\begin{proof} 

{\it Step 0. $\Spf A^G$ is an admissible formal $k^+$-scheme}: The $k^+$-algebra $A$ is admissible by Remark~\ref{rmk:formal-flat-issue}(\ref{rmk:formal-flat-issue-3}) (and the analogous fact for topologically finitely generated morphisms). Now the claim immediately follows from Corollary~\ref{cor:formal-invariants-top-finitely-generated}. \medskip

{\it Step 1. $\phi$ is a homeomorphism}: This is completely analogous to Step $1$ of Proposition~\ref{prop:alg-geometric-quotient-affine}. We only need to show that $p\colon \Spf A \to \Spf A^G$ is a surjective, finite morphism with fibers being exactly $G$-orbits. \smallskip

Lemma~\ref{lemma:formal-invariants-saturated} says that $\Spf A \to \Spf A^G$ is finite. We note that surjectivity of $\Spec A \to \Spec A^G$ obtained in Lemma~\ref{lemma:alg-finite-integral}(\ref{lemma:alg-finite-integral-2}) implies that any prime ideal $\mathfrak{p}$ of $A^G$ lifts to a prime ideal $\mathfrak{P}$ in $A$. If $\mathfrak{p}$ is open (i.e. it contains $\varpi^n$ for some $n$), then so is $\mathfrak{P}$. Therefore, the morphism $\Spf A \to \Spf A^G$ is surjective. \smallskip

Now we note that a prime ideal $\mathfrak{P}\subset A$ is open if and only if so is $g(\mathfrak{P})$ for $g\in G$. So Lemma~\ref{lemma:alg-finite-integral}(\ref{lemma:alg-finite-integral-2}) ensures that the fibers of $\Spf A \to \Spf A^G$ are exactly $G$-orbits.
\medskip

{\it Step 2. $\phi$ is an isomorphism of topologically locally ringed spaces:} We already know that $\phi$ is a homeomorphism. So the only thing that we need to show here is that the morphism 
\[
\O_{\Y} \to \phi_* \O_{\X/G}
\] 
is an isomorphism of topological sheaves. Using the basis of basic affine opens in $\Y$, it suffices to show that
\begin{equation}\label{eqn:formal-affine-quotient}
    \left(A^G\right)_{\{f\}} \to \left(A_{\{f\}}\right)^G
\end{equation}
is a topological isomorphism for $f\in A^G$. Corollary~\ref{cor:formal-invariants-top-finitely-generated} ensures that both sides have the $\varpi$-adic topology, so we can ignore the topologies. \smallskip

Now we show that (\ref{eqn:formal-affine-quotient}) is an (algebraic) isomorphism. We note that
\[
A_{\{f\}} \simeq \left(A^G\right)_{\{f\}} \wdh{\otimes}_{A^G} A \simeq \left(A^G\right)_{\{f\}} \otimes_{A^G} A ,
\]
where the second isomorphism follows from  Lemma~\ref{lemma:formal-general}(\ref{lemma:formal-general-1}) and finiteness of $A$ over $A^G$. Therefore, it suffices to show that the natural morphism
\[
\left(A^G\right)_{\{f\}} \to \left(\left(A^G\right)_{\{f\}} \otimes_{A^G} A\right)^G
\]
is an isomorphism of $k^+$-algebras. This follows from Lemma~\ref{lemma:invariants-commute-flat-base-change} and Lemma~\ref{lemma:formal-general}(\ref{lemma:formal-general-4}). \medskip
\end{proof} 

\begin{rmk}\label{rmk:formal-tuadhesive-affine-quotient} Proposition~\ref{prop:formal-geometric-quotient-affine} can be generalized to the case of an affine, universally adhesive base $\S=\Spf R$ (see Definition~\ref{defn:universally-adhesive-formal-schemes}). We refer to Proposition~\ref{prop:topologically-universally-adhesive-geometric-quotient-affine} for the precise statement.
\end{rmk}

\subsection{General Case}

The main goal of this section is to globalize the results of the previous section. This is very close to what we did in the schematic situation in the proof of Theorem~\ref{thm:alg-main}. 

\begin{lemma}\label{lemma:formal-open-G-orbit-preserved} Let $\X$ be a formal $\S$-scheme with an $\S$-action of a finite group $G$. Suppose that each point $x\in \X$ admits an open affine subscheme $\V_x$ that contains the orbit $G.x$. Then the same holds with $\X$ replaced by any $G$-stable open formal subscheme $\sU \subset \X$.
\end{lemma}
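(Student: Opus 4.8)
Lemma~\ref{lemma:formal-open-G-orbit-preserved} asserts that the property ``every point has an open affine neighborhood containing its $G$-orbit'' is inherited by $G$-stable open formal subschemes $\sU \subset \X$. This is the formal-scheme analog of Lemma~\ref{lemma:alg-open-G-orbit-preserved}, and the proof structure should mirror that one closely.

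\begin{proof}
Let $x$ be a point of $\sU$, and let $\V_x \subset \X$ be an open affine formal subscheme containing the orbit $G.x$. Set $\mathfrak{W}_x \coloneqq \sU \cap \V_x$, an open (possibly non-affine) neighborhood of $x$ inside $\sU$ that still contains $G.x$, since both $\sU$ (being $G$-stable) and $\V_x$ contain $G.x$. It therefore suffices to produce a single open affine inside $\V_x$ containing the finite set $G.x$; in fact it is enough to prove the stronger claim that \emph{any} finite set of points of $\mathfrak{W}_x$ is contained in an open affine subscheme of $\V_x$.

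The plan is to pass to the underlying reduced schemes and quote the analogous algebraic fact. Recall that for an admissible (or any $\varpi$-adic) formal $k^+$-scheme, the underlying topological space coincides with that of its special fiber: writing $\V_x = \Spf A$, the topological space $|\V_x|$ is homeomorphic to $|\Spec A/\varpi A|$, and open formal subschemes correspond bijectively (and compatibly with the affine property) to open subschemes of $\Spec A/\varpi A$. Under this identification, $\mathfrak{W}_x$ corresponds to an open subscheme $W$ of the affine scheme $\Spec A/\varpi A$, and a finite set of points in $\mathfrak{W}_x$ corresponds to a finite set of points in $W \subset \Spec A/\varpi A$. By \cite[Corollaire 4.5.4]{EGA2}, applied to the open subscheme $W$ of the affine scheme $\Spec A/\varpi A$ (whose structure sheaf is ample), any finite set of points of $W$ lies in an open affine subscheme. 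This open affine lifts back to an open affine formal subscheme of $\V_x$ containing our finite set, and in particular one containing $G.x$.

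The only point requiring mild care is the identification of open formal subschemes with open subschemes of the special fiber, together with the fact that an open formal subscheme of $\Spf A$ is affine precisely when the corresponding open subscheme of $\Spec A/\varpi A$ is affine; this is where the $\varpi$-adic formal structure is used, but it is standard and causes no real obstacle. I do not anticipate any genuine difficulty: the substance of the argument is entirely contained in the schematic statement \cite[Corollaire 4.5.4]{EGA2}, exactly as in Lemma~\ref{lemma:alg-open-G-orbit-preserved}, and the formal-geometric input is only the translation between a formal scheme and its special fiber.
\end{proof}
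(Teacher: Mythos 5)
Your proof is correct and follows essentially the same route as the paper: the paper's proof also reduces to the schematic statement (Lemma~\ref{lemma:alg-open-G-orbit-preserved}) via the homeomorphism $|\X|\simeq |\X\times_{\Spf k^+}\Spec k^+/\varpi|$ together with the fact that affineness of open formal subschemes can be checked on the special fiber. The only cosmetic difference is that you inline the argument of the scheme lemma (intersecting with $\V_x$ and invoking \cite[Corollaire 4.5.4]{EGA2}) rather than citing it wholesale, which changes nothing of substance.
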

 \begin{proof}
 This follows easily from Lemma~\ref{lemma:alg-open-G-orbit-preserved} as 
 \[
 |\X|\simeq |\X\times_{\Spf k^+} \Spec k^+/\varpi| \text{ and } |\S|=|\Spf k^+|\simeq |\Spec k^+/\varpi|.
 \]
 Thus, we can reduce the statement to the case of schemes.
 \end{proof}

\begin{lemma}\label{lemma:formal-G-basis} Let $\X$ be a formal $\S$-scheme with an $\S$-action of a finite group $G$. Suppose that for any point $x\in \X$ there is an open affine subscheme $\V_x$ that contains the orbit $G.x$. Then each point $x\in \X$ has a $G$-stable open affine neighborhood $\sU_x \subset \X$.
\end{lemma}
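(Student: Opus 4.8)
The plan is to mimic the proof of Lemma~\ref{lemma:alg-G-basis} but to transport the entire question to the schematic reduction modulo $\varpi$, exactly in the spirit of Lemma~\ref{lemma:formal-open-G-orbit-preserved}. The point is that the statement is essentially topological -- it asserts the existence of a $G$-stable open subset that happens to underlie an \emph{affine} open formal subscheme -- and the only genuinely formal-geometric input we will need is that affineness of an open formal subscheme can be detected on its reduction; everything else can be borrowed verbatim from the scheme case.

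First I would set $X_0 \coloneqq \X \times_{\Spf k^+}\Spec k^+/\varpi$, an honest $k^+/\varpi$-scheme carrying the induced action of $G$, together with the $G$-equivariant homeomorphism $|\X|\xrightarrow{\sim}|X_0|$ already used in Lemma~\ref{lemma:formal-open-G-orbit-preserved}. Open formal subschemes of $\X$ correspond bijectively, and compatibly with the $G$-action, to open subschemes of $X_0$ via $\sU \mapsto \sU_0 \coloneqq \sU\times_{\Spf k^+}\Spec k^+/\varpi$. Under this correspondence the hypothesis transports: each affine open $\V_x\subset \X$ containing $G.x$ has affine reduction $(\V_x)_0\subset X_0$, and $(\V_x)_0$ still contains the orbit $G.x$, since the orbit is literally the same subset of $|\X|=|X_0|$. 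Hence $X_0$ satisfies the hypothesis of Lemma~\ref{lemma:alg-G-basis}, and that lemma produces, for the given point $x$, a $G$-stable open affine $U_{x,0}\subset X_0$ with $x\in U_{x,0}$.

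It then remains to lift $U_{x,0}$ to a $G$-stable affine open formal subscheme of $\X$. Let $\sU_x\subset \X$ be the open formal subscheme corresponding to the open subset $U_{x,0}\subset |X_0|=|\X|$; by construction it is $G$-stable, contains $x$, and has reduction $(\sU_x)_0 = U_{x,0}$ affine. The only remaining point is affineness of $\sU_x$ itself. Here I would argue that for each $n\geq 0$ the scheme $\sU_{x,n}$ obtained by equipping $|\sU_x|$ with the structure sheaf $\O_{\X}/\varpi^{n+1}\O_{\X}$ is a nilpotent thickening of the affine scheme $(\sU_x)_0 = U_{x,0}$, hence is itself affine (affineness is insensitive to nilpotent thickenings); writing $\sU_{x,n}=\Spec A_n$ one obtains $\sU_x = \Spf\left(\varprojlim_n A_n\right)$, so that $\sU_x$ is an affine formal scheme. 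Alternatively one may directly invoke the formal-geometric fact that an open formal subscheme is affine if and only if its special fibre is affine (see \cite{FujKato}). This $\sU_x$ is the desired neighborhood.

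The main -- and really the only -- obstacle is this last affineness-detection step. In the scheme case one concludes by the elementary observation that a finite intersection of affine opens inside a separated scheme is affine; here affineness of $\sU_x$ is not visible purely topologically and must instead be extracted from the reduction $X_0$. Once this is granted, the argument is a faithful transport of Lemma~\ref{lemma:alg-G-basis} across the homeomorphism $|\X|\simeq|X_0|$.
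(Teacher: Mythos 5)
Your proposal is correct and follows essentially the same route as the paper: the paper's proof also reduces to Lemma~\ref{lemma:alg-G-basis} via the $G$-equivariant homeomorphism $|\X|\simeq|\X\times_{\Spf k^+}\Spec k^+/\varpi|$, and handles the only non-topological point---detecting affineness of $\sU_x$ on its reduction---by citing exactly the fact you invoke, namely that an open formal subscheme $\sU\subset\X$ is affine if and only if $\sU_0$ is affine (\cite[Proposition I.4.1.12]{FujKato}). Your sketch of that fact via finite-order nilpotent thickenings and $\sU_x=\Spf\left(\varprojlim_n A_n\right)$ is a sound substitute for the citation.
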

\begin{proof}
Again, this easily follows from Lemma~\ref{lemma:alg-G-basis} as an open subscheme $\sU \subset \X$ is affine if and only if $\sU_0\coloneqq \sU\times_{\Spf k^+} \Spec k^+/\varpi$ is affine \cite[Proposition I.4.1.12]{FujKato}. 
\end{proof}

\begin{rmk} We note that the condition of Lemma~\ref{lemma:formal-G-basis} is automatically satisfied if the special fiber $\ov{\X}\coloneqq \X \times_{\Spf k^+} \Spec k^+/\m_k$ is quasi-projective over $\Spec k^+/\m_k$. This follows easily from Proposition~\ref{prop:example-alg}.
\end{rmk}

Now we are ready to formulate and prove the main result of this section. 

\begin{thm}\label{thm:formal-main} Let $\X$ be an admissible formal $k^+$-scheme with a $k^+$-action of a finite group $G$. Suppose that each point $x\in \X$ admits an affine neighborhood $\V_x$ containing $G.x$. Then $\X/G$ is an admissible formal $k^+$-scheme. Moreover, it satisfies the following properties:
\begin{enumerate}
\item $\pi\colon \X \to \X/G$ is universal in the category of $G$-invariant morphisms to topologically locally ringed spaces over $\S$. 
\item $\pi: \X \to \X/G$ is a surjective, finite, topologically finitely presented morphism (in particular, it is closed). 
\item Fibers of $\pi$ are exactly the $G$-orbits.
\item\label{thm:formal-main-4} The formation of the geometric quotient commutes with flat base change, i.e. for any flat, topologically finite type $k^+$-morphism $\frak{Z}\to \X/G$, the geometric quotient $(\X \times_{\X/G} \frak{Z})/G$ is an admissible formal $k^+$-schemes, and the natural morphism $(\X \times_{\X/G} \frak{Z})/G \to \frak{Z}$ is an isomorphism.
\end{enumerate}
\end{thm}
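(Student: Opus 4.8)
The plan is to reduce everything to the affine case already handled in Proposition~\ref{prop:formal-geometric-quotient-affine}, by covering $\X$ with $G$-stable affine opens and gluing, exactly mimicking the proof of Theorem~\ref{thm:alg-main}. First I would observe, as in the scheme case, that the claim that $\X/G$ is an admissible formal $k^+$-scheme is local on $\S$, and that the hypothesis that each $x$ has an affine neighborhood containing $G.x$ is inherited by $G$-stable opens (Lemma~\ref{lemma:formal-open-G-orbit-preserved}). Then Lemma~\ref{lemma:formal-G-basis} produces a covering of $\X$ by $G$-stable open affine formal subschemes $\sU_i=\Spf A_i$. The construction of the geometric quotient as a topologically locally ringed space (Definition~\ref{defn:geometric-quotient-formal}) commutes with passing to $G$-stable opens: $\pi(\sU_i)\subset \X/G$ is open, canonically isomorphic to $\sU_i/G$, and $\pi^{-1}(\sU_i/G)=\sU_i$. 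Hence it suffices to know each $\sU_i/G=\Spf A_i^G$ is an admissible formal $k^+$-scheme, which is exactly Proposition~\ref{prop:formal-geometric-quotient-affine}. Gluing these local models yields that $\X/G$ is an admissible formal $k^+$-scheme.

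For the enumerated properties I would argue as follows. Property (1), universality in the category of $G$-invariant morphisms to topologically locally ringed $\S$-spaces, is formal and was already recorded in Remark~\ref{rmk:universal-topologically-locally-ringed}; it requires no finiteness input. For properties (2) and (3), surjectivity, finiteness, topological finite presentation, and the identification of fibers with $G$-orbits, these are all local on the target $\X/G$, so I may pass to the cover $\sU_i/G$ and reduce to the affine situation $\Spf A_i \to \Spf A_i^G$. There finiteness and topological finite presentation come from Corollary~\ref{cor:formal-invariants-top-finitely-generated} (which says $A_i^G$ is admissible and $A_i$ is a finitely presented $A_i^G$-module), while surjectivity and the fiber description were established in Step~1 of Proposition~\ref{prop:formal-geometric-quotient-affine} via the analysis of open prime ideals and Lemma~\ref{lemma:alg-finite-integral}(\ref{lemma:alg-finite-integral-2}). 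Closedness is then automatic since finite morphisms are closed.

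Property (4), compatibility with flat base change, is the step that takes the most care, and it is where I expect the main technical subtlety. Given a flat, topologically finite type morphism $\frak{Z}\to \X/G$, the assertion is again local on $\X/G$ and on $\frak{Z}$, so I would reduce to the affine case $\X=\Spf A$, $\X/G=\Spf A^G$, and $\frak{Z}=\Spf B$ with $A^G\to B$ a flat, topologically finite type map of admissible $k^+$-algebras. The goal becomes showing that the natural map $B\to \left(B\wdh{\otimes}_{A^G}A\right)^G$ is an isomorphism and that $B\wdh{\otimes}_{A^G}A$ is admissible. The key point, as in Step~2 of Proposition~\ref{prop:formal-geometric-quotient-affine}, is that since $A$ is finite over $A^G$ (Lemma~\ref{lemma:formal-invariants-saturated}), the completed tensor product collapses to the ordinary one: $B\wdh{\otimes}_{A^G}A\simeq B\otimes_{A^G}A$ by Lemma~\ref{lemma:formal-general}(\ref{lemma:formal-general-1}) applied to a finite module. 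This removes the completion, which is precisely the delicate functor warned about in the introduction to this section, and then the purely algebraic statement that invariants commute with flat base change, Lemma~\ref{lemma:invariants-commute-flat-base-change}, finishes the argument. I would also need to check that $(\X\times_{\X/G}\frak{Z})/G$ is itself admissible formal, which follows since $B\otimes_{A^G}A$ is finite over the admissible $B$ and is $k^+$-flat, hence admissible by the finite-presentation results of Lemma~\ref{lemma:formal-general}. The hard part throughout is ensuring the completed tensor product behaves well; the finiteness of $A$ over $A^G$ is exactly what makes this work.
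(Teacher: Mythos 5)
Your proposal is correct and follows essentially the same route as the paper's proof: reduction to the affine case via $G$-stable affine covers (Lemma~\ref{lemma:formal-open-G-orbit-preserved} and Lemma~\ref{lemma:formal-G-basis}), Proposition~\ref{prop:formal-geometric-quotient-affine} for the local model, and collapsing the completed tensor product $B\wdh{\otimes}_{A^G}A$ to the ordinary one via finiteness of $A$ over $A^G$ before applying Lemma~\ref{lemma:invariants-commute-flat-base-change}. The only detail you gloss that the paper spells out is the passage from ``$A$ is a finitely presented $A^G$-module'' to ``$A^G\to A$ is a topologically finitely presented morphism,'' which the paper handles by checking finite presentation modulo $\varpi^n$ (citing \cite[Proposition 7.3/10]{B}).
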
 
\begin{proof}
{\it Step 1. The geometric quotient $\X/G$ is an admissible formal $k^+$-scheme}: The same proof as used in the proof of Theorem~\ref{thm:alg-main} just goes through. We firstly reduce to the case of an affine $\X=\Spf A$ by choosing a $G$-stable open affine covering, and then use Proposition~\ref{prop:formal-geometric-quotient-affine} to show the claim in the affine case.\smallskip

{\it Step 2. $\pi: \X \to \X/G$ is surjective, finite, topologically finitely presented, and fibers are exactly the $G$-orbits}: The morphism is clearly surjecitve with fibers being exactly the $G$-orbits. \smallskip

To show that it is finite and topologically finitely presented, we can assume that $\X=\Spf A$ is affine. Lemma~\ref{lemma:formal-invariants-saturated} says that $\X \to \X/G$ is finite. Corollary~\ref{cor:formal-invariants-top-finitely-generated} ensures that $A$ is finitely presented as an $A^G$-module. Therefore, it is topologically finitely presented as an $A^G$-algebra because \cite[Proposition 7.3/10]{B} gives that $A^G \to A$ is topologically finitely presented if and only if $A^G/\varpi^n A^G \to A/\varpi^n A$ is finitely presented for any $n\geq 1$. \smallskip

{\it Step 3. $\pi\colon \X \to \X/G$ is universal and commutes with flat base change}: The universality is essentially trivial (see Remark~\ref{rmk:universal-topologically-locally-ringed}). To show the latter claim, we can again assume that $\X=\Spf A$ and $\frak{Z}=\Spf B$ are affine. Then the claim boils down to showing that the natural map
\[
B \to (A \wdh{\otimes}_{A^G} B)^G 
\]
is a topological isomorphism. Now we note that Lemma~\ref{lemma:valuations-general}(\ref{lemma:formal-general-1}) implies that $A \otimes_{A^G} B$ is already $\varpi$-adically complete as it is a finite module over the topologically finite type $k^+$-algebra $B$. Therefore, it suffices to show that the natural map
\begin{equation}\label{eqn:formal-general}
    B \to (A \otimes_{A^G} B)^G
\end{equation}
is a topological isomorphism. Both sides have the $\varpi$-adic topology by Corollary~\ref{cor:formal-invariants-top-finitely-generated}. So we can ignore the topologies. Now (\ref{eqn:formal-general}) is an isomorphism by Lemma~\ref{lemma:invariants-commute-flat-base-change} and flatness of $A^G \to B$ (see Remark~\ref{rmk:formal-flat-issue}).
\end{proof}

\begin{rmk}\label{rmk:tu-adhesive-formal-main}  Theorem~\ref{thm:formal-main} can be generalized to the case of a locally universally adhesive base $\S$ (see Definition~\ref{defn:universally-adhesive-formal-schemes}). We refer to Theorem~\ref{thm:adhesive-formal-main} for the precise statement. 
\end{rmk}

\subsection{Comparison between the schematic and formal quotients}\label{formal-algebraic}

The main goal of this section is to compare the schematic and formal quotients by finite groups actions. \smallskip

Throughout this section, we fix a microbial valuation ring $k^+$ and a pseudo-uniformizer $\varpi\in k^+$. Unlike previous sections, we do not assume that $k^+$ is complete.  \smallskip

If $X$ is a flat, locally finite type $k^+$-scheme, we define $\wdh{X}$ to be the formal $\varpi$-adic completion of $X$. This is easily seen to be an admissible formal $\wdh{k}^+$-scheme with a $\wdh{k}^+$-action of $G$. Using the universal property of geometric quotients, there is a natural morphism $\wdh{X}/G \to \wdh{X/G}$.

\begin{thm}\label{thm:comparison-formal-alg} Let $X$ be a flat, locally finite type $k^+$-scheme with a $k^+$-action of a finite group $G$. Suppose that any orbit $G.x \subset X$ lies in an affine open subset $V_x$. The same holds for its $\varpi$-adic completion $\wdh{X}$ with the induced $\wdh{k}^+$-action of $G$, and the natural morphism:
\[
\wdh{X}/G \to \wdh{X/G}
\]
is an isomorphism.
\end{thm}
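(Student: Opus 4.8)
The plan is to reduce the global statement to the affine case, where both sides admit explicit descriptions, and then to check compatibility with the gluing procedures used to construct the quotients. First I would verify the claim that $\widehat{X}$ satisfies the orbit condition. Since $|\widehat{X}| \simeq |X \times_{\Spec k^+} \Spec k^+/\varpi|$ and formal completion does not change the underlying topological space relative to the special fiber, the affine open $V_x$ containing $G.x$ restricts to an affine open formal subscheme $\widehat{V_x}$ of $\widehat{X}$ containing $G.x$; this is the formal analog of Lemma~\ref{lemma:alg-open-G-orbit-preserved} and follows from the same reduction to the special fiber as in Lemma~\ref{lemma:formal-open-G-orbit-preserved}. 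Consequently both $X/G$ (by Theorem~\ref{thm:valuation-main}) and $\widehat{X}/G$ (by Theorem~\ref{thm:formal-main}) exist in their respective categories, so the morphism $\widehat{X}/G \to \widehat{X/G}$ is a well-defined map of admissible formal $\widehat{k}^+$-schemes, and it suffices to check it is an isomorphism.

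Next I would reduce to the affine case. Using Lemma~\ref{lemma:alg-G-basis}, cover $X$ by $G$-stable open affines $U_i = \Spec A_i$; by Theorem~\ref{thm:alg-main} the images $\pi(U_i) \simeq U_i/G = \Spec A_i^G$ form an open cover of $X/G$, and formal completion is compatible with this open cover on both sides. Since the formation of $\widehat{X}/G$ likewise localizes over the $G$-stable affine cover $\{\widehat{U_i}\}$ (Step~1 of Theorem~\ref{thm:formal-main}), and both functors ``completion'' and ``quotient by $G$'' commute with passing to these opens, the global isomorphism follows once it is established on each $\widehat{U_i}$. Thus I am reduced to showing that for an affine flat finite type $X = \Spec A$ with $G$-action, the natural map $\widehat{\Spec A}/G \to \widehat{\Spec A^G}$ is an isomorphism; by Proposition~\ref{prop:formal-geometric-quotient-affine} the source is $\Spf(\widehat{A})^G$ and the target is $\Spf \widehat{A^G}$, so everything comes down to the purely algebraic claim that the natural map
\[
\widehat{A^G} \to \bigl(\widehat{A}\bigr)^G
\]
is an isomorphism, where $\widehat{(-)}$ denotes $\varpi$-adic completion.

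The heart of the matter is therefore this commutation of invariants with $\varpi$-adic completion, and I expect this to be the main obstacle. The key input is that $A$ is a finite $A^G$-module (Lemma~\ref{lemma:alg-finite-integral}(\ref{lemma:alg-finite-integral-3})) and that $A^G$ is itself finite type over $k^+$ by the non-noetherian Artin--Tate Lemma~\ref{lemma:valuation-Artin-Tate} via Corollary~\ref{cor:valuation-invariants-finitely-generated}. Because $A$ is $k^+$-flat and finite over the finite type $k^+$-algebra $A^G$, taking $G$-invariants is a finite limit that I want to commute with the completion functor; the cleanest route is to observe that $\varpi$-adic completion is exact on finitely generated modules over the noetherian-like (finite type over $k^+$) ring $A^G$, so completing the kernel--of--$(\alpha - \mathrm{Id})$ presentation $A^G \to A \xrightarrow{\ \alpha - \mathrm{Id}\ } \prod_{g \in G} A$ that computes $A^G$ yields exactly $(\widehat{A})^G$. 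Concretely, $\widehat{A} \simeq A \otimes_{A^G} \widehat{A^G}$ since $A$ is a finite $A^G$-module, and then $(\widehat{A})^G = (A \otimes_{A^G} \widehat{A^G})^G \simeq \widehat{A^G}$ by Lemma~\ref{lemma:invariants-commute-flat-base-change}, using flatness of $A^G \to \widehat{A^G}$. The delicate points to nail down are the flatness of the completion map $A^G \to \widehat{A^G}$ (which holds because $A^G$ is finite type over $k^+$, hence the completion is flat by Lemma~\ref{lemma:formal-general}(\ref{lemma:formal-general-4}) combined with the finite-type structure) and the identification $\widehat{A} \simeq A \otimes_{A^G} \widehat{A^G}$, which uses finiteness of $A$ over $A^G$ exactly as in the computation of the completed localization in Step~2 of Proposition~\ref{prop:formal-geometric-quotient-affine}. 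Once these are in place, Lemma~\ref{lemma:invariants-commute-flat-base-change} applied to the flat base change $A^G \to \widehat{A^G}$ finishes the argument.
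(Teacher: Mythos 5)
Your proposal is correct and follows essentially the same route as the paper: verify the orbit condition by completing a $G$-stable affine cover (Lemma~\ref{lemma:alg-G-basis}), reduce to $X=\Spec A$ affine, and prove that $\wdh{A^G} \to (\wdh{A})^G$ is an isomorphism by combining $\wdh{A}\simeq A\otimes_{A^G}\wdh{A^G}$ (finiteness of $A$ over the finite type $k^+$-algebra $A^G$, via Corollary~\ref{cor:valuation-invariants-finitely-generated}) with Lemma~\ref{lemma:invariants-commute-flat-base-change} and flatness of $A^G \to \wdh{A^G}$. Two small corrections: the flatness of the completion map is not what Lemma~\ref{lemma:formal-general}(\ref{lemma:formal-general-4}) provides (that lemma concerns completed localizations $A_{\{f\}}$ of topologically finite type algebras); the paper instead cites \cite[Lemma 8.2/2]{B} for this genuinely non-noetherian fact, and one should also record, as the paper does via Corollary~\ref{cor:formal-invariants-top-finitely-generated}, that both sides carry the $\varpi$-adic topology, so that the algebraic isomorphism is automatically a topological one and indeed induces an isomorphism of formal schemes.
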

\begin{proof}

\noindent{\it Step 1. The condition of Theorem~\ref{thm:formal-main} is satisfied for $\wdh{X}$ with the induced action of $G$}: Firstly, we observe that $\wdh{X}$ is $\wdh{k}^+$-admissible as stated above. Now Lemma~\ref{lemma:alg-G-basis} says that our assumption on $X$ implies that there is a covering of $X=\cup_{i\in I} U_i$ by affine, open $G$-stable subschemes. Then $\wdh{X}=\cup_{i\in I} \wdh{U}_i$ is an open covering of $\wdh{X}$ by affine, $G$-stable open formal subschemes. In particular, every orbit lies in an affine open formal subscheme of $\wdh{X}$. \medskip

\noindent{\it Step 2. We show that $\wdh{X}/G \to \wdh{X/G}$ is an isomorphism}: We have a commutative diagram
\[
\begin{tikzcd}
\wdh{X} \arrow{d}{\pi_{\wdh{X}}}  \arrow{rd}{\wdh{\pi_X}} &\\
\wdh{X}/G \arrow{r}{\phi} &  \wdh{X/G}.
\end{tikzcd}
\]
of admissible formal $\wdh{k}^+$-schemes. We want to show that $\phi$ is an isomorphism. To prove the claim, we can assume that $X=\Spec A$ is affine by passing to an open covering of $X$ by $G$-stable affines. Then $X/G \simeq \Spec A^G$, $\wdh{X}/G\simeq \Spf \wdh{A}^G$, and $\phi$ can be identified with the map 
\[
\Spf (\wdh{A})^G \to \Spf \wdh{\left(A^G\right)}
\]
induced by the continuous homomorphism
\begin{equation}\label{eqn:base-change-completion}
\wdh{A^G} \to (\wdh{A})^G
\end{equation}
whose source has the $\varpi$-adic topology by construction and whose target has the $\varpi$-adic topology by Corollary~\ref{cor:formal-invariants-top-finitely-generated}. 
So it suffices to show that this map is an isomorphism of abstract rings (ignoring topology) for any flat, finitely generated $k^+$-flat algebra $A$. \smallskip
 
We note that Corollary~\ref{cor:valuation-invariants-finitely-generated} shows that $A^G$ is a finite type $k^+$-algebra and Lemma~\ref{lemma:alg-finite-integral}~(\ref{lemma:alg-finite-integral-1}) shows that $A$ is a finite $A^G$-module. Therefore, \cite[Lemma 7.3/14]{B} implies that the natural homomorphism
\[
A\otimes_{A^G} \wdh{A^G} \to \wdh{A}
\]
is an (algebraic) isomorphism. Thus we can identify (\ref{eqn:base-change-completion}) with the natural homomorphism
\[
\wdh{A^G} \to \left(A\otimes_{A^G}\wdh{A^G}\right)^G
\]
that is an (algebraic) isomorphism by Lemma~\ref{lemma:invariants-commute-flat-base-change} and flatness of the map $A^G \to \wdh{A^G}$ (see \cite[Lemma 8.2/2]{B}). 
\end{proof}

\begin{rmk}\label{rmk:adhesive-comparison-formal-alg} Theorem~\ref{thm:comparison-formal-alg} has a version over any topologically universally adhesive base\footnote{We do not assume that $R$ is $I$-adically complete.} $(R, I)$ (see Definition~\ref{defn:tu-adhesive}). We refer to Theorem~\ref{thm:adhesive-comparison-formal-alg} for the precise statement. 
\end{rmk}

%\newpage

\section{Quotients of Strongly Noetherian Adic Spaces}\label{section:adic}

We discuss the existence of quotients of some class of analytic adic spaces by an action of a finite group $G$. We refer the reader to Appendix~\ref{defn-adic} for a review of the main definitions and facts from the theory of Huber rings and corresponding adic spaces. \smallskip

The strategy to construct quotients is close to the one used in Section~\ref{section:alg} and Section~\ref{section:formal}. We firstly construct the candidate space $X/G$ that is, a priori, only a topologically locally $v$-ringed space (see Definition~\ref{defn:valuative-spaces}). This construction clearly satisfies the universal property, but it is not clear whether $X/G$ is an adic space. We resolve this issue by firstly showing that it is an adic space if $X$ is affinoid. Then we argue by gluing to prove the claim for a larger class of adic spaces. \smallskip

We point out the two main complications compared to Section~\ref{section:formal} (and Section~\ref{section:alg}). The first new issue that is not seen in the world of formal schemes is that the notion of a finite (resp. topologically finite type) morphism of Huber pairs $(A, A^+) \to (B, B^+)$ is more involved since there is an extra condition on the morphism $A^+ \to B^+$ that makes the theory more subtle (see Definition~\ref{defn:huber-topologically-finite-type} and Definition~\ref{defn:Huber-finite}).  \smallskip 

The second issue is that the underlying topological space $\Spa(A, A^+)$ of a Huber pair $(A, A^+)$ is more difficult to express in terms of the pair $(A, A^+)$. It is the set of all valuations on $A$ with corresponding continuity and integrality conditions. So one needs some extra work to identify $\Spa(A^G, A^{+,G})$ with $\Spa(A, A^+)/G$ even in the affinoid case.

\subsection{The Candidate Space $X/G$}

For the rest of the section we fix a locally strongly noetherian analytic adic space $S$ (see Definition~\ref{defn:strongly-noetherian-space}). 

\begin{exmpl} An example of a strongly noetherian Tate affinoid adic space $S$ is $\Spa(k, k^+)$ for a microbial valuation ring $k^+$. 
\end{exmpl}

\begin{defn}\label{defn:geometric-quotient-adic} Let $G$ be a finite group and $X$ a valuation locally topologically ringed space over $S$ with a right $S$-action of $G$. The {\it geometric quotient} $X/G=(|X/G|, \O_{X/G}, \{v_{\ov{x}}\}_{\ov{x}\in X/G}, h)$ consists of:
\begin{itemize}\itemsep0.5em
\item the topological space $|X/G|\coloneqq |X|/G$ with the quotient topology. We denote by $\pi:|X| \to |X/G|$ the natural projection,
\item the sheaf of topological rings $\O_{X/G}\coloneqq (\pi_*\O_X)^G$ with the subspace topology,
\item for any $\ov{x}\in X/G$, the  valuation $v_{\ov{x}}$ defined as the composition of the natural morphism\footnote{Lemma~\ref{lemma:geometric-quotient-locally-ringed} ensures that $(|X/G|, \O_{X/G})$ is a locally ringed space, so $k(\ov{x})$ is well-defined.} $k(\ov{x}) \to k(x)$ and the valuation $v_x\colon  k(x) \to \Gamma_{v_x} \cup \{0\}$, where $x\in p^{-1}(\ov{x})$ is any lift\footnote{One can show that $v_{\ov{x}}$ is independent of the choice of $x$ similarly to Lemma~\ref{lemma:geometric-quotient-locally-ringed}.} of $\ov{x}$.
\item the morphism $h:X/G \to S$ defined by the pair $(h, h^{\#})$, where $h:|X|/G \to S$ is the unique morphism induced by $f\colon X \to S$ and $h^{\#}$ is the natural morphism 
\[
\O_{S} \to h_*\left(\O_{X/G}\right)=h_*\left(\left(\pi_*\O_{X}\right)^G\right)=\left(h_*\left(\pi_*\O_{X}\right)\right)^G=\left(f_*\O_X\right)^G
\]
that comes from $G$-invariance of $f$.
\end{itemize}
\end{defn}

\begin{rmk}\label{rmk:universal-valuative-topologically-locally-ringed} We note that Lemma~\ref{lemma:geometric-quotient-locally-ringed} ensures that $X/G$ is a topologically locally $v$-ringed $\S$-space, and $\pi\colon X \to X/G$ is a morphism of topologically locally $v$-ringed $S$-spaces (so $X/G \to S$ is too). It is trivial to see that the pair $(X/G, \pi)$ is a universal object in the category of $G$-invariant morphisms to topologically locally $v$-ringed $S$-spaces.
\end{rmk}

Our main goal is to show that under some assumptions, $X/G$ is a locally topologically finite type adic $S$-space when $X$ is. We start with the case of affinoid adic spaces and then move to the general case.

\subsection{Affinoid Case}\label{affine-analytic} For the rest of this section, we assume that $S=\Spa(R, R^+)$ is a complete Tate affinoid. \smallskip

We show that $X/G$ is a topologically finite type adic $S$-space when $X=\Spa(A, A^+)$ for a topologically finite type complete $\left(R, R^+\right)$-Tate-Huber pair $\left(A, A^+\right)$ with an $(R, R^+)$-action of a finite group $G$. \smallskip

 We start the section by discussing algebraic properties of the Tate-Huber pair $\left(A^G, A^{+, G}\right)$. In particular, we show that it is topologically of finite type over $(R, R^+)$ if $(R, R^+)$ is strongly noetherian. The main new input is the ``analytic'' Artin-Tate Lemma~\ref{lemma:adic-Artin-Tate}. 
 Then we show that the canonical morphism $X/G \to \Spa(A^G, A^{+, G})$ is an isomorphism. In particular, $X/G$ is an adic space, topologically of finite type over $S$. \smallskip

\begin{lemma}\label{lemma:adic-complete} Let $(A, A^+)$ be a complete $(R, R^+)$-Tate-Huber pair with an $(R, R^+)$-action of a finite group $G$. Then 
\begin{enumerate}
    \item\label{lemma:adic-complete-1} $A$ has a $G$-stable pair of definition $(A_0, \varpi)$ such that $A_0\subset A^+$.
    \item The subspace topology on $(A_0^G, \varpi)$ coincides with the $\varpi$-adic topology. 
    \item $(A_0^G, \varpi)$ is a complete pair of definition of $A^G$ with the subspace topology. In particular, $A^G$ is a Huber ring.
    \item $(A^G, A^{+, G})$ with the subspace topology is a Tate-Huber pair.
\end{enumerate}
\end{lemma}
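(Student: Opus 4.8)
\emph{Overall strategy.} The plan is to reduce the whole lemma to the construction in part~(1) of a $G$-\emph{stable} pair of definition $(A_0,\varpi)$ in which the pseudo-uniformizer $\varpi$ is itself $G$-\emph{invariant}; once such a pair is available, parts~(2)--(4) follow by soft topological and integral-closure arguments. The first move is to produce a $G$-invariant pseudo-uniformizer by a multiplicative averaging trick that avoids dividing by $\#G$: starting from any pseudo-uniformizer $\varpi_0$ of $A$, the product $\varpi\coloneqq\prod_{g\in G}g(\varpi_0)$ is again a topologically nilpotent unit (a finite product of such), and it is $G$-invariant because each $h\in G$ merely reindexes the factors.

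\emph{Part (1) and the main obstacle.} Next I would fix a ring of definition $B_0$ of $A$ with $\varpi\in B_0$; replacing $B_0$ by $B_0\cap A^+$ (still open and bounded, and a ring of definition since $\varpi B_0\subset A^{\circ\circ}\subset A^+$ makes $\varpi(B_0\cap A^+)$ an ideal of definition) I may assume $B_0\subset A^+$. The crucial step is to make this ring $G$-stable without losing boundedness: I would let $A_0$ be the subring generated by the finitely many translates $\{g(B_0):g\in G\}$, which is $G$-stable, contains $\varpi$, and lies in $A^+$ because $G$ preserves $A^+$. The key technical input, which I expect to be the \textbf{main obstacle}, is that the compositum of finitely many rings of definition is again a ring of definition: openness is immediate, and for boundedness one uses that each bounded translate satisfies $\varpi^{m}g(B_0)\subset B_0$ for a common $m$, whence $\varpi^{\#G\,m}A_0\subset B_0$, so $A_0\subset\varpi^{-\#G\,m}B_0$ is contained in a scalar multiple of the bounded set $B_0$. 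That $\varpi A_0$ is an ideal of definition is then the standard fact that a pseudo-uniformizer contained in a ring of definition of a Tate ring generates one.

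\emph{Parts (2)--(4).} With $(A_0,\varpi)$ as above and $A_0^G=A_0\cap A^G$, the heart of part~(2) is the \emph{exact} identity
\[
\varpi^n A_0\cap A_0^G=\varpi^n A_0^G\qquad\text{for all }n\geq 0 .
\]
Indeed, if $a=\varpi^n b\in A_0^G$ with $b\in A_0$, then applying $g\in G$ and using $g(\varpi)=\varpi$ gives $\varpi^n(g(b)-b)=0$; since $A$ is Tate, $\varpi$ is a unit, so $A\supset A_0$ is $\varpi$-torsion free and $g(b)=b$, i.e. $b\in A_0^G$. This identity is exactly the statement that the subspace and $\varpi$-adic topologies on $A_0^G$ coincide, proving~(2). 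For part~(3), $A_0^G$ is open and bounded in $A^G$, hence a ring of definition, and it is $\varpi$-adically complete because $A^G=\bigcap_g\ker(g-\mathrm{id})$ is closed in the complete ring $A$, so $A_0^G$ is closed in the complete ring $A_0$ while carrying the $\varpi$-adic topology by~(2); thus $(A_0^G,\varpi)$ is a complete pair of definition and $A^G$ is a complete Huber ring. Finally, for part~(4) I would observe that $A^G$ is Tate with topologically nilpotent unit $\varpi$, whose inverse satisfies $g(\varpi^{-1})=g(\varpi)^{-1}=\varpi^{-1}$ and hence lies in $A^G$, and that $A^{+,G}=A^+\cap A^G$ is a ring of integral elements: it is open, consists of power-bounded elements, contains $(A^G)^{\circ\circ}\subset A^{\circ\circ}\cap A^G$, and is integrally closed in $A^G$ because $A^+$ is integrally closed in $A$. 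Therefore $(A^G,A^{+,G})$ with the subspace topology is a (complete) Tate-Huber pair.
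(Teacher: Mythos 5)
Your proof is correct, and it reaches the same endpoint as the paper's, but the construction in part (1) — the heart of the lemma — is genuinely different. The paper never needs your multiplicative averaging trick: since the $G$-action is an $(R,R^+)$-action, hence $R$-linear, the image in $A$ of any pseudo-uniformizer $\varpi\in R$ is automatically $G$-invariant, so a $G$-invariant pseudo-uniformizer comes for free from the base. Likewise, to make the ring of definition $G$-stable the paper replaces a ring of definition $A_0'\cap A^+$ by the \emph{intersection} $\bigcap_{g\in G}g(A_0')$, for which openness and boundedness are immediate (a finite intersection of open subrings is open, and a subset of a bounded set is bounded), invoking Huber's characterization of rings of definition as the open and bounded subrings. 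Your compositum $A_0=\langle g(B_0):g\in G\rangle$ also works, but boundedness there is the delicate point: the inclusion $\varpi^{\#G\,m}A_0\subset B_0$ that you assert with a ``whence'' requires the collecting argument that, since each $g(B_0)$ is a subring containing $1$, every monomial in elements of $\bigcup_g g(B_0)$ can be rewritten as a product of at most $\#G$ factors, one from each translate; you should spell that out, as it is exactly where the finiteness of $G$ enters. Parts (2)--(4) of your proof coincide with the paper's: the identity $\varpi^nA_0\cap A_0^G=\varpi^nA_0^G$ via $G$-invariance of $\varpi$ and $\varpi$ being a unit, completeness of $A_0^G$ as a closed subring of the complete ring $A_0$ carrying the $\varpi$-adic topology, and the observation that $A^{+,G}\subset(A^G)^{\circ}$ is open and integrally closed. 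What your route buys is generality and self-containedness: it never uses the $R$-algebra structure of $A$, so it proves the statement for any complete Tate ring with a finite group acting by continuous automorphisms preserving $A^+$; what the paper's route buys is brevity, since both the invariant uniformizer and the boundedness of the $G$-stable ring of definition are immediate.
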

\begin{proof}
We note that $A$ is Tate since $R$ is. We choose a pseudo-uniformizer $\varpi\in R^+$ and a compatible pair of definition\footnote{We abuse the notation and consider $\varpi$ as an element of $A$ via the natural morphism $R \to A$.} $(A'_0, \varpi)$ of $A$. Then \cite[Proposition 1.1]{H0} ensures that a subring $A'\subset A$ is a ring of definition if and only if $A'$ is open and bounded. So we can replace $A'_0$ with $A'_0\cap A^+$ and $\varpi$ with a power to achieve that $A'_0 \subset A^+$. \smallskip

Now {\it loc.cit.} implies that 
\[
\left(A_0, \varpi\right) \coloneqq  \left( \bigcap_{g\in G} g\left(A'_0\right), \varpi \right)
\]
is a pair of definition in $A$ contained in $A^+$, and it is $G$-stable by construction. \smallskip

To show that the subspace topology in $A_0^G$ coincides with the $\varpi$-adic topology, it suffices to show that $\varpi^n A_0 \cap A_0^G =\varpi^n A_0^G$. This can be easily seen from the fact that $\varpi$ is a unit in $A$ (and so a non zero divisor in $A_0$). \smallskip

Now we note that $A_0^G$ is complete in the subspace topology since the action of $G$ on $A_0$ is clearly continuous. Therefore, it is complete in the $\varpi$-adic topology as these topologies were shown to be equivalent. Also, we note that $A_0^G$ with the subspace topology is clearly open and bounded in $A^G$, so it is a ring of definition by \cite[Proposition 1.1]{H0}. \smallskip

Finally, we note that clearly $A^{+, G} \subset A^\circ \cap A^G \subset \left(A^G\right)^\circ$ is an open and integrally closed subring of $\left(A^G\right)^\circ$. So $(A^G, A^{+, G})$ is a Tate-Huber pair. 
\end{proof}

\begin{cor}\label{cor:adic-action-cont} Let $(A, A^+)$ be a complete $(R, R^+)$-Tate-Huber pair with an $(R, R^+)$-action of a finite group $G$. Then the action of $G$ on $A$ is continuous.
\end{cor}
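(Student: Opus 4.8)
The plan is to reduce continuity of the $G$-action to the statement that each individual automorphism $g\colon A \to A$ is continuous. This reduction is legitimate because $G$ is finite: giving $G$ the discrete topology, the action map $G \times A \to A$ is continuous if and only if each translate $g\cdot(-)\colon A \to A$ is. So I fix $g\in G$ and show that $g$ is continuous as a self-map of $A$.

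First I would invoke Lemma~\ref{lemma:adic-complete}(\ref{lemma:adic-complete-1}) to produce a $G$-stable pair of definition $(A_0, \varpi)$ with $A_0\subset A^+$. By the definition of a pair of definition, the collection $\{\varpi^n A_0\}_{n\geq 0}$ is a fundamental system of open neighborhoods of $0$ for the topology on $A$. Now the key two observations: since the $G$-action is an $(R, R^+)$-action and $\varpi$ is the image of an element of $R$ under $R\to A$, each $g$ fixes $\varpi$; and since $A_0$ is $G$-stable, $g(A_0)=A_0$. Combining these gives $g(\varpi^n A_0)=\varpi^n g(A_0)=\varpi^n A_0$ for every $n$.

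Finally, $(A,+)$ is a topological abelian group and $g$ is additive, so continuity of $g$ follows from continuity at $0$. But the computation above shows that $g$ carries each basic neighborhood $\varpi^n A_0$ of $0$ bijectively onto itself, so $g^{-1}(\varpi^n A_0)=\varpi^n A_0$ is open; hence $g$ is continuous (indeed a homeomorphism). As $g\in G$ was arbitrary, the action is continuous.

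I expect essentially no obstacle here: all of the real content---the construction of a $G$-stable ring of definition contained in $A^+$---has already been carried out in Lemma~\ref{lemma:adic-complete}. The only points requiring (minor) care are that $\varpi$ is genuinely $G$-invariant, which is exactly the hypothesis that $G$ acts through $(R, R^+)$-morphisms, and that the $\varpi$-adic topology defined by $A_0$ really is the given topology on $A$; both are immediate from the setup, so the corollary is little more than an unwinding of the preceding lemma.
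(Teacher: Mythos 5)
Your proof is correct and follows essentially the same route as the paper: both arguments rest entirely on Lemma~\ref{lemma:adic-complete}(\ref{lemma:adic-complete-1}) producing a $G$-stable pair of definition $(A_0,\varpi)$ with $\varpi$ coming from $R$, so that each $g\in G$ preserves the fundamental system of neighborhoods $\{\varpi^n A_0\}$ of $0$. You merely spell out the details (the reduction to continuity of each $g$ at $0$, and $G$-invariance of $\varpi$) that the paper compresses into ``this is clear because $A_0$ carries the $\varpi$-adic topology.''
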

\begin{proof}
We choose a $G$-stable pair of definition $(A_0, \varpi)$ as in Lemma~\ref{lemma:adic-complete}(\ref{lemma:adic-complete-1}). Then it suffices to show that the action of $G$ on $A_0$ is continuous. This is clear because $A_0$ carries the $\varpi$-adic topology.
\end{proof}

\begin{lemma}\label{lemma:adic-finite-invariants} Let $(A, A^+)$ be a topologically finite type (see Definition~\ref{defn:huber-topologically-finite-type}) complete $(R, R^+)$-Tate-Huber pair with an $(R, R^+)$-action of a finite group $G$. Then the morphism $(A^G, A^{+, G}) \to (A, A^+)$ is a finite morphism of complete Huber pairs (see Definition~\ref{defn:Huber-finite}). 
\end{lemma}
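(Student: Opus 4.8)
The plan is to unpack what a finite morphism of complete Huber pairs requires (Definition~\ref{defn:Huber-finite}) and verify each condition separately. Such a morphism $(A^G, A^{+,G}) \to (A, A^+)$ asks for two things: first, that $A$ is a finite $A^G$-module with its natural topology (so $A^G \to A$ is ``finite'' in the Huber sense, i.e. $A$ is module-finite over $A^G$ and carries the quotient topology from a surjection $(A^G)^{\oplus n} \twoheadrightarrow A$), and second, an integral-closure condition relating $A^+$ to $A^{+,G}$, namely that $A^+$ is the integral closure of $A^{+,G}$ in $A$ (equivalently, $A^+$ is integral over $A^{+,G}$).

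First I would establish the module-finiteness of $A$ over $A^G$. Here Lemma~\ref{lemma:adic-complete} gives me a $G$-stable pair of definition $(A_0, \varpi)$ with $A_0 \subset A^+$ and shows $(A_0^G, \varpi)$ is a complete pair of definition of $A^G$. Since $(A,A^+)$ is topologically of finite type over $(R,R^+)$, the ring of definition $A_0$ should be topologically of finite type over $R_0$, hence $A_0/\varpi A_0$ is of finite type over $R_0/\varpi R_0$. I would then run exactly the mod-$\varpi$ argument from Lemma~\ref{lemma:formal-invariants-saturated}: the map $A_0^G/\varpi A_0^G \to A_0/\varpi A_0$ is integral (by Lemma~\ref{lemma:alg-finite-integral}(\ref{lemma:alg-finite-integral-1}) applied to the $G$-action) and of finite type, hence finite, so by successive approximation (\cite[\href{https://stacks.math.columbia.edu/tag/031D}{Tag 031D}]{stacks-project}) $A_0$ is finite over $A_0^G$. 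Inverting $\varpi$ then yields that $A$ is finite over $A^G$ as a module, and the topology matches because the $\varpi$-adic topology on $A_0^G$ is the subspace topology by Lemma~\ref{lemma:adic-complete}.

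Next I would handle the integrality condition on the ``plus'' rings. The subring $A^{+,G} = A^+ \cap A^G$ is open and integrally closed in $A^G$ by the last part of Lemma~\ref{lemma:adic-complete}. I must show $A^+$ is integral over $A^{+,G}$. This reduces to the elementary fact that for any $a \in A^+$, the coefficients of the characteristic polynomial $\prod_{g \in G}(T - g(a))$ are $G$-invariant and lie in $A^+$ (as $A^+$ is a $G$-stable integrally closed subring containing the $G$-orbit of $a$), hence lie in $A^{+,G}$; this polynomial is monic and annihilates $a$, giving integral dependence. That $A^+$ is \emph{exactly} the integral closure of $A^{+,G}$ in $A$ follows because $A^+$ is itself integrally closed in $A$ and contains $A^{+,G}$.

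The main obstacle I anticipate is the topological bookkeeping rather than the algebra: verifying that the module-finite structure $A^G \to A$ produces the correct topology in the Huber sense, i.e. that the quotient topology from a finite free presentation over $A^G$ agrees with the given topology on $A$. The clean way around this is to work on the level of rings of definition, where Lemma~\ref{lemma:adic-complete} has already identified the subspace and $\varpi$-adic topologies, so that finiteness of $A_0$ over $A_0^G$ as $\varpi$-adic rings transports directly to the statement that $A^G \to A$ is finite as a morphism of Huber rings. Once these two conditions (module-finiteness with correct topology, and integral closure of the plus-rings) are in place, the definition of a finite morphism of complete Huber pairs is satisfied and the lemma follows.
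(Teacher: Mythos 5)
Your decomposition of Definition~\ref{defn:Huber-finite} into two conditions (module-finiteness of $A^G \to A$ and integrality of $A^{+,G} \to A^+$) is the right one, and the integrality half of your argument is fine --- it is Lemma~\ref{lemma:alg-finite-integral}(\ref{lemma:alg-finite-integral-1}), or equivalently your norm-polynomial computation. (A side remark: the paper's Definition~\ref{defn:Huber-finite} imposes no topological condition, so the ``natural topology'' bookkeeping you worry about is not actually required; for complete pairs it is automatic by the Banach open mapping theorem.) The genuine gap is in the module-finiteness half, at the sentence ``the ring of definition $A_0$ should be topologically of finite type over $R_0$''. Nothing in Lemma~\ref{lemma:adic-complete} asserts this, and it is not a formality: the $G$-stable ring of definition there is produced as an intersection $A_0 = \bigcap_{g\in G} g(A'_0)$ (after first shrinking $A'_0$ to $A'_0\cap A^+$), and intersections of rings of definition, unlike images, have no reason to remain topologically of finite type over $R_0$. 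Since your whole mod-$\varpi$ argument (integral $+$ finite type $\Rightarrow$ finite, then successive approximation) hinges on $A_0/\varpi A_0$ being a finite type $R_0/\varpi R_0$-algebra, this unproved claim is exactly the crux of the lemma rather than a routine verification.

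This is precisely the difficulty the paper's proof is engineered to sidestep: it never examines finiteness properties of the $G$-stable ring of definition. Instead it observes, via Lemma~\ref{lemma:adic-complete} together with the cancellation property of Lemma~\ref{lemma:tft-useful} applied to $(R,R^+) \to (A^G,A^{+,G}) \to (A,A^+)$, that $(A^G,A^{+,G}) \to (A,A^+)$ is topologically of finite type, and then invokes Lemma~\ref{lemma:top-finite-type-integral-finite}, whose proof constructs rings of definition adapted to the situation as \emph{images} $h(A_0\langle T_1,\dots,T_n\rangle)$ --- a construction that manifestly preserves topological finite generation. If you want to keep your direct mod-$\varpi$ argument, it can be repaired by replacing the intersection with a join: take a ring of definition $A'_0\subset A^+$ that is the image of $R_0\langle T_1,\dots,T_n\rangle$ (scale the topological generators into $A^+$ by a power of $\varpi$), and let $A_0$ be the subring generated by the translates $g(A'_0)$, $g\in G$. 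This is again a $G$-stable ring of definition contained in $A^+$ (rings of definition are stable under finite joins), it is topologically of finite type over $R_0$ (it is the image of $R_0\langle T_i^{(g)}\rangle$, $T_i^{(g)}\mapsto g(a_i)$), and the arguments for parts (2)--(3) of Lemma~\ref{lemma:adic-complete} apply verbatim to it; with that substitution your proof goes through.
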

\begin{proof}
Firstly, we note that Lemma~\ref{lemma:adic-complete} ensures that $(A^G, A^{+, G})$ is a complete Huber-Tate pair, so it makes to ask whether $(A^G, A^{+, G}) \to (A, A^+)$ is a finite morphism of complete Huber pair. \smallskip

Lemma~\ref{lemma:alg-finite-integral} gives that the morphisms $A^G \to A$ and $A^{+, G} \to A^+$ are integral. So we only need to show that $A$ is finite as an $A^G$-module. Lemma~\ref{lemma:tft-useful} (applied to $(R, R^+) \to (A^G, A^{+, G}) \to (A, A^+)$) ensures that $(A^G, A^{+, G}) \to (A, A^+)$ is a topologically finite type morphism of complete Tate-Huber pairs with $A^{+, G} \to A^+$ being integral. Therefore, Lemma~\ref{lemma:top-finite-type-integral-finite} implies that $(A^G, A^{+, G}) \to (A, A^+)$  is finite.
\end{proof}

\begin{lemma}[Analytic Artin-Tate]\label{lemma:adic-Artin-Tate} Let $(R, R^+)$ be a strongly noetherian complete Tate-Huber pair, and $i\colon \left(A, A^+\right) \to \left(B, B^+\right)$ a finite {\it injective} morphism of complete Tate-Huber $(R, R^+)$-pairs. If $\left(B, B^+\right)$ is a topologically finite type $\left(R, R^+\right)$-Tate-Huber pair, then so is $\left(A, A^+\right)$. 
\end{lemma}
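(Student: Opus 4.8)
The plan is to imitate the proof of the formal Artin--Tate Lemma~\ref{lemma:formal-Artin-Tate}, but with two modifications dictated by the analytic setting: the saturatedness/flatness input is replaced by strong noetherianity of the base, and one must simultaneously control the ring of integral elements $A^+$. First I would record the Artin--Tate data. Since $(B, B^+)$ is topologically finite type over $(R, R^+)$, choose a continuous surjection $\rho\colon R\langle T_1, \ldots, T_n\rangle \twoheadrightarrow B$ with $T_i \mapsto x_i$, where we may arrange $x_i \in B^+$ (hence power-bounded). Since $i$ is finite, $B$ is a finite $A$-module; pick module generators $y_1, \ldots, y_m$, which we may again take power-bounded. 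This yields relations $x_i = \sum_j a_{ij} y_j$ and $y_i y_j = \sum_l a_{ijl} y_l$ with coefficients $a_{ij}, a_{ijl} \in A$.

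The $A$-part: after multiplying by a large power of the pseudo-uniformizer $\varpi$ (which is a unit in the Tate ring $A$), the elements $\varpi^N a_{ij}$ and $\varpi^N a_{ijl}$ become topologically nilpotent, so the assignment $U_{ij} \mapsto \varpi^N a_{ij}$, $U_{ijl}\mapsto \varpi^N a_{ijl}$ extends to a continuous homomorphism $R\langle U_{ij}, U_{ijl}\rangle \to A$ whose image $A'$ contains all the $a_{ij}, a_{ijl}$ (as $\varpi^{-N} \in R$). By construction $B$ is generated by $y_1, \ldots, y_m$ as an $A'$-module, hence is finite over $A'$. The essential point, which replaces the saturatedness argument of Lemma~\ref{lemma:valuation-Artin-Tate}, is that $A'$ is noetherian: it is a quotient of $R\langle U_{ij}, U_{ijl}\rangle$, which is noetherian because $S$ is (locally) strongly noetherian (Definition~\ref{defn:strongly-noetherian-space}). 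Consequently the $A'$-submodule $A \subseteq B$ of the finite $A'$-module $B$ is itself finite over $A'$, and therefore $A$ is topologically finite type over $R$.

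The $A^+$-part is where I expect the main obstacle to lie. Fix the defining surjection $R\langle U\rangle \twoheadrightarrow A$ just produced, and let $\tilde{A}^+$ denote the integral closure in $A$ of the image of $R^+\langle U\rangle$; by definition $(A, \tilde{A}^+)$ is a topologically finite type Huber pair (Definition~\ref{defn:huber-topologically-finite-type}), so it suffices to show $A^+ = \tilde{A}^+$. The inclusion $\tilde{A}^+ \subseteq A^+$ is formal: the images of the $U$'s lie in $A^{\circ\circ}\subseteq A^+$, so the image of $R^+\langle U\rangle$ lies in $A^+$, and $A^+$ is integrally closed in $A$. For the reverse inclusion I would use that $i$ is finite to identify $A^+ = A \cap B^+$ (an element of $A$ that is integral over $A^+$ inside $B$ already lies in $A^+$ by integral-closedness), together with the fact that $B^+$ is integral over the image of $R^+\langle U\rangle$; the latter follows from finiteness of $B$ over $A'$ and from the relations above, which express the power-bounded generators $x_i$ of $B^+$ over $R^+$ in terms of the $a$'s and the $y_j$. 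Hence any $a \in A^+ = A\cap B^+$ is integral over the image of $R^+\langle U\rangle$, i.e. $a \in \tilde{A}^+$. This last step, matching up the two rings of integral elements, is where the genuine care is required; I would isolate it using the auxiliary results on finite and topologically finite type morphisms of Huber pairs (Lemma~\ref{lemma:tft-useful} and Lemma~\ref{lemma:top-finite-type-integral-finite}) already employed in Lemma~\ref{lemma:adic-finite-invariants}.
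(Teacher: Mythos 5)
Your endgame matches the paper's (its Step~5): use strong noetherianity of $R\langle T_{i,j},T_{i,j,k}\rangle$ to conclude that $A$, being a submodule of the finite module $B$, is itself finite, and recover $A^+$ as $A\cap B^+$ via integral-closedness. But there is a genuine gap at the central claim of your ``$A$-part'': the assertion that ``by construction $B$ is generated by $y_1,\dots,y_m$ as an $A'$-module.'' In the algebraic Artin--Tate Lemma~\ref{lemma:valuation-Artin-Tate} the analogous claim is immediate because $B$ is honestly generated as an algebra by $x_1,\dots,x_n$; here $B$ is only \emph{topologically} generated by the $x_i$ (it is a quotient of $R\langle X_1,\dots,X_n\rangle$, not of a polynomial ring). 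The $A'$-submodule $N=A'+\sum_j A'y_j$ is, by your relations, a subalgebra containing the dense subring $R[x_1,\dots,x_n]$, but nothing in your argument shows that $N$ is closed in $B$, so you cannot conclude $N=B$. This is exactly the topological difficulty the paper's proof is organized around: it chooses the $y_j$ via Lemma~\ref{lemma:top-finite-type-integral-finite} and Remark~\ref{rmk:top-finite-type-integral-finite} to be $A_0$-module generators of a ring of definition $B_0'$ that is finite over $A_0$ and contains the $x_i$ (so that the coefficients $a_{i,j},a_{i,j,k}$ automatically lie in $A_0\subset A^+$), then defines the ring of definition $B_0=g(R_0\langle X,Y,T\rangle)$, proves that $B_0/\varpi B_0$ is module-finite over $(R_0/\varpi R_0)[T_{i,j},T_{i,j,k}]$ using the relations, and lifts this by successive approximation (using $\varpi$-adic completeness of $B_0$) before inverting $\varpi$.

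Your rescaling device makes this gap worse rather than better, and it also breaks the ``$A^+$-part.'' Once the coefficients are replaced by $\varpi^N a_{ij}$, the actual relations $x_i=\sum_j a_{ij}y_j$ have coefficients that need not be power-bounded and lie in no ring of definition, so they cannot be exploited modulo $\varpi$: multiplying through by $\varpi^N$ renders them trivial in $B_0/\varpi B_0$. For the plus-rings, your key claim that $B^+$ is integral over the image of $R^+\langle U\rangle$ does not follow from finiteness of $B$ over $A'$: module-finiteness gives (by Cayley--Hamilton) integrality of $B^+$ over the full ring $A'$, not over the much smaller subring $\mathrm{im}\left(R^+\langle U\rangle\right)$, and the unrescaled $a_{ij}$, which are what actually occur in the relations expressing the $x_i$, are in general not even power-bounded, hence certainly not integral over that image. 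The paper avoids both problems at once by never rescaling: since its coefficients already lie in $A_0\subset A^+$, the map $\left(R\langle T\rangle, R^+\langle T\rangle\right)\to (A,A^+)$, $T_{i,j}\mapsto a_{i,j}$, is a well-defined map of pairs; it then shows $B^+$ is integral over $R^+B_0$ (its Step~3) and $B_0$ is finite over $R_0\langle T\rangle$ (its Step~4), whence $B^+$ is integral over $R^+\langle T\rangle$ by transitivity. In short, the correct substitute for your rescaling is the refined choice of generators in Step~1 of the paper's proof; without it, both halves of your argument have holes.
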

The proof of Lemma~\ref{lemma:adic-Artin-Tate} imitates the proof of the Adic Artin-Tate Lemma (Lemma~\ref{lemma:formal-Artin-Tate}), but it is more difficult due to the issue that we need to control the integral aspect of Definition~\ref{defn:Huber-finite}. We recommend the reader to look at the proof of Lemma~\ref{lemma:formal-Artin-Tate} before reading this proof.
\begin{proof}
{\it Step 0. Preparation for the proof}: We choose a pseudo-uniformizer $\varpi \in R$ and an open, surjective morphism 
\[
f\colon R\left\langle X_1, \dots, X_n \right\rangle \twoheadrightarrow B
\]
such that $B^+$ is integral over $f\left(R^+\left\langle X_1, \dots, X_n \right\rangle \right)$. We denote by $x_i\in B^+$ the image $f(X_i)$. \\
%Since $i$ is finite we know that $B^+$ is integral over $A^+$. In particular, it is integral over $g(A^+ \langle Y_1, \dots, Y_m \rangle)$. Also recall that $g$ is automatically open by Remark \ref{Open-Mapping}.

{\it Step 1. We choose ``good'' $A$-module generators $y_1, \dots, y_m$ of $B$}: Remark~\ref{rmk:top-finite-type-integral-finite} implies that there is a compatible choice of rings of definition $A_0 \subset A$, $B'_0 \subset B$ containing all $x_i$ such that $B'_0$ is a finite $A_0$-module. Then we choose $A_0$-module generators $y_1, \dots, y_m$ of $B'_0$. Since $B\simeq B'_0\left[\frac{1}{\varpi}\right]$, $A\simeq A_0\left[\frac{1}{\varpi}\right]$, we conclude that $y_1, \dots, y_m$ are also $A$-module generators of $B$. The crucial property of this choice of $A$-module generators is that there exist $a_{i,j}, a_{i,j,k} \in A_0 \subset A^+$ such that 
\[
x_i=\sum_j a_{i,j}y_j,
\]
\[
y_iy_j=\sum_k a_{i,j,k}y_k. 
\]

{\it Step 2. We define another ring of definition $B_0$}: We consider the unique surjective, continuous $R$-algebra homomorphism
\[
g\colon R\langle X_1,\dots, X_n, Y_1, \dots, Y_m, T_{i,j}, T_{i,j,k}\rangle \to B
\]
defined by $g(X_i)=x_i$, $g(Y_j)=y_j$, $g(T_{i,j})=a_{i,j}$, and $g(T_{i,j,k})=a_{i,j,k}$. This morphism is automatically open by Remark~\ref{rmk:open-mapping}.\smallskip

We define $B_0 \coloneqq g(R_0\langle X_1,\dots, X_n, Y_1, \dots, Y_m, T_{i,j}, T_{i,j,k}\rangle)$, where $R_0$ is a ring of definition in $R$ compatible with $A_0$ (see \cite[Corollary 1.3(ii)]{H0} for its existence). This is clearly an open and bounded subring of $B$, so it is a ring of definition.  \smallskip

By construction, $B_0$ contains $f(R_0\langle X_1, \dots, X_n \rangle)$, and $B_0/\varpi B_0$ is generated as an $R_0/\varpi R_0$-algebra by the classes $\ov{x_i}$, $\ov{y_j}$, $\ov{a_{i,j}}$, and $\ov{a_{i,j,k}}$.\\

{\it Step 3. We show that $B^+$ is integral over $R^+B_0$}: We note that $B^+$ is integral over 
\[
    f(R^+\langle X_1, \dots, X_n \rangle)= f(R^+R_0\langle X_1, \dots, X_n \rangle)=R^+f(R_0\langle X_1, \dots, X_n\rangle). 
\]
Therefore, it is integral over $R^+B_0$ since it contains $R^+f(R_0\langle X_1, \dots, X_n\rangle)$ by the previous Step. \\

{\it Step 4. We show that $(B, B^+)$ is finite over $\left(R\langle T_{i,j}, T_{i,j,k}\rangle, R^+ \langle T_{i,j}, T_{i,j,k}\rangle\right)$}: We recall that $a_{i,j}$, $a_{i,j,k} \in A_0 \subset A^+$ for all $i$, $j$, $k$. So, we can use the universal property of restricted power series to define a continuous morphism of complete Tate-Huber pairs:
\[
r\colon \left(R\langle T_{i,j}, T_{i,j,k}\rangle, R^+\langle T_{i,j}, T_{i,j,k}\rangle\right) \to (A, A^+)
\]
as the unique continuous $R$-algebra morphism such that
\[
r(T_{i,j})=a_{i,j}, r(T_{i,j,k})=a_{i,j,k}.
\]
We also define the morphism
\[
t\colon \left(R\langle T_{i,j}, T_{i,j,k}\rangle, R^+ \langle T_{i,j}, T_{i,j,k}\rangle\right) \to (B, B^+)
\]
as the composition of $r$ and $i$. \smallskip

We now show that $B_0$ is finite over $R_0\langle T_{i,j}, T_{i,j,k}\rangle$. Note that this actually makes sense since the natural morphism
\[
R_0\langle T_{i,j}, T_{i,j,k}\rangle \to B
\]
factors through $B_0$ by the choice of $B_0$. We consider the reduction $B_0/\varpi B_0$ and claim that it is finite over 
\[
R_0\langle T_{i,j}, T_{i,j,k}\rangle/\varpi = \left(R_0/\varpi\right)[T_{i,j}, T_{i,j,k}].
\] 
Indeed, we know that $B_0/\varpi B_0$ is generated as an $R_0/\varpi R_0$-algebra by the elements
\[
\ov{x_1}, \dots, \ov{x_n}, \ov{y_1}, \dots, \ov{y_m}, \ov{a_{i,j}}, \ov{a_{i,j,k}}.
\]
However, we note that $\ov{a_{i,j}}=\ov{t(T_{i,j})}$ and $\ov{a_{i,j,k}}=\ov{t(T_{i,j,k})}$. Thus, we can conclude that $B_0/\varpi B_0$ is generated as an $(R_0/\varpi R_0)[T_{i,j}, T_{i,j,k}]$-algebra by the elements 
\[
\ov{x_1}, \dots, \ov{x_n}, \ov{y_1}, \dots, \ov{y_m}.
\]
Recall that the choice of $x_i$ and $y_j$ implies that each of $\ov{x_i}$ is a linear combination of $\ov{y_j}$ with coefficients in $\ov{a_{i,j}}=\ov{t(T_{i,j})}$. This implies that $B_0/\varpi B_0$ is generated as an $(R_0/\varpi R_0)[T_{i,j}, T_{i,j,k}]$-algebra by $\ov{y_1}, \dots, \ov{y_m}$. But again, the same argument shows that each product $\ov{y_i}\ov{y_j}$ can be expressed as a linear combination of $\ov{y_k}$ with coefficients $\ov{a_{i,j,k}}=\ov{t(T_{i,j,k})}$. This implies that $\ov{y_1}, \dots, \ov{y_m}$ are actually $(R_0/\varpi R_0)[T_{i,j}, T_{i,j,k}]$-module generators for $B_0/\varpi B_0$. Now we use a successive approximation argument (or \cite[\href{https://stacks.math.columbia.edu/tag/031D}{Tag 031D}]{stacks-project}) to conclude that $B_0$ is finite over $R_0\langle T_{i,j}, T_{i,j,k}\rangle$. \smallskip

We conclude that $B$ is a finite module over $R\langle T_{i,j}, T_{i,j,k}\rangle $ since 
\[
B= B_0\left[\frac{1}{\varpi}\right], \text{ and } R\langle T_{i,j}, T_{i,j,k} \rangle = R_0\langle T_{i,j}, T_{i,j,k}\rangle \left[\frac{1}{\varpi}\right].
\]

Thus, we are only left to show that $B^+$ is integral over $R^+\langle T_{i,j}, T_{i,j,k}\rangle$. Step~$3$ implies that $B^+$ is integral over $B_0R^+$, so it suffices to show that $B_0R^+$ is integral over $R^+\langle T_{i,j}, T_{i,j,k}\rangle$. But this easily follows from the fact that $B_0$ is finite over $R_0\langle T_{i,j}, T_{i,j,k}\rangle$. \\

{\it Step 5. We show that $(A, A^+)$ is finite over $(R\langle T_{i,j}, T_{i,j,k}\rangle, R^+ \langle T_{i,j}, T_{i,j,k}\rangle)$:} Note that $R\langle T_{i,j}, T_{i,j,k}\rangle$ is noetherian since $R$ is strongly noetherian by assumption. Therefore, we see that $A$ must be a finite $R\langle T_{i,j}, T_{i,j,k}\rangle$-module as a submodule of a finite $R\langle T_{i,j}, T_{i,j,k}\rangle$-module $B$. Moreover, we see that $A^+$ is equal to the intersection $B^+\cap A$ because $(B, B^+)$ is a finite $(A, A^+)$-Tate-Huber pair. This implies that $A^+$ is integral over the image $r(R^+\langle T_{i,j}, T_{i,j,k}\rangle)$. We conclude that the complete Huber pair $(A, A^+)$ is finite over $(R\langle T_{i,j}, T_{i,j,k}\rangle, R^+ \langle T_{i,j}, T_{i,j,k}\rangle)$. Therefore, it is topologically finite type over $(R, R^+)$ by Lemma~\ref{lemma:huber-finite-finite-type} and  Lemma~\ref{lemma:tft-useful}.
\end{proof}

\begin{cor}\label{cor:adic-invariants-finitely-generated} Let $(R, R^+)$ be a strongly noetherian complete Tate-Huber pair and $(A, A^+)$ a topologically finite type complete $(R, R^+)$-Tate-Huber pair with an $(R, R^+)$-action of a finite group $G$. Then the complete Tate-Huber pair $\left(A^G, A^{+, G}\right)$ is topologically finite type over $(R, R^+)$, and the natural morphism $\left(A^G, A^{+, G}\right) \to \left(A, A^+\right)$ is a finite morphism of complete Tate-Huber pairs.
\end{cor}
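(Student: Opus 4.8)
The plan is to deduce this corollary directly from the Analytic Artin-Tate Lemma (Lemma~\ref{lemma:adic-Artin-Tate}), in exact parallel with the way Corollary~\ref{cor:valuation-invariants-finitely-generated} and Corollary~\ref{cor:formal-invariants-top-finitely-generated} were extracted from their respective Artin-Tate lemmas. All the genuine work has already been carried out in the supporting lemmas; the corollary is just the assembly. The three ingredients I would combine are: that $(A^G, A^{+,G})$ with the subspace topology is a complete Tate-Huber pair, that the inclusion $(A^G, A^{+,G}) \to (A, A^+)$ is a finite morphism of complete Tate-Huber pairs, and that this inclusion is injective on rings.

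Concretely, first I would invoke Lemma~\ref{lemma:adic-complete} to guarantee that $(A^G, A^{+,G})$, equipped with the topology induced from $(A, A^+)$, is a complete Tate-Huber pair; this is exactly what parts (1)--(4) of that lemma provide, and it is what allows us to even speak of $(A^G, A^{+,G})$ as a valid object to feed into Lemma~\ref{lemma:adic-Artin-Tate}. Second, I would observe that the finiteness assertion of the corollary---that $(A^G, A^{+,G}) \to (A, A^+)$ is a finite morphism of complete Tate-Huber pairs---is literally the content of Lemma~\ref{lemma:adic-finite-invariants}, so the second claim of the corollary is already proved outright. Third, the inclusion $A^G \hookrightarrow A$ is tautologically injective. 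Thus Lemma~\ref{lemma:adic-Artin-Tate}, applied to the finite injective morphism $(A^G, A^{+,G}) \to (A, A^+)$ with $(A, A^+)$ topologically finite type over $(R, R^+)$, yields that $(A^G, A^{+,G})$ is topologically finite type over $(R, R^+)$, which is the first claim.

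Since the corollary is purely a combination of previously established results, there is essentially no new obstacle internal to its proof. The substantive difficulties have all been absorbed into the upstream lemmas: the delicate verification in Lemma~\ref{lemma:adic-complete} that the $G$-stable pair of definition restricts to give $A^G$ a well-behaved $\varpi$-adic structure with $A^{+,G}$ the correct ring of integral elements; the module-finiteness of $A$ over $A^G$ together with control of the integral condition on $A^{+,G} \to A^+$ in Lemma~\ref{lemma:adic-finite-invariants}; and, above all, the bookkeeping of both the ring-level and the $+$-level integral-closure conditions in the proof of Lemma~\ref{lemma:adic-Artin-Tate} itself. The only point one must be mildly careful about here is that the hypotheses of Lemma~\ref{lemma:adic-Artin-Tate} are met \emph{as morphisms of Tate-Huber pairs}---i.e.\ that finiteness is finiteness in the sense of Definition~\ref{defn:Huber-finite}, not merely module-finiteness of the underlying rings---but this is precisely guaranteed by Lemma~\ref{lemma:adic-finite-invariants}, so no extra argument is needed.
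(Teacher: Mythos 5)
Your proof is correct and follows essentially the same route as the paper: the paper's own proof also applies Lemma~\ref{lemma:adic-finite-invariants} to get finiteness of $(A^G, A^{+,G}) \to (A, A^+)$ and then feeds this (tautologically injective) finite morphism into the Analytic Artin--Tate Lemma~\ref{lemma:adic-Artin-Tate} to conclude that $(A^G, A^{+,G})$ is topologically finite type over $(R, R^+)$. Your additional explicit citation of Lemma~\ref{lemma:adic-complete} for the well-definedness of $(A^G, A^{+,G})$ as a complete Tate-Huber pair is a detail the paper leaves implicit (it is already absorbed into Lemma~\ref{lemma:adic-finite-invariants}), but it changes nothing of substance.
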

\begin{proof}
Lemma~\ref{lemma:adic-finite-invariants} gives that $\left(A^G, A^{+, G}\right) \to \left(A, A^+\right)$ is a finite morphism of complete Tate-Huber pairs. So Lemma~\ref{lemma:adic-Artin-Tate} guarantees that $\left(A^G, A^{+G}\right)$ is a topologically finite type complete $\left(R,R^+\right)$-Tate-Huber pair.
\end{proof}

\begin{thm}\label{thm:adic-geometric-quotient-affine} Let $(R, R^+)$ be a strongly noetherian complete Tate-Huber pair and $X=\Spa (A, A^+)$ a topologically finite type affinoid adic $S=\Spa(R, R^+)$-space with an $S$-action of a finite group $G$. Then the natural morphism $\phi\colon X/G \to Y=\Spa \left(A^G, A^{+G}\right)$ is an isomorphism over $S$. In particular, $X/G$ is a topologically finite type affinoid adic $S$-space. 
\end{thm}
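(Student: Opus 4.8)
The goal is to show that the canonical morphism $\phi\colon X/G \to Y = \Spa(A^G, A^{+,G})$ is an isomorphism of adic $S$-spaces. By Corollary~\ref{cor:adic-invariants-finitely-generated} we already know that $(A^G, A^{+,G})$ is a topologically finite type complete $(R,R^+)$-Tate-Huber pair, so the target $Y$ is a legitimate topologically finite type affinoid adic $S$-space; the whole content is that $\phi$ is an isomorphism. Following the template of Proposition~\ref{prop:alg-geometric-quotient-affine} and Proposition~\ref{prop:formal-geometric-quotient-affine}, I would split the argument into a topological step (that $\phi$ is a homeomorphism of underlying spaces, compatibly with valuations) and a structure-sheaf step (that $\phi^{\#}$ is an isomorphism of topological sheaves).

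\emph{Step 1: $\phi$ is a homeomorphism of valuation spaces.} Let $p\colon \Spa(A, A^+) \to \Spa(A^G, A^{+,G})$ be the map induced by the finite morphism $(A^G, A^{+,G}) \to (A, A^+)$ of Corollary~\ref{cor:adic-invariants-finitely-generated}. The map $p$ is $G$-invariant, so it factors through $\pi\colon X \to X/G$, giving $\phi$. Since $\pi$ is a topological quotient map, it suffices to show that $p$ is a closed quotient map whose fibers are exactly the $G$-orbits. Here is where the extra subtlety of adic spaces enters: a point of $\Spa(A, A^+)$ is a continuous valuation, and I must check that two valuations $v, v'$ on $A$ restrict to the same valuation on $A^G$ if and only if $v' = v \circ g$ for some $g \in G$. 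The ``if'' direction is immediate; for ``only if'' I would use that $A$ is integral (indeed finite) over $A^G$ by Lemma~\ref{lemma:adic-finite-invariants}, so that valuations of $A$ lying over a fixed valuation of $A^G$ form a single $G$-orbit—this is the valuation-theoretic analogue of the statement that $G$ acts transitively on the fibers of $\Spec A \to \Spec A^G$ (Lemma~\ref{lemma:alg-finite-integral}(\ref{lemma:alg-finite-integral-2})), now applied to the rank-$n$ valuation rings attached to points of the adic spectrum. Finiteness of $p$ also gives that it is closed and surjective (it is a finite morphism of adic spaces by Corollary~\ref{cor:adic-invariants-finitely-generated}). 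Thus $p$ and $\pi$ are both quotient maps with identical fibers, so $\phi$ is a homeomorphism; compatibility of the valuations $v_{\ov{x}}$ is then immediate from the construction in Definition~\ref{defn:geometric-quotient-adic}.

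\emph{Step 2: $\phi^{\#}$ is an isomorphism on structure sheaves.} Working on the basis of rational subsets of $Y$, it suffices to check that for a rational subset $\Spa((A^G)\langle \tfrac{T}{s}\rangle, \dots) \subset Y$ the natural map identifies the sections of $\O_{X/G}$ over $\phi^{-1}$ of this rational subset with the corresponding localized ring of invariants. Concretely, this reduces—exactly as in the affine schematic case—to showing that for the rational localization $A^G \to (A^G)\langle T/s\rangle$ (or the relevant completed localization), the natural map
\[
(A^G)\langle \tfrac{T}{s}\rangle \to \left(A \,\wdh{\otimes}_{A^G}\, (A^G)\langle \tfrac{T}{s}\rangle\right)^G
\]
is a topological isomorphism. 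Since $A$ is finite (hence finitely presented, by the noetherianness coming from Step~5 of Lemma~\ref{lemma:adic-Artin-Tate}) over $A^G$, the completed tensor product agrees with the ordinary tensor product, and the map becomes the statement that invariants commute with the flat base change $A^G \to (A^G)\langle T/s\rangle$—which is exactly Lemma~\ref{lemma:invariants-commute-flat-base-change}, since rational localizations of a strongly noetherian Tate ring are flat. One must also verify the $(+)$-part, i.e. that $A^{+,G}$ and its rational localizations match the integral-closure conditions defining $\O^+_{X/G}$; this follows from $A^{+,G} = A^+ \cap A^G$ together with the fact that $p$ is a finite morphism, so that integral closures are compatible under the base change.

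\emph{Main obstacle.} The genuinely new difficulty compared to the schematic and formal cases is the valuative/topological bookkeeping, concentrated in Step~1: establishing that the $G$-orbits are \emph{exactly} the fibers of $p$ at the level of continuous valuations of possibly higher rank, and that $p$ is a quotient map for the adic (not Zariski) topology. The structure-sheaf step, by contrast, is a formal consequence of flat base change (Lemma~\ref{lemma:invariants-commute-flat-base-change}) once finiteness of $A$ over $A^G$ and strong noetherianness of the relevant Tate rings are in hand. I therefore expect the transitivity-of-$G$-on-valuation-fibers claim, and the compatibility of the $A^+$-structures under localization, to be where the real work lies.
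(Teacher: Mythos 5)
Your overall architecture---a homeomorphism step plus a structure-sheaf step, the latter reduced via finiteness of $A$ over $A^G$ and flat base change of invariants (Lemma~\ref{lemma:invariants-commute-flat-base-change}) to rational localizations---matches the paper's proof, and your Step~2 is essentially the paper's Step~4 (the paper additionally invokes Remark~\ref{rmk:huber-forgetful-conservative} to avoid a separate check of the $+$-rings, and the Banach Open Mapping Theorem to discard the topological issue). The genuine gap is in your Step~1, and it is precisely the part you defer as ``where the real work lies'': surjectivity of $p$ and transitivity of $G$ on its fibers are asserted, not proven, and the one justification you do offer is incorrect. Finiteness of $p$ gives closedness (\cite[Lemma 1.4.5(ii)]{H3}) but \emph{not} surjectivity: closed immersions are finite and almost never surjective. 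What forces surjectivity is injectivity of $A^G \to A$, and converting that into surjectivity on adic spectra is the actual content of the theorem. Given a continuous valuation $v \in \Spa(A^G, A^{+,G})$ one must (i) lift its support prime (Lemma~\ref{lemma:alg-finite-integral}), (ii) extend the valuation ring $R_v \subset k(\mathfrak{p}_v)$ to a valuation ring of $k(\mathfrak{q})$ by domination (\cite[Theorem 10.2]{M1}), and then---the genuinely new point in the adic setting, which your proposal never mentions---(iii) prove that the resulting valuation $w$ on $A$ is \emph{continuous}. The paper does (iii) via Lemma~\ref{cont}: since $\Gamma_w/\Gamma_v$ is torsion for such an extension, cofinality of $v(\varpi)$ propagates to $\Gamma_w$, and $w(a\varpi)<1$ holds on a $G$-stable ring of definition $A_0 \subset A^+$ (Lemma~\ref{lemma:adic-complete}) because $w(A^+)\le 1$. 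Without (iii) you do not even know the fiber of $p$ over $v$ is nonempty, so the claim that $p$ and $\pi$ are quotient maps with the same fibers cannot get started.

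Likewise, the transitivity claim is not a formal ``analogue'' of the scheme statement that can simply be carried over to ``rank-$n$ valuation rings''; the paper proves it in $\Spv$ by combining three nontrivial inputs: the stabilizer $G_{\mathfrak{q}}$ surjects onto $\Aut(k(\mathfrak{q})/k(\mathfrak{p}_v))$ (\cite[Ch.5, \textsection 2, n.2, Theorem 2]{Bou}); extensions of $v$ to $k(\mathfrak{q})$ are in bijection with the maximal ideals of the integral closure of $R_v$ in $k(\mathfrak{q})$ (\cite[Ch. 6, \textsection 8, n.6, Proposition 6]{Bou}); and the automorphism group acts transitively on those maximal ideals (\cite[Theorem 9.3(iii)]{M1}). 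Transitivity on fibers of $p$ then follows because $G$ preserves the subset $\Spa(A,A^+) \subset \Spv A$. So your plan correctly locates the obstacle but leaves it unresolved, and the patch offered in its place (surjectivity from finiteness) is a non sequitur; these higher-rank valuation-lifting arguments are the heart of the paper's proof.
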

We adapt the proofs of Proposition~\ref{prop:alg-geometric-quotient-affine}~and~\ref{prop:formal-geometric-quotient-affine}. However, there are certain complications due to the presence of higher rank points. Namely, there are usually many different points $v\in \Spa\left(A, A^{+}\right)$ with the same support $\mathfrak p$. Thus in order to study fibers of the map $X \to Y$ we need to work harder than in the algebraic and formal setups. 
\begin{proof}
{\it Step 0. Preparation}:
The $S$-action of $G$ on $\Spa(A, A^+)$ induces an $(R, R^+)$-action of $G$ on $(A, A^+)$. By Corollary~\ref{cor:adic-invariants-finitely-generated},  $(A^G, A^{+, G})$ is topologically finite type over $(R, R^+)$. In particular, $Y=\Spa(A^G, A^{+, G})$ is an adic space\footnote{The structure presheaf $\O_Y$ is a sheaf on $Y$ by \cite[Theorem 2.5]{H1}.}, and it is topologically finite type over $S$. 

Now we recall that there is a natural map of valuative spaces $p'\colon \Spv A \to \Spv A^G$, where $\Spv A$ (resp. $\Spv A^G$) is the set of {\it all} valuations on the ring $A$ (resp. $A^G$). We have the commutative diagram
\[
\begin{tikzcd}
\Spa\left(A, A^+\right) \arrow{r}{p} \arrow{d} & \Spa\left(A^G, A^{+G}\right) \arrow{d}\\
\Spv A \arrow{r}{p'} \arrow{d} & \Spv A^G \arrow{d}\\
\Spec A \arrow{r}{p''}  & \Spec A^G
\end{tikzcd}
\]
with the upper vertical maps being the forgetful maps and the lower vertical maps being the maps that send a valuation to its support.
\smallskip

{\it Step 1. The natural map $p'\colon \Spv A \to \Spv A^G$ is surjective and $G$ acts transitively on fibers:} Recall that data of a valuation $v\in \Spv A$ is the same as data of a prime ideal $\mathfrak p_v \subset A$ (its support) and a valuation ring $R_v \subset k(\mathfrak p)$. \smallskip 

To show surjectivity of $p'$, pick any valuation $v\in \Spv A^G$; we want to lift it to a valuation of $A$. We use Lemma~\ref{lemma:alg-finite-integral} to find a prime ideal $\mathfrak q \subset A$ that lifts the support 
\[
\mathfrak p_v\coloneqq \supp(v) \subset A^G,
\]
so $k(\frak{q})$ is finite over $k(\frak{p_v})$ since $A$ is $A^G$-finite. \smallskip

Now we use \cite[Theorem 10.2]{M1} to dominate the valuation ring $R_v \subset k(\mathfrak p_v)$ by some valuation ring $R_{w} \subset k(\mathfrak q)$. This provides us with a valuation $w\colon A \to \Gamma_{w}\cup \{0\}$ such that $p'(w)=v$. Therefore, the map $p'$ is surjective. 

As for the transitivity of the $G$-action, we note that Lemma~\ref{lemma:alg-finite-integral} implies that $G$ acts transitively on the fiber $(p'')^{-1}(\mathfrak p_v)$. Furthermore,  \cite[Ch.5, \textsection 2, n.2, Theorem 2]{Bou} guarantees that, for any prime ideal $\mathfrak q \in (p'')^{-1}(\mathfrak p_v)$, the stabilizer subgroup 
\[
G_{\mathfrak q}\coloneqq \operatorname{Stab}_G({\mathfrak q})
\]
surjects onto the automorphism group $\Aut(k(\mathfrak q)/ k(\mathfrak p_v))$. We use \cite[Ch. 6, \textsection 8, n.6, Proposition 6]{Bou} to see that there is a bijection between the sets

\begin{equation*}
\left\{\begin{array}{l} \text{Valuations }w  \text{ on } k(\frak{q})\\  \text{ restricting to } v \text{ on }  k(\frak{p_v}) \end{array}\right\} \leftrightarrow \left\{ \text{ Maximal ideals in } \operatorname{Nr}_{k(\mathfrak q)}(R_v)\right\},
\end{equation*}
where $\operatorname{Nr}_{k(\mathfrak q)}(R_v)$ is the integral closure of $R_v$ in the field $k(\mathfrak q)$. Now we use \cite[Theorem 9.3(iii)]{M1} to conclude that $\Aut(k(\mathfrak q)/k(\mathfrak p_v))$ (and therefore $G_{\mathfrak q}$) acts transitively on the set of maximal ideals of $\operatorname{Nr}_{k(\mathfrak q)}(R_v)$. As a consequence, $G_{\mathfrak q}$ acts transitively on the set of valuations $w \in p'^{-1}(v)$ with the support $\mathfrak q$. Therefore, $G$ acts transitively on $p'^{-1}(v)$ for any $v\in \Spv A^G$. \\

{\it Step 2. We show that $p\colon \Spa(A, A^+) \to \Spa(A^G, A^{+, G})$ is surjective, and $G$ acts transitively on fibers}: We recall that $\Spa(A, A^{+})$ (resp. $\Spa(A^G, A^{+, G})$) is naturally a subset of $\Spv(A)$ (resp. $\Spv(A^G)$). Therefore, it suffices (by Step~$1$) to show that, for any $v\in \Spa(A^G, A^{+, G})$, any $w\in p'^{-1}(v)$ is continuous and $w(A^+)\leq 1$. \smallskip

It is clear that $w(A^+)\leq 1$ as $A^+$ is integral over $A^{+, G}$. So we only need to show that the valuation $w\in \Spv(A)$ is continuous.

%This shows that the only thing that we really need to check is that a valuation $w$ of $B$ is continuous provided that its restriction $v\coloneqq w|_{B^G}$ lies in $\Spa\left(B^G, B^{+G}\right)$. In order to get our hands on this continuity condition we recall the ``algebraic'' characterization of continuity of valuations on Tate rings.
\begin{lemma}\label{cont} Let $A$ be a Tate ring with a pair of definition $(A_0, \varpi)$, where $\varpi$ is a pseudo-uniformizer. Then a valuation $v\colon A \to \Gamma_v \cup \{0\}$ is continuous if and only if:
\begin{itemize}\itemsep0.5em
\item The value $v(\varpi)$ is cofinal in $\Gamma_v$,
\item $v(a\varpi) < 1$ in $\Gamma_v$ for any $a\in A_0$.
\end{itemize}
\end{lemma}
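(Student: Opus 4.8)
The plan is to work throughout with the explicit fundamental system of open neighborhoods of $0$ given by $\{\varpi^n A_0\}_{n \ge 0}$, and to unwind continuity of $v$ into the concrete assertion that for every $\gamma \in \Gamma_v$ there is an integer $n$ with $v(\varpi^n A_0) < \gamma$ (i.e. $v(c) < \gamma$ for all $c \in \varpi^n A_0$). First I would record the elementary observation that, because $v$ is non-archimedean, $v(b+c) \le \max(v(b), v(c))$, any sublevel set $A_{<\gamma} \coloneqq \{a \in A : v(a) < \gamma\}$ that contains some $\varpi^n A_0$ is automatically \emph{open}: around a point $b$ with $v(b) < \gamma$ one checks $b + \varpi^n A_0 \subset A_{<\gamma}$. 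This reduces continuity of $v$ to the single condition ``for each $\gamma$ there is $n$ with $v(\varpi^n A_0) < \gamma$'', which is the form I will match against the two displayed conditions.

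For the forward implication, cofinality of $v(\varpi)$ is immediate: given $\gamma$, continuity provides $n$ with $v(\varpi^n A_0) < \gamma$, and since $1 \in A_0$ we get $v(\varpi)^n = v(\varpi^n) < \gamma$. The second condition $v(a\varpi) < 1$ for $a \in A_0$ is the delicate point, and I expect it to be the \textbf{main obstacle}, since feeding $\gamma = 1$ into continuity only bounds $v$ on $\varpi^n A_0$ for a possibly large $n$, which is weaker than a bound on $\varpi A_0$. The trick I would use is to pass to powers: if some $a \in A_0$ had $v(a\varpi) \ge 1$, then $v(a^k \varpi^k) = v(a\varpi)^k \ge 1$ for all $k$, while $a^k \in A_0$ forces $a^k \varpi^k \in \varpi^k A_0 \subset \varpi^n A_0$ for $k \ge n$, contradicting $v(\varpi^n A_0) < 1$. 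Here the ring (hence multiplicatively closed) structure of the ring of definition $A_0$ is exactly what makes the argument go.

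For the reverse implication I would combine the two conditions directly. The second condition gives $v(\varpi a) < 1$ for all $a \in A_0$, whence $v(\varpi^n a) < v(\varpi)^{n-1}$ for every $a \in A_0$; the first condition (cofinality) then lets me, given $\gamma$, choose $m$ with $v(\varpi)^m < \gamma$ and conclude $v(\varpi^{m+1} A_0) < \gamma$. By the opening observation this makes every $A_{<\gamma}$ open, i.e. $v$ is continuous. The only points requiring care are the bookkeeping with the non-archimedean inequality when upgrading a mere neighborhood of $0$ to an open set, and remembering to use $1 \in A_0$ to tie $v(\varpi)$ to the topology.
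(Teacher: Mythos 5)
Your proof is correct, but it is worth noting that the paper does not actually prove this lemma at all: its entire ``proof'' is the citation \cite[Corollary 9.3.3]{Seminar}. So your write-up supplies a self-contained argument where the paper defers to the literature. The skeleton you use is the right one, and all the delicate points check out: the reduction of continuity to ``for each $\gamma \in \Gamma_v$ there is $n$ with $v(\varpi^n A_0) < \gamma$'' is valid because $\{\varpi^n A_0\}_{n\geq 0}$ is a fundamental system of neighborhoods of $0$ and the ultrametric inequality upgrades containment of a neighborhood of $0$ to openness of the whole sublevel set; cofinality uses $1 \in A_0$; and the power trick in the forward direction is exactly what is needed (a direct appeal to continuity at $\gamma = 1$ only bounds $v$ on $\varpi^n A_0$ for large $n$, and dividing by $v(\varpi)^{n-1} > 1$ destroys the bound, so passing to $a^k\varpi^k \in \varpi^k A_0 \subset \varpi^n A_0$ is genuinely necessary). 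Two hypotheses you use implicitly deserve to be flagged, though both hold: $\varpi \in A_0$ (so that $\varpi^k A_0 \subset \varpi^n A_0$ for $k \geq n$), which is part of $(A_0,\varpi)$ being a pair of definition; and $v(\varpi) \neq 0$ (so that multiplication by $v(\varpi)^{m}$ in $\Gamma_v$ behaves as you claim in the reverse direction), which holds for \emph{any} valuation on a Tate ring because $\varpi$ is a unit. What the paper's citation buys is brevity; what your argument buys is self-containedness from the bare definitions of continuous valuations and pairs of definition, which is in the spirit of the paper's stated aim of avoiding nontrivial analytic input.
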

\begin{proof}
\cite[Corollary 9.3.3]{Seminar}.
\end{proof}

We choose a $G$-stable pair of definition $(A_0, \varpi)$ from Lemma~\ref{lemma:adic-complete}. Then \cite[Ch. 6, \textsection 8, n.1, Proposition 1]{Bou} gives that $\Gamma_w/\Gamma_v$ is a torsion group. Therefore $w(\varpi)=v(\varpi)$ is cofinal in $\Gamma_w$ if it is cofinal in $\Gamma_v$. In particular, $w(\varpi) < 1$. \smallskip

Now we verify the second condition in Lemma~\ref{cont}. Since $w(A^+)\leq 1$  and $v|_{A^{+, G}}=w|_{A^{+, G}}$, 
\[
w(a\varpi)=w(a)w(\varpi)<w(a)\leq 1
\]
for any $a\in A_0 \subset A^+$. \\

{\it Step 3. We show that $\phi \colon X/G \to Y=\Spa(A^G, A^{+, G})$ is a homeomorphism:} Step~$2$ implies that $\phi$ is a bijection. Now note that $p\colon X \to Y$ is a finite, surjective morphism of strongly noetherian adic spaces. Therefore, it is closed by \cite[Lemma 1.4.5(ii)]{H3}. In particular, it is a topological quotient morphism. The map $\pi\colon X \to X/G$ is a topological quotient morphism by its construction. Hence, $\phi$ is a homeomorphism. \\

{\it Step 4. We show that $\phi$ is an isomorphism of topologically locally $v$-ringed spaces}: Firstly, Remark~\ref{rmk:huber-forgetful-conservative} implies that it suffices to show that the natural morphism
\[
\O_Y \to \phi_*\O_{X/G}
\]
is an isomorphism of sheaves of topological rings. Using the basis of rational subdomains in $Y$, it suffices to show that
\begin{equation}\label{eqn:adic-affine-quotient}
A^G\left\langle \frac{f_1}{s}, \dots \frac{f_n}{s} \right\rangle \to \left(A\left\langle \frac{f_1}{s}, \dots, \frac{f_n}{s}\right\rangle\right)^G
\end{equation}
is a topological isomorphism for any $f_1, \dots, f_n, s$ generating the unit ideal in $A^G$. Lemma~\ref{lemma:adic-finite-invariants} gives that (\ref{eqn:adic-affine-quotient}) is a continuous morphism of complete Tate rings. So the Banach Open Mapping Theorem \cite[Lemma 2.4 (i)]{H1} guarantees that it is automatically open (and so a homeomorphism) if it is surjective. Thus, we can ignore the topologies. \smallskip

Now we show that (\ref{eqn:adic-affine-quotient}) is an (algebraic) isomorphism. We note that 
\[
A\otimes_{A^G} A^G\left\langle \frac{f_1}{s}, \dots, \frac{f_n}{s}\right\rangle \simeq A\left\langle \frac{f_1}{s}, \dots, \frac{f_n}{s}\right\rangle,
\]
by Corollary~\ref{cor:complete-localization}.  Therefore, it suffices to show that 

\[
A^G\left\langle \frac{f_1}{s}, \dots \frac{f_n}{s} \right\rangle \to \left(A\otimes_{A^G} A^G\left\langle \frac{f_1}{s}, \dots, \frac{f_n}{s}\right\rangle\right)^G
\]
is an isomorphism. This follows from Lemma~\ref{lemma:invariants-commute-flat-base-change} and flatness of the map $A^G \to A^G\left\langle \frac{f_1}{s}, \dots, \frac{f_n}{s}\right\rangle$ obtained in \cite[Case II.1.(iv) on p. 530]{H1}.
\end{proof}

\subsection{General Case}\label{proof-analytic} 
The main goal of this section is to globalize the results of the previous section. This is very close to what we did in the formal situation in the proof of Theorem~\ref{thm:formal-main}. The main issue is that the adic analog of Lemma~\ref{lemma:formal-G-basis} is more difficult to show. \smallskip

For the rest of the section, we fix a locally strongly noetherian analytic adic space $S$ (see Definition~\ref{defn:strongly-noetherian-space}). \smallskip

\begin{lemma}\label{lemma:adic-quasi-affinoid} Let $X=\Spa(A, A^+)$ be a pre-adic Tate affinoid\footnote{We do not assume that the structure presheaf $\O_{X}$ is a sheaf.}, and $V\subset X$ an open pre-adic subspace. Then any finite set of points $F \subset V$ is contained in an affinoid pre-adic subspace of $V$.
\end{lemma}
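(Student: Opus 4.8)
The plan is to reduce everything to producing a single rational subdomain $W \subseteq X$ with $F \subseteq W \subseteq V$. Since a rational subdomain of a pre-adic Tate affinoid is again a (pre-adic) affinoid and is open in $X$, such a $W$ is an affinoid pre-adic subspace of $V$ containing $F$, which is exactly what the lemma asks for. This is the adic incarnation of the scheme-theoretic input \cite[Corollaire 4.5.4]{EGA2} used in Lemma~\ref{lemma:alg-open-G-orbit-preserved}: rational subdomains here play the role that the affine opens $X_s$ (nonvanishing loci of sections of an ample sheaf) play in the algebraic argument.

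First I would record the two structural facts driving the proof: the rational subdomains form a basis of the topology of the spectral space $X=\Spa(A,A^+)$ that is stable under finite intersections, and the complement $Z \coloneqq X \setminus V$, being closed in the quasi-compact space $X$, is itself quasi-compact. Shrinking $V$ if necessary — for each $x_i \in F$ pick a rational subdomain $R_i$ with $x_i \in R_i \subseteq V$ and replace $V$ by $\bigcup_i R_i$ — I may assume $V$ is quasi-compact, hence a finite union of rational subdomains. The problem becomes: given a finite set $F$ inside a finite union of rational subdomains, find one rational subdomain containing $F$ and contained in the union.

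\textbf{The main obstacle} is this merging step: a finite union of rational subdomains is almost never itself rational, so one cannot simply take the union and must instead build the data of a single rational subdomain $W=\{\,|f_1|\le|g|,\dots,|f_n|\le|g|\,\}$ by hand, arranging that every $x_i$ lies in $W$ while no point of $Z$ does. A clean way to guarantee $x_i\in W$ is to pass to the rank-$1$ maximal generization $\widetilde{x}_i$ of $x_i$: since $V$ is open it is stable under generization, so $\widetilde{x}_i\in V$, and if the datum is chosen so that each $\widetilde{x}_i$ satisfies the inequalities \emph{strictly}, i.e. $|f_k(\widetilde{x}_i)|<|g(\widetilde{x}_i)|$ for all $k$, then, because strict comparisons of the dominant rank-$1$ part of a valuation are inherited by all of its specializations, the same strict inequalities hold at $x_i$, whence $x_i\in W$. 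It therefore suffices to produce $f_1,\dots,f_n,g\in A$ generating the unit ideal that are strictly dominated ($|f_k|<|g|$) at the finitely many points $\widetilde{x}_1,\dots,\widetilde{x}_r$ and fail this at every point of the quasi-compact set $Z$. Constructing such functions is an avoidance problem — exhibit a ``section'' small at a prescribed finite set and under control on a quasi-compact closed set — which I would solve using that $A$ is a Tate ring topologically of finite type, paralleling the production of an ample section non-vanishing on a finite set in \cite[Corollaire 4.5.4]{EGA2}.

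Finally I would assemble these functions into the rational datum $(f_1,\dots,f_n;g)$, check that they generate the unit ideal (automatic in the Tate case after rescaling) so that $W$ is a genuine rational subdomain, and verify the containments $F \subseteq W$ and $W \cap Z = \emptyset$ from the strict-versus-non-strict inequality bookkeeping above. I expect the technical heart of the write-up to be exactly this bookkeeping together with the avoidance argument producing the cutting functions; the reductions to the quasi-compact and rank-$1$ situations are formal consequences of $X$ being spectral and of the rational subdomains forming an intersection-stable basis.
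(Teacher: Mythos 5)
Your proposal has a genuine gap, and it sits exactly where the paper warns the difficulty lies. Everything in your sketch up to the ``avoidance problem'' is either formal (reduction to quasi-compact $V$, the fact that strict inequalities at the rank-$1$ maximal generization $\widetilde{x}_i$ descend to the specialization $x_i$) or a restatement of what must be proved; the actual content --- producing $f_1,\dots,f_n,g\in A$ that are strictly dominated at the finitely many generizations \emph{and} violate the defining inequalities at every point of $Z=X\setminus V$ --- is asserted, not proved. There is no mechanism to run this: unlike in \cite[Corollaire 4.5.4]{EGA2}, where one has an ample invertible sheaf on a quasi-projective scheme and can choose sections vanishing on a closed subscheme but not at finitely many points, here $Z$ is merely a closed subset of the spectral space $\Spa(A,A^+)$ with no pre-adic (or scheme) structure, so ``control on $Z$'' cannot be imposed by any algebraic construction with elements of $A$; moreover the Lemma makes no topologically-finite-type or noetherian assumption on $A$, so the hypothesis you invoke (``$A$ is a Tate ring topologically of finite type'') is not available. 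You are also aiming at a strictly stronger statement than the Lemma asserts: you want $W$ to be a \emph{rational} subdomain of $X$, whereas the Lemma only claims an affinoid pre-adic subspace, and the affinoids the paper actually produces are in general not rational subdomains; it is unclear (and nowhere needed) that the rational-subdomain version holds in this generality. Finally, merging is a real obstruction even set-theoretically: for two points lying in two different rational subdomains $R_1,R_2$ with $F\subset R_1\cup R_2\subset V$, there is no evident single rational datum cutting out a set between $F$ and $R_1\cup R_2$.

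The paper's proof avoids all of this by transporting the problem to honest schemes, where ampleness exists. One considers the open immersion $U=\Spec A\to S=\Spec A^+$ and the category of $U$-admissible modifications $Y\to S$ (projective morphisms that are isomorphisms over $U$); by \cite[Theorem 8.1.2 and Remark 8.1.8]{Bhatt-notes} the inverse limit of the special fibers $\ov{Y}$ is canonically homeomorphic to $\Spa(A,A^+)$. Quasi-compactness of $V$ lets one descend $V$ to a quasi-compact open $\ov{V}'$ in a single modification $\ov{Y}$, which is quasi-projective over $\Spec A^+/\varpi$; there \cite[Corollaire 4.5.4]{EGA2} applies verbatim to produce an affine open $\ov{W}\subset\ov{V}'$ containing the image of $F$, and the preimage of $\ov{W}$ in $X$ is affinoid by (the proof of) \cite[Corollary 8.1.7]{Bhatt-notes}. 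This detour through models is not cosmetic: it is precisely the substitute for the missing ampleness/avoidance argument in your sketch. If you want to salvage a direct argument, you would have to first prove an adic avoidance statement of this kind from scratch, which the paper explicitly flags as ``quite difficult'' for exactly the reason above.
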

Our proof uses an adic analogue of the theory of ``formal'' models of rigid spaces in an essential way. It might be possible to justify this claim directly from first principles, but it seems quite difficult due to the fact that the complement $X\setminus V$ does not have a natural structure of a pre-adic space.  \smallskip

In what follows, for any topological space $Z$ with a map $Z \to \Spec A^+$, we denote by $\ov{Z}$ the fiber product $Z\times_{\Spec A^+} \Spec A^+/\varpi$ in the category of topological spaces. 
\begin{proof}
First of all, we note that rational subdomains form a basis of the topology of $\Spa(A, A^+)$, and they are quasi-compact. Therefore, we can find a quasi-compact open subspace $F\subset V' \subset V$, so we may and do assume that $V$ is quasi-compact. \smallskip

We consider the affine open immersion
\[
U=\Spec A \to S=\Spec A^+.
\]
And define the category of {\it $U$-admissible modifications} $\mathbf{Adm}_{U,S}$ to be the category of projective morphisms\footnote{We emphasize that a projective morphism is not required to be finitely presented} $f\colon Y \to S$ that are isomorphisms over $U$. Then \cite[Theorem 8.1.2 and Remark 8.1.8]{Bhatt-notes} (alternatively, one can adapt the proof of \cite[Theorem A.4.7]{FujKato} to this situation) shows that 
\[
\ov{X} \coloneqq \left(\lim_{f\colon Y\to \Spec A^+ \ | \ f\in \mathbf{Adm}_{U,S}} Y \right) \times_{\Spec A^+} \Spec A^+/\varpi \simeq  \lim_{f\in \mathbf{Adm}_{U,S} } \ov{Y}
\]
admits a canonical morphism $\ov{X} \to \Spa(A, A^+)$ that is a homeomorphism. Since $V \subset X$ is quasi-compact, \cite[\href{https://stacks.math.columbia.edu/tag/0A2P}{Tag 0A2P}]{stacks-project} implies that there is a $U$-admissible modification $Y\to \Spec A^+$ and a quasi-compact open $V'\subset Y$ such that $\pi^{-1}(\ov{V}') = V$ for the projection map $\pi\colon \ov{X} \to \ov{Y}$. \smallskip

Now $\ov{V}'$ is a quasi-projective scheme over $\Spec A^+/\varpi$. Therefore, \cite[Corollaire 4.5.4]{EGA2} implies that there is an open affine subscheme $\ov{W}\subset \ov{V}'$ containing $\pi(F)$. Therefore, $\pi^{-1}(\ov{W}) \subset \Spa(A, A^+)$ contains $F$, and (the proof of) \cite[Corollary 8.1.7]{Bhatt-notes} implies that $\pi^{-1}(\ov{W})$ is affinoid\footnote{The inverse limit giving the preimage in the statement is shown to be affinoid in the proof.}. 
\end{proof}

\begin{lemma}\label{lemma:adic-open-G-orbit-preserved} Let $X$ be a pre-adic space with an action of a finite group $G$. Suppose that each point $x\in X$ admits an open affinoid pre-adic subspace $V_x$ that contains the orbit $G.x$. Then the same holds with $X$ replaced by any $G$-stable open pre-adic subspace $U \subset X$.
\end{lemma}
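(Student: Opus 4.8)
The plan is to imitate the proof of the algebraic analog, Lemma~\ref{lemma:alg-open-G-orbit-preserved}, replacing the appeal to \cite[Corollaire 4.5.4]{EGA2} with its adic counterpart, Lemma~\ref{lemma:adic-quasi-affinoid}, which is exactly the geometric input that carries the genuine difficulty.

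First I would fix a point $x \in U$ and, using the hypothesis on $X$, choose an open affinoid pre-adic subspace $V_x \subset X$ containing the orbit $G.x$. Since $U$ is $G$-stable we have $G.x \subset U$, and since $G$ is finite, $G.x$ is a finite set of points. I would then set $W_x \coloneqq U \cap V_x$, an open pre-adic subspace of $X$ that is simultaneously an open pre-adic subspace of the affinoid $V_x$ and a neighborhood of $x$ inside $U$. By construction $G.x \subset V_x$ and $G.x \subset U$, so $G.x \subset W_x$.

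The key step is then to apply Lemma~\ref{lemma:adic-quasi-affinoid} with the Tate affinoid $V_x$ playing the role of $X$, the open subspace $W_x$ playing the role of $V$, and the finite set $G.x$ playing the role of $F$. This produces an affinoid pre-adic subspace of $W_x$ containing $G.x$; being open in $W_x$, it is open in $U$. Hence each point $x \in U$ admits an open affinoid pre-adic subspace of $U$ containing its orbit $G.x$, which is precisely the desired conclusion.

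The only point requiring care is verifying that $V_x$ is genuinely a \emph{Tate} affinoid, as demanded by the hypotheses of Lemma~\ref{lemma:adic-quasi-affinoid}; this holds because we work throughout Section~\ref{section:adic} over the analytic adic space $S$, so every affinoid open of $X$ is Tate. Granting Lemma~\ref{lemma:adic-quasi-affinoid}, the remainder of the argument is a direct formal consequence and presents no further obstacle. The main difficulty has thus been entirely absorbed into the prior lemma, exactly as the use of ampleness of $\O_Y$ on affine schemes was in the schematic case.
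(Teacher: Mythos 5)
Your proposal is correct and follows essentially the same route as the paper: the paper's proof simply says the argument of Lemma~\ref{lemma:alg-open-G-orbit-preserved} carries over with Lemma~\ref{lemma:adic-quasi-affinoid} replacing \cite[Corollaire 4.5.4]{EGA2}, which is exactly the reduction you spell out (intersect $V_x$ with the $G$-stable open $U$ and apply the quasi-affinoid lemma to the finite set $G.x$). Your closing remark about $V_x$ being a \emph{Tate} affinoid addresses a point the paper leaves implicit, and is resolved the same way in the intended application.
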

\begin{proof}
The proof is analogous to that of Lemma~\ref{lemma:alg-open-G-orbit-preserved}. One only needs to use Lemma~\ref{lemma:adic-quasi-affinoid} in place of \cite[Corollaire 4.5.4]{EGA2}.
\end{proof}

\begin{lemma}\label{lemma:adic-G-basis} Let $X$ be a locally topologically finite type adic $S$-space with an $S$-action of a finite group $G$. Suppose that for any point $x\in X$ there is an open affinoid adic subspace $V_x \subset X$ that contains the orbit $G.x$. Then each point $x\in X$ has a $G$-stable strongly noetherian Tate affinoid neighborhood $U_x \subset X$.
\end{lemma}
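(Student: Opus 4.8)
The plan is to imitate the proof of the schematic statement Lemma~\ref{lemma:alg-G-basis}, with Lemma~\ref{lemma:adic-quasi-affinoid} and Lemma~\ref{lemma:adic-open-G-orbit-preserved} playing the roles that \cite[Corollaire 4.5.4]{EGA2} and Lemma~\ref{lemma:alg-open-G-orbit-preserved} played there. The genuinely new adic input, which I expect to be the main obstacle, is the fact that in a \emph{separated} adic $S$-space a finite intersection of affinoid opens that are topologically finite type over a strongly noetherian Tate affinoid base is again such an affinoid.

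First I would reduce to the case where the base is a single strongly noetherian Tate affinoid $S=\Spa(R,R^+)$. Since $G$ acts over $S$, the whole orbit $G.x$ lies in the fiber over $s\coloneqq f(x)$; choosing a strongly noetherian Tate affinoid open $S_0\subseteq S$ containing $s$ and applying Lemma~\ref{lemma:adic-quasi-affinoid} to the open subspace $V_x\cap f^{-1}(S_0)$ of the affinoid $V_x$ and the finite set $G.x$, I obtain an affinoid open $V'_x\subseteq f^{-1}(S_0)$ still containing $G.x$ and topologically finite type over $S_0$. Replacing $V_x$ by $V'_x$ and $S$ by $S_0$, I may assume the base is $S=\Spa(R,R^+)$ and that $V_x$ is topologically finite type over it.

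Next I would treat the separated case, assuming $X\to S$ is separated. Here I claim that $U_x\coloneqq\bigcap_{g\in G} g(V_x)$ does the job. Each $g(V_x)$ is the image of $V_x$ under an $S$-automorphism of $X$, hence is again a topologically finite type affinoid over $S$; moreover $U_x$ is $G$-stable and contains $G.x$, so it is an open neighborhood of $x$. The real content is that $U_x$ is affinoid. For two affinoid opens $U,U'$ one has $U\cap U'\cong (U\times_S U')\times_{X\times_S X} X$, and since $X\to S$ is separated the diagonal is a closed immersion, so $U\cap U'$ is a closed adic subspace of $U\times_S U'$. This is exactly where strong noetherianity is essential: the completed tensor product makes $U\times_S U'$ a strongly noetherian Tate affinoid topologically finite type over $S$, and a closed subspace of such an affinoid is again an affinoid topologically finite type over $S$. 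Iterating over the finite group $G$ shows that $U_x$ is a strongly noetherian Tate affinoid, as desired.

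Finally, the general (possibly non-separated) case reduces to the separated one exactly as in Lemma~\ref{lemma:alg-G-basis}. The open subspace $W\coloneqq\bigcap_{g\in G} g(V_x)$ is $G$-stable, and since it is contained in the affinoid, hence separated, space $V_x$, the structure morphism $W\to S$ is separated. By Lemma~\ref{lemma:adic-open-G-orbit-preserved} the hypothesis that every orbit lies in an affinoid open is inherited by $W$, so applying the separated case to $W$ produces a $G$-stable strongly noetherian Tate affinoid neighborhood of $x$ inside $W\subseteq X$. The only step that is not a formal transcription of the schematic argument is the affinoid-intersection fact of the third paragraph, whose proof rests on the permanence of strong noetherianity and topological finite type under completed tensor products and closed immersions; I expect this to be the heart of the matter.
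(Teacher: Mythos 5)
Your proposal is correct and follows essentially the same route as the paper: reduce to a strongly noetherian Tate affinoid base, take $U_x=\bigcap_{g\in G} g(V_x)$ in the separated case, and handle the general case by passing to the $G$-stable separated open $\bigcap_{g\in G} g(V_x)$ via Lemma~\ref{lemma:adic-open-G-orbit-preserved}. The ``affinoid-intersection fact'' you single out as the heart of the matter is exactly the paper's Corollary~\ref{lemma:intersection-affinoids}, which it proves by the same argument you sketch (separatedness makes the diagonal a closed immersion, and a closed adic subspace of a strongly noetherian Tate affinoid is again such an affinoid), so your inlined proof of it matches the paper's cited one.
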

\begin{proof}
The proof is similar to that of Lemma~\ref{lemma:alg-open-G-orbit-preserved}. Lemma~\ref{lemma:adic-open-G-orbit-preserved} allows to reduce to the case $S$ a strongly noetherian Tate affinoid space. Then for a separated $X$, $U_x\coloneqq \bigcap_{g\in G} g(V_x)$ is a strongly noetherian Tate affinoid (see Corollary~\ref{lemma:intersection-affinoids}) and does the job. In general, Lemma~\ref{lemma:adic-open-G-orbit-preserved} guarantees that one can replace $X$ with the {\it separated} open adic subspace $\bigcap_{g\in G} g(V_x)$ and reduce to the separated case. 
\end{proof}

\begin{thm}\label{thm:adic-main} Let $X$ be a locally topologically finite type adic $S$-space with an $S$-action of a finite group $G$. Suppose that each point $x\in X$ admits an affinoid open neighborhood $V_x$ containing $G.x$. Then $X/G$ is a locally topologically finite type adic $S$-space. Moreover, it satisfies the following properties:
\begin{enumerate}
\item $\pi\colon X \to X/G$ is universal in the category of $G$-invariant morphisms to topologically locally $v$-ringed $S$-spaces. 
\item $\pi: X \to X/G$ is a finite, surjective morphism (in particular, it is closed). 
\item Fibers of $\pi$ are exactly the $G$-orbits.
\item\label{thm:adic-main-4} The formation of the geometric quotient commutes with flat base change, i.e. for any flat morphism $Z\to X/G$ (see Definition~\ref{defn:huber-flat}) of locally strongly noetherian analytic adic spaces, the geometric quotient $(X \times_{X/G} Z)/G$ is an adic space, and the natural morphism $(X \times_{X/G} Z)/G \to Z$ is an isomorphism.
\end{enumerate}
\end{thm}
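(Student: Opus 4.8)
The plan is to globalize the affinoid result (Theorem~\ref{thm:adic-geometric-quotient-affine}) by the same strategy used in the formal case (Theorem~\ref{thm:formal-main}), using the gluing lemma (Lemma~\ref{lemma:adic-G-basis}) that produces $G$-stable affinoid neighborhoods. First I would reduce all local assertions to the affinoid case. Since the question of whether $X/G$ is an adic $S$-space is local on the target, and since the condition ``each orbit $G.x$ lies in an affinoid open'' is stable under passing to $G$-stable opens (Lemma~\ref{lemma:adic-open-G-orbit-preserved}), I would invoke Lemma~\ref{lemma:adic-G-basis} to cover $X$ by $G$-stable strongly noetherian Tate affinoid opens $U_i = \Spa(A_i, A_i^+)$. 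By construction of the geometric quotient, $\pi(U_i) \subset X/G$ is open, naturally isomorphic to $U_i/G$, with $\pi^{-1}(\pi(U_i)) = U_i$; hence it suffices to know each $U_i/G$ is an adic space, which is exactly Theorem~\ref{thm:adic-geometric-quotient-affine}. The compatibility of these identifications on overlaps $U_i \cap U_j$ follows because the formation of $(A^G, A^{+,G})$ and the isomorphism $\phi$ are canonical, so the local pieces $U_i/G = \Spa(A_i^G, A_i^{+,G})$ glue to a locally topologically finite type adic $S$-space.

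Next I would establish properties (1)--(3). The universality in (1) is formal: Remark~\ref{rmk:universal-valuative-topologically-locally-ringed} already shows $(X/G,\pi)$ is universal among $G$-invariant morphisms to valuative topologically locally ringed $S$-spaces, and this property is insensitive to whether $X/G$ is an adic space. For (2) and (3), surjectivity and the identification of fibers with $G$-orbits are questions on underlying topological spaces, checked locally on $X/G$; there they reduce to Step~1 and Step~2 of the proof of Theorem~\ref{thm:adic-geometric-quotient-affine}, where $p\colon \Spa(A,A^+) \to \Spa(A^G, A^{+,G})$ is shown to be surjective with fibers exactly the $G$-orbits. Finiteness of $\pi$ is likewise local on the target and follows from Lemma~\ref{lemma:adic-finite-invariants}, which gives that $(A^G, A^{+,G}) \to (A, A^+)$ is a finite morphism of complete Tate-Huber pairs; finiteness in turn implies $\pi$ is closed.

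For the flat base change statement (4), I would again reduce to the affinoid situation. Given a flat morphism $Z \to X/G$ of locally strongly noetherian analytic adic spaces, the assertion is local on $X/G$ and on $Z$, so I may assume $X = \Spa(A, A^+)$ with $X/G = \Spa(A^G, A^{+,G})$ and $Z = \Spa(B, B^+)$ affinoid, with $A^G \to B$ flat. The fiber product $X \times_{X/G} Z$ is the affinoid $\Spa$ of the completed tensor product, but since $A$ is finite over $A^G$ (Corollary~\ref{cor:adic-invariants-finitely-generated}), the completed tensor product agrees with the ordinary one, and $A \otimes_{A^G} B$ is already complete. Then the claim that $(X \times_{X/G} Z)/G \to Z$ is an isomorphism boils down to showing the natural map
\[
B \to \left(A \otimes_{A^G} B\right)^G
\]
is a topological isomorphism; both sides carry the expected topology so one may ignore topologies, and the algebraic statement is exactly Lemma~\ref{lemma:invariants-commute-flat-base-change} applied to the flat base change $A^G \to B$. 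This mirrors Step~3 of Theorem~\ref{thm:formal-main} verbatim.

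The main obstacle I anticipate is entirely contained in the affinoid case and in the gluing lemma, both of which are already proved in the excerpt: the hard analytic and valuation-theoretic work (surjectivity of $p$ on valuation spaces, continuity of lifted valuations, the Analytic Artin-Tate Lemma~\ref{lemma:adic-Artin-Tate}, and the existence of $G$-stable affinoid neighborhoods via Lemma~\ref{lemma:adic-quasi-affinoid}) has been front-loaded. Consequently the proof of Theorem~\ref{thm:adic-main} itself should be a relatively short gluing-and-descent argument, with the only genuine care needed being the verification that the canonical local isomorphisms $\phi_i \colon U_i/G \xrightarrow{\sim} \Spa(A_i^G, A_i^{+,G})$ are compatible on overlaps so that the $X/G$ constructed abstractly as a valuative space genuinely acquires the structure of an adic space.
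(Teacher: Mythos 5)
Your proposal is correct and follows essentially the same route as the paper: reduce to $G$-stable strongly noetherian Tate affinoids via Lemmas~\ref{lemma:adic-open-G-orbit-preserved} and~\ref{lemma:adic-G-basis}, apply Theorem~\ref{thm:adic-geometric-quotient-affine} for existence and properties (2)--(3) together with Lemma~\ref{lemma:adic-finite-invariants}, quote Remark~\ref{rmk:universal-valuative-topologically-locally-ringed} for (1), and for (4) use finiteness of $A$ over $A^G$ to replace $\wdh{\otimes}$ by $\otimes$ (Lemma~\ref{complete-base-change}) before invoking Lemma~\ref{lemma:invariants-commute-flat-base-change}. The only two points the paper makes explicit that you glossed over are that passing from flatness of the morphism $Z \to X/G$ (Definition~\ref{defn:huber-flat}) to flatness of the ring map $A^G \to B$ requires Lemma~\ref{lemma:huber-flat-affinoids}, and that ``ignoring topologies'' is licensed by the Banach Open Mapping Theorem (surjectivity implies openness) rather than by inspection.
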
 

\begin{proof}
{\it Step 1. $X/G$ is a topologically locally finite type adic $S$-space}: Similarly to Step~$1$ of Theorem~\ref{thm:alg-main}, we can use Lemma~\ref{lemma:adic-open-G-orbit-preserved} and Lemma~\ref{lemma:adic-G-basis} to reduce to the case of a strongly noetherian Tate affinoid $S=\Spa(R, R^+)$ and affinoid $X=\Spa(A, A^+)$. Then the claim follows from Theorem~\ref{thm:adic-geometric-quotient-affine}. \smallskip

{\it Step 2. $\pi: X \to X/G$ is surjective,  finite, and fibers are exactly the $G$-orbits}: Similarly to Step~$1$, we can assume that $X$ and $S$ are affinoid. Then it follows from Lemma~\ref{lemma:adic-finite-invariants} and Theorem~\ref{thm:adic-geometric-quotient-affine}. \smallskip

{\it Step 3. $\pi\colon X \to X/G$ is universal and commutes with flat base change}: The universality is essentially trivial (Remark~\ref{rmk:universal-locally rings}). To show the latter claim, we can again assume that $X=\Spa (A, A^+)$ and $S=\Spa (R, R^+)$ are strongly noetherian Tate affinoids and it suffices to consider strongly noetherian Tate affinoid $Z=\Spa(B, B^+)$. Then the construction of the quotient implies that it suffices to show that the natural morphism of Tate-Huber pairs
\begin{equation}\label{eqn:flat-base-change-affinoid}
    (B, B^+) \to \left((B, B^+)\wdh{\otimes}_{(A^G, A^{+, G})} (A, A^+)\right)^G=:(C, C^+)
\end{equation}
is a topological isomorphism. \smallskip

We can ignore the topologies to show that $B \to B\wdh{\otimes}_{A^G} A$ is a topological isomorphism since its surjectivity would imply openness by the Banach Open Mapping Theorem \cite[Lemma 2.4 (i)]{H1}. Now Corollary~\ref{cor:adic-invariants-finitely-generated} and Lemma~\ref{complete-base-change} ensure that $B\wdh{\otimes}_{A^G} A \simeq B\otimes_{A^G} A$. Therefore, it suffices to show that the natural map
\[
B \to \left(B\otimes_{A^G} A\right)^G
\]
is an (algebraic) isomorphism. This follows from Lemma~\ref{lemma:invariants-commute-flat-base-change} and flatness of $A \to B$ justified in Lemma~\ref{lemma:huber-flat-affinoids}. 
\end{proof}

\begin{rmk}\label{rmk:referee} As the anonymous referee pointed out, the condition that each point $x\in X$ admits an affinoid open neighborhood $V_x$ containing $G.x$ may be automatic under a very mild assumption on $X$ (it is probably enough to assume that $X$ is separated over $S$). In the lemma below, we give a proof of this claim for separated rigid-analytic spaces. A more general version of this result would seem to require a generalization of the main results of \cite{temkin-local} to more general adic spaces. This is beyond the scope of this paper.
\end{rmk}

The next lemma uses Berkovich spaces in its proof; the reader unfamiliar with Berkovich spaces can safely ignore this lemma as it is never used in the rest of the paper. We heartfully thank Michael Temkin for suggesting the idea of the following argument:

\begin{lemma}\label{lemma:mild-assumption} Let $K$ be a non-archimedean field with the residue field $k$, $X$ a separated, locally finite type adic $\Spa(K, \O_K)$-space, and $\{x_1, \dots, x_n\}$ is a finite set of points of $X$. Then there is an open affinoid subset $U\subset X$ containing all $x_i$.
\end{lemma}
\begin{proof}
    Firstly, we can replace $X$ with a quasi-compact open subset containing all $x_i$ to reduce to the case of a quasi-compact (and separated) $X$. Then the underlying topological space of $X$ is spectral, so \cite[\href{https://stacks.math.columbia.edu/tag/0904}{Tag 0904}]{stacks-project} implies that it suffices to consider the case when all $x_i$ have the same (unique) rank-$1$ generalization $x\in X$. \smallskip
    
    Now since $X$ is separated and quasi-compact, \cite[Lemma 5.1.3]{H3} implies that $X$ is taut in the sense of \cite[Definition 5.1.2]{H3}. Therefore, \cite[Proposition 8.3.1 and Remark 8.3.2]{H3} (or \cite[Construction 7.1]{Henkel}) constructs the associated Hausdorff $K$-strict Berkovich space $X^{\rm{Ber}}$ with the continuous map of underlying topological spaces
    \[
    \omega \colon X \to X^{\rm{Ber}}
    \]
    that sends a point  $x$ to its unique rank-$1$ generalization. In what follows, we will slightly abuse the notation and consider $x$ as both the point of $X$ and $X^{\rm{Ber}}$ (this is essentially harmless because $X^{\rm{Ber}}$ is set-theoretically equal to the set of rank-$1$ points of $X$). \smallskip
    
    We consider the germ $X^{\rm{Ber}}_x$ (see \cite[\textsection 3.4]{Berkovich}) and its reduction $\widetilde{X}^{\rm{Ber}}_x=(V_{\widetilde{x}}, \widetilde{\cal{H}(x)}, \varepsilon)$ (considered as an object of $\rm{bir}_k$, see \cite[p.4]{temkin-local}) that is defined just after \cite[Lemma 2.1]{temkin-local} (see also \cite[Definition 5.7.2.10]{temkin-notes}). Geometrically, the underlying topological space of the reduction $\widetilde{X}_x^{\rm{Ber}}$ is identified with the fiber $\omega^{-1}(\omega(x))$ (see \cite[Remark 2.6]{temkin-local}). In particular, points $x_i$ uniquely correspond to points of $\widetilde{X}^{\rm{Ber}}_x$ that we also denote by $x_i$ (by slight abuse of notation). \smallskip
    
    Now we note that \cite[Fact 5.2.2.4]{temkin-notes} and \cite[Remarks 1.3.18 and 1.3.19]{H3} imply that $X^{\rm{Ber}}$ is a separated Berkovich space since $X$ is a separated adic space. Therefore, \cite[Proposition 2.5]{temkin-local} implies that the reduction $\widetilde{X}^{\rm{Ber}}_x$ is separated (see \cite[Definition/Exercise 5.7.2.8(iv)]{temkin-notes}). In other words, when $\widetilde{X}^{\rm{Ber}}_x$ is considered as an object of $\rm{Bir}_k$ (see \cite[\textsection 2]{temkin-local} for the precise definition and \cite[Proposition 1.4]{temkin-local} for the equivalence between $\rm{Bir}_k$ and $\rm{bir}_k$), the underlying birational equivalence class of $k$-schemes $\widetilde{X}^{\rm{Ber}}_x$ is separated (i.e. any representative is separated over $k$). Therefore, Chow's lemma \cite[\href{https://stacks.math.columbia.edu/tag/0200}{Tag 0200}]{stacks-project} implies that we can find a representative $Y$ of $\widetilde{X}^{\rm{Ber}}_x$ that is quasi-projective over $k$. 
    
    Now we recall that the underlying topological space of $\widetilde{X}^{\rm{Ber}}_x$ is equal to the cofiltered limit of all representatives of the birational equivalence class $\widetilde{X}^{\rm{Ber}}_x$ (see \cite[Corollary 1.3]{temkin-local}). In particular, each point $x_i\in \widetilde{X}^{\rm{Ber}}_x$ (uniquely) defines a point $y_i\in Y$. Then \cite[\href{https://stacks.math.columbia.edu/tag/01ZY}{Tag 01ZY}]{stacks-project} implies that there is an open affine subset $V_Y\subset Y$ containing all $y_i$. Then (by \cite[Definition/Exercise 5.7.2.8(ii)]{temkin-notes}) it gives rise to an affine open subspace $V\subset \widetilde{X}^{\rm{Ber}}_x$. Now \cite[Theorem 2.4 and Theorem 3.1]{temkin-local} show that $V$ defines a {\it good} subdomain $W^{\rm{Ber}}_x\subset X^{\rm{Ber}}_x$ (see \cite[p.7]{temkin-local} for the definition of a good germ and of a subdomain of a germ). Essentially by definition, this gives rise to a subdomain $x\in W^{\rm{Ber}} \subset X^{\rm{Ber}}$ such that $W$ is a good Berkovich space. Now the subdomain $W^{\rm{Ber}}$ defines an open subspace $W\subset X$ (by applying the functor $(-)^{\rm{ad}}$ from \cite[Construction 7.5]{Henkel}) that contains all the points $x_1, \dots, x_n$ by its construction. Therefore, we may replace $X$ with $W$ to assume that $X^{\rm{Ber}}$ is good. \smallskip
    
    Finally, if $X^{\rm{Ber}}$ is good, there is an open affinoid subspace $x\in U^{\rm{Ber}} \subset X^{\rm{Ber}}$ by the very definition of goodness\footnote{The reader unfamiliar with Berkovich spaces may find it strange, but it is {\it not} true that a point of a general Berkovich space contains an open affinoid neighborhood.}. Then $U\coloneqq \omega^{-1}(U^{\rm{Ber}})$ is an open affinoid subspace of $X$ containing $\ov{\{x\}}$. In particular, it contains all point $x_1, \dots, x_n\in \ov{\{x\}}$. 
\end{proof}

\subsection{Comparison of adic quotients and formal quotients}\label{comp-adic-formal}

For this section, we fix a complete, microbial valuation ring $k^+$ (see Definition~\ref{defn:formal-microbial-valuation}) with fraction field $k$, and a choice of a pseudo-uniformizer $\varpi$. \smallskip

We consider the functor of adic generic fiber: 
\begin{equation*}
(-)_k\colon \left\{\begin{array}{l}\text{admissible formal} \\  \ \ \ \ k^+\text{-schemes}\end{array}\right\} \to \left\{\begin{array}{l} \text {Adic Spaces locally of topologically} \\ \text{finite type over}\  \Spa(k, k^+)\end{array}\right\}
\end{equation*}
that is defined in \cite[\textsection 1.9]{H3} (it is denoted by $d$ there). To an affine admissible formal $k^+$-scheme $\Spf(A)$, this functor assigns the affinoid adic space $\Spa(A\left[\frac{1}{\varpi}\right], A^+)$ where $A^+$ is the integral closure of $A$ in $A\left[\frac{1}{\varpi}\right]$. \smallskip

Let $\X$ be an admissible formal $k^+$-scheme with a $k^+$-action of a finite group $G$. Then $\X_k$ is a locally topologically finite type adic $\Spa(k, k^+)$-space with a $\Spa(k, k^+)$-action of $G$. Using the universal property of geometric quotients, there is a natural morphism $\X_k/G \to (\X/G)_k$.

\begin{thm}\label{thm:comparison-formal-adic} Let $\X$ be an admissible formal $k^+$-scheme with a $k^+$-action of a finite group $G$. Suppose that any orbit $G.x \subset \X$ lies in an affine open subset. Then the adic generic fiber $\X_k$ with the induced $\Spa(k, k^+)$-action of $G$ satisfies the assumption of Theorem~\ref{thm:adic-main}, and the natural morphism:
\[
\X_k/G \to (\X/G)_k
\]
is an isomorphism.
\end{thm}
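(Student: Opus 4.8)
The plan is to follow the strategy of Theorem~\ref{thm:comparison-formal-alg}, replacing the $\varpi$-adic completion functor with the adic generic fiber functor $(-)_k$ and the algebraic Artin--Tate input with its adic counterpart from Section~\ref{affine-analytic}. The argument naturally splits into two steps: first I would verify that $\X_k$ satisfies the hypothesis of Theorem~\ref{thm:adic-main}, and then I would check that the comparison map $\phi\colon \X_k/G \to (\X/G)_k$ is an isomorphism after reduction to the affine case.

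For the first step, the hypothesis on $\X$ together with Lemma~\ref{lemma:formal-G-basis} produces a covering of $\X$ by $G$-stable affine open formal subschemes $\sU_i=\Spf A_i$. Applying the adic generic fiber functor, which is functorial and hence carries the $G$-action along, yields a covering $\X_k=\bigcup_i (\sU_i)_k$ by $G$-stable affinoid opens $(\sU_i)_k=\Spa(A_i[1/\varpi], A_i^+)$. Since $\Spa(k,k^+)$ is a strongly noetherian Tate affinoid, $\X_k$ is locally topologically finite type over a locally strongly noetherian adic space, and every point $x\in\X_k$ lies in some $G$-stable affinoid $(\sU_i)_k$ containing its entire orbit $G.x$; thus the hypothesis of Theorem~\ref{thm:adic-main} holds and $\X_k/G$ exists.

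For the second step, the statement is local on $\X/G$, so I would pass to a $G$-stable affine cover and assume $\X=\Spf A$ with $A$ admissible. Then $\X/G\simeq\Spf A^G$ by Proposition~\ref{prop:formal-geometric-quotient-affine}, so $(\X/G)_k=\Spa(A^G[1/\varpi], C)$ where $C$ is the integral closure of $A^G$ in $A^G[1/\varpi]$. Writing $B=A[1/\varpi]$ and letting $B^+$ be the integral closure of $A$ in $B$, Theorem~\ref{thm:adic-geometric-quotient-affine} identifies $\X_k/G$ with $\Spa(B^G, B^{+,G})$. The map $\phi$ is induced by the natural continuous homomorphism $A^G[1/\varpi]\to B^G$, which is continuous for the subspace topology on $B^G$; since both are complete Tate rings, the Banach Open Mapping Theorem \cite[Lemma 2.4 (i)]{H1} reduces me to checking that this is an algebraic isomorphism and that it matches the positive subrings $C$ and $B^{+,G}$.

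The two algebraic comparisons are the heart of the matter. For the underlying rings, since $\varpi\in A^G$ one has $A[1/\varpi]=A\otimes_{A^G}A^G[1/\varpi]$, and Lemma~\ref{lemma:invariants-commute-flat-base-change} applied to the flat map $A^G\to A^G[1/\varpi]$ gives $B^G=(A[1/\varpi])^G=A^G[1/\varpi]$. For the positive subrings I must show $C=B^{+,G}$: the inclusion $C\subseteq B^{+,G}$ is immediate, as an element integral over $A^G$ is integral over $A$ and $G$-invariant; conversely, a $G$-invariant element $x\in B^+$ is integral over $A$, and since $A$ is finite over $A^G$ by Corollary~\ref{cor:formal-invariants-top-finitely-generated}, the subring $A[x]$ is a finite $A^G$-module, so $x$ is integral over $A^G$ and lies in $C$. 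I expect the main obstacle to be precisely this identification of positive subrings, where the finiteness of $A$ over $A^G$ furnished by the adic Artin--Tate lemma is the decisive input; the topological bookkeeping is then subsumed by the open mapping theorem exactly as in Step~4 of Theorem~\ref{thm:adic-geometric-quotient-affine}.
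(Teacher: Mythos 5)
Your proof is correct and follows essentially the same route as the paper: reduce to a $G$-stable affine $\Spf A$, identify the underlying rings via Lemma~\ref{lemma:invariants-commute-flat-base-change} applied to the flat localization $A^G \to A^G\left[\frac{1}{\varpi}\right]$, and compare the integral closures using that $A$ is integral/finite over $A^G$. The only cosmetic deviations are that you invoke the Banach Open Mapping Theorem for the topological identification where the paper instead observes that both sides have $A^G$ as a ring of definition, and that you use finiteness of $A$ over $A^G$ where the paper gets by with integrality (Lemma~\ref{lemma:alg-finite-integral}); both variants are valid.
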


\begin{proof} Similarly to Step~$1$ in the proof of Theorem~\ref{thm:comparison-formal-alg}, the condition of Theorem~\ref{thm:adic-main} is satisfied for $\X_k$ with the induced action of $G$. \smallskip

To show that $\X_k/G \to (\X/G)_k$ is an isomorphism, similarly to Step~$2$ in the proof of Theorem~\ref{thm:comparison-formal-alg} we can assume that $\X=\Spf A$ is affine. Then we have to show that the natural map
\[
\left(A^G\left[\frac{1}{\varpi}\right], (A^{G})^+\right) \to \left(A\left[\frac{1}{\varpi}\right]^G, (A^{+})^G \right) 
\]
is an isomorphism of Tate-Huber pairs. \smallskip

Lemma~\ref{lemma:invariants-commute-flat-base-change} implies that the map $A^G\left[\frac{1}{\varpi}\right] \to A\left[\frac{1}{\varpi}\right]^G$ is an algebraic isomorphism. This is a topological isomorphism since both sides have $A^G$ as a ring of definition (see Corollary~\ref{cor:formal-invariants-top-finitely-generated} and Lemma~\ref{lemma:adic-complete}). Therefore, we are only left to show that the natural map $(A^G)^+ \to (A^+)^G$ is an (algebraic) isomorphism.\smallskip

Clearly, $A^+$ is integral over $A$, and so it is integral over $A^G$ by Lemma~\ref{lemma:alg-finite-integral}(\ref{lemma:alg-finite-integral-1}). Hence, $(A^+)^G$ is integral over $(A^G)^+$. Since $(A^G)^+$ is integrally closed in $A\left[\frac{1}{\varpi}\right]^G=A^G\left[\frac{1}{\varpi}\right]$, we conclude that $(A^G)^+=(A^+)^G$.
\end{proof}

\subsection{Comparison of adic quotients and algebraic quotients}\label{comp-adic-alg}

For this section, we fix a complete, rank-$1$ valuation ring $\O_K$ with fraction field $K$, and a choice of a pseudo-uniformizer $\varpi$. 

A {\it rigid space} over $K$ always means here an adic space locally topologically finite type over $\Spa(K, \O_K)$. When we need to use classical rigid-analytic spaces, we refer to them as Tate rigid-analytic spaces. \smallskip

In what follows, for any topologically finite type $K$-algebra $A$, we define $\Sp A\coloneqq \Spa(A, A^\circ)$. We note that \cite[Lemma 4.4]{H1} implies that for any affinoid space $\Spa(A, A^+)$ that is topologically finite type over $\Spa(K, \O_K)$, we have $A^+=A^\circ$. So this notation does not cause any confusion. \smallskip

We consider the analytification functor: 
\begin{equation*}
(-)^{\operatorname{an}}\colon \left\{\begin{array}{l}\text{locally finite type} \\  \ \ \ \ K\text{-schemes}\end{array}\right\} \to \left\{\begin{array}{l} \text {Rigid Spaces} \\ \ \ \ \ \text{over}\  K\end{array}\right\}
\end{equation*}
that is defined as a composition 
\[
(-)^{\an}=r_K \circ (-)^{\text{rig}}
\]
of the classical analytification functor $(-)^{\text{rig}}$ (as it is defined in \cite[\textsection 5.4]{B} and the functor $r_K$ that sends a Tate rigid space to the associated adic space (see \cite[\textsection 4]{H1}). \smallskip

The main issue with the analytification functor is that it does not send affine schemes to affinoid spaces. More precisely, the analytification of an affine scheme $X=\Spec K\left[T_1, \dots, T_d \right]/I$ is canonically isomorphic to
\begin{equation}\label{eqn:affine-union-affinoid}
\bigcup_{n=0}^{\infty} \Sp\left(\frac{K\left\langle \varpi^nT_1, \dots, \varpi^nT_d\right\rangle}{I\cdot K\left\langle \varpi^nT_1, \dots, \varpi^nT_d\right\rangle}\right)
\end{equation}
by the discussion after the proof of \cite[Lemma 5.4/1]{B}. In particular, $\mathbf{A}^{n, \rm{an}}_K$ is not affinoid as it is not quasi-compact. \smallskip

\begin{lemma}\label{lemma:invariants-local} Let $X$ be a rigid space over $K$ with a $K$-action of a finite group $G$. Suppose that each point $x\in X$ admits an affinoid open neighborhood $V_x$ containing $G.x$. Then, for any classical point $\ov{x}\in X/G$, the natural map
\[
\O_{X/G, \ov{x}} \to \left(\prod_{x\in \pi^{-1}(\ov{x})} \O_{X, x}\right)^G.
\]
is an isomorphism. 
\end{lemma}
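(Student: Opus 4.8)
The plan is to reduce the right-hand side to a colimit and then exploit the fact that distinct classical points of a rigid space can be separated by disjoint opens. Write $Y=X/G$ and recall that the $G$-stable opens $U\subseteq X$ are exactly the preimages $\pi^{-1}(\ov U)$ of opens $\ov U\ni \ov x$ in $Y$. Since $(-)^G$ is a finite limit, it commutes with filtered colimits, so
\[
\O_{X/G,\ov x}=\colim_{U}\O_X(U)^G=\Big(\colim_U \O_X(U)\Big)^G,
\]
where $U$ ranges over $G$-stable opens containing the fiber. Thus it suffices to prove that the $G$-equivariant germ map
\[
\alpha\colon \colim_{U}\O_X(U)\longrightarrow \prod_{i}\O_{X,x_i}
\]
is an isomorphism of $G$-modules, where $\{x_1,\dots,x_r\}=\pi^{-1}(\ov x)=G.x$, and then to apply $(-)^G$. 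Since $\pi$ is finite with fibers the orbits (Theorem~\ref{thm:adic-main}) and $\ov x$ is classical, each $x_i$ has residue field finite over $k(\ov x)$, hence over $K$; so the orbit consists of finitely many classical points, all contained in a single affinoid $V_x=\Spa(A,A^+)$ by hypothesis.

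The key geometric input is a separation statement: distinct classical points of $\Spa(A,A^+)$ lie in disjoint rational subdomains. Indeed, classical points correspond to distinct, hence comaximal, maximal ideals $\mathfrak{m}_i\subset A$. For a pair $i\neq j$ I would choose $f\in A$ with $f\in\mathfrak{m}_i$ and $f\equiv 1 \bmod \mathfrak{m}_j$, so that $|f(x_i)|=0$ and $|f(x_j)|=1$. The rational domains $\{|f|\leq |\varpi^2|\}$ and $\{|f|\geq |\varpi|\}$ are then disjoint (a common point would force $|\varpi|\leq |\varpi|^2$, impossible as $|\varpi(z)|<1$) open neighborhoods of $x_i$ and $x_j$. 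Taking finite intersections of such domains yields pairwise disjoint opens $W_i\ni x_i$, which may moreover be shrunk inside any prescribed neighborhood of $x_i$. This separation is precisely the ``analytic'' feature absent in the scheme case, where the analogous identification genuinely fails.

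To run the argument $G$-equivariantly, I would upgrade $\{W_i\}$ to a $G$-compatible family by setting $U_i\coloneqq \bigcap_{g\in G} g\big(W_{g^{-1}(i)}\big)$; a direct check gives $x_i\in U_i$, $U_i\subseteq W_i$ (so the $U_i$ stay pairwise disjoint), and $g(U_i)=U_{g(i)}$. Hence $U\coloneqq \bigsqcup_i U_i$ is a $G$-stable open neighborhood of the orbit with $\O_X(U)=\prod_i \O_X(U_i)$, and such $U$ are cofinal among $G$-stable neighborhoods of the orbit. Surjectivity of $\alpha$ follows: a family $(s_i)\in\prod_i \O_{X,x_i}$ is represented, after shrinking the $W_i$, by sections $\sigma_i\in\O_X(U_i)$, which glue to $\sigma\in\O_X(U)$ with $\alpha(\sigma)=(s_i)$. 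Injectivity is formal and needs no separation: if $s\in\O_X(U)$ has vanishing germ at every $x_i$, then $s$ vanishes on an open neighborhood $W$ of the orbit, and replacing $W$ by $\bigcap_{g\in G}g(W)$ gives a $G$-stable neighborhood on which $s$ vanishes, so $s=0$ in the colimit.

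Combining these, $\alpha$ is an isomorphism of $G$-modules, and applying $(-)^G$ gives the claim. I expect the main obstacle to be the separation of distinct classical points by disjoint opens, together with the bookkeeping needed to make the separating neighborhoods simultaneously $G$-stable, disjoint, and cofinal; the injectivity and the passage to invariants are formal once this is in place.
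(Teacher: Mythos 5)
Your proof is correct, and it takes a genuinely different route from the paper's. The paper reduces to the affinoid case $X=\Sp A$, $X/G\simeq \Sp A^G$ via Lemma~\ref{lemma:adic-G-basis} and Theorem~\ref{thm:adic-geometric-quotient-affine}, and then argues by flat base change: by \cite[Corollary 4.1/5]{B} the map $A^G \to \O_{X/G, \ov{x}}$ is flat, so Lemma~\ref{lemma:invariants-commute-flat-base-change} gives $\O_{X/G, \ov{x}} \simeq \left(A\otimes_{A^G} \O_{X/G, \ov{x}}\right)^G$, and finally \cite[Lemma A.1.3]{C-ample} identifies $A\otimes_{A^G} \O_{X/G, \ov{x}}$ with $\prod_{x\in \pi^{-1}(\ov{x})} \O_{X, x}$. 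You instead compute both sides as filtered colimits over $G$-stable opens containing the fiber, commute $(-)^G$ (a finite limit) past the filtered colimit, and prove the resulting germ map is a $G$-equivariant isomorphism by separating the finitely many classical points of the orbit by pairwise disjoint rational subdomains, made $G$-compatible by the intersection trick $U_i=\bigcap_{g\in G} g\bigl(W_{g^{-1}(i)}\bigr)$; your disjointness check ($|\varpi(z)|\leq |\varpi(z)|^2$ is impossible since $0<|\varpi(z)|<1$ for every analytic point) and the cofinality/surjectivity/injectivity bookkeeping are all sound. What you still share with the paper is the input from Theorem~\ref{thm:adic-main}: you need fibers of $\pi$ to be orbits and $\pi$ to be finite (so that points over a classical point are classical with distinct, hence comaximal, supports — this is exactly where classicality is essential, since higher-rank points sharing a support cannot be separated). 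What your route buys is self-containedness: it avoids both the BGR flatness result and Conrad's lemma, and it isolates the precise analytic feature (topological separation of classical points) that makes the statement true, correctly explaining why the naive Zariski-local analogue fails for schemes — consistent with the paper's own footnote in the proof of Theorem~\ref{thm:comparison-alg-adic}, where the schematic version is asserted only at the level of henselian local rings. What the paper's route buys is brevity given its citations, and a mechanism (invariants commute with flat base change) that is reused verbatim throughout the paper, e.g.\ immediately afterwards in Corollary~\ref{cor:invariants-local}.
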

\begin{proof}
Theorem~\ref{thm:adic-main} gives that $X/G$ is a rigid space over $K$. Lemma~\ref{lemma:adic-G-basis} implies that we can assume that $X=\Sp A$ is affinoid, so $X/G\simeq \Sp A^G$ by Theorem~\ref{thm:adic-geometric-quotient-affine} (\cite[Lemma 4.4]{H1}  guarantees the equality of $+$-rings). \smallskip

Now, \cite[Corollary 4.1/5]{B} implies that $A^G \to \O_{X/G, \ov{x}}$ is flat. Therefore, Lemma~\ref{lemma:invariants-commute-flat-base-change} ensures that 
\[
\O_{X/G, \ov{x}} \simeq (A\otimes_{A^G} \O_{X/G, \ov{x}})^G.
\]
Finally, we note that the natural map
\[
A\otimes_{A^G} \O_{X/G, \ov{x}} \to \prod_{x\in \pi^{-1}(\ov{x})} \O_{X, x}
\]
is an isomorphism by \cite[Theorem A.1.3]{C-ample}. 
\end{proof}

\begin{cor}\label{cor:invariants-local} Let $X$ be a rigid space over $K$ with a $K$-action of a finite group $G$. Suppose that each point $x\in X$ admits an affinoid open neighborhood $V_x$ containing $G.x$. Then, for any classical point $\ov{x}\in X/G$, the natural map
\[
\wdh{\O}_{X/G, \ov{x}} \to \left(\prod_{x\in \pi^{-1}(\ov{x})} \wdh{\O}_{X, x}\right)^G.
\]
is an isomorphism. 
\end{cor}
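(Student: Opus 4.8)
The plan is to deduce this completed statement from the uncompleted isomorphism of Lemma~\ref{lemma:invariants-local} by a flat base change along the completion map, exploiting that the local rings in play are noetherian because $X/G$ is a rigid space. First I would reduce to the affinoid case exactly as in Lemma~\ref{lemma:invariants-local}: by Lemma~\ref{lemma:adic-G-basis} one may assume $X=\Sp A$ and $X/G\simeq \Sp A^G$ (Theorem~\ref{thm:adic-geometric-quotient-affine}), with $A$ a finite $A^G$-module (Theorem~\ref{thm:adic-main}) and $\pi^{-1}(\ov{x})$ a finite set of classical points. Writing $R\coloneqq \O_{X/G,\ov{x}}$, the proof of Lemma~\ref{lemma:invariants-local} already records (via \cite[Corollary 4.1/5]{B} and \cite[Lemma A.1.3]{C-ample}) that $A^G\to R$ is flat and that there is an isomorphism of finite $R$-modules
\[
A\otimes_{A^G} R \simeq \prod_{x\in \pi^{-1}(\ov{x})} \O_{X,x}.
\]

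Next, since $\ov{x}$ is a classical point of the rigid space $X/G$, the ring $R$ is a noetherian local ring, so the completion map $R\to \wdh{R}=\wdh{\O}_{X/G,\ov{x}}$ is flat; hence $\wdh{R}$ is a flat $A^G$-algebra. The key computation is then to identify the base change of the displayed isomorphism along $R\to\wdh{R}$:
\[
A\otimes_{A^G}\wdh{R} \simeq \big(A\otimes_{A^G} R\big)\otimes_R \wdh{R} \simeq \Big(\prod_{x}\O_{X,x}\Big)\otimes_R \wdh{R} \simeq \prod_{x}\wdh{\O}_{X,x}.
\]
The last isomorphism holds because $\prod_x\O_{X,x}$ is a finite module over the noetherian ring $R$, so tensoring with $\wdh{R}$ computes its $\m_{\ov{x}}$-adic completion; this completion commutes with the finite product, and for each $x$ the $\m_{\ov{x}}$-adic and $\m_x$-adic completions of $\O_{X,x}$ coincide because $\pi$ is finite, so that $\m_{\ov{x}}\O_{X,x}$ is $\m_x$-primary.

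Finally, I would apply Lemma~\ref{lemma:invariants-commute-flat-base-change} to the flat $A^G$-algebra $\wdh{R}$ to obtain $\wdh{R}\simeq (A\otimes_{A^G}\wdh{R})^G$, and combine it with the identification above to conclude
\[
\wdh{\O}_{X/G,\ov{x}}=\wdh{R}\simeq \Big(\prod_{x\in \pi^{-1}(\ov{x})}\wdh{\O}_{X,x}\Big)^G,
\]
which is exactly the claimed isomorphism. The main obstacle is the middle step: verifying that flat base change to $\wdh{R}$ genuinely computes the product of completed stalks. This is precisely where noetherianness (to know $R\to\wdh{R}$ is flat and that completion of a finite module equals $-\otimes_R\wdh{R}$) and the finiteness of $\pi$ (to match the $\m_{\ov{x}}$-adic and $\m_x$-adic topologies) are both essential; the rest is formal.
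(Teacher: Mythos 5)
Your proof is correct and follows essentially the same route as the paper: both rest on the finiteness of $\O_{X,x}$ over $\O_{X/G,\ov{x}}$ to match the $\m_x$-adic and $\m_{\ov{x}}$-adic topologies, then use noetherianness to identify completion with $-\otimes_R\wdh{R}$, and finish with Lemma~\ref{lemma:invariants-commute-flat-base-change}. The only (immaterial) difference is bookkeeping: you unwind to the affinoid level and apply the flat base change lemma to $A^G\to\wdh{R}$, whereas the paper stays at the level of local rings, using Lemma~\ref{lemma:invariants-local} as a black box and applying the same lemma to the flat map $\O_{X/G,\ov{x}}\to\wdh{\O}_{X/G,\ov{x}}$ with the $G$-ring $\prod_x\O_{X,x}$.
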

\begin{proof}
By \cite[Theorem A.1.3]{C-ample}, each $\O_{X, x}$ is $\O_{X/G,\ov{x}}$-finite. Therefore, $\O_{X, x}/\m_{\ov{x}}\O_{X, x}$ is an artinian $k(\ov{x})$-algebra. Thus, there is $n_x$ such that $\m_{x}^{n_x} \subset \m_{\ov{x}}\O_{X,x}$. This implies that the $\m_{x}$-adic topology on $\O_{X, x}$ coincides with the $\m_{\ov{x}}\O_{X, x}$-adic topology. \smallskip

Therefore, using that $\O_{X/G, \ov{x}}$ is noetherian \cite[Proposition 4.1/6]{B} and $\O_{X, x}$ is finite as an $\O_{X/G, \ov{x}}$-module, we conclude that 
\[
    \prod_{x\in \pi^{-1}(\ov{x})} \wdh{\O}_{X, x} \simeq \left(\prod_{x\in \pi^{-1}(\ov{x})} \O_{X, x}\right) \otimes_{\O_{X/G, \ov{x}}} \wdh{\O}_{X/G, \ov{x}}.
\]
The claim now follows from Lemma~\ref{lemma:invariants-commute-flat-base-change} and Lemma~\ref{lemma:invariants-local}.
\end{proof}

\begin{thm}\label{thm:comparison-alg-adic} Let $X$ be a locally finite type $K$-scheme with a $K$-action of a finite group $G$. Suppose that any orbit $G.x \subset X$ lies in an affine open subset. Then the analytification $X^{\rm{an}}$ with the induced $K$-action of $G$ satisfies the assumption of Theorem~\ref{thm:adic-main}, and the natural morphism:
\[
\phi\colon X^{\rm{an}}/G \to (X/G)^{\rm{an}}
\]
is an isomorphism.
\end{thm}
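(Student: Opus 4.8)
The plan is to mirror the proofs of Theorem~\ref{thm:comparison-formal-alg} and Theorem~\ref{thm:comparison-formal-adic}: first verify the hypothesis of Theorem~\ref{thm:adic-main} for $X^{\rm{an}}$, then reduce to the affine case and match the two affinoid exhaustions. For the hypothesis, recall that analytification is a $G$-equivariant functor compatible with open immersions, so the analytification map $X^{\rm{an}}\to X$ is $G$-equivariant. Given $x\in X^{\rm{an}}$ over a scheme point $\xi\in X$, the hypothesis furnishes an affine open $V\subset X$ containing $G.\xi$; since each $g.x$ maps to $g.\xi\in V$, the whole orbit $G.x$ lands in $V^{\rm{an}}=\bigcup_n \Sp A_n$, the exhaustion of (\ref{eqn:affine-union-affinoid}). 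As $G.x$ is finite, it lies in a single $\Sp A_N$, which is the desired affinoid neighborhood. Hence $X^{\rm{an}}$ satisfies the hypothesis of Theorem~\ref{thm:adic-main}, so $X^{\rm{an}}/G$ exists as a rigid space, and $\phi$ is the map induced through the universal property (Remark~\ref{rmk:universal-valuative-topologically-locally-ringed}) by the analytification $q^{\rm{an}}$ of the quotient morphism $q\colon X\to X/G$, which is $G$-invariant.

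To prove $\phi$ is an isomorphism I would check it locally on the target. Covering $X$ by $G$-stable affine opens $U_i=\Spec A_i$ (Lemma~\ref{lemma:alg-G-basis}), the opens $(U_i/G)^{\rm{an}}=(\Spec A_i^G)^{\rm{an}}$ cover $(X/G)^{\rm{an}}$ and $\phi$ restricts to $U_i^{\rm{an}}/G \to (U_i/G)^{\rm{an}}$; so I may assume $X=\Spec A$ with $A$ a $G$-stable finite type $K$-algebra, whence $X/G=\Spec A^G$, the map $A^G\to A$ is finite, and $A^G$ is of finite type over $K$ by Lemma~\ref{lemma:intro-Artin-Tate}. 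Since analytification is compatible with finite morphisms, $q^{\rm{an}}\colon X^{\rm{an}}\to (X/G)^{\rm{an}}$ is a finite, $G$-invariant morphism. Writing $(X/G)^{\rm{an}}=\bigcup_m \Sp C_m$ for the standard affinoid exhaustion of (\ref{eqn:affine-union-affinoid}), the preimages $\Sp D_m\coloneqq (q^{\rm{an}})^{-1}(\Sp C_m)$ form a $G$-stable affinoid exhaustion of $X^{\rm{an}}$ with $D_m\simeq A\otimes_{A^G} C_m$ (a finite $C_m$-module, hence already complete, so no completed tensor product is needed).

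Finally I would compute the quotients of these affinoids. By Theorem~\ref{thm:adic-geometric-quotient-affine}, $(\Sp D_m)/G\simeq \Sp(D_m^G)$; and since the analytification morphism $(\Spec A^G)^{\rm{an}}\to \Spec A^G$ is flat, the map $A^G\to C_m$ is flat, so Lemma~\ref{lemma:invariants-commute-flat-base-change} gives $D_m^G=(A\otimes_{A^G} C_m)^G\simeq C_m$. As the $\Sp D_m$ are $G$-stable and cover $X^{\rm{an}}$, the quotient $X^{\rm{an}}/G$ is glued from the $(\Sp D_m)/G\simeq \Sp C_m$, and these identifications, being induced by $C_m=D_m^G\hookrightarrow D_m$, glue to $\phi$; therefore $\phi$ is an isomorphism. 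The main obstacle is the second step: arranging the two affinoid exhaustions to match, i.e. establishing that analytification carries the finite morphism $q$ to a finite morphism whose restriction over $\Sp C_m$ is $\Sp(A\otimes_{A^G} C_m)$, together with the flatness of $A^G\to C_m$. Once this compatibility is in place, the invariant-theoretic content reduces entirely to the already-proven flat base change of Lemma~\ref{lemma:invariants-commute-flat-base-change}, exactly as in the affine steps of the earlier comparison theorems.
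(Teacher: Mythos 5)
Your proof is correct, but it takes a genuinely different route from the paper's. After the same Step 1, the paper does not match affinoid exhaustions: it observes that $\pi_X^{\rm an}$ is finite, surjective and $G$-equivariant, deduces from Proposition~\ref{prop:quot-prop-adic} that $\phi$ is finite and surjective, and then reduces, via \cite[Lemma A.1.3]{C-ample} and noetherianness of the local rings, to checking that $\phi$ induces isomorphisms on \emph{completed local rings at classical points}; both sides are then computed using Corollary~\ref{cor:invariants-local} together with the comparison \cite[Lemma A.1.2(2)]{C-irreducible} between classical points of $(X/G)^{\rm an}$ and closed points of $X/G$. Your argument replaces this local analysis by a global one: pull back the standard exhaustion (\ref{eqn:affine-union-affinoid}) of $(X/G)^{\rm an}$, identify the preimages as $\Sp(A\otimes_{A^G}C_m)$, and conclude from Theorem~\ref{thm:adic-geometric-quotient-affine} plus a single application of Lemma~\ref{lemma:invariants-commute-flat-base-change}, gluing via the universal property. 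The trade-off is in the external GAGA-type input: the paper only needs the qualitative statement that analytification preserves finiteness (which it asserts without proof for $\pi_X^{\rm an}$), whereas you need the quantitative relative-analytification statement $(q^{\rm an})^{-1}(\Sp C_m)\simeq\Sp(A\otimes_{A^G}C_m)$ --- a standard fact (Köpf's relative analytification; see also \cite{C-ample}), and no harder to import than the completed-local-ring lemmas the paper cites; in exchange your route avoids classical points and completed local rings entirely and makes the invariant-theoretic content visibly one flat base change. Two points worth tightening: (i) flatness of $A^G\to C_m$ is best argued directly from $C_m\simeq A^G\otimes_{K[T_1,\dots,T_d]}K\langle\varpi^mT_1,\dots,\varpi^mT_d\rangle$ and flatness of $K[T_1,\dots,T_d]\to K\langle\varpi^mT_1,\dots,\varpi^mT_d\rangle$, since flatness of the analytification morphism is a stalk-wise statement, and passing from stalks to flatness of the ring map $A^G\to C_m$ requires an additional faithfully flat descent step; (ii) when quoting Theorem~\ref{thm:adic-geometric-quotient-affine} to get $(\Sp D_m)/G\simeq\Sp(D_m^G)$ you should record, as the paper does in Lemma~\ref{lemma:invariants-local}, that $(D_m^{\circ})^G=(D_m^G)^{\circ}$ by \cite[Lemma 4.4]{H1}, so that $\Spa(D_m^G,(D_m^{\circ})^G)$ really is $\Sp(D_m^G)$.
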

\begin{proof}
\noindent{\it Step 1. The condition of Theorem~\ref{thm:adic-main} is satisfied for $X^{\rm{an}}$ with the induced action of $G$}: Firstly, Lemma~\ref{lemma:alg-G-basis} says that our assumption on $X$ implies that there is a covering of $X=\cup_{i\in I} U_i$ by affine open $G$-stable subschemes. Then $X^{\rm{an}}=\cup_{i\in I} U^{\rm{an}}_i$ is an open covering by $G$-stable adic subspaces. \smallskip

Now we choose $i$ such that $x\in U^{\rm{an}}_i$. We note that (\ref{eqn:affine-union-affinoid}) implies that each $U^{\rm{an}}_i$ can be written as an ascending union
\[
U^{\rm{an}}_i = \bigcup_{n=0}^{\infty} U^{(n)}_i
\]
of open affinoid subspaces. Since the orbit $G.x$ is finite, it is contained in some $U^{(n)}_i$. \smallskip

%{\it Step 2. We reduce to the case $X$ is affine and $K$ is algebraically closed}: We clearly can assume that $X$ is affine by passing to a $G$-stable affine covering of $X$.\smallskip

%Now we note that both the algebraic and adic quotients commutes with flat base change by Theorem~\ref{thm:alg-main}(\ref{thm:alg-main-4}) and Theorem~\ref{thm:adic-main}(\ref{thm:adic-main-4}), respectively. Therefore, we can check that $\varphi$ is an isomorphism after base change to $\wdh{\ov{K}}$. We can assume that $K$ is algebraically closed. \smallskip

{\it Step 2. $\phi$ is a bijection on classical points}: Classical points of $(X/G)^{\rm{an}}$ are identified with $(X/G)_0$ the set of closed points of $X/G$ (see \cite[Lemma 5.1.2(1)]{C-irreducible}). Thus the noetherian analogue of Theorem~\ref{thm:formal-main} (or Theorem~\ref{thm:universally-adhesive-main}) and the fact that finite morphisms reflect closed points imply that classical points of $(X/G)^{\rm{an}}$ are identified with $X_0/G$. \smallskip

Similarly, we can use Theorem~\ref{thm:adic-main} and the fact that $\pi_{X^{\rm{an}}}$ reflects classical points (as being finite) to conclude that the classical points of $X^{\rm{an}}/G$ are identified with the set $X_0/G$. \smallskip

{\it Step 3. We reduce to a claim on completed local rings}: We consider the commutative diagram:
\[
\begin{tikzcd}
X^{\rm{an}} \arrow{d}{\pi_{X^{\rm{an}}}}  \arrow{rd}{\pi_X^{\rm{an}}} &\\
X^{\rm{an}}/G \arrow{r}{\phi} &  (X/G)^{^{\rm{an}}}.
\end{tikzcd}
\]
Since $\pi_X^{\rm{an}}$ is a finite, surjective, $G$-equivariant morphism, we conclude that $\phi$ is a finite, surjective morphism by Proposition~\ref{prop:quot-prop-adic} (that is independent of the present section). Therefore, $\phi_*\left(\O_{X^{\rm{an}/G}}\right)$ is a coherent $\O_{(X/G)^{\rm{an}}}$-module by Corollary~\ref{cor:huber-coherent-sheaves}~(\ref{cor:huber-coherent-sheaves-2}) and it suffices to show \[
\O_{(X/G)^{\rm{an}}} \to \phi_*\left(\O_{X^{\rm{an}}/G}\right)
\]
is an isomorphism. Since {\it classical points} on a rigid-analytic space reflect isomorphisms of {\it coherent} sheaves\footnote{This is standard and can be deduced from \cite[Proposition 9.4.2/6 and Corollary 9.4.2/7]{BGR}}, it suffices to show that 
\[
\O_{(X/G)^{\rm{an}}, y} \to \left(\phi_*\left(\O_{X^{\rm{an}}/G}\right)\right)_{y}
\]
is an isomorphism for any classical point $y\in (X/G)^{\rm{an}}$. Now we use \cite[Theorem A.1.3]{C-ample}, finiteness and surjectivity of $\phi$ (that, in particular, implies that $\phi$ is surjective on classical points), and Step~$2$ to conclude that it suffices to show that the natural map
\begin{equation*}
\phi^\#_{\ov{x}}\colon \O_{(X/G)^{\rm{an}}, \phi(\ov{x})} \to \O_{X^{\rm{an}}/G, \ov{x}} 
\end{equation*}
is an isomorphism at each {\it classical} point of $X^{\rm{an}}/G$. We note that $\phi^\#_{\ov{x}}$ is a local homomorphism of noetherian local rings by \cite[Proposition 4.1/6]{B}. Thus, \cite[Chap III, \textsection 5.4, Prop. 4]{Bou} implies that it suffices to show that the morphism 
\[
\wdh{\phi}^\#_{\ov{x}}\colon \wdh{\O}_{(X/G)^{\rm{an}}, \phi(\ov{x})} \to \wdh{\O}_{X^{\rm{an}}/G, \ov{x}}
\]
is an isomorphism, where the completions on both sides are understood with respect to the corresponding maximal ideals. \smallskip

{\it Step 4. We show that $\wdh{\phi}^\#_{\ov{x}}$ is an isomorphism}: We note Corollary~\ref{cor:invariants-local} gives that 
\[
\wdh{\O}_{X^{\rm{an}}/G, \ov{x}} \cong \left( \prod_{x\in \pi^{-1}_{X^{\rm{an}}}(\ov{x})} \wdh{\O}_{X^{\rm{an}}, x} \right)^G.
\]
Now we consider the natural morphism of locally ringed spaces $\it{i}\colon (X/G)^{\rm{an}} \to X/G$. By \cite[Lemma 5.1.2(1), (2)]{C-irreducible}, $\it{i}$ is a bijection between the sets of classical points of $(X/G)^{\rm{an}}$ and closed points of $X/G$, and the natural morphism 
\[
    \wdh{\O}_{X/G, \it{i}(y)} \to \wdh{\O}_{(X/G)^{\rm{an}}, \it{y}}
\]
is an isomorphism for any closed point $y\in (X/G)^{\rm{an}}$. \smallskip

We set $\ov{z}\coloneqq \it{i}(\phi(\ov{x}))$. Using finiteness of $X \to X/G$ and Lemma~\ref{lemma:invariants-commute-flat-base-change}, we see\footnote{One can repeat the proof of Corollary~\ref{cor:invariants-local} using that Lemma~\ref{lemma:invariants-local} holds on the level of henselian local rings in the scheme world.} that
\[
\wdh{\O}_{(X/G)^{\rm{an}}, \phi(\ov{x})} \simeq \wdh{\O}_{X/G, \ov{z}} \simeq \left(\prod_{z\in \pi^{-1}_{X}(\ov{z})} \wdh{\O}_{X, z} \right)^G,
\]
and $\phi^\#_{\ov{x}}$ identified with the natural map
\begin{equation}\label{eqn:alg-adic}
\left(\prod_{z\in \pi^{-1}_{X}(\ov{z})} \wdh{\O}_{X, z} \right)^G \to \left( \prod_{x\in \pi^{-1}_{X^{\rm{an}}}(\ov{x})} \wdh{\O}_{X^{\rm{an}}, x} \right)^G.
\end{equation}
Finally, we use \cite[Lemma 5.1.2(2)]{C-irreducible} once again to conclude that (\ref{eqn:alg-adic}) is an isomorphism. 
\end{proof}

%\newpage

\section{Properties of the Geometric Quotients}

We discuss some properties of schemes (resp. formal schemes, resp. adic spaces) that are preserved by taking geometric quotients. For instance, one would like to know that $X/G$ is separated (resp. quasi-separated, resp. proper) if $X$ is. This is not entirely obvious as $X/G$ is explicitly constructed only in the affine case, and in general one needs to do some gluing to get $X/G$. This gluing might, a priori, destroy certain global properties of $X$ such as separatedness. In this section we show that this does not happen for many geometric properties in all schematic, formal and adic setups. We mostly stick to the properties we will need in our paper \cite{Z1}.

\subsection{Properties of the Schematic Quotients}\label{section:prop-alg}

In this section, we discuss the schematic case. For the rest of the section, we fix a valuation ring $k^+$. The proofs are written so they will adapt to other settings (formal schemes and adic spaces). \smallskip

Let $f\colon X \to Y$ be a $G$-invariant\footnote{Here and later in the paper, a $G$-invariant morphism means a $G$-equivariant morphism for some $G$-action on the source and a trivial $G$-action on the target.} morphism of flat, locally finite type $k^+$-schemes. We assume that any orbit $G.x \subset X$ lies inside some open affine subscheme $V_x\subset X$. In particular, the conditions of Theorem~\ref{thm:alg-main} are satisfied, so it gives that $X/G$ is a $k^+$-scheme with a finite morphism $\pi \colon X \to X/G$. The universal property of the geometric quotient implies that $f$ factors through $\pi$ and defines the commutative diagram:
\[
\begin{tikzcd}
X \arrow{d}{\pi}  \arrow{rd}{f} &\\
X/G \arrow{r}{f'} & Y.
\end{tikzcd}
\]

\begin{prop}\label{prop:quot-prop-alg} Let $f\colon X\to Y$, a finite group $G$, and $f':X/G \to Y$ be as above. Then $f'$ is quasi-compact (resp. quasi-separated, resp. separated, resp. proper, resp. finite) if $f$ is so.
\end{prop}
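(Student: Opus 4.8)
The plan is to exploit that $\pi\colon X \to X/G$ is finite and surjective (Theorem~\ref{thm:alg-main}) together with the factorization $f=f'\circ\pi$, and to descend each property from $f$ to $f'$ along $\pi$. All five properties are local on the target, so whenever convenient I would assume $Y=\Spec R$ is affine with $R$ a flat finite type $k^+$-algebra. For quasi-compactness I would use the elementary cancellation: if $p$ is surjective and $h\circ p$ is quasi-compact, then $h$ is quasi-compact, since for a quasi-compact open $U$ of the target $h^{-1}(U)=p\bigl(p^{-1}(h^{-1}(U))\bigr)=p\bigl((h\circ p)^{-1}(U)\bigr)$ is the continuous image of a quasi-compact set; applying this to $p=\pi$ gives quasi-compactness of $f'$. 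For quasi-separatedness I apply the same cancellation to the diagonals: the identity $\Delta_{f'}\circ\pi=(\pi\times_Y\pi)\circ\Delta_f$ holds, $\pi\times_Y\pi$ is finite (hence quasi-compact) as a composite of base changes of $\pi$, and $\Delta_f$ is quasi-compact because $f$ is quasi-separated; since $\pi$ is surjective the cancellation yields quasi-compactness of $\Delta_{f'}$, i.e. $f'$ is quasi-separated.

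Separatedness is the step I expect to require the most care, since $X/G$ is produced only by gluing and its diagonal is not directly accessible. The trick I would use is again the square $\Delta_{f'}\circ\pi=(\pi\times_Y\pi)\circ\Delta_f$. Here $\Delta_f$ is a closed immersion (as $f$ is separated), hence finite, and $\pi\times_Y\pi$ is finite, so the composite $\Delta_{f'}\circ\pi$ is finite, in particular a closed map. Its set-theoretic image equals $\Delta_{f'}(X/G)$ because $\pi$ is surjective, so $\Delta_{f'}$ has closed image; since the diagonal of any morphism of schemes is an immersion, and an immersion with closed image is a closed immersion, we conclude that $\Delta_{f'}$ is a closed immersion and therefore $f'$ is separated.

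For properness I would assemble the definition proper $=$ separated $+$ finite type $+$ universally closed. Separatedness is already handled. Universal closedness descends by the same image-chasing cancellation as for quasi-compactness, carried out after an arbitrary base change $Y'\to Y$: surjectivity of $\pi$ is preserved by base change, and $f_{Y'}$ is closed since $f$ is universally closed, whence $f'_{Y'}$ is closed for every $Y'$. Finite type of $f'$ is automatic: both $X/G$ (Theorem~\ref{thm:valuation-main}) and $Y$ are locally of finite type over $k^+$, so the $k^+$-morphism $f'$ is locally of finite type, and it is quasi-compact by the first step, hence of finite type.

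Finally, for finiteness I would reduce to $Y=\Spec R$ affine; then $f$ finite forces $X=\Spec A$ with $A$ finite over $R$ (so finite type over $k^+$), and Proposition~\ref{prop:alg-geometric-quotient-affine} identifies $X/G$ with $\Spec A^G$. Since $A$ is $k^+$-flat, the quotient $A/A^G$ is $k^+$-torsion free, so $A^G$ is a saturated $R$-submodule of the finite $R$-module $A$, and Lemma~\ref{lemma:valuations-general}(\ref{lemma:valuations-general-4}) applied over $R$ shows that $A^G$ is a finite $R$-module; thus $f'$ is finite. The only genuinely global input is the separatedness argument, and it is the place I expect the main difficulty; everything else is either an elementary topological cancellation along the surjection $\pi$ or the affine computation of the ring of invariants.
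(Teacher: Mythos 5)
Your argument for quasi-compactness, quasi-separatedness, separatedness, and properness is correct and is essentially the paper's proof: the same cancellation along the surjection $\pi$, the same square $\Delta_{f'}\circ\pi=(\pi\times_Y\pi)\circ\Delta_f$ with $\pi\times_Y\pi$ finite, and the same assembly of properness from separatedness, finite type (via Theorem~\ref{thm:valuation-main}) and universal closedness. Where you genuinely diverge is the finiteness step. The paper deduces it from the properness case: $f'$ is proper, quasi-finite (since $\pi$ is surjective and $f$ has finite fibers), and then Zariski's Main Theorem \cite[Corollaire 18.12.4]{EGA4_4} gives finiteness. You instead reduce to $Y=\Spec R$ affine, identify $X/G\simeq \Spec A^G$ via Proposition~\ref{prop:alg-geometric-quotient-affine}, observe that $k^+$-flatness of $A$ makes $A^G$ a saturated $R$-submodule of the finite $R$-module $A$, and invoke Lemma~\ref{lemma:valuations-general}(\ref{lemma:valuations-general-4}) to conclude $A^G$ is $R$-finite. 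This is valid (the $G$-invariance of $f$ does put the image of $R$ inside $A^G$, so $A^G$ is an $R$-submodule, and torsion-freeness of $A$ gives saturation exactly as in Corollary~\ref{cor:valuation-invariants-finitely-generated}), and it buys a more elementary, self-contained argument that avoids ZMT. What it costs is generality and portability: your route leans on the $k^+$-flatness hypothesis and the valuation-ring Artin--Tate machinery, whereas the paper's ZMT-style argument uses neither and is precisely the shape of argument that transfers to the formal and adic settings (Propositions~\ref{prop:quot-prop-formal} and~\ref{prop:quot-prop-adic}, where finiteness is again deduced from properness plus quasi-finiteness via the appropriate analogue of ZMT).
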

\begin{proof}
We note that all these properties are local on $Y$. Since the formation of $X/G$ commutes with open immersions, we can assume that $Y$ is affine. \smallskip

{\it Quasi-compactness}: We suppose that $f$ is quasi-compact. Using the fact that $Y$ is affine, we see that quasi-compactness of $f$ (resp. $f'$) is equivalent to quasi-compactness of the scheme $X$ (resp. $X/G$). Thus, $X$ is quasi-compact. Since $\pi$ is surjective by Theorem~\ref{thm:alg-main}, we conclude that $X/G$ is quasi-compact. Therefore, $f'$ is quasi-compact as well.
\smallskip

{\it Quasi-separatedness}: We suppose that the diagonal morphism $\Delta_{X}:X \to X\times_Y X$ is quasi-compact and consider the following commutative square:
\begin{equation}\label{eqn:diagram-diagonal}
\begin{tikzcd}
X \arrow{r}{\Delta_X} \arrow{d}{\pi} & X\times_Y X \arrow{d}{\pi\times_Y \pi}\\
X/G \arrow{r}{\Delta_{X/G}} & X/G \times_Y X/G
\end{tikzcd}
\end{equation}

We know that $\pi$ is finite, so it is quasi-compact. Therefore, the morphism $\pi\times_Y \pi$ is quasi-compact as well. This implies that the morphism
\[
(\pi\times_Y \pi) \circ \Delta_X = \Delta_{X/G} \circ \pi
\]
is quasi-compact. But we also know that $\pi$ is surjective, so we see that quasi-compactness of $\Delta_{X/G} \circ \pi$ implies quasi-compactness of $\Delta_{X/G}$. Thus $f'$ is quasi-separated. \smallskip

{\it Separatedness:} We consider Diagram~(\ref{eqn:diagram-diagonal}) once again. Since $\pi$ is finite we conclude that $\pi\times_Y \pi$ is finite as well, hence closed. Now we use surjectivity of $\pi$ to get an equality: 
\[
\Delta_{X/G}(X/G)=(\pi\times_Y \pi)(\Delta_{X}(X))
\]
with $(\pi\times_Y \pi)(\Delta_{X}(X))$ being closed as image of the closed subset $\Delta_{X}(X)$ (by separateness of $X$ over affine $Y$). This shows that $\Delta_{X/G}(X/G)$ is a closed subset of $X/G\times_{Y}X/G$, so $X/G$ is separated. \smallskip

{\it Properness:} We already know that properness of $f$ implies that $f'$ is quasi-compact and separated. Also, Theorem~\ref{thm:valuation-main} shows that $f'$ is locally of finite type, so it is of finite type. The only thing that we are left to show is that it is universally closed. But this easily follows from universal closedness of $f$ and surjectivity of $\pi$. 
\smallskip

{\it Finiteness:} A finite morphism is proper, so the case of proper morphisms implies that $f'$ is proper. It is also quasi-finite as $\pi$ is surjective and $f=f' \circ \pi$ has finite fibers. Now Zariski's Main Theorem \cite[Corollaire 18.12.4]{EGA4_4} implies that $f'$ is finite.
\end{proof}

We now slightly generalize Proposition~\ref{prop:quot-prop-alg} to the case of a $G$-equivariant morphism $f$. Namely, we consider a $G$-equivariant morphism $f\colon X \to Y$ of flat, locally finite type $k^+$-schemes. We assume that the actions of $G$ on both $X$ and $Y$ satisfy the condition of Theorem~\ref{thm:valuation-main}. Then the universal property of the geometric quotient implies that $f$ descends to a morphism $f'\colon X/G \to Y/G$ over $k^+$. We show that various properties of $f$ descend to $f'$:

\begin{prop}\label{prop:quot-prop-alg-2} Let $f\colon X\to Y$, a finite group $G$, and $f'\colon X/G \to Y/G$ be as above. Then $f'$ is quasi-compact (resp. quasi-separated, resp. separated, resp. proper, resp. a $k$-modification\footnote{A morphism $f\colon X \to Y$ of flat, locally  finite type $k^+$-schemes is called {\it a $k$-modification}, if it is proper and the base change $f_k\colon X_k \to Y_k$ is an isomorphism.}, resp. finite) if $f$ is so.
\end{prop}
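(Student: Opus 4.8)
The plan is to deduce every case from the already-established Proposition~\ref{prop:quot-prop-alg} by trading the $G$-equivariant morphism $f$ for a suitable $G$-\emph{invariant} one. Concretely, I would introduce the composite $g\coloneqq \pi_Y \circ f\colon X \to Y/G$, where $\pi_Y\colon Y \to Y/G$ is the quotient projection. Since $\pi_Y$ is $G$-invariant and $f$ is $G$-equivariant, the morphism $g$ is $G$-invariant, so by the universal property of $\pi_X\colon X \to X/G$ it factors uniquely as $g = f' \circ \pi_X$; this is precisely the defining relation of the descended morphism $f'$, so $f'$ is exactly the morphism associated to the $G$-invariant map $g$ in the sense of Proposition~\ref{prop:quot-prop-alg}. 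The target $Y/G$ is again flat and locally of finite type over $k^+$ by Theorem~\ref{thm:valuation-main}, so that proposition applies.

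With this identification in hand, the first five properties follow mechanically. By Theorem~\ref{thm:valuation-main} the projection $\pi_Y$ is finite, hence in particular quasi-compact, quasi-separated, separated, proper, and finite. Each of these classes of morphisms is stable under composition, so whenever $f$ has one of them, so does $g = \pi_Y \circ f$; Proposition~\ref{prop:quot-prop-alg} then transfers the property from $g$ to $f'$. This settles the quasi-compact, quasi-separated, separated, proper, and finite cases uniformly.

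The remaining, genuinely new, case is that of a $k$-modification. Properness of $f'$ is already covered by the previous paragraph, so it only remains to show that the generic fibre $(f')_k\colon (X/G)_k \to (Y/G)_k$ is an isomorphism. Here I would invoke that the formation of the quotient commutes with flat base change (Theorem~\ref{thm:alg-main}(\ref{thm:alg-main-4})): applied to the flat morphism $\Spec k \to \Spec k^+$, it yields canonical, $G$-equivariantly natural identifications $(X/G)_k \simeq X_k/G$ and $(Y/G)_k \simeq Y_k/G$. Under these, $(f')_k$ is precisely the morphism on quotients induced by $f_k\colon X_k \to Y_k$. Since $f$ is a $k$-modification, $f_k$ is a $G$-equivariant isomorphism, and hence the induced map $X_k/G \to Y_k/G$ is an isomorphism as well. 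Thus $f'$ is proper with $(f')_k$ an isomorphism, i.e.\ a $k$-modification.

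The main obstacle I expect is exactly this $k$-modification case: the other five properties reduce formally to Proposition~\ref{prop:quot-prop-alg}, but $k$-modifications require commuting the quotient with passage to the generic fibre, which is where the flat-base-change statement of Theorem~\ref{thm:alg-main}(\ref{thm:alg-main-4}) does the real work. The one point to check with care is that the identification $(X/G)_k \simeq X_k/G$ is natural enough in $X$ that $(f')_k$ genuinely matches the quotient of $f_k$; granting this naturality, the argument closes.
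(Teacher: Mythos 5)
Your proposal is correct and follows essentially the same route as the paper's proof: the paper likewise forms the composite $h = \pi_Y \circ f = f'\circ \pi_X$, transfers each property from $f$ to $h$ via finiteness of $\pi_Y$ and stability under composition, and then applies Proposition~\ref{prop:quot-prop-alg} to $h$, handling the $k$-modification case separately by commuting the quotient with the flat base change $\Spec k \to \Spec k^+$ exactly as you do. Your write-up is only slightly more explicit than the paper's in verifying that $f'$ coincides with the morphism descended from the $G$-invariant composite and in spelling out the identifications $(X/G)_k \simeq X_k/G$ and $(Y/G)_k \simeq Y_k/G$.
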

\begin{proof}
We start the proof by considering the commutative diagram
\[
\begin{tikzcd}
X \arrow{r}{f} \arrow{d}{\pi_X}& Y \arrow{d}{\pi_Y}\\
X/G \arrow{r}{f'} & Y/G.
\end{tikzcd}
\]
We denote by $h\colon X \to Y/G$ the composition $ f'\circ \pi_X = \pi_Y\circ f$. Note that, for all relevant properties $\mathbf{P}$ except for $k$-modification, $f$ satisfies $\mathbf{P}$ implies that $h$ satisfies $\mathbf{P}$ due to finiteness of $\pi_Y$. Thus, all but the $k$-modification property follow from Proposition~\ref{prop:quot-prop-alg} applied to $h$.
\smallskip

Now suppose that $f$ is a $k$-modification. We have already proven that $f'$ is a proper map, so we only need to show that its restriction to $k$-fibers is an isomorphism. This follows from the fact that the formation of the geometric quotient commutes with flat base change such as $\Spec k \to \Spec k^+$.
\end{proof}

\begin{lemma}\label{lemma:properties-alg-G-torsor} Let $Y$ be a flat, locally finite type $k^+$-scheme, and $f\colon X \to Y$ a $G$-torsor for a finite group $G$. The natural morphism $f'\colon X/G \to Y$ is an isomorphism.
\end{lemma}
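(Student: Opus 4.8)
The plan is to deduce the statement from faithfully flat descent together with the fact that a torsor becomes trivial after base change along itself.

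First, since $G$ is finite the torsor $f\colon X\to Y$ is finite and faithfully flat; in particular it is affine, so every orbit $G.x$ lies in the affine open $f^{-1}(V)$ for any affine open $V\ni f(x)$. Hence $X$ satisfies the hypothesis of Theorem~\ref{thm:valuation-main}, the quotient $X/G$ exists as a flat, locally finite type $k^+$-scheme, and the universal property produces the factorization $f=f'\circ\pi$ with $\pi\colon X\to X/G$. Because $f$ is faithfully flat and quasi-compact, being an isomorphism descends along it; so I would reduce to showing that the base change
\[
\mathrm{pr}_2\colon (X/G)\times_Y X \to X
\]
of $f'$ along $f$ is an isomorphism.

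Next I would trivialize the torsor after this base change. The torsor condition gives a canonical $G$-equivariant isomorphism $X\times_Y X \xrightarrow{\sim} \coprod_{g\in G} X$, $(x_1,x_2)\mapsto (g,x_2)$ with $g x_2 = x_1$, where $G$ acts on the source through the first factor and on the target by permuting the copies via left translation. I would then identify $(X/G)\times_Y X$ with the quotient $(X\times_Y X)/G$ by the $G$-action on the first factor: this is the assertion that the formation of invariants commutes with the flat base change $f$, which affinely (writing $X=\Spec A$, $Y=\Spec R$, with $R\to A$ flat) amounts to $A^G\otimes_R A \simeq (A\otimes_R A)^G$ and follows from exactness of $-\otimes_R A$ applied to $0\to A^G\to A\to \prod_{g\in G} A$ exactly as in the proof of Lemma~\ref{lemma:invariants-commute-flat-base-change}. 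This identification is compatible with the projections to $X$.

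Finally I would compute, using the trivialization,
\[
(X/G)\times_Y X \simeq (X\times_Y X)/G \simeq \Big(\coprod_{g\in G} X\Big)\big/G \simeq X,
\]
since $G$ acts freely and transitively on the set of copies, so the quotient is a single copy of $X$ and the induced map to $X$ is the identity. Tracing the maps to $X$ through these isomorphisms shows that $\mathrm{pr}_2$ is the identity, hence an isomorphism, and descent then yields that $f'$ is an isomorphism. The main subtlety to get right is the flat base change for invariants along $f$ --- this is base change over $Y$ rather than over $X/G$, so Theorem~\ref{thm:alg-main}(\ref{thm:alg-main-4}) does not apply verbatim and one argues via the kernel description of $A^G$ --- together with careful bookkeeping of which factor carries the $G$-action, so that the final identifications are genuinely compatible with the projection $\mathrm{pr}_2$.
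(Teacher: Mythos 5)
Your proof is correct and takes essentially the same route as the paper: establish the hypotheses of Theorem~\ref{thm:valuation-main} using affineness of $f$, descend the isomorphism claim along the faithfully flat $f\colon X \to Y$, commute quotient formation with this base change, and trivialize $X\times_Y X \simeq \coprod_{g\in G} X$ to settle the trivial-torsor case. The only divergence is the justification of the base-change step: the paper does cite Theorem~\ref{thm:alg-main}(\ref{thm:alg-main-4}), which in fact applies here after observing that $Z \coloneqq (X/G)\times_Y X \to X/G$ is flat (being the base change of $f$ along $X/G \to Y$) and that $X\times_{X/G} Z \simeq X\times_Y X$, whereas you re-derive the needed statement directly from the kernel sequence $0 \to A^G \to A \to \prod_{g\in G} A$ tensored with the flat $R$-algebra $A$ --- an equally valid, self-contained substitute.
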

\begin{proof}
Since a $G$-torsor is a finite \'etale morphism, we see that $X$ is a flat, locally finite type $k^+$-scheme. Moreover, we note that the conditions of Theorem~\ref{thm:valuation-main} are satisfied as $f$ is affine and the action on $Y$ is trivial. Thus, $X/G$ is a flat, locally finite type $k^+$-scheme. The universal property of the geometric quotient defines the map
\[
X/G \to Y
\]
that we need to show to be an isomorphism. It suffices to check this \'etale locally on $Y$ as the formation of $X/G$ commutes with flat base change by Theorem~\ref{thm:alg-main}(\ref{thm:alg-main-4}). Therefore, it suffices to show that it is an isomorphism after the base change along $X \to Y$. Now, $X\times_Y X$ is a trivial $G$-torsor over $X$, so it suffices to show the claim for a trivial $G$-torsor. This is essentially obvious and follows either from the construction or from the universal property. 
\end{proof}

\subsection{Properties of the Formal Quotients}\label{section:prop-formal} 
Similarly to Section~\ref{section:prop-alg}, we discuss that certain properties descend to the geometric quotient in the formal setup. Most proofs are similar to those in Section~\ref{section:prop-alg}. \smallskip

For the rest of the section, we fix a complete, microbial valuation ring $k^+$ with a pseudo-uniformizer $\varpi$ and field of fractions $k$. \smallskip

We consider a $G$-equivariant morphism $f\colon \X \to \Y$ of admissible formal $k^+$-schemes. We assume that the actions of $G$ on both $\X$ and $\Y$ satisfy the condition of Theorem~\ref{thm:formal-main}. Then the universal property of the geometric quotient implies that $f$ descends to a morphism $f'\colon \X/G \to \Y/G$ over $k^+$:
\[
\begin{tikzcd}
\X \arrow{d}{\pi_\X}  \arrow{r}{f} & \Y \arrow{d}{\pi_\Y}\\
\X/G \arrow{r}{f'} & \Y/G.
\end{tikzcd}
\]

We show that various properties of $f$ descend to $f'$:

\begin{prop}\label{prop:quot-prop-formal} Let $f\colon \X\to \Y$, a finite group $G$, and $f'\colon \X/G \to \Y/G$ be as above. Then $f'$ is quasi-compact (resp. quasi-separated, resp. separated, resp. proper, resp. a rig-isomorphism\footnote{A morphism $f\colon \X \to \Y$ of admissible formal $k^+$-schemes is called {\it a rig-isomorphism} if the adic generic fiber $f_k\colon \X_k \to \Y_k$ is an isomorphism.}, resp. finite) if $f$ is so.
\end{prop}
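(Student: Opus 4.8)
The plan is to transcribe the proof of Proposition~\ref{prop:quot-prop-alg-2} into the formal category, the only genuinely new ingredient being the rig-isomorphism case. As there, I would introduce the composite $h\coloneqq f'\circ \pi_\X = \pi_\Y\circ f\colon \X \to \Y/G$, which is $G$-invariant for the action on its source since $\pi_\Y$ is $G$-invariant and $f$ is $G$-equivariant. Theorem~\ref{thm:formal-main} guarantees that $\pi_\Y$ is surjective, finite and topologically finitely presented, hence in particular quasi-compact, separated and proper. As each of the properties quasi-compact, quasi-separated, separated, proper and finite is stable under composition, $f$ having such a property forces $h=\pi_\Y\circ f$ to have it as well. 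It then remains to descend the property from $h$ to $f'$ along the finite surjection $\pi_\X$.

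This descent step is the verbatim formal analog of Proposition~\ref{prop:quot-prop-alg}, and I would run exactly the same arguments after reducing to the case $\Y/G$ affine (all the properties in question being local on the target). Quasi-compactness of $f'$ is immediate from $\X/G=\pi_\X(\X)$ being a continuous image of the quasi-compact $\X$. For quasi-separatedness and separatedness I would use the commutative square relating $\Delta_\X$ and $\Delta_{\X/G}$: since $\pi_\X$ is finite, so is $\pi_\X\times_{\Y/G}\pi_\X$, whence it is quasi-compact and closed, and surjectivity of $\pi_\X$ then transfers quasi-compactness (resp.\ closedness) of the image of $\Delta_\X$ to that of $\Delta_{\X/G}$. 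Properness of $f'$ follows by assembling quasi-compactness, separatedness and topological finite type-ness (Theorem~\ref{thm:formal-main}), together with universal closedness descended from $h$ via surjectivity of $\pi_\X$. The only inputs beyond the schematic argument are the assertions of Theorem~\ref{thm:formal-main} that $\pi_\X$ is surjective, finite and topologically finitely presented.

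The finite case is where the most care is needed, because finiteness of a morphism of admissible formal schemes is subtle and, crucially, the formation of invariants does not commute with reduction modulo $\varpi$, so one cannot simply identify $(\X/G)_0$ with $\X_0/G$. Having already shown $f'$ proper, I would reduce finiteness to its special fibre: by \cite[Proposition I.4.1.12]{FujKato} the preimage under $f'$ of an affine open is affine as soon as its reduction modulo $\varpi$ is, and then module-finiteness of the corresponding ring extension lifts from its reduction by the successive-approximation criterion \cite[\href{https://stacks.math.columbia.edu/tag/031D}{Tag 031D}]{stacks-project}. Thus it suffices to prove that $f'_0\colon (\X/G)_0\to(\Y/G)_0$ is finite. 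Now $f'_0$ is proper (the reduction of the proper $f'$) and quasi-finite: writing $h_0=f'_0\circ\pi_{\X,0}$, the map $h_0$ is finite and $\pi_{\X,0}$ is finite surjective, so each fibre of $f'_0$ is the image under $\pi_{\X,0}$ of a finite fibre of $h_0$ and is therefore finite. Zariski's Main Theorem \cite[Corollaire 18.12.4]{EGA4_4} applied to $f'_0$ then gives that $f'_0$ is finite. Note that this route avoids computing $(\X/G)_0$ altogether, sidestepping the failure of invariants to commute with reduction.

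Finally, for the rig-isomorphism case I would pass to adic generic fibres. By Theorem~\ref{thm:comparison-formal-adic} the natural maps $\X_k/G\to(\X/G)_k$ and $\Y_k/G\to(\Y/G)_k$ are isomorphisms, and by naturality of these isomorphisms the map $(f')_k$ is identified with the morphism on quotients induced by $f_k$. Since $f$ is a rig-isomorphism, $f_k$ is an isomorphism, and it is $G$-equivariant by functoriality of $(-)_k$; an equivariant isomorphism induces an isomorphism on quotients, its inverse descending to the inverse map. Hence $(f')_k$ is an isomorphism, i.e.\ $f'$ is a rig-isomorphism. I expect the finite case to be the main obstacle: the descent arguments are routine translations, and the rig-isomorphism case is conceptually straightforward once the comparison Theorem~\ref{thm:comparison-formal-adic} is available, but reducing finiteness to the special fibre without a usable description of $(\X/G)_0$ is the point that genuinely requires the formal machinery.
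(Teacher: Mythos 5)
Your proposal is correct and takes essentially the same approach as the paper: the quasi-compact, quasi-separated, separated, and proper cases by transcribing Proposition~\ref{prop:quot-prop-alg-2} with Theorem~\ref{thm:formal-main} supplying the properties of $\pi_\X$ and $\pi_\Y$, the rig-isomorphism case via Theorem~\ref{thm:comparison-formal-adic} and equivariance of $f_k$, and the finite case by noting $f'$ is proper and quasi-finite, so its mod-$\varpi$ fiber is finite, and then lifting finiteness to the formal morphism. The only cosmetic difference is that the paper cites \cite[Proposition I.4.2.3]{FujKato} directly for this last lifting step, whereas you reconstruct it from \cite[Proposition I.4.1.12]{FujKato} together with the successive-approximation criterion \cite[\href{https://stacks.math.columbia.edu/tag/031D}{Tag 031D}]{stacks-project}.
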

\begin{proof}

We note that in the case of a quasi-compact (resp. quasi-separated, resp. separated, resp. proper) $f$, the proof of Proposition~\ref{prop:quot-prop-alg-2} works verbatim. We only need to use Theorem~\ref{thm:formal-main} in place of Theorem~\ref{thm:valuation-main}.\smallskip

The rig-isomorphism case is easy, akin to the $k$-modification case in Proposition~\ref{prop:quot-prop-alg-2}. We only need to use Theorem~\ref{thm:comparison-formal-adic} in place of Theorem~\ref{thm:alg-main}(\ref{thm:alg-main-4}). \smallskip

Now suppose that $f$ is finite. The proper case implies that $f'$ is proper, and it is clearly quasi-finite. Therefore, the mod-$\varpi$ fiber $f'_0\colon (\X/G)_0 \to (\Y/G)_0$ is finite. Now \cite[Proposition I.4.2.3]{FujKato} gives that $f'$ is finite. 
\end{proof}

\begin{lemma}\label{lemma:properties-formal-G-torsor} Let $\Y$ be an admissible formal $k^+$-scheme, and $f:\X \to \Y$ a $G$-torsor for a finite group $G$. The natural morphism $f'\colon \X/G \to \Y$ is an isomorphism.
\end{lemma}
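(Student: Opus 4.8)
The plan is to transcribe the proof of Lemma~\ref{lemma:properties-alg-G-torsor} into the formal setting, using Theorem~\ref{thm:formal-main} in place of its schematic counterpart. First I would record that the hypotheses of Theorem~\ref{thm:formal-main} are met. Since a $G$-torsor is a finite \'etale morphism, $\X$ is flat and locally topologically of finite type, hence admissible, because $\Y$ is. Moreover $f$ is finite, so it is affine, and the $G$-action on $\X$ lies over the \emph{trivial} action on $\Y$; thus for $x\in \X$ and an affine open $\V \subset \Y$ containing $f(x)$, the preimage $f^{-1}(\V)$ is a $G$-stable affine open containing the whole orbit $G.x$ (as $f(g.x)=f(x)$ for all $g\in G$). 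Consequently $\X/G$ exists as an admissible formal $k^+$-scheme, and the universal property of the geometric quotient produces the canonical morphism $f'\colon \X/G \to \Y$ that we must show is an isomorphism.

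Next I would reduce to a trivial torsor by faithfully flat descent. Being a $G$-torsor, $f$ is finite, flat and surjective, hence faithfully flat, so it suffices to prove that the base change of $f'$ along $f$ is an isomorphism. Because the formation of the quotient commutes with flat base change (Theorem~\ref{thm:formal-main}(\ref{thm:formal-main-4})), base-changing along the flat morphism $f$ identifies $(\X/G)\times_{\Y} \X$ with $(\X \times_{\Y} \X)/G$, where $G$ acts through the first factor. Here one should note that, since $f$ is finite, the completed fibre product $\X \times_{\Y} \X$ agrees with the ordinary one, so no completion subtleties intervene.

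Finally I would invoke the standard fact that a torsor pulled back along itself trivializes: the map $(x_1,x_2)\mapsto(x_1,g)$, where $g\in G$ is the unique element with $g.x_1=x_2$, exhibits $\X \times_{\Y} \X$ as the trivial $G$-torsor $\X \times \underline{G}=\coprod_{g\in G}\X$ over $\X$, with $G$ permuting the copies. For such a trivial torsor the quotient is visibly $\X$ itself, so the base-changed morphism $(\X \times_{\Y} \X)/G \to \X$ is an isomorphism; descent along the faithfully flat $f$ then yields that $f'$ is an isomorphism.

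The main obstacle I anticipate is purely one of bookkeeping rather than new ideas: one must confirm that faithfully flat descent of isomorphisms is available for the finite flat surjection $f$ in the category of admissible formal $k^+$-schemes, and that the trivialization of $\X\times_{\Y}\X$ is compatible with the $G$-action used in the quotient. Both follow from the finiteness and flatness of $f$ together with the flat-base-change compatibility already established in Theorem~\ref{thm:formal-main}, so the argument requires no genuinely analytic input beyond the results of this section.
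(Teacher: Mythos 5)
Your argument is correct and is essentially the paper's proof: both verify the hypotheses of Theorem~\ref{thm:formal-main} (using that a $G$-torsor is finite \'etale and the action on $\Y$ is trivial), base-change along the faithfully flat morphism $f$ itself via Theorem~\ref{thm:formal-main}(\ref{thm:formal-main-4}), trivialize $\X\times_\Y\X$, and conclude by faithfully flat descent. The paper pins down the one step you flag as an obstacle by citing descent for adic faithfully flat morphisms (\cite[Proposition I.6.1.5]{FujKato}), observing exactly as you do that the finite flat case is easy because the completed tensor product along a finite module coincides with the ordinary one (Lemma~\ref{lemma:formal-general}(\ref{lemma:formal-general-1})); note that this descent statement is a separate input and does not formally follow from the flat base change property of the quotient.
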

\begin{proof}
The proof of Lemma~\ref{lemma:properties-alg-G-torsor} adapts to this situation. The only non-trivial fact that we used is that one can check that a morphism is an isomorphism after a finite \'etale base change (and we use Theorem~\ref{thm:formal-main}(\ref{thm:formal-main-4}) in place of Theorem~\ref{thm:alg-main}(\ref{thm:alg-main-4})). This follows from descent for adic, faithfully flat morphisms \cite[Proposition I.6.1.5]{FujKato}\footnote{The case of a finite flat morphism is much easier as the completed tensor product along a finite module coincides with the usual tensor product due to Lemma~\ref{lemma:formal-general}(\ref{lemma:formal-general-1}).} 
\end{proof}

\subsection{Properties of the Adic Quotients}\label{section:prop-adic} 
Similarly to Section~\ref{section:prop-alg} and Section~\ref{section:prop-formal}, we discuss that certain properties descend to the geometric quotient in the adic setup. \smallskip

For the rest of the section, we fix a locally strongly noetherian analytic adic space $S$. \smallskip

We consider a $G$-equivariant $S$-morphism $f:X \to Y$ of locally topologically finite type adic $S$-spaces. We assume that the actions of $G$ on both $X$ and $Y$ satisfy the condition of Theorem~\ref{thm:adic-main}. Then the universal property of the geometric quotient implies that $f$ descends to a morphism $f'\colon X/G \to Y/G$ over $S$:
\[
\begin{tikzcd}
X \arrow{d}{\pi_X}  \arrow{r}{f} & Y \arrow{d}{\pi_Y}\\
X/G \arrow{r}{f'} & Y/G.
\end{tikzcd}
\]

We show that various properties of $f$ descend to $f'$:

\begin{prop}\label{prop:quot-prop-adic} Let $f\colon X\to Y$, a finite group $G$, and $f'\colon X/G \to Y/G$ be as above. Then $f'$ is quasi-compact (resp. quasi-separated, resp. separated, resp. proper, resp. finite) if $f$ is so.
\end{prop}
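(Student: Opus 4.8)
The plan is to follow the template of Propositions~\ref{prop:quot-prop-alg} and~\ref{prop:quot-prop-alg-2}, feeding in the adic inputs supplied by Theorem~\ref{thm:adic-main}. First I would reduce to the situation of Proposition~\ref{prop:quot-prop-alg}, in which the target carries no group action. Writing $h\colon X \to Y/G$ for the $G$-invariant composition $h = f'\circ\pi_X = \pi_Y\circ f$, I note that $\pi_Y$ is finite and surjective by Theorem~\ref{thm:adic-main}, so each listed property passes from $f$ to $h$ (a finite morphism is quasi-compact, quasi-separated, separated, proper and finite, and each of these is stable under post-composition with a finite morphism). It then suffices to establish the adic counterpart of Proposition~\ref{prop:quot-prop-alg}: for a $G$-invariant morphism $g\colon X \to T$ to a locally topologically finite type adic $S$-space $T$, the induced morphism $f'\colon X/G \to T$ inherits property $\mathbf P$ from $g$. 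As all the properties are local on $T$ and the formation of $X/G$ commutes with the open immersions of an affinoid cover, I may assume $T=\Spa(R,R^+)$ is a strongly noetherian Tate affinoid.

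For quasi-compactness, quasi-separatedness and separatedness the diagonal arguments of Proposition~\ref{prop:quot-prop-alg} transcribe essentially verbatim, once three adic facts are recorded: $\pi\colon X \to X/G$ is finite and surjective (Theorem~\ref{thm:adic-main}); finite morphisms of locally strongly noetherian adic spaces are quasi-compact and closed (as already used in Step~$3$ of Theorem~\ref{thm:adic-geometric-quotient-affine} via \cite[Lemma 1.4.5(ii)]{H3}); and the fibre products $X\times_T X$ and $X/G\times_T X/G$ exist, since $X/G\to T$ is locally topologically finite type. Concretely, quasi-compactness of $g$ over the affinoid $T$ makes $X$ quasi-compact, hence $X/G=\pi(X)$ is quasi-compact and $f'$ is quasi-compact; and the square relating $\Delta_X$, $\Delta_{X/G}$ and $\pi\times_T\pi$ shows that $\Delta_{X/G}\circ\pi$ is quasi-compact (resp. has closed image), so $\Delta_{X/G}$ is quasi-compact (resp. has closed image) because $\pi$ is a surjective quotient map and $\pi\times_T\pi$ is finite, hence closed.

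For properness I would combine the three properties just obtained: $f'$ is quasi-compact and separated, it is topologically of finite type because both $X/G$ and $T$ are locally topologically finite type over $S$, and it is universally closed. The last point is the analog of the algebraic argument: $\pi$ is finite surjective, hence universally surjective, so after any base change $T'\to T$ one has $(f')_{T'}(Z)=g_{T'}(\pi_{T'}^{-1}(Z))$ for closed $Z$, which is closed by universal closedness of $g$. Finally, for finiteness, $f'$ is proper by the previous case and quasi-finite, since for $\bar t\in T$ the fibre $f'^{-1}(\bar t)$ is the image under the finite surjection $\pi$ of the finite set $g^{-1}(\bar t)$; an adic Zariski's Main Theorem then upgrades ``proper and quasi-finite'' to ``finite''.

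I expect the finiteness step to be the main obstacle, for two linked reasons. First, quasi-finiteness of $f'$ must be understood in the adic sense, so the identification of $f'^{-1}(\bar t)$ with the image of $g^{-1}(\bar t)$ has to be carried out on the level of valuation spaces, controlling higher-rank points exactly as in Step~$2$ of Theorem~\ref{thm:adic-geometric-quotient-affine} rather than by naive point-counting. Second, one needs a genuinely analytic form of Zariski's Main Theorem to deduce finiteness from properness plus quasi-finiteness; this is the substitute for \cite[Proposition I.4.2.3]{FujKato} used in the formal case, and here there is no convenient special fibre to reduce to, so the argument must remain inside Huber's framework of strongly noetherian adic spaces. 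The auxiliary points---existence and good behaviour of the fibre products and the permanence of ``topologically finite type'' under the relevant operations---are routine in the locally strongly noetherian setting but should be cited with care.
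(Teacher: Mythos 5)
Your proposal follows essentially the same route as the paper's proof: reduce to the $G$-invariant-target case via $h=\pi_Y\circ f=f'\circ \pi_X$ using finiteness and surjectivity of $\pi_Y$ (as in Proposition~\ref{prop:quot-prop-alg-2}), transcribe the diagonal and surjectivity arguments of Proposition~\ref{prop:quot-prop-alg} with the adic inputs from Theorem~\ref{thm:adic-main}, and settle the finite case by combining properness with quasi-finiteness. The ``adic Zariski's Main Theorem'' you flag as the main obstacle is not a gap: the paper cites \cite[Proposition 1.5.5]{H3} (a quasi-finite proper morphism in this setting is finite) at exactly that point, in place of \cite[Corollaire 18.12.4]{EGA4_4}.
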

\begin{proof}

The proof is almost identical to that of Proposition~\ref{prop:quot-prop-alg-2}. We use Theorem~\ref{thm:adic-main} in place of Theorem~\ref{thm:valuation-main} (that is used in the proof of Proposition~\ref{prop:quot-prop-alg} that Proposition~\ref{prop:quot-prop-alg-2} relies on). We use \cite[Proposition 1.5.5]{H3} in place of \cite[Corollaire 18.12.4]{EGA4_4} to ensure that a quasi-finite, proper morphism is finite. 
\end{proof}

\begin{lemma}\label{lemma:properties-adic-G-torsor} Let $Y$ be a locally topologically finite type adic $S$-space, and $f:X \to Y$ a $G$-torsor for a finite group $G$. The natural morphism $f'\colon X/G \to Y$ is an isomorphism.
\end{lemma}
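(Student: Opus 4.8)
The plan is to follow the proofs of Lemma~\ref{lemma:properties-alg-G-torsor} and Lemma~\ref{lemma:properties-formal-G-torsor} essentially verbatim, replacing the schematic (resp.\ formal) inputs by their adic counterparts. First I would record that a $G$-torsor $f\colon X \to Y$ is finite \'etale, hence finite (in particular affine) and faithfully flat; consequently $X$ is again a locally topologically finite type adic $S$-space. Since the $G$-action on $Y$ is trivial and $f$ is finite, for every $x \in X$ the orbit $G.x$ is contained in $f^{-1}(V)$ for any affinoid open neighborhood $V \ni f(x)$, and $f^{-1}(V)$ is affinoid because $f$ is finite. Thus the hypothesis of Theorem~\ref{thm:adic-main} holds, $X/G$ exists as a locally topologically finite type adic $S$-space, and the universal property produces the natural morphism $f'\colon X/G \to Y$ that I must show is an isomorphism.

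Next I would check that $f'$ is an isomorphism after the faithfully flat base change along $f\colon X \to Y$. Being an isomorphism descends along faithfully flat adic morphisms (the analog of the descent invoked in the formal case in the proof of Lemma~\ref{lemma:properties-formal-G-torsor}); here the situation is even cleaner, since $f$ is finite flat, so the relevant completed tensor products agree with the ordinary ones by Lemma~\ref{lemma:formal-general}(\ref{lemma:formal-general-1}). To identify the base change, I would invoke that the formation of the quotient commutes with flat base change (Theorem~\ref{thm:adic-main}(\ref{thm:adic-main-4})): pulling back $f'$ along $f$ identifies $(X/G)\times_Y X$ with $(X\times_Y X)/G$, where $G$ acts through the first factor.

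Finally, since $f$ is a $G$-torsor, the projection $X\times_Y X \to X$ onto the second factor is the \emph{trivial} $G$-torsor, i.e.\ there is a $G$-equivariant isomorphism $X\times_Y X \cong \coprod_{g\in G} X$ over $X$. The claim therefore reduces to the statement that $\bigl(\coprod_{g\in G} W\bigr)/G \cong W$ for a locally topologically finite type adic $S$-space $W$, which is immediate from the construction of the geometric quotient (or from its universal property, Remark~\ref{rmk:universal-valuative-topologically-locally-ringed}).

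The main obstacle I anticipate is the descent step: making precise that ``isomorphism'' descends along the faithfully flat finite \'etale morphism $f$ in the category of adic spaces, together with the clean identification $(X/G)\times_Y X \cong (X\times_Y X)/G$ supplied by Theorem~\ref{thm:adic-main}(\ref{thm:adic-main-4}). Everything else is formal and parallels the schematic and formal cases already treated in Lemma~\ref{lemma:properties-alg-G-torsor} and Lemma~\ref{lemma:properties-formal-G-torsor}.
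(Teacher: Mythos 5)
Your overall strategy is exactly the paper's: verify the hypotheses of Theorem~\ref{thm:adic-main} (using that $f$ is finite and the action on $Y$ is trivial), base change along the surjective flat morphism $f\colon X \to Y$, identify $(X/G)\times_Y X$ with $(X\times_Y X)/G$ via Theorem~\ref{thm:adic-main}(\ref{thm:adic-main-4}), and reduce to the trivial torsor. However, the step you yourself flag as ``the main obstacle'' --- descent of the property of being an isomorphism along $f$ --- is precisely the non-trivial content, and your proposal leaves it unresolved. There is no general faithfully flat descent of isomorphisms for adic spaces in this setting: an isomorphism of adic spaces is more than an isomorphism of the underlying rings, since one must also control the rings of integral elements and the topologies, and completed tensor products are not exact. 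The paper closes this gap with two specific ingredients you do not supply: first, $f'\colon X/G \to Y$ is \emph{finite}, which follows from Proposition~\ref{prop:quot-prop-adic} (applied to the $G$-equivariant finite morphism $f$ with trivial action on $Y$); second, Lemma~\ref{lemma:huber-flat-descent}, which says exactly that a \emph{finite} morphism of strongly noetherian Tate affinoids is an isomorphism if and only if its base change along a surjective flat morphism is. Finiteness of the morphism being descended is essential there: it is what allows one to reduce to an isomorphism of Tate rings via Lemma~\ref{lemma:huber-finite-iso} (the $+$-rings then agree automatically, because $B^+$ is integral over $A^+$ and $A^+$ is integrally closed in $A\simeq B$), to ignore topologies via the open mapping theorem, and to replace the completed tensor product by the ordinary one via Lemma~\ref{complete-base-change}, after which classical faithfully flat descent applies.

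Two smaller points. Your claim that the completed tensor products agree with the ordinary ones cites Lemma~\ref{lemma:formal-general}(\ref{lemma:formal-general-1}), which is a statement about formal $k^+$-schemes; the correct reference in the adic setting is Lemma~\ref{complete-base-change} (or Lemma~\ref{lemma:finite-tensor-product}). And note that the relevant finiteness for that lemma, as the paper uses it, is finiteness of $f'$ (the morphism being tested), not just finiteness of the base-change morphism $f$; your phrasing attributes it to the latter. With finiteness of $f'$ established and Lemma~\ref{lemma:huber-flat-descent} invoked, the rest of your argument (trivialization of the torsor after base change and the evident computation $\bigl(\coprod_{g\in G} W\bigr)/G \cong W$) goes through as written.
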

\begin{proof}
The proof of Lemma~\ref{lemma:properties-alg-G-torsor} adapts to this situation. The only non-trivial fact that we used is that one can check that $X/G \to Y$ is an isomorphism after a surjective, flat base change (and we use Theorem~\ref{thm:adic-main}(\ref{thm:adic-main-4}) in place of Theorem~\ref{thm:alg-main}(\ref{thm:alg-main-4})). This follows from Lemma~\ref{lemma:huber-flat-descent} as $f'$ is finite due to Proposition~\ref{prop:quot-prop-adic}.
\end{proof}

\newpage

\newpage

\appendix

\section*{Appendix}

\section{Adhesive Rings and Boundedness of Torsion Modules}\label{appendix-A}

Let $A$ be a ring with an ideal $I$. We define the notion of $I$-torsion part of an $A$-module and discuss some of its basic properties. Then we define the notion of (universally) adhesive and topologically (universally) adhesive rings. The main original results in this Appendix are Theorem~\ref{thm:universally-adhesive-main} and Theorem~\ref{thm:adhesive-formal-main}. The rest is mostly a summary of the results from \cite{FujKato} in a form convenient for the reader. 

\subsection{$I$-torsion Submodule}

\begin{defn}\label{defn:I-tors} Let $M$ be an $A$-module, $a\in A$, and $I\subset A$ an ideal. \smallskip

An element $m\in M$ is {\it $a$-torsion} if $a^nm=0$ for some $n\geq 1$. The set of all $a$-torsion elements is denoted by $M_{a-tors}$. \smallskip 

An element $m\in M$ is {\it $I$-torsion} if $m$ is $a$-torsion for every $a\in I$. The set of all $I$-torsion elements is denoted by $M_{I-tors}$. \smallskip

We say that $M$ is {\it I-torsion free} if $M_{I-tors}=0$. \smallskip

An $A$-submodule $N\subset M$ is {\it saturated} if $M/N$ is $I$-torsion free. 
\end{defn}

\begin{rmk}\label{rmk:adhesive-powers} Suppose that $I, J\subset A$ are finitely generated ideals such that $I^n \subset J$ and $J^m \subset I$ for some integers $n$ and $m$. Then $M_{I-tors}=M_{J-tors}$ for any $A$-module $M$.
\end{rmk}

\begin{lemma}\label{lemma:adhesive-torsion-flat-local} Let $A \to B$ be a flat ring homomorphism, and $I\subset A$ a finitely generated ideal, and $M$ an $A$-module. Then $M_{I-tors}\otimes_A B \simeq (M\otimes_A B)_{IB-tors}$.
\end{lemma}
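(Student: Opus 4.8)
The plan is to reduce the statement to a ``finite level'' computation, where finite generation of $I$ lets us realize the relevant torsion submodules as honest kernels that are preserved by a flat base change, and then to pass to a colimit.

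The first step is to record the elementary reformulation
\[
M_{I-tors} = \bigcup_{N \geq 1} (0 :_M I^N) = \colim_N (0 :_M I^N),
\]
where the colimit is taken along the inclusions $(0 :_M I^N) \hookrightarrow (0 :_M I^{N+1})$. The inclusion $\supseteq$ is immediate, since $a^N \in I^N$ for $a \in I$. For $\subseteq$ I would use that $I = (f_1, \dots, f_r)$ is finitely generated: if $m$ is $I$-torsion then $f_i^{n_i} m = 0$ for each $i$, and for $N$ large enough every degree-$N$ monomial in the $f_i$ involves some $f_i$ to a power $\geq n_i$, so $I^N m = 0$. The same reformulation applies verbatim to $IB$ inside $M \otimes_A B$, since $IB$ is again finitely generated (being generated by the images of the $f_i$).

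The key step is then to fix $N$ and choose generators $g_1, \dots, g_s$ of the finitely generated ideal $I^N$. This presents $(0 :_M I^N)$ as the kernel of $m \mapsto (g_1 m, \dots, g_s m)$, giving a left exact sequence
\[
0 \to (0 :_M I^N) \to M \xrightarrow{(g_1,\dots,g_s)} M^s.
\]
Applying $- \otimes_A B$ and using flatness of $A \to B$ to preserve left-exactness (together with $M^s \otimes_A B \simeq (M \otimes_A B)^s$), I obtain that $(0 :_M I^N) \otimes_A B$ is the kernel of multiplication by $(g_1,\dots,g_s)$ on $M \otimes_A B$. Since $g_1, \dots, g_s$ generate $(IB)^N = I^N B$ over $B$, this kernel is exactly $(0 :_{M\otimes_A B} (IB)^N)$. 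Hence $(0 :_M I^N)\otimes_A B \simeq (0 :_{M\otimes_A B}(IB)^N)$, and these identifications are realized inside $M \otimes_A B$, so they are compatible with the transition maps as $N$ varies.

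Finally, I would pass to the colimit over $N$. Because $- \otimes_A B$ commutes with filtered colimits, the reformulation of the first step yields
\[
M_{I-tors}\otimes_A B = \colim_N \big((0:_M I^N)\otimes_A B\big) \simeq \colim_N (0 :_{M\otimes_A B}(IB)^N) = (M\otimes_A B)_{IB-tors}.
\]
The hypotheses are used only at the finite level: flatness of $A \to B$ for the exactness that computes the base change of the annihilator, and finite generation of $I$ both to reduce $I$-torsion to annihilators of the powers $I^N$ and to write each such annihilator as a finite kernel. I expect the only (mild) obstacle to be the bookkeeping that makes the isomorphisms $(0:_M I^N)\otimes_A B \simeq (0:_{M\otimes_A B}(IB)^N)$ natural in $N$, so that the colimit identification is legitimate; this is handled by the observation above that both sides sit inside $M \otimes_A B$ and the isomorphisms are induced by $- \otimes_A B$.
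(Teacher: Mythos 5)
Your proof is correct, and it organizes the reduction differently from the paper. The paper first splits the problem over a finite generating set, writing $M_{I\text{-}tors}=\bigcap_{i} M_{a_i\text{-}tors}$ for $I=(a_1,\dots,a_n)$, reduces to the principal case, and then uses $M_{a\text{-}tors}=\bigcup_n M[a^n]$ with $M[a^n]=\ker(M \xrightarrow{a^n} M)$ preserved by flat base change; note that this route implicitly also needs the fact that a \emph{finite} intersection of submodules commutes with flat base change (a finite intersection is the kernel of $M \to \bigoplus_i M/N_i$), which the paper does not spell out. You instead work with the ideal directly, using finite generation twice: once to identify $M_{I\text{-}tors}$ with $\colim_N (0:_M I^N)$, and once to realize each $(0:_M I^N)$ as the kernel of a single map $M \to M^s$ built from generators of $I^N$, after which flatness and commutation of tensor with filtered colimits finish the argument. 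The two proofs rest on exactly the same two pillars (flatness preserves kernels; tensor commutes with filtered unions), but your decomposition avoids the intersection step entirely, which makes the appeal to flatness happen in one place and closes the small gap left implicit in the paper; the paper's version, in exchange, reduces everything to the more familiar principal case $M[a^n]$.
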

\begin{proof}
We start by choosing some generators $I=(a_1, \dots, a_n)$. Then 
\[
M_{I-tors}=\bigcap M_{a_i-tors}\text{, and } (M\otimes_A B)_{IB-tors}=\bigcap (M\otimes_A B)_{a_i-tors}.
\] 
Therefore, it suffices to show that $M_{a-tors}$ commutes with flat base change. Now we note that $M_{a-tors} = \cup_n M[a^n]$ where $M[a^n]$ is the submodule of elements annihilated by $a^n$. It is clear that \[(M\otimes_A B)[a^n]=M[a^n]\otimes_A B\] 
since $B$ is $A$-flat. This implies that $M_{a-tors}=(M\otimes_A B)_{a-tors}$. 
\end{proof}

%\begin{cor}\label{cor:adhesive-I-tors-flat-local} Let $A \to B$ be a faithfully flat morphism, $I \subset A$ be a finitely generated ideal, and $M$ an $A$-module. Then $M$ is $I$-torsion free if and only if so is $M\otimes_A B$.
%\end{cor}

Lemma~\ref{lemma:adhesive-torsion-flat-local} implies that the notion of $M_{I, tors}$ can be globalized.

\begin{defn}\label{defn:adhesive-torsion-global} Let $X$ be scheme, $\cal{I}$ a quasi-coherent ideal of finite type, and $\cal{M}$ a quasi-coherent $\O_X$-module. The {\it $\O_X$-submodule of $\cal{I}$-torsion elements} $\cal{M}_{\cal{I}-tors}$ is defined as the sheafification of 
\[
U \mapsto \cal{M}(U)_{\cal{I}(U)-tors}. 
\]
\end{defn}

\begin{rmk}\label{rmk:adhesive-torsion-affine} Lemma~\ref{lemma:adhesive-torsion-flat-local} implies that $\cal{M}_{\cal{I}-tors}$ is a quasi-coherent $\O_X$-module. If $X=\Spec A$, $\cal{I}=\widetilde{I}$, and $\cal{M}=\widetilde{M}$. Then $\cal{M}_{\cal{I}-tors}\simeq \widetilde{M_{I-tors}}$.
\end{rmk}

\begin{defn}\label{defn:adhesive-torsionfree-global}
Let $X$ be a scheme, and $\cal{I}$ a quasi-coherent ideal of finite type. We say that $X$ is {\it $\cal{I}$-torsion free} if $\O_{X, \cal{I}-tors}\simeq 0$. \smallskip

Let $f\colon X \to Y$ be a morphism of schemes, and $\cal{I} \subset \O_Y$ a quasi-coherent ideal of finite type. We say that $X$ is {\it $\cal I$-torsion free} if $X$ is $\cal{I}\O_X$-torsion free. 
\end{defn}

\subsection{Universally Adhesive Schemes}

\begin{defn}\label{defn:adhesive} A pair $(R, I)$ of a ring and a finitely generated ideal is {\it adhesive} (or $R$ is {\it $I$-adically adhesive}) if $\Spec R \setminus \mathrm{V}(I)$ is noetherian and, for any finite $R$-module $M$, the $I$-torsion submodule $M_{I-tors}$ (see Definition~\ref{defn:I-tors}) is $R$-finite (see \cite[Definition 0.8.5.4]{FujKato}). \medskip 

A pair $(R, I)$ is {\it universally adhesive} if $(R[X_1, \dots, X_d], IR[X_1, \dots, X_d])$ is an adhesive pair for all $d\geq 0$.
\end{defn}

\begin{rmk}\label{rmk:microbial-valuation-adhesive} A valuation ring $k^+$ is universally adhesive if it is microbial (in the sense of Definition~\ref{defn:formal-microbial-valuation}). More precisely, $k^+$ is universally $\varpi$-adically adhesive for any choice of a pseudo-uniformizer $\varpi \in k^+$. Indeed, \cite[Proposition 0.8.5.3]{FujKato} implies that it is sufficient to see that any finite $k^+[X_1, \dots, X_d]$-module $M$ that is $\varpi$-torsion free (see Definition~\ref{defn:I-tors}) is finitely presented. This follows from Lemma~\ref{lemma:valuations-general}(\ref{lemma:valuations-general-3}) and the observation that $M$ is torsion free over $k^+$ if and only if it is $\varpi$-torsion free.
\end{rmk}

\begin{lemma}\label{lemma:adhesive=microbial} A valuation ring $k^+$ is $I$-adically adhesive for some finitely generated ideal $I$ if and only if $k^+$ is microbial.
\end{lemma}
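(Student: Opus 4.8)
The plan is to prove both implications, the forward one being essentially already available and the reverse one requiring an explicit construction. Throughout write $\Gamma$ for the value group of $k^+$ and $v$ for its valuation, and recall the basic dictionary: for a nonzero non-unit $\varpi\in k^+$ the principal ideal $(\varpi)$ is an ideal of definition for the valuation topology exactly when $\{n\,v(\varpi)\}_{n\ge 1}$ is cofinal in $\Gamma$, equivalently when $\bigcap_n(\varpi^n)=0$; and $k^+$ is microbial precisely when some such $\varpi$ exists. (The degenerate ideals $I=0$ and $I=k^+$ correspond to the trivial valuation, where $k^+$ is a field and hence microbial, so in the substantive part I take $I=(\varpi)$ with $\varpi$ a nonzero non-unit, which is the form of an honest ideal of definition.)

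For the direction ``microbial $\Rightarrow$ adhesive for some $I$'' I would simply take $I=(\varpi)$ for a pseudo-uniformizer $\varpi$, which exists by the definition of microbial, and invoke Remark~\ref{rmk:microbial-valuation-adhesive}: it gives that $k^+$ is even universally $\varpi$-adically adhesive, so in particular $(k^+,(\varpi))$ is adhesive.

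For the converse I argue by contraposition: assuming $k^+$ is \emph{not} microbial, I produce a finite $k^+$-module whose $(\varpi)$-torsion submodule fails to be finitely generated, contradicting the adhesiveness of $(k^+,I)$ with $I=(\varpi)$ (principal, as $k^+$ is a valuation ring). The main point to appreciate — and the reason the argument must be this delicate — is that the \emph{noetherianness} half of adhesiveness is not enough: there exist non-microbial valuation rings for which $\Spec k^+\setminus V(\varpi)=\Spec k^+[1/\varpi]$ is noetherian (e.g. take the convex subgroups of $\Gamma$ to form an ascending chain of order type $\omega+1$, which has no maximal proper member yet no infinite descending chain). Thus it is exactly the $I$-torsion finiteness condition that must be exploited.

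Concretely, since $k^+$ is not microbial no element is a pseudo-uniformizer, so in particular $\bigcap_n(\varpi^n)\ne 0$; I choose $0\ne t\in\bigcap_n(\varpi^n)$ and set $M\coloneqq k^+/(t)$, a finite cyclic module. Writing $s_n\coloneqq \varpi^{-n}t$ (which lies in $k^+$ because $t\in(\varpi^n)$), the ideals $(s_n)$ form a strictly increasing chain, since $v(s_n)=v(t)-n\,v(\varpi)$ is strictly decreasing. A direct check identifies $M_{(\varpi)-tors}=\mathfrak{c}/(t)$ with $\mathfrak{c}\coloneqq\bigcup_n(s_n)$: indeed $\bar x$ is $(\varpi)$-torsion iff $\varpi^m x\in(t)$ for some $m$, i.e. iff $x\in(s_m)$ for some $m$. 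The hard part, which is really the crux of the lemma, is to see that $\mathfrak{c}$ is not finitely generated: a finitely generated ideal of a valuation ring is principal, and if $\mathfrak{c}=(c)$ then $c\in(s_N)$ for some $N$ gives $v(c)\ge v(s_N)$, while $s_{N+1}\in\mathfrak{c}=(c)$ gives $v(c)\le v(s_{N+1})<v(s_N)\le v(c)$, a contradiction. Hence $\mathfrak{c}/(t)=M_{(\varpi)-tors}$ is not a finite $k^+$-module, contradicting adhesiveness and completing the contrapositive.
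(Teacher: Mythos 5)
Your core argument is correct, and it takes a genuinely different route from the paper. The paper's proof of ``adhesive $\Rightarrow$ microbial'' uses \emph{only} the noetherianness half of Definition~\ref{defn:adhesive}: writing $I=(a)$ (finitely generated ideals of a valuation ring are principal), adhesiveness makes $k^+[1/a]$ a noetherian valuation ring, hence a field or a DVR, and in either case $k^+$ acquires a height-one prime ($\rad(a)$, resp.\ the contraction of the maximal ideal of $k^+[1/a]$), which gives microbiality by \cite[Proposition 9.1.3]{Seminar}. You instead use \emph{only} the torsion half: for $\varpi$ a nonzero non-unit and $k^+$ not microbial, you pick $0\neq t\in\bigcap_n(\varpi^n)$ and identify the $(\varpi)$-torsion of $k^+/(t)$ with $\bigl(\bigcup_n(\varpi^{-n}t)\bigr)/(t)$, which is not finite because a strictly increasing union of principal ideals in a valuation ring is never finitely generated. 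That computation is correct and is more self-contained than the paper's argument (it needs no facts about height-one primes or localizations of valuation rings); what it hides is that the noetherianness condition alone already suffices, which is exactly what the paper exploits.

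Two of your side claims are, however, wrong. The serious one is your ``main point to appreciate'': that the noetherianness half of adhesiveness is \emph{not} enough, illustrated by a value group whose convex subgroups form an ascending chain of order type $\omega+1$. This is false --- the paper's proof proceeds precisely via the noetherianness half --- and your example does not do what you claim. Convex subgroups $H_1\subsetneq H_2\subsetneq\cdots$ correspond \emph{order-reversingly} to primes $\mathfrak{p}_{H_1}\supsetneq\mathfrak{p}_{H_2}\supsetneq\cdots$ of $k^+$; any nonzero non-unit $\varpi$ has $v(\varpi)\in H_m$ for some finite $m$, so the infinitely many primes $\mathfrak{p}_{H_n}$, $n\geq m$, do not contain $\varpi$ and survive in $k^+[1/\varpi]$. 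A valuation ring with infinitely many primes is not noetherian (a noetherian valuation ring is a field or a DVR), so $\Spec k^+\setminus V(\varpi)$ is \emph{not} a noetherian scheme --- and ``noetherian scheme'' is the meaning of the condition in Definition~\ref{defn:adhesive}, following \cite[Definition 0.8.5.4]{FujKato} and the paper's own usage. Your space is noetherian only as a topological space, which is presumably the source of the slip. Second, the parenthetical on degenerate ideals is off: $I=k^+$ forces $k^+$ to be noetherian, hence a field \emph{or a DVR} (either way microbial, so that case is harmless), while $I=(0)$ forces nothing at all --- every valuation ring is $(0)$-adically adhesive, since every element of any module is $(0)$-torsion and $\Spec k^+\setminus V(0)=\emptyset$ --- so the lemma as literally stated fails for $I=0$, and that case must be excluded by convention; the paper's proof tacitly does so too, since it treats $k^+[1/a]$ as a valuation ring. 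Neither slip touches your central computation, but the first one materially misrepresents the relationship between your proof and the paper's.
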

\begin{proof}
If $k^+$ is microbial, we take $I=(\varpi)$ for any pseudo-uniformizer $\varpi$. Then $k^+$ is $I$-adically adhesive by Remark~\ref{rmk:microbial-valuation-adhesive}. \smallskip

Now we suppose that $k^+$ is adhesive for some finitely generated ideal $I$. Then $I=(a)$ is principal because $k^+$ is a valuation ring. Hence, $k^+\left[\frac{1}{a}\right]$ is a noetherian valuation ring by the $I$-adic adhesiveness of $k^+$. Therefore, $k^+\left[\frac{1}{a}\right]$ is either a field or discrete valuation ring. \smallskip

We firstly consider the case $k^+\left[\frac{1}{a}\right]$ is a field. We then observe that $\rm{rad}(a)$ is a height-$1$ prime ideal of $k^+$ by \cite[Proposition 0.6.7.2 and Proposition 0.6.7.3]{FujKato}. Therefore, $k^+$ is microbial by \cite[Definition 1.1.4]{H3} or \cite[Proposition 9.1.3]{Seminar}. \smallskip

Now we consider the case $k^+\left[\frac{1}{a}\right]$ a discrete valuation ring. Its maximal ideal $\m$ is clearly of height-$1$, so it defines a height-$1$ prime ideal $\frak{p}$ of $k^+$. Hence, {\it loc. cit.} implies that $k^+$ is microbial. 
\end{proof}

Here we summarize the main results about universally adhesive pairs:

\begin{lemma}\label{lemma:adhesive-general} Let $(R, I)$ be a universally adhesive pair, $A$ a finite type $R$-algebra, and $M$ a finite $A$-module.
\begin{enumerate}
    \item\label{lemma:adhesive-general-4} Let $N\subset M$ be a saturated $A$-submodule of $M$. Then $N$ is a finite $A$-module.
    \item\label{lemma:adhesive-general-3} If $M$ is $I$-torsion free as an $A$-module, then it is a finitely presented $A$-module. 
    \item\label{lemma:adhesive-general-2} If $A$ is $I$-torsion free as an $R$-module, then it is a finitely presented $R$-algebra.
\end{enumerate}
\end{lemma}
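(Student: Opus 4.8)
The plan is to isolate part (\ref{lemma:adhesive-general-3}) over a polynomial ring as the single substantive input, and then to deduce the other two assertions by formal arguments, exactly mirroring the valuation-ring analogue Lemma~\ref{lemma:valuations-general} (where (\ref{lemma:valuations-general-4}) was derived from (\ref{lemma:valuations-general-3})). The foundational case is the following: for $P = R[X_1, \dots, X_d]$ and any $d \geq 0$, every finite $IP$-torsion-free $P$-module is finitely presented over $P$. This is precisely the module-theoretic characterization of universal adhesiveness recorded in \cite[Proposition 0.8.5.3]{FujKato} (used here in the direction opposite to the one in Remark~\ref{rmk:microbial-valuation-adhesive}), and it is where all of the genuine Fujiwara--Kato content enters. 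I expect this to be the main, and essentially the only non-formal, step; everything else is homological bookkeeping.

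To upgrade the foundational case to part (\ref{lemma:adhesive-general-3}) for a general finite type algebra $A = P/J$, I would view a finite $I$-torsion-free $A$-module $M$ as a $P$-module. It is then finite and $IP$-torsion-free, so it is finitely presented over $P$ by the foundational case. Choosing a presentation $P^{a} \to P^{b} \to M \to 0$ and applying $-\otimes_P A$, and noting that $M \otimes_P A = M/JM = M$ since $JM = 0$, right exactness of the tensor product yields an exact sequence $A^{a} \to A^{b} \to M \to 0$. Hence $M$ is finitely presented over $A$.

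Part (\ref{lemma:adhesive-general-4}) then follows formally: given a saturated $A$-submodule $N \subset M$, the quotient $M/N$ is a finite $I$-torsion-free $A$-module, hence finitely presented over $A$ by part (\ref{lemma:adhesive-general-3}); since $N = \ker(M \twoheadrightarrow M/N)$ is the kernel of a surjection from a finite module onto a finitely presented one, it is finite over $A$. Finally, for part (\ref{lemma:adhesive-general-2}) I would write $A = P/J$ with $P = R[X_1,\dots,X_d]$; the hypothesis that $A$ is $I$-torsion-free over $R$ says exactly that $J$ is a saturated $P$-submodule of the finite $P$-module $P$, so part (\ref{lemma:adhesive-general-4}) shows $J$ is a finite $P$-module, i.e. a finitely generated ideal, whence $A$ is a finitely presented $R$-algebra.

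The one point requiring care beyond the Fujiwara--Kato input is the compatibility of the various torsion submodules. I would record at the outset that for a finitely generated ideal $I = (a_1,\dots,a_n)$ one has $M_{I-tors} = \bigcap_i M_{a_i-tors}$, as in the proof of Lemma~\ref{lemma:adhesive-torsion-flat-local}; since the generators $a_i \in R$ act through the ring map $R \to P \to A$, being $I$-torsion-free as an $R$- or $A$-module coincides with being $IP$-torsion-free as a $P$-module. With this identification in hand, all of the reductions above are routine.
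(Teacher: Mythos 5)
Your proposal is correct and is essentially the paper's own argument: both hinge on \cite[Proposition 0.8.5.3]{FujKato} applied through the universal adhesiveness of the pair $(R[X_1,\dots,X_d], IR[X_1,\dots,X_d])$, both deduce part (\ref{lemma:adhesive-general-4}) from part (\ref{lemma:adhesive-general-3}) by realizing $N$ as the kernel of a surjection from a finite module onto the finitely presented module $M/N$, and both obtain part (\ref{lemma:adhesive-general-2}) by applying part (\ref{lemma:adhesive-general-4}) to the kernel of a presentation $R[X_1,\dots,X_d]\twoheadrightarrow A$. The only cosmetic difference is that the paper transfers adhesiveness itself from the polynomial ring to $A$ (its ``this easily implies that so is $A$'') and then invokes Fujiwara--Kato over $A$, whereas you invoke it only over the polynomial ring and carry finite presentation down to $A$ by right-exactness of $-\otimes_{R[X_1,\dots,X_d]} A$; both reductions are valid, and your torsion-compatibility remark (that $I$-torsion over $R$, $A$, or the polynomial ring agree, since $I$ is finitely generated and torsion is computed generator by generator) correctly handles the one point the paper leaves implicit.
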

\begin{proof}
We choose some surjective morphism $\varphi \colon R[X_1, \dots, X_d] \to A$. Then the definition of universal adhesiveness says that $R[X_1, \dots, X_d]$ is $I$-adically adhesive. This easily implies that so is $A$. Now the first two claims follow from \cite[Proposition 0.8.5.3]{FujKato}. To show the last claim, we note that the kernel $\varphi$ is a saturated submodule of $R[X_1, \dots, X_d]$, so it is a finitely generated ideal by Part~($1$). Therefore, $A$ is finitely presented as an $R$-algebra. 
\end{proof}

\begin{lemma}\label{lemma:universally-adhesive-Artin-Tate} Let $(R, I)$ be a universally adhesive pair (see Definition~\ref{defn:adhesive}), and $A\to B$ be a finite injective morphism of $R$-algebras. Suppose that $B$ is of finite type over $R$, and that $A\subset B$ is saturated (see Definition~\ref{defn:I-tors}). Then $A$ is a finite type $R$-algebra.
\end{lemma}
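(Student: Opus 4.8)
The plan is to imitate the proof of Lemma~\ref{lemma:valuation-Artin-Tate} essentially verbatim, replacing the valuation-ring input Lemma~\ref{lemma:valuations-general}(\ref{lemma:valuations-general-4}) with its adhesive counterpart Lemma~\ref{lemma:adhesive-general}(\ref{lemma:adhesive-general-4}). First I would use that $B$ is of finite type over $R$ to pick elements $x_1, \dots, x_n \in B$ generating $B$ as an $R$-algebra, and then use finiteness of $B$ over $A$ to pick $A$-module generators $y_1, \dots, y_m \in B$. Expressing each $x_i$ as an $A$-linear combination of the $y_j$, and each product $y_i y_j$ as an $A$-linear combination of the $y_l$, produces finitely many structure constants $a_{i,j}, a_{i,j,l} \in A$, exactly as in the valuation-ring case.

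Next I would let $A' \subset A$ be the $R$-subalgebra generated by all the $a_{i,j}$ and $a_{i,j,l}$. By construction $A'$ is a finite type $R$-algebra, and $B$ is finite over $A'$ because $y_1, \dots, y_m$ generate it as an $A'$-module: the displayed relations express the algebra generators $x_i$ and all products of the $y_j$ using coefficients that already lie in $A'$. At this point $A$ sits as an $A'$-submodule of the finite $A'$-module $B$, and it remains only to upgrade this to the statement that $A$ is a \emph{finite} $A'$-module.

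The key step is to apply Lemma~\ref{lemma:adhesive-general}(\ref{lemma:adhesive-general-4}) over the base ring $A'$: since $(R,I)$ is universally adhesive and $A'$ is of finite type over $R$, the lemma gives that $A$ is a finite $A'$-module provided it is a \emph{saturated} $A'$-submodule of $B$, i.e. provided $B/A$ is $I$-torsion free. Granting this, $A$ is finite over $A'$, and hence $A$ is of finite type over $R$ since $A'$ already is, which is the desired conclusion.

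The one point that requires a moment of care — and the only place the argument genuinely differs from the valuation-ring proof — is verifying that saturatedness of $A$ in $B$ is unaffected by shrinking the coefficient ring from $A$ to $A'$. This is immediate once one recalls that $I$-torsion in Definition~\ref{defn:I-tors} is measured purely by multiplication by elements of $I \subset R$, a condition that factors through the common $R$-algebra structure on $A$, $A'$ and $B$ and is therefore independent of which intermediate $R$-algebra one regards $B/A$ as a module over; thus the hypothesis that $B/A$ is $I$-torsion free is exactly the condition needed to invoke Lemma~\ref{lemma:adhesive-general}(\ref{lemma:adhesive-general-4}). I do not anticipate any essential difficulty beyond this bookkeeping.
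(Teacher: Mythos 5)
Your proposal is correct and is essentially identical to the paper's proof, which likewise repeats the argument of Lemma~\ref{lemma:valuation-Artin-Tate} verbatim and replaces the finiteness step for the saturated submodule $A \subset B$ over $A'$ by an appeal to Lemma~\ref{lemma:adhesive-general}. Your extra remark that saturatedness (being an $I$-torsion condition with $I \subset R$) is independent of whether $B/A$ is viewed as an $A$- or $A'$-module is a point the paper leaves implicit, and it is handled correctly.
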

\begin{proof}
The proof is analogous to that of Lemma~\ref{lemma:valuation-Artin-Tate} and again the only difficulty lies in showing that $A$ is finite over $A'$ (as defined in the proof of Lemma~\ref{lemma:valuation-Artin-Tate}). This follows from Lemma~\ref{lemma:adhesive-general}(\ref{lemma:adhesive-general-4}). 
\end{proof}

\begin{cor}\label{cor:universally-adhesive-invariants-finitely-generated} Let $(R, I)$ be a universally adhesive pair, and $A$ an $I$-torsion free, finite type $R$-algebra with an $R$-action of a finite group $G$. The $R$-flat $A^G$ is a finite type $R$-algebra, and the natural morphism $A^G \to A$ is finitely presented.
\end{cor}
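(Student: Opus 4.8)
The plan is to transcribe the proof of Corollary~\ref{cor:valuation-invariants-finitely-generated} almost verbatim, replacing each appeal to a valuation-ring fact by its universally adhesive counterpart from Lemma~\ref{lemma:adhesive-general} and Lemma~\ref{lemma:universally-adhesive-Artin-Tate}. Throughout, $(R,I)$ is universally adhesive, $A$ is a finite type, $I$-torsion free $R$-algebra, and $G$ acts on $A$ by $R$-algebra automorphisms.

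First I would record that $A^G\to A$ is finite: since $A$ is of finite type over $R$, this is exactly Lemma~\ref{lemma:alg-finite-integral}(\ref{lemma:alg-finite-integral-3}), and the map is visibly injective. Next I would check that $A^G$ is a saturated $R$-submodule of $A$, i.e. that $A/A^G$ is $I$-torsion free. Here is the one place where a small computation is needed: if $\bar a\in A/A^G$ is $I$-torsion, then for each $b\in I$ there is $n\geq 1$ with $b^n a\in A^G$; applying any $g\in G$ and using that $g$ fixes $b\in R$ gives $b^n(g(a)-a)=0$, so $g(a)-a$ is $I$-torsion in $A$ and hence zero since $A$ is $I$-torsion free. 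Thus $a\in A^G$, which proves saturation. With finiteness and saturation in hand, Lemma~\ref{lemma:universally-adhesive-Artin-Tate} applied to $A^G\to A$ (with the role of $B$ played by the finite type $R$-algebra $A$) yields that $A^G$ is a finite type $R$-algebra.

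It remains to establish the finite presentation statements. Since $A^G\subset A$ and $A$ is $I$-torsion free over $R$, the submodule $A^G$ is $I$-torsion free as well. Viewing $A$ as a module over the finite type $R$-algebra $A^G$, it is finite by the first step and $I$-torsion free (its $IA^G$-torsion and $I$-torsion coincide, since the generators of $I$ map to generators of $IA^G$, cf.\ Remark~\ref{rmk:adhesive-powers}), so Lemma~\ref{lemma:adhesive-general}(\ref{lemma:adhesive-general-3}) shows that $A$ is a finitely presented $A^G$-module. Finally, a finite module that is finitely presented as a module is finitely presented as an algebra, so \cite[Proposition 1.4.7]{EGA4_1} upgrades this to the assertion that $A^G\to A$ is finitely presented as a map of algebras.

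The argument is essentially formal once the adhesive Artin-Tate lemma is available; the only genuinely non-automatic point is the saturation step, together with the bookkeeping that the relevant torsion is insensitive to whether one works over $R$ or over $A^G$. I expect that to be the main (and rather mild) obstacle, everything else being a direct substitution into the valuation-ring proof.
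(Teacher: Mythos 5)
Your proof is correct and is essentially the paper's own argument: the paper proves this corollary by declaring that the proof of Corollary~\ref{cor:valuation-invariants-finitely-generated} works verbatim with Lemma~\ref{lemma:universally-adhesive-Artin-Tate} in place of Lemma~\ref{lemma:valuation-Artin-Tate}, which is exactly the substitution you carry out --- finiteness of $A^G \to A$ from Lemma~\ref{lemma:alg-finite-integral}, saturation of $A^G$ in $A$, the adhesive Artin--Tate lemma, then Lemma~\ref{lemma:adhesive-general}(\ref{lemma:adhesive-general-3}) and \cite[Proposition 1.4.7]{EGA4_1} --- and your explicit check of saturation (via $b^n(g(a)-a)=0$ and $I$-torsion freeness of $A$) is the correct adhesive analogue of the step the paper dismisses as ``clear'' from flatness in the valuation case. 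The only caveat concerns the paper's statement rather than your argument: the adjective ``$R$-flat'' in the corollary is an artifact of the valuation-ring version (where torsion free equals flat), and what both your proof and the paper's actually establish is that $A^G$ is $I$-torsion free and of finite type over $R$, which is the intended content.
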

\begin{proof}
The proof of Corollary~\ref{cor:valuation-invariants-finitely-generated} works verbatim. One only has to use Lemma~\ref{lemma:universally-adhesive-Artin-Tate} in place of Lemma~\ref{lemma:valuation-Artin-Tate}. 
\end{proof}

\begin{defn}\label{defn:adhesive-global} A pair $(X, \cal{I})$ of a scheme and a quasi-coherent ideal of finite type is {\it universally adhesive} if there is an open affine covering of $X=\cup_{i\in I}\Spec U_i$ such that $(\O(U_i), \cal{I}(U_i))$ is universally adhesive for all $i\in I$.
\end{defn}

\begin{rmk} The notion of universal adhesiveness is independent of a choice of affine open covering. This is explained in \cite[Proposition 0.8.5.6 and Proposition 0.8.6.7]{FujKato}. It essentially follows from Lemma~\ref{lemma:adhesive-torsion-flat-local} and the fact that noetherianness is local in the fppf topology. 
\end{rmk}

\begin{thm}\label{thm:universally-adhesive-main} Let $(S, \cal{I})$ be a universally adhesive pair (in the sense of Definition~\ref{defn:adhesive-global}), and $X$ be an $\cal{I}$-torsion free, locally finite type $S$-scheme with an $S$-action of a finite group $G$. Suppose that each point $x\in X$ admits an affine neighborhood $V_x$ containing $G.x$. Then the scheme $X/G$ as in Theorem~\ref{thm:alg-main} is $\cal{I}$-torsion free and locally finite type over $S$, and the integral surjection $\pi\colon X \to X/G$ is finite and finitely presented.
\end{thm}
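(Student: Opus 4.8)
The plan is to follow the proof of Theorem~\ref{thm:valuation-main} essentially verbatim, replacing its valuation-theoretic input Corollary~\ref{cor:valuation-invariants-finitely-generated} by the adhesive analogue Corollary~\ref{cor:universally-adhesive-invariants-finitely-generated}. Since $(S, \cal{I})$ is universally adhesive and $X$ satisfies the orbit-neighborhood hypothesis, Theorem~\ref{thm:alg-main} already produces $X/G$ as an $S$-scheme together with an integral, surjective morphism $\pi\colon X \to X/G$ that is moreover finite because $X$ is locally of finite type over $S$. Thus the only genuinely new content to establish is that $X/G$ is $\cal{I}$-torsion free and locally of finite type over $S$, and that $\pi$ is finitely presented.

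First I would reduce to an affine base. All the assertions are local on $S$ and on the target $X/G$: $\cal{I}$-torsion freeness and local finite type are Zariski-local, finite presentation of $\pi$ can be checked on an open cover of $X/G$, and the notion of universal adhesiveness is itself defined (Definition~\ref{defn:adhesive-global}) through an affine open cover. By Lemma~\ref{lemma:alg-open-G-orbit-preserved} the orbit-neighborhood hypothesis passes to $G$-stable opens, so I may assume $S = \Spec R$ with $(R, I)$ a universally adhesive pair. Then Lemma~\ref{lemma:alg-G-basis} furnishes a cover of $X$ by $G$-stable open affines $U_i = \Spec A_i$, and, exactly as recorded in the proof of Theorem~\ref{thm:alg-main}, the images $\pi(U_i) \subset X/G$ are open, canonically isomorphic to $U_i/G \cong \Spec A_i^G$, and satisfy $\pi^{-1}(\pi(U_i)) = U_i$. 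Hence it suffices to verify the remaining claims on each affine piece.

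On a fixed piece $U_i = \Spec A_i$, the $\cal{I}$-torsion freeness of $X$ means that $A_i$ is $I$-torsion free, and $A_i$ is of finite type over $R$ by hypothesis. Now Corollary~\ref{cor:universally-adhesive-invariants-finitely-generated} applies directly and gives that $A_i^G$ is a finite type $R$-algebra with $A_i^G \to A_i$ finitely presented. Consequently $U_i/G = \Spec A_i^G$ is of finite type over $S$; moreover $A_i^G$, being an $R$-submodule of the $I$-torsion free module $A_i$, is itself $I$-torsion free, so $U_i/G$ is $\cal{I}$-torsion free. Finiteness of $\pi|_{U_i}$ is already supplied by Theorem~\ref{thm:alg-main} (equivalently, by Lemma~\ref{lemma:alg-finite-integral}(\ref{lemma:alg-finite-integral-3}), since $A_i$ is of finite type over $R$), and combined with the finite presentation of $A_i^G \to A_i$ this makes $\pi|_{U_i}$ finite and finitely presented. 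Gluing over the cover $\{\pi(U_i)\}$ of $X/G$ yields all the asserted properties.

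I do not expect a serious obstacle here: once Corollary~\ref{cor:universally-adhesive-invariants-finitely-generated} is available, the argument is purely the formal gluing already performed for Theorem~\ref{thm:alg-main}. The only point requiring any care is the local-on-$S$ reduction, ensuring that the affine pieces inherit the universally adhesive structure so that the corollary genuinely applies; this is exactly the stability of universal adhesiveness under passage to finite type algebras that underlies Corollary~\ref{cor:universally-adhesive-invariants-finitely-generated} itself (via the adhesive Artin--Tate Lemma~\ref{lemma:universally-adhesive-Artin-Tate}). The substitution of ``$\cal{I}$-torsion free'' for ``$k^+$-flat'' is the correct analogue because over the base these two notions of ``flatness'' coincide in the relevant sense, and torsion-freeness is manifestly inherited by submodules such as $A_i^G \subset A_i$.
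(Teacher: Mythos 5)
Your proposal is correct and follows essentially the same route as the paper: the paper's proof of Theorem~\ref{thm:universally-adhesive-main} simply says that the proof of Theorem~\ref{thm:valuation-main} goes through with Corollary~\ref{cor:universally-adhesive-invariants-finitely-generated} substituted for Corollary~\ref{cor:valuation-invariants-finitely-generated}, which is exactly the reduction to $G$-stable affines plus the adhesive invariants corollary that you carry out. Your extra observation that $A_i^G$ inherits $\cal{I}$-torsion freeness as a submodule of the torsion-free $A_i$ is the adhesive counterpart of the paper's remark that $X/G$ is ``clearly $k^+$-flat by construction,'' so nothing is missing.
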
 
\begin{proof}
The proof of Theorem~\ref{thm:valuation-main} just goes through if one uses Corollary~\ref{cor:universally-adhesive-invariants-finitely-generated} instead of Corollary~\ref{cor:valuation-invariants-finitely-generated}. 
\end{proof}

\subsection{Universally Adhesive Formal Schemes}

\begin{defn}\label{defn:tu-adhesive} A pair $(R, I)$ of a ring and a finitely generated ideal is {\it topologically universally adhesive} (or $R$ is {\it $I$-adically topologically universally adhesive}) if $(R, I)$ is universally adhesive, and the pair $(\wdh{R}\langle X_1,\dots, X_d\rangle, I\wdh{R}\langle X_1,\dots, X_d\rangle)$ is adhesive (in the sense of Definition~\ref{defn:adhesive}) for any $d\geq 0$. \medskip

An adically topologized ring $R$ endowed with the adic topology defined by a finitely generated ideal of definition $I \subset R$ is {\it topologically universally adhesive} if $R$ is $I$-adically complete, and the pair $(R, I)$ is topologically universally adhesive. 
\end{defn}

\begin{rmk} We note that the definition of topologically universally adhesive topological rings is independent of the choice of a finitely generated ideal of definition. For any two ideals of definition $I$ and $J$, $I^n\subset J$ and $J^m \subset I$ for some integers $n$ and $m$. Therefore, $M_{I-tors}=M_{J-tors}$ by Remark~\ref{rmk:adhesive-powers}. 
\end{rmk}

\begin{rmk}\label{rmk:adhesive-microb-tu-adhesive} We note that a microbial valuation ring $k^+$ is topologically universally adhesive. More precisely, $k^+$ is topologically universally $\varpi$-adically adhesive for any choice of a pseudo-uniformizer $\varpi \in k^+$. This is proven in \cite[Theorem 0.9.2.1]{FujKato}. Alternatively, one can easily show the claim from Lemma~\ref{lemma:formal-general} and the classical fact that $k$ is strongly noetherian, i.e. $k\langle X_1,\dots, X_d\rangle$ is noetherian for any $d\geq 0$.
\end{rmk}

\begin{lemma}\label{lemma:adhesive-formal-general} Let $R$ be a complete, topologically universally $I$-adically adhesive ring, $A$ be a topologically finite type $R$-algebra, and $M$ a finite $A$-module. Then
\begin{enumerate}
    \item\label{lemma:adhesive-formal-general-1}  $M$ is $I$-adically complete. In particular, $A$ is $I$-adically complete.
    \item\label{lemma:adhesive-formal-general-new-2}  Let $N \subset M$ be an $A$-submodule of $M$. Then the $I$-adic topology on $M$ restricts to the $I$-adic topology on $N$. 
    \item\label{lemma:adhesive-formal-general-3}  Let $N \subset M$ be a saturated $A$-submodule of $M$. Then $N$ is a finite $A$-module. 
    \item\label{lemma:adhesive-formal-general-new}  If $M$ is $R$-flat, it is finitely presented over $A$. 
    \item\label{lemma:adhesive-formal-general-2}  If $A$ is $R$-flat, it is topologically finitely presented. 
    \item\label{lemma:adhesive-formal-general-4}  For any element $f\in A$, the completed localization $A_{\{f\}}=\lim_n A_f/I^nA_f$ is $A$-flat. 
\end{enumerate}
\end{lemma}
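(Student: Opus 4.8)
The plan is to reduce all six assertions to the single fact that the pair $(A, IA)$ is adhesive in the sense of Definition~\ref{defn:adhesive}, after which each item becomes a citation to \cite{FujKato}, mirroring exactly how Lemma~\ref{lemma:formal-general} dispatches each part to \cite{B} in the microbial case. First I would choose a continuous surjection $R\langle X_1, \dots, X_d\rangle \twoheadrightarrow A$, which exists because $A$ is topologically finite type over $R$. Since $R$ is complete and topologically universally $I$-adically adhesive, Definition~\ref{defn:tu-adhesive} says precisely that $(R\langle X_1, \dots, X_d\rangle, I R\langle X_1, \dots, X_d\rangle)$ is adhesive; as adhesiveness is inherited by quotient rings, it follows that $(A, IA)$ is adhesive. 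This pair, together with the $I$-adic completeness of $R$, is the workhorse for everything that follows.

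For (\ref{lemma:adhesive-formal-general-1}) and (\ref{lemma:adhesive-formal-general-new-2}) I would cite the completeness and Artin--Rees statements for finite modules over a topologically finite type algebra over a complete topologically universally adhesive ring in \cite{FujKato} (the analogues of \cite[Proposition 7.3/8 and Lemma 7.3/7]{B}): these give that $M$, and in particular $A$, is $I$-adically complete, and that the $I$-adic topology on $M$ restricts to the $I$-adic topology on any submodule $N$. Part (\ref{lemma:adhesive-formal-general-3}) is then immediate: if $N\subset M$ is saturated (Definition~\ref{defn:I-tors}), then $M/N$ is a finite, $I$-torsion free $A$-module, so the adhesive finiteness result \cite[Proposition 8.5.3]{FujKato}, applied to the adhesive ring $A$ exactly as in Lemma~\ref{lemma:adhesive-general}, forces $M/N$ to be finitely presented; hence $N$, being the kernel of a map from a finite module to a finitely presented one, is finite. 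The same circle of ideas handles (\ref{lemma:adhesive-formal-general-new}) and (\ref{lemma:adhesive-formal-general-2}): an $R$-flat finite module is $I$-torsion free and hence finitely presented over $A$ by the same finiteness result, while an $R$-flat $A$ is $I$-torsion free, so the kernel $J$ of $R\langle X_1, \dots, X_d\rangle \twoheadrightarrow A$ is saturated and therefore finite, which is exactly the statement that $A$ is topologically finitely presented.

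The one genuinely analytic input is (\ref{lemma:adhesive-formal-general-4}): I would prove that the completed localization $A_{\{f\}}=\lim_n A_f/I^nA_f$ is $A$-flat by invoking the flatness of completed rational localizations over a topologically universally adhesive base from \cite{FujKato} (the analogue of \cite[Proposition 7.3/11]{B}), combined with the descent-of-flatness criterion \cite[\href{https://stacks.math.columbia.edu/tag/05GG}{Tag 05GG}]{stacks-project}, precisely as in the proof of Lemma~\ref{lemma:formal-general}(\ref{lemma:formal-general-4}). I expect this last part to be the main obstacle, the real work being to locate the exact adhesive flatness statement in \cite{FujKato} and to check its hypotheses against our completeness assumption on $R$; the remaining parts are formal once the adhesiveness of $(A, IA)$ and the completeness bookkeeping are in place.
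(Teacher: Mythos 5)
Your proposal is correct and takes essentially the same route as the paper: the paper's proof consists exactly of the citations you anticipate (\cite[Propositions 0.8.5.16 and 7.4.11]{FujKato} for parts (1)--(2), \cite[Proposition I.2.1.2]{FujKato} for part (6)) together with the remark that parts (3)--(5) are proved as in Lemma~\ref{lemma:adhesive-general}, which is precisely your reduction to adhesiveness of $(A, IA)$ via a presentation $R\langle X_1, \dots, X_d\rangle \twoheadrightarrow A$ and \cite[Proposition 0.8.5.3]{FujKato}. The one caveat --- shared equally by the paper's own proof-by-reference --- is your bridge ``$R$-flat $\Rightarrow$ $I$-torsion free'' in parts (4)--(5): since for $R$-flat $M$ one has $\ker(a\colon M \to M) \cong \operatorname{ann}_R(a)\otimes_R M$, this implication is valid only when $R$ itself is $I$-torsion free (automatic for valuation rings, and harmless in all of the paper's applications, where the relevant algebras are admissible and hence $I$-torsion free by definition).
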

\begin{proof}
We choose a surjection $R\langle T_1, \dots, T_n\rangle \to A$. Then it suffices to show the first claim for $A=R\langle T_1, \dots, T_n\rangle$. We note that $A$ is $I$-adically adhesive and $I$-adically complete, and so \cite[Proposition 0.8.5.16]{FujKato} (and the discussion after it) implies that any finite $A$-module is $I$-adically complete. The second claim follows from \cite[Proposition 0.8.5.16]{FujKato} and the definition of $\mathbf{(AP)}$ (see \cite[\textsection 7.4(c), p.161]{FujKato}). The proofs of Parts~(3)-(5) are similar to the proof of the analogous statements in Lemma~\ref{lemma:adhesive-general}. The last Part is proven in \cite[Proposition I.2.1.2]{FujKato}.
\end{proof}

\begin{defn}\label{defn:topologically-universally-adhesive-admissible-algebra} An algebra $A$ over a complete, topologically universally $I$-adically adhesive ring $R$ is {\it admissible} if $A$ is topologically finite type over $R$ and $I$-torsion free.
\end{defn}

\begin{lemma}\label{lemma:topologically-universally-adhesive-Artin-Tate} Let $R$ be an $I$-adically complete, $I$-adically topologically universally adhesive ring (see Definition~\ref{defn:tu-adhesive}), and $A\to B$ be a finite injective morphism of $I$-adically complete $R$-algebras. Suppose that $B$ is topologically finite type over $R$, and $A\subset B$ is saturated in $B$ (See Definition~\ref{defn:I-tors}). Then $A$ is a topologically finite type $R$-algebra.
\end{lemma}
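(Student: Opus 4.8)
The plan is to imitate the proof of the Adic Artin--Tate Lemma~\ref{lemma:formal-Artin-Tate} essentially verbatim, replacing the principal ideal $(\varpi)$ by the finitely generated ideal of definition $I$ and invoking Lemma~\ref{lemma:adhesive-formal-general} wherever Lemma~\ref{lemma:formal-general} was used in the microbial case. This is the same reduction that was already carried out in the schematic setting in Lemma~\ref{lemma:universally-adhesive-Artin-Tate}.

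First I would use that $B$ is topologically finite type over $R$ to choose elements $x_1, \dots, x_n \in B$ giving a continuous surjection $p\colon R\langle X_1, \dots, X_n\rangle \twoheadrightarrow B$, and then use finiteness of $B$ over $A$ to choose $A$-module generators $y_1, \dots, y_m \in B$. Recording the resulting relations
\[
x_i = \sum_j a_{i,j} y_j, \qquad y_i y_j = \sum_l a_{i,j,l} y_l
\]
with coefficients $a_{i,j}, a_{i,j,l} \in A$, I would introduce the topologically finite type $R$-algebra $A' \coloneqq R\langle T_{i,j}, T_{i,j,l}\rangle$ equipped with the continuous $R$-algebra homomorphism $A' \to A$ sending the variables to the corresponding coefficients. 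This map is well-defined precisely because $A$ is $I$-adically complete.

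The next step is to show that $B$ is finite over $A'$, generated by $1, y_1, \dots, y_m$ as an $A'$-module (via $A' \to A \to B$). Since $B$ is $I$-adically complete, a successive approximation argument (or \cite[Tag 031D]{stacks-project}) reduces this to finiteness modulo $I$. Here one uses that $R\langle X_1, \dots, X_n\rangle / I \cong (R/I)[X_1, \dots, X_n]$, so that $B/IB$ is generated as an $(R/I)$-algebra by the classes $\ov{x_i}$; the reductions of the relations above then show that the $\ov{y_j}$ already generate $B/IB$ as an $(A'/IA')$-module, which gives the required finiteness mod $I$.

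Finally, $A$ is a saturated $A'$-submodule of the finite $A'$-module $B$, so Lemma~\ref{lemma:adhesive-formal-general}(\ref{lemma:adhesive-formal-general-3}) shows that $A$ is finite over $A'$; as a finite algebra over the topologically finite type $R$-algebra $A'$, the ring $A$ is itself topologically finite type over $R$. The hard part is purely the topological bookkeeping that was already isolated in the microbial case: one must check that the identification $R\langle X\rangle/I \cong (R/I)[X]$ and the successive approximation go through with $I$ merely finitely generated rather than principal, and that the adhesiveness hypotheses packaged into Lemma~\ref{lemma:adhesive-formal-general} supply the one genuinely non-formal input, namely that a saturated submodule of a finite module remains finite.
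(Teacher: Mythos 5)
Your proposal is correct and coincides with the paper's own proof: the paper likewise proves this lemma by rerunning the argument of Lemma~\ref{lemma:formal-Artin-Tate} with the ideal of definition $I$ in place of $(\varpi)$, observing that the only non-formal input is that a saturated submodule of a finite module is finite, now supplied by Lemma~\ref{lemma:adhesive-formal-general} instead of Lemma~\ref{lemma:formal-general}. Your additional bookkeeping (the reduction modulo $I$, the identification $R\langle X\rangle/I \cong (R/I)[X]$, and the successive approximation via \cite[\href{https://stacks.math.columbia.edu/tag/031D}{Tag 031D}]{stacks-project}) just spells out the steps the paper leaves implicit.
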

\begin{proof}
The proof is analogous to that of Lemma~\ref{lemma:formal-Artin-Tate} and again the only difficulty lies in showing that $A$ is finite over $A'$ (as defined in the proof of Lemma~\ref{lemma:formal-Artin-Tate}). This follows from Lemma~\ref{lemma:adhesive-formal-general}(\ref{lemma:adhesive-formal-general-3}). 
\end{proof}

\begin{cor}\label{cor:topologically-universally-adhesive-invariants-top-finitely-generated} Let $R$ be an $I$-adically complete, $I$-adically topologically universally adhesive ring, and $A$ an admissible $R$-algebra (in the sense of Definition~\ref{defn:topologically-universally-adhesive-admissible-algebra}) with an $R$-action of a finite group $G$. Then $A^G$ is an admissible $R$-algebra, the induced topology on $A^G$ coincides with the $I$-adic topology, and $A$ is a finitely presented $A^G$-module.
\end{cor}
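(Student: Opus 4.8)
The plan is to run the proof of Corollary~\ref{cor:formal-invariants-top-finitely-generated} verbatim, with the pseudo-uniformizer $\varpi$ replaced throughout by the finitely generated ideal $I$ and with the microbial inputs Lemma~\ref{lemma:formal-Artin-Tate} and Lemma~\ref{lemma:formal-general} replaced by their adhesive counterparts Lemma~\ref{lemma:topologically-universally-adhesive-Artin-Tate} and Lemma~\ref{lemma:adhesive-formal-general}. Concretely, I would first record that $A^G$ is $I$-adically complete: by admissibility and Lemma~\ref{lemma:adhesive-formal-general}(\ref{lemma:adhesive-formal-general-1}) the ring $A$ is $I$-adically complete, and Lemma~\ref{lemma:formal-invariants-topology} (which only uses that $I$ is finitely generated) then gives completeness of $A^G$.

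The two steps that genuinely differ from the microbial case are saturatedness and module-finiteness. For saturatedness, the shortcut ``clear by flatness'' is no longer available, so I would argue directly with torsion: the $G$-action is $R$-linear, so each of the finitely many generators $a$ of $I$ is $G$-fixed, and if $a^{n}x\in A^G$ for some $x\in A$ then $a^{n}(x-g(x))=0$ for every $g\in G$; as $A$ is $I$-torsion free, running over all generators forces $x=g(x)$, hence $x\in A^G$. Thus $A/A^G$ is $I$-torsion free, i.e. $A^G$ is saturated in $A$. For module-finiteness I would imitate Lemma~\ref{lemma:formal-invariants-saturated}(3): the inclusion $A^G\to A$ is integral by Lemma~\ref{lemma:alg-finite-integral}(\ref{lemma:alg-finite-integral-1}), so $A^G/IA^G\to A/IA$ is integral; since $A/IA$ is of finite type over $R/I$ and hence over $A^G/IA^G$, it is finite, and the complete Nakayama / successive approximation argument (as in the proof of Lemma~\ref{lemma:formal-invariants-saturated}) upgrades this to finiteness of $A$ over $A^G$, using that both rings are $I$-adically complete.

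With these in hand the remainder is assembly. Lemma~\ref{lemma:topologically-universally-adhesive-Artin-Tate}, applied to the finite injective map $A^G\to A$ of $I$-adically complete $R$-algebras with $A$ topologically finite type and $A^G$ saturated, shows $A^G$ is topologically finite type over $R$; being also $I$-torsion free as a submodule of $A$, it is admissible. The coincidence of the subspace and $I$-adic topologies on $A^G$ is Lemma~\ref{lemma:adhesive-formal-general}(\ref{lemma:adhesive-formal-general-new-2}) applied to $A^G\subset A$ (now legitimately a finite $A^G$-module), and finite presentation of $A$ over $A^G$ follows from Lemma~\ref{lemma:adhesive-formal-general}(\ref{lemma:adhesive-formal-general-new}) using $I$-torsion freeness of $A$. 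The only real obstacle is the finiteness of $A$ over $A^G$: because $A$ is only topologically (not algebraically) of finite type over $R$, the classical Artin--Tate reduction is unavailable and one must descend to $A/IA$ and reconstitute generators by completeness; everything genuinely non-formal is otherwise packaged inside Lemma~\ref{lemma:topologically-universally-adhesive-Artin-Tate} and Lemma~\ref{lemma:adhesive-formal-general}.
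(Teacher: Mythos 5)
Your proposal is correct and follows exactly the paper's route: the paper's own proof of this corollary is literally the recipe you describe, namely ``the proof of Corollary~\ref{cor:formal-invariants-top-finitely-generated} works verbatim, using Lemma~\ref{lemma:topologically-universally-adhesive-Artin-Tate} in place of Lemma~\ref{lemma:formal-Artin-Tate} and Lemma~\ref{lemma:adhesive-formal-general} in place of Lemma~\ref{lemma:formal-general}.'' Your write-up is in fact more careful than the paper's, since you make explicit the two points where ``verbatim'' needs genuine input --- the saturatedness of $A^G\subset A$ via the $I$-torsion-freeness argument (replacing the flatness shortcut of Lemma~\ref{lemma:formal-invariants-saturated}) and the mod-$I$ integrality plus successive-approximation argument for finiteness of $A$ over $A^G$ --- and your appeal to Lemma~\ref{lemma:adhesive-formal-general}(\ref{lemma:adhesive-formal-general-new}) with $I$-torsion-freeness rather than its stated $R$-flatness hypothesis is precisely the reading the paper intends, as that part's proof defers to Lemma~\ref{lemma:adhesive-general}, which is stated for $I$-torsion-free modules.
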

\begin{proof}
The proof of Corollary~\ref{cor:formal-invariants-top-finitely-generated} works verbatim. One only needs to use Lemma~\ref{lemma:topologically-universally-adhesive-Artin-Tate} in place of Lemma~\ref{lemma:formal-Artin-Tate} and Lemma~\ref{lemma:adhesive-formal-general} in place of Lemma~\ref{lemma:formal-general}.
\end{proof}

\begin{prop}\label{prop:topologically-universally-adhesive-geometric-quotient-affine} Let $R$ be an $I$-adically complete, $I$-adically topologically universally adhesive ring, and $\X=\Spf A$ an affine admissible formal $R$-scheme with an $R$-action of a finite group $G$. Then the natural map $\phi\colon \X/G \to \Y=\Spf A^G$ is an $R$-isomorphism of topologically locally ringed spaces. In particular, $\X/G$ is an admissible formal $R$-scheme.
\end{prop}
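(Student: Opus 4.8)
The plan is to imitate the proof of Proposition~\ref{prop:formal-geometric-quotient-affine} essentially verbatim, replacing each appeal to a $k^+$-specific result by its topologically universally adhesive counterpart. The inputs special to the microbial valuation ring case are Corollary~\ref{cor:formal-invariants-top-finitely-generated} (together with the finiteness statement of Lemma~\ref{lemma:formal-invariants-saturated}) and Lemma~\ref{lemma:formal-general}; these get replaced by Corollary~\ref{cor:topologically-universally-adhesive-invariants-top-finitely-generated} and Lemma~\ref{lemma:adhesive-formal-general}. The two purely algebraic facts used along the way, Lemma~\ref{lemma:alg-finite-integral} and Lemma~\ref{lemma:invariants-commute-flat-base-change}, are insensitive to the base and carry over unchanged, with $I$ (resp. $IA^G$) playing the role of $\varpi$ throughout.

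First I would record (Step~$0$) that $\Y=\Spf A^G$ is an admissible formal $R$-scheme: by the analog of Remark~\ref{rmk:formal-flat-issue}(\ref{rmk:formal-flat-issue-3}) the $R$-algebra $A$ is admissible in the sense of Definition~\ref{defn:topologically-universally-adhesive-admissible-algebra}, so Corollary~\ref{cor:topologically-universally-adhesive-invariants-top-finitely-generated} shows that $A^G$ is admissible and that $A$ is a finitely presented $A^G$-module. Next (Step~$1$) I would check that $\phi$ is a homeomorphism. By Lemma~\ref{lemma:alg-finite-integral} the map $\Spec A \to \Spec A^G$ is closed and surjective with fibers exactly the $G$-orbits; passing to formal spectra one notes that a prime $\mathfrak{P}\subset A$ lies in $V(IA)$ if and only if $g(\mathfrak{P})$ does, and if and only if its contraction to $A^G$ lies in $V(IA^G)$, so $p\colon \Spf A \to \Spf A^G$ is again closed and surjective with fibers the $G$-orbits. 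Since both $\pi\colon \X \to \X/G$ and $p$ are topological quotient maps with the same fibers, $\phi$ is a homeomorphism.

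The remaining, and most delicate, step (Step~$2$) is to verify that $\phi^{\#}\colon \O_{\Y}\to \phi_*\O_{\X/G}$ is an isomorphism of sheaves of topological rings. Working on the basis of basic opens $D(f)$ for $f\in A^G$, this reduces to showing that
\[
(A^G)_{\{f\}}\to (A_{\{f\}})^G
\]
is a topological isomorphism. By Corollary~\ref{cor:topologically-universally-adhesive-invariants-top-finitely-generated} both sides carry the $I$-adic topology, so the topologies may be discarded. Using Lemma~\ref{lemma:adhesive-formal-general}(\ref{lemma:adhesive-formal-general-1}) and finiteness of $A$ over $A^G$ I would identify $A_{\{f\}}\simeq (A^G)_{\{f\}}\wdh{\otimes}_{A^G}A\simeq (A^G)_{\{f\}}\otimes_{A^G}A$, reducing the claim to showing that $(A^G)_{\{f\}}\to \big((A^G)_{\{f\}}\otimes_{A^G}A\big)^G$ is an isomorphism; this follows from Lemma~\ref{lemma:invariants-commute-flat-base-change} together with the flatness of the completed localization $(A^G)_{\{f\}}$ over $A^G$ furnished by Lemma~\ref{lemma:adhesive-formal-general}(\ref{lemma:adhesive-formal-general-4}).

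The one genuine point demanding care is that all of the topological bookkeeping now rests on Lemma~\ref{lemma:adhesive-formal-general} and Corollary~\ref{cor:topologically-universally-adhesive-invariants-top-finitely-generated} rather than on the concrete structure of a valuation ring: one must ensure that completeness of finite modules (part~(\ref{lemma:adhesive-formal-general-1})) and flatness of completed localizations (part~(\ref{lemma:adhesive-formal-general-4})) remain available over the adhesive base, and that the coincidence of the subspace and $I$-adic topologies on $A^G$ holds, so that the completed tensor product collapses to the ordinary one and the topologies can be ignored exactly as in Proposition~\ref{prop:formal-geometric-quotient-affine}. Once these substitutions are in place, no new argument is needed and the proof goes through formally.
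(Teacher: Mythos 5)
Your proposal is correct and follows essentially the same route as the paper: the paper's own proof simply declares that the argument of Proposition~\ref{prop:formal-geometric-quotient-affine} goes through, with Corollary~\ref{cor:topologically-universally-adhesive-invariants-top-finitely-generated} replacing Corollary~\ref{cor:formal-invariants-top-finitely-generated} (admissibility of $A^G$ and coincidence of the induced and $I$-adic topologies) and Lemma~\ref{lemma:adhesive-formal-general}(\ref{lemma:adhesive-formal-general-4}) replacing Lemma~\ref{lemma:formal-general}(\ref{lemma:formal-general-4}) (flatness of $(A^G)_{\{f\}}$ over $A^G$), which are exactly the substitutions you make. Your additional spelling-out of Step~1 (openness of a prime $\mathfrak{P}\subset A$ being equivalent to openness of its contraction to $A^G$, since $I$ comes from the base) and of the collapse $\wdh{\otimes}=\otimes$ via Lemma~\ref{lemma:adhesive-formal-general}(\ref{lemma:adhesive-formal-general-1}) matches the original proof being imitated.
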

\begin{proof}
The proof of Proposition~\ref{prop:formal-geometric-quotient-affine} goes through in this more general set-up. The only two differences are that one needs to deduce that $A^G$ is admissible over $R$ (with the induced topology equal to the $I$-adic) from Corollary~\ref{cor:topologically-universally-adhesive-invariants-top-finitely-generated} instead of Corollary~\ref{cor:formal-invariants-top-finitely-generated} and one needs to use Lemma~\ref{lemma:adhesive-formal-general}(\ref{lemma:adhesive-formal-general-4}) instead of Lemma~\ref{lemma:formal-general}(\ref{lemma:adhesive-formal-general-4}) to ensure that $(A^G)_{\{f\}}$ is an $A^G$-flat module. 
\end{proof}

\begin{defn}\label{defn:universally-adhesive-formal-schemes} A formal scheme $\X$ is {\it locally universally adhesive} if there exists an affine open covering $\X =\cup_{i\in I} \sU_i$ such that each $\sU_i$ is isomorphic to $\Spf A$ with $A$ a topologically universally adhesive ring. If $\X$ is, moreover, quasi-compact, we say that $\X$ is universally adhesive.  
\end{defn}

\begin{rmk} Definition~\ref{defn:universally-adhesive-formal-schemes} is independent of the choice of open covering. More precisely, an affine formal scheme $\X=\Spf A$ is universally adhesive if and only if $A$ is topologically universally adhesive. This is shown in \cite[Propostition I.2.1.9]{FujKato}.
\end{rmk}

\begin{rmk}\label{rmk:adhesive-flat-issue} Lemma~\ref{lemma:adhesive-formal-general}(\ref{lemma:adhesive-formal-general-4}) can be strengthened to the statement that an adic morphism of affine universally adhesive formal schemes $\Spf B \to \Spf A$ is flat\footnote{We follow \cite{FujKato} and say that an adic morphism of formal schemes $f\colon \X \to \Y$ is flat if and only if $\O_{\Y, f(x)} \to \O_{\X, x}$ is flat for all $x\in \X$.} if and only if $A \to B$ is flat. This is proven from \cite[Proposition I.4.8.1]{FujKato}.
\end{rmk}

\begin{defn}\label{defn:adhesive-admissible} Let $\S$ be a universally adhesive formal scheme. An adic $\S$-scheme $\X$ is called {\it admissible} if it is locally of topologically finite type, and there is an affine open covering $\X =\cup_{i\in I} \sU_i$ such that each $\sU_i$  is isomorphic to $\Spf A$  with $A$ an $I$-torsion free ring for a(ny) finitely generated ideal of definition $I\subset A$.
\end{defn}

We show that this definition is independent of the choice of a covering. 

\begin{lemma}\label{lemma:adhesive-admissible-local} Let $\X=\Spf A$ be an affine, locally of topologically finite type formal $\S$-scheme. Then $\X$ is admissible if and only if $A$ is $I$-torsion free for a(ny) finitely generated ideal of definition $I$.
\end{lemma}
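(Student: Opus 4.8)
The ``if'' direction is immediate: if $A$ is $I$-torsion free, then the one-element covering $\X=\Spf A$ itself exhibits $\X$ as admissible in the sense of Definition~\ref{defn:adhesive-admissible}. So the entire content is the converse, which I would prove by descending torsion-freeness from a covering to the global affine ring.

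First I would fix a finitely generated ideal of definition $I\subset A$; by Remark~\ref{rmk:adhesive-powers} the $I$-torsion submodule does not depend on this choice, so there is no loss of generality. Since $\X=\Spf A$ is quasi-compact (its underlying space is that of $\Spec A/I$) and the principal opens $\Spf A_{\{f\}}$ form a basis of its topology, I would refine the given admissible covering $\X=\bigcup_i \sU_i$ to a \emph{finite} covering by principal opens $\Spf A_{\{f_1\}},\dots,\Spf A_{\{f_n\}}$, each contained in some $\sU_i$. The purpose of this refinement is that every $A_{\{f_j\}}$ is again $I$-torsion free: it is a completed localization of the torsion-free ring $\O(\sU_i)$, hence flat over it by Lemma~\ref{lemma:adhesive-formal-general}(\ref{lemma:adhesive-formal-general-4}), so flat base change for torsion (Lemma~\ref{lemma:adhesive-torsion-flat-local}) identifies its $I$-torsion with the localization of the vanishing $I$-torsion of $\O(\sU_i)$.

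Next I would set $N\coloneqq A_{I-tors}$ and observe that it is a finite $A$-module. Indeed, $A/N$ is $I$-torsion free, so $N$ is a saturated submodule of the finite module $A$; since $A$ is topologically finite type over the complete, topologically universally adhesive base, Lemma~\ref{lemma:adhesive-formal-general}(\ref{lemma:adhesive-formal-general-3}) applies and gives finiteness of $N$. Applying Lemma~\ref{lemma:adhesive-torsion-flat-local} to each flat map $A\to A_{\{f_j\}}$ then yields
\[
N\otimes_A A_{\{f_j\}}\;\cong\;\left(A_{\{f_j\}}\right)_{IA_{\{f_j\}}-tors}\;=\;0
\]
for every $j$, the last equality by the previous paragraph.

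It remains to deduce $N=0$ from the vanishing of $N\otimes_A A_{\{f_j\}}$ on a covering, and this is the one step that needs genuine care: the completed localizations $A_{\{f_j\}}$ do not obviously form a faithfully flat cover of $A$ over all of $\Spec A$, so the formal base-change statement must first be converted into an honest Zariski-local statement. I would do this by passing to the special fiber. Because $N$ is finite and $I$-torsion for a finitely generated $I$, it is annihilated by some power $I^M$, hence is an $A/I^M$-module; using $A_{\{f_j\}}/I^M A_{\{f_j\}}\cong (A/I^M)_{\bar f_j}$ (valid as $I$ is finitely generated), the displayed base change becomes the ordinary localization $N_{\bar f_j}$, which therefore vanishes for all $j$. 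Since the $\Spf A_{\{f_j\}}$ cover $\Spf A$ and $|\Spf A|\cong|\Spec A/I^M|$, the $\bar f_j$ generate the unit ideal in $A/I^M$, and a module whose localizations at a Zariski covering all vanish is zero. Thus $N=A_{I-tors}=0$, i.e. $A$ is $I$-torsion free, completing the proof.
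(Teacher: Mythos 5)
Your proof is correct, but your descent step runs on a genuinely different engine from the paper's. The paper keeps the given affine covering $\X=\bigcup_{i=1}^n \Spf A_i$ and shows that $A \to \prod_{i=1}^n A_i$ is \emph{faithfully} flat: flatness comes from Remark~\ref{rmk:adhesive-flat-issue}, and faithfulness from the fact that every maximal ideal of the $I$-adically complete ring $A$ is open (\cite[Lemma 0.7.2.13]{FujKato}), hence corresponds to a point of $\Spf A$ and so lies in the image of some $\Spec A_i$. One application of Lemma~\ref{lemma:adhesive-torsion-flat-local} then gives $A_{I-tors}\otimes_A \left(\prod_{i=1}^n A_i\right) \simeq \left(\prod_{i=1}^n A_i\right)_{I-tors}=0$, and faithful flatness kills $A_{I-tors}$ outright, with no finiteness of the torsion module ever needed. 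You sidestep the faithful-flatness question (which you correctly flag as the delicate point for the completed localizations $A_{\{f_j\}}$ taken alone) by a two-step substitute: first you make $N=A_{I-tors}$ a \emph{finite} $A$-module via the saturated-submodule statement Lemma~\ref{lemma:adhesive-formal-general}(\ref{lemma:adhesive-formal-general-3}), so that $N$ is annihilated by $I^M$; then you reduce modulo $I^M$, where completed and ordinary localizations agree, and conclude by an ordinary Zariski-covering argument over $\Spec A/I^M$. What your route buys: the only flatness input is Lemma~\ref{lemma:adhesive-formal-general}(\ref{lemma:adhesive-formal-general-4}) for completed localizations plus elementary commutative algebra, and you never need the openness of maximal ideals in complete rings. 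What it costs: the refinement to principal opens, and a second use of adhesiveness (finiteness of $N$) on top of the flatness use --- in the paper adhesiveness enters only through flatness, so its argument applies unchanged even when the torsion module is not finite. Both arguments share the same implicit step that the coordinate ring of every affine open of $\X$ (in particular $A$ and each $A_i$) is complete and topologically universally adhesive, which is what licenses the flatness and finiteness lemmas; this follows from $\X$ being locally of topologically finite type over the universally adhesive $\S$ together with the covering-independence remark after Definition~\ref{defn:universally-adhesive-formal-schemes}.
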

\begin{proof}
First of all, we note that Remark~\ref{rmk:adhesive-powers} implies that Definition~\ref{defn:adhesive-admissible} is independent of the choice of a finitely generated ideal of definition $I$. Thus, using that $\X$ is quasi-compact, we can assume that $\X=\cup_{i=1}^n \sU_i=\Spf A_i$ with $A_i$ an $I$-torsion free $A$-algebra. Then the morphism $A \to \prod_{i=1}^n A_i$ is faithfully flat by Remark~\ref{rmk:adhesive-flat-issue} and the fact that all maximal ideals are open in an $I$-adically complete ring (see \cite[Lemma 0.7.2.13]{FujKato}). Now Lemma~\ref{lemma:adhesive-torsion-flat-local} implies that 
\[
\left(\prod_{i=1}^n A_i\right)_{I-tors} \simeq A_{I-tors} \otimes_A \left(\prod_{i=1}^n A_i\right). 
\]
Our assumption implies that $(\prod_{i=1}^n A_i)_{I-tors} \simeq 0$. Therefore, $A_{I-tors}\simeq 0$ as $A \to \prod_{i=1}^n A_i$ is  faithfully flat.
\end{proof}

\begin{lemma}\label{lemma:adhesive-admissible-commutes-flat-base-change} Let $\S$ be a universally adhesive formal scheme, and let $\X$ be an  $\S$-finite, admissible formal $\S$-scheme. Suppose that $\S' \to \S$ is a flat adic morphism of universally adhesive formal schemes. Then $\X' \coloneqq \X\times_{\S} \S'$ is an admissible formal $\S'$-scheme.
\end{lemma}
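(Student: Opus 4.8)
The plan is to reduce to the affine case and then verify both halves of Definition~\ref{defn:adhesive-admissible} for $\X'$, the only non-formal point being that $I$-torsion-freeness is preserved by the base change, which is exactly the content of Lemma~\ref{lemma:adhesive-torsion-flat-local}.

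Since admissibility can be checked locally on the target (Lemma~\ref{lemma:adhesive-admissible-local}) and $\X' \to \S'$ is finite, being the base change of the finite morphism $\X \to \S$, I would first localize on $\S'$ and then on $\S$ to assume that $\S = \Spf R$ and $\S' = \Spf R'$ are affine with $R, R'$ topologically universally adhesive and $R \to R'$ a flat adic morphism. As $\X \to \S$ is finite, $\X = \Spf A$ with $A$ a finite $R$-algebra, and admissibility of $\X$ means that $A$ is $I$-torsion-free for an ideal of definition $I \subset R$.

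Next I would identify the base-changed algebra. By definition $\X' = \Spf(A \wdh{\otimes}_R R')$. Because $A$ is $R$-finite, $A \otimes_R R'$ is a finite $R'$-module, hence $IR'$-adically complete by Lemma~\ref{lemma:adhesive-formal-general}(\ref{lemma:adhesive-formal-general-1}); therefore the completed and ordinary tensor products agree, $A \wdh{\otimes}_R R' = A \otimes_R R'$, and $\X' = \Spf(A \otimes_R R')$ is finite, in particular locally topologically finite type, over $\S'$. This gives the first requirement of admissibility. For the second requirement I would show that $A \otimes_R R'$ is $IR'$-torsion-free: applying Lemma~\ref{lemma:adhesive-torsion-flat-local} to the flat map $R \to R'$, the finitely generated ideal $I$, and the module $A$, one obtains $(A \otimes_R R')_{IR'-tors} \simeq A_{I-tors} \otimes_R R' = 0$, since $A_{I-tors} = 0$ by admissibility of $\X$. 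By Lemma~\ref{lemma:adhesive-admissible-local} this is precisely admissibility of the affine $\X'$, and reassembling the affine pieces yields admissibility of $\X'$ over $\S'$ in general.

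The step I expect to require the most care is the identification $A \wdh{\otimes}_R R' = A \otimes_R R'$: it is exactly where finiteness of $\X \to \S$ is indispensable. For a merely topologically finite type $A$ the completed tensor product is neither left nor right exact, so the torsion submodule of $\X'$ would not be controlled termwise by the base-change behaviour of $I$-torsion submodules in Lemma~\ref{lemma:adhesive-torsion-flat-local}; it is the finiteness hypothesis that collapses $\wdh{\otimes}$ to $\otimes$ and makes that lemma directly applicable. Once this identification is in place the remaining verifications are formal.
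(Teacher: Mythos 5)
Your proof matches the paper's argument essentially step for step: localize via Lemma~\ref{lemma:adhesive-admissible-local}, use finiteness of $\X \to \S$ together with Lemma~\ref{lemma:adhesive-formal-general}(\ref{lemma:adhesive-formal-general-1}) to collapse $\wdh{\otimes}$ to $\otimes$, and then transfer torsion-freeness through flat base change via Lemma~\ref{lemma:adhesive-torsion-flat-local}. The only discrepancy is that you assume $R \to R'$ is flat while the printed statement only says ``adic''; this is in fact the intended hypothesis, since the paper's own proof likewise invokes Remark~\ref{rmk:adhesive-flat-issue} together with the torsion--flat-base-change lemma, and the lemma is only ever applied to flat adic morphisms (in Theorem~\ref{thm:adhesive-formal-main}), so your reading is the correct one.
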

\begin{proof}
Lemma~\ref{lemma:adhesive-admissible-local} ensures that the question is Zariski local on $\X'$. Thus, we may and do assume that $\S$, $\S'$ (and, therefore, $\X$) are affine. Suppose $\S=\Spf A$, $\S'=\Spf A'$, and $\X=\Spf B$ for an $A$-algebra $B$ finite as an $A$-module (see \cite[Proposition I.4.2.1]{FujKato}). Choose an ideal of definition $I\subset A$, our assumptions imply that $IA'$ is an ideal of definition in $A'$. We know that  $\X'$ is given by $\Spf A'\wdh{\otimes}_{A} B$, so it is easily seen to be a topologically finite type formal $\cal{S}'$-scheme. We are only left to check that $A'\wdh{\otimes}_A B$ is $I$-torsionfree. \smallskip

We note that $A'\otimes_{A} B$ is finite over $A'$, so it is already $IA'$-adically complete by Lemma~\ref{lemma:adhesive-formal-general}(\ref{lemma:adhesive-formal-general-1}). Therefore, we conclude that $\X'\simeq \Spf \left(A'\otimes_{A} B\right)$. Now Remark~\ref{rmk:adhesive-flat-issue} ensures that $A'$ is $A$-flat, and so $A'\otimes_A B$ is $B$-flat. Since $B$ had no $I$-torsion, the same holds for $A'\otimes_A B$. 
\end{proof}

\begin{thm}\label{thm:adhesive-formal-main} Let $\S$ be a universally adhesive formal scheme (see Definition~\ref{defn:universally-adhesive-formal-schemes}), and $\X$ an admissible formal $\S$-scheme (see Definition~\ref{defn:adhesive-admissible}). Suppose that $\X$ has an $\S$-action of a finite group $G$ such that each point $x\in \X$ admits an affine neighborhood $\V_x$ containing $G.x$. Then $\X/G$ is an admissible formal $\S$-scheme. Moreover, it satisfies the following properties:
\begin{enumerate}
\item $\pi\colon \X \to \X/G$ is universal in the category of $G$-invariant morphisms to topologically locally ringed $\S$-spaces.
\item $\pi: \X \to \X/G$ is a finite, surjective, topologically finitely presented morphism (in particular, it is closed). 
\item Fibers of $\pi$ are exactly the $G$-orbits.
\item The formation of the geometric quotient commutes with flat base change, i.e. for any universally adhesive formal scheme $\frak{Z}$ and a flat {\it adic} morphism $\frak{Z}\to \X/G$, the geometric quotient $(\X \times_{\X/G} \frak{Z})/G$ is a formal schemes, and the natural morphism $(\X \times_{\X/G} \frak{Z})/G \to \frak{Z}$ is an isomorphism.
\end{enumerate}
\end{thm}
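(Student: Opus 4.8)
The plan is to follow the proof of Theorem~\ref{thm:formal-main} essentially verbatim, replacing each ingredient about admissible formal $k^+$-schemes by its topologically universally adhesive analogue established earlier in this appendix. Since every property appearing in the conclusion is local on $\S$, I would first reduce to the case $\S=\Spf R$ with $R$ an $I$-adically complete, $I$-adically topologically universally adhesive ring.

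For the first assertion, that $\X/G$ is an admissible formal $\S$-scheme, I would invoke the analogues of Lemma~\ref{lemma:formal-open-G-orbit-preserved} and Lemma~\ref{lemma:formal-G-basis}. These carry over with no change to the argument, since their proofs reduce the question to the schematic Lemma~\ref{lemma:alg-G-basis} via reduction modulo an ideal of definition together with the fact that an open formal subscheme is affine if and only if its closed reduction is; neither input is sensitive to the nature of the base. This produces a covering of $\X$ by $G$-stable affine opens $\Spf A$, and Proposition~\ref{prop:topologically-universally-adhesive-geometric-quotient-affine} identifies each $\Spf A/G$ with $\Spf A^G$, an admissible formal $\S$-scheme; gluing yields the global statement.

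For the second assertion, surjectivity of $\pi$ with $G$-orbits as fibers is clear from the construction of $\X/G$. After reducing to $\X=\Spf A$, finiteness of $\pi$ and the fact that $A$ is a finitely presented $A^G$-module both follow from Corollary~\ref{cor:topologically-universally-adhesive-invariants-top-finitely-generated}; finite presentation of $A$ over $A^G$ then upgrades to topological finite presentation of the morphism, exactly as in Theorem~\ref{thm:formal-main}, since a finitely presented module over a topologically finite type ring defines a topologically finitely presented algebra. The universal property in item~(1) is formal (Remark~\ref{rmk:universal-topologically-locally-ringed}).

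The compatibility with flat base change in the last item is where the topological bookkeeping must be done carefully, and I expect it to be the main (if modest) obstacle. Reducing once more to affines $\X=\Spf A$, $\frak{Z}=\Spf B$, and $\X/G=\Spf A^G$, with $A^G\to B$ flat and adic (flatness of the adic morphism being equivalent to flatness of $A^G\to B$ by Remark~\ref{rmk:adhesive-flat-issue}), the key point is that the completed tensor product $A\wdh{\otimes}_{A^G}B$ agrees with the ordinary one: because $A$ is a finite $A^G$-module, $A\otimes_{A^G}B$ is a finite $B$-module and hence already $I$-adically complete by Lemma~\ref{lemma:adhesive-formal-general}(\ref{lemma:adhesive-formal-general-1}). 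It then suffices to show that the natural map $B\to (A\otimes_{A^G}B)^G$ is a topological isomorphism; both sides carry the $I$-adic topology by Corollary~\ref{cor:topologically-universally-adhesive-invariants-top-finitely-generated}, so the topologies may be ignored, and the algebraic isomorphism is precisely Lemma~\ref{lemma:invariants-commute-flat-base-change} applied to the flat map $A^G\to B$. Admissibility of the resulting fiber product $(\X\times_{\X/G}\frak{Z})/G\simeq \frak{Z}$ is then automatic, and is in any case guaranteed in general by Lemma~\ref{lemma:adhesive-admissible-commutes-flat-base-change}.
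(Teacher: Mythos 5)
Your proposal is correct and follows the paper's own proof essentially verbatim: parts (1)--(3) by rerunning the proof of Theorem~\ref{thm:formal-main} with Proposition~\ref{prop:topologically-universally-adhesive-geometric-quotient-affine}, Lemma~\ref{lemma:adhesive-formal-general}, and Corollary~\ref{cor:topologically-universally-adhesive-invariants-top-finitely-generated} substituted for their microbial-valuation-ring counterparts, and part (4) by the same reduction to affines where the completed tensor product collapses to the ordinary one (completeness of the finite module $A\otimes_{A^G}B$), followed by Lemma~\ref{lemma:invariants-commute-flat-base-change}, with Remark~\ref{rmk:adhesive-flat-issue} and Lemma~\ref{lemma:adhesive-admissible-commutes-flat-base-change} supplying flatness and admissibility. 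The only cosmetic difference is that, for upgrading finite presentation of $A$ over $A^G$ to topological finite presentation of $\pi$, the paper replaces \cite[Proposition 7.3/10]{B} with the precise reference \cite[Proposition I.2.2.3]{FujKato}, whereas you assert that step without citing it.
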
 
\begin{proof}
The proofs of parts $(1)$, $(2)$, and $(3)$ are similar to those of Theorem~\ref{thm:formal-main}. The main difference is that one needs to use Proposition~\ref{prop:topologically-universally-adhesive-geometric-quotient-affine} in place of Proposition~\ref{prop:formal-geometric-quotient-affine},  Lemma~\ref{lemma:adhesive-formal-general} in place of Lemma~\ref{lemma:formal-general}, and \cite[Proposition I.2.2.3]{FujKato} in place of \cite[Proposition 7.3/10]{B}. \medskip

We explain part $(4)$ in a bit more detail. We first reduce to the case $\S=\Spf R$, $\X=\Spf A$ with $A$ a finite $R$-module, and $\S'=\Spf R'$. Then $R \to R'$ is flat by Remark~\ref{rmk:adhesive-flat-issue}. Then Lemma~\ref{lemma:adhesive-admissible-commutes-flat-base-change} implies that $\X'$ is $\S'$-admissible, and then one can repeat the proof of Theorem~\ref{thm:formal-main} using Lemma~\ref{lemma:adhesive-formal-general}(\ref{lemma:adhesive-formal-general-1}) in place of Lemma~\ref{lemma:formal-general}(\ref{lemma:formal-general-1}).
\end{proof}

\begin{thm}\label{thm:adhesive-comparison-formal-alg} Let $R$ be an topologically universally $I$-adically adhesive ring, and $X$ an $I$-torsion free, locally finite type $R$-scheme with a $R$-action of a finite group $G$. Suppose that any orbit $G.x \subset X$ lies in an affine open subset $V_x$. The same holds for its $I$-adic completion $\wdh{X}$ with the induced $\wdh{R}^+$-action of $G$, and the natural morphism
\[
\wdh{X}/G \to \wdh{X/G}
\]
is an isomorphism.
\end{thm}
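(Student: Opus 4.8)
The plan is to follow the proof of Theorem~\ref{thm:comparison-formal-alg} essentially verbatim, systematically replacing each input specific to a microbial valuation ring with its topologically universally adhesive counterpart established elsewhere in this appendix. Concretely, every appeal to $\varpi$-adic data becomes an appeal to $I$-adic data, Corollary~\ref{cor:valuation-invariants-finitely-generated} is replaced by Corollary~\ref{cor:universally-adhesive-invariants-finitely-generated}, Corollary~\ref{cor:formal-invariants-top-finitely-generated} by Corollary~\ref{cor:topologically-universally-adhesive-invariants-top-finitely-generated}, Theorem~\ref{thm:formal-main} by Theorem~\ref{thm:adhesive-formal-main}, and Proposition~\ref{prop:formal-geometric-quotient-affine} by Proposition~\ref{prop:topologically-universally-adhesive-geometric-quotient-affine}.

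First I would verify that $\wdh{X}$, with its induced $\wdh{R}$-action of $G$, satisfies the hypotheses of Theorem~\ref{thm:adhesive-formal-main}. By Lemma~\ref{lemma:alg-G-basis}, the orbit assumption on $X$ produces a covering $X=\bigcup_i U_i$ by $G$-stable affine open subschemes; passing to $I$-adic completions yields a covering $\wdh{X}=\bigcup_i \wdh{U}_i$ by $G$-stable affine open formal subschemes, each admissible since $U_i$ is $I$-torsion free and of finite type over $R$. In particular every orbit of $\wdh{X}$ lies in an affine open formal subscheme, so $\wdh{X}/G$ exists. Since the formation of both $\wdh{X}/G$ and $\wdh{X/G}$ commutes with this $G$-stable open covering, I would then reduce to the affine case $X=\Spec A$ with $A$ an $I$-torsion free finite type $R$-algebra carrying the $G$-action. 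Here $X/G\simeq \Spec A^G$ by Proposition~\ref{prop:alg-geometric-quotient-affine}, so $\wdh{X/G}\simeq \Spf \wdh{A^G}$, whereas $\wdh{X}/G\simeq \Spf (\wdh{A})^G$ by Proposition~\ref{prop:topologically-universally-adhesive-geometric-quotient-affine}. Under these identifications $\phi$ is induced by the natural continuous homomorphism $\wdh{A^G}\to (\wdh{A})^G$, whose target carries the $I$-adic topology by Corollary~\ref{cor:topologically-universally-adhesive-invariants-top-finitely-generated}; as both sides are $I$-adically topologized, it suffices to show this map is an algebraic isomorphism.

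To finish, I would invoke Corollary~\ref{cor:universally-adhesive-invariants-finitely-generated}, which gives that $A^G$ is of finite type over $R$ and that $A$ is a finite $A^G$-module. The adhesive analog of \cite[Lemma 7.3/14]{B} then identifies the natural map
\[
A\otimes_{A^G}\wdh{A^G}\to \wdh{A}
\]
as an isomorphism, whence $(\wdh{A})^G\simeq (A\otimes_{A^G}\wdh{A^G})^G$, and the homomorphism in question becomes the natural map $\wdh{A^G}\to (A\otimes_{A^G}\wdh{A^G})^G$. This is an isomorphism by Lemma~\ref{lemma:invariants-commute-flat-base-change} applied to the flat base change $A^G\to \wdh{A^G}$.

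The main obstacle will be supplying the two completion facts in the non-noetherian adhesive setting, where they take the place of \cite[Lemma 7.3/14]{B} and \cite[Lemma 8.2/2]{B}: namely, that completion commutes with the finite base change $A^G\to A$, and that the completion map $A^G\to \wdh{A^G}$ is flat. Both follow from the Fujiwara--Kato theory \cite{FujKato} once one observes that $A^G$, being of finite type over the topologically universally adhesive ring $R$, is itself adhesive, so that its $I$-adic completion is flat and exact on finite(ly presented) modules; the finite presentation of $A$ over $A^G$ needed to run this argument is precisely the last assertion of Corollary~\ref{cor:universally-adhesive-invariants-finitely-generated}.
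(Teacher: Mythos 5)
Your proposal is correct and follows essentially the same route as the paper: run the proof of Theorem~\ref{thm:comparison-formal-alg} with the adhesive replacements, reducing to the affine case and identifying the map with $\wdh{A^G}\to(\wdh{A})^G$, where the paper likewise isolates flatness of $A^G\to\wdh{A^G}$ as the only new non-trivial input and derives it from \cite[Proposition 0.8.5.16 and 8.2.18(i)]{FujKato} via adhesiveness of the finite type $R$-algebra $A^G$. Your explicit treatment of the second completion fact (the adhesive analog of \cite[Lemma 7.3/14]{B}) is a point the paper leaves implicit, but it is handled by the same Fujiwara--Kato input, so there is no substantive difference.
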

\begin{proof}
The proof of Theorem~\ref{thm:comparison-formal-alg} goes through in this wider generality. The only new non-trivial input is flatness of $A^G \to \wdh{A^G}$. More generally, this flatness holds for any finite type $R$-algebra $B$. Namely, any such algebra is $I$-adically adhesive, so it  satisfies the so called $\mathbf{BT}$ property (see Definition~\cite[Section 0.8.2(a)]{FujKato}) by \cite[Proposition 0.8.5.16]{FujKato}. Therefore, \cite[8.2.18(i)]{FujKato} implies that $B \to \wdh{B}$ is flat.
\end{proof}

\section{Foundations of Adic Spaces}\label{defn-adic}

The theory of adic spaces still seems to lack a ``universal reference'' for proofs of all basic questions one might want to use. For example, all of \cite{H0}, \cite{H1}, \cite{H3} and \cite{KedLiu2} do not really discuss notions of flat and separated morphisms in detail. The two main goals of this Appendix are to provide the reader with the main definitions we use in the paper, and to give proofs of claims that we need in the paper and that seem difficult to find in the standard literature on the subject.

We stick to the case of analytic adic spaces\footnote{We recall that an adic space $X$ is required to be sheafy, i.e. the structure presheaf $\O_X$ must be a sheaf.} since this is the only case that we need in this paper.

\subsection{Basic Definitions}
We start this section by recalling the definition of the category of topologically locally $v$-ringed spaces $\cal{V}$. This category will play the same role as the category of locally ringed spaces plays for the category of schemes. Namely, $\cal{V}$ is going to be a sufficiently flexible category with a fully faithful embedding of the category of adic spaces into it. 

\begin{defn}\label{defn:valuative-spaces} A {\it category of topologically locally $v$-ringed spaces\footnote{Our definition is taken from \cite[Definition 13.1.1]{Seminar}. It is different from \cite[page 521]{H1} since the latter requires $\O_X$ to be a sheaf of complete topological rings.} $\mathcal V$}  is the category  objects of this category are triples $(X, \O_X, \{v_x\}_{x\in X})$ such that
\begin{enumerate}
    \item $X$ is a topological space,
    \item $\O_X$ is a sheaf of topological rings such that the stalk $\O_{X,x}$ is a local ring for all $x\in X$,
    \item $v_x$ is a valuation on the residue field $k(x)$ of $\O_{X,x}$.
\end{enumerate} 

Morphisms $f\colon X \to Y$ of objects in $\mathcal V$ are defined as maps $(f, f^\#)$ of topologically locally ringed spaces such that the induced maps of residue fields $k(f(x)) \to k(x)$ are compatible with valuations (equivalently, induces a local inclusion between the valuation rings). \end{defn}

\begin{rmk}\label{rmk:huber-forgetful-conservative} The category $\cal{V}$ comes with the forgetful functor $F\colon \cal{V} \to \mathbf{TLRS}$ to the category of topologically locally ringed spaces. It is clear that this functor is conservative. 
\end{rmk}

\begin{defn}\label{defn:adic-analytic-spaces} We define the {\it category $\mathbf{AS}$ of analytic adic spaces} as the full subcategory of $\mathcal V$ whose objects are triples $(X, \O_X, \{v_x\}_{x\in X})$ locally isomorphic to $\Spa(A, A^+)$ for a complete Tate-Huber pair $(A, A^+)$. We remind the reader that this requires the pair $(A,A^+)$ to be ``sheafy''. 
\end{defn}

%\begin{rmk}\label{rmk:huber-completeness} A topological ring $\O_X(X)$ is complete for any analytic adic space $X$. This follows from the construction if $X$ is affinoid. In general, one uses that $\O_X$ is a {\it topological} sheaf, a (possibly infinite) product of complete topological abelian groups is complete, and a covering of $X$ by affinoids to show that $\O_X(X)$ is complete. 
%\end{rmk}

\begin{rmk} Given any analytic adic space $(X, \O_X, \{v_x\}_{x\in X})$, Huber defined a sheaf $\O_{X}^+$ as follows
\[
\O_{X}^+(U)=\left\{ f\in \O_X(U) \ | \ v_x(f) \leq 1 \text{ for any } x\in U \right\}.
\]
We note that \cite[Proposition 1.6]{H1} implies that $\O_X^+(X)=A^+$ for any sheafy complete Tate-Huber pair $(A, A^+)$ and $X=\Spa(A, A^+)$.
\end{rmk}

\begin{rmk}\label{grp-act-adic} Note that \cite[Proposition 2.1(ii)]{H1} guarantees that we have a natural identification
\[
\Hom_{\mathbf{AS}} (X, \Spa(A, A^+))=\Hom_{\operatorname{cont}} ((A, A^+), (\O_X(X), \O_X^+(X)))\footnote{This is the set of all continuous ring homomorphisms $f\colon A \to \O_X(X)$ such that $f(A^+) \subset \O_X^+(X)$. We do not claim that $(\O_X(X), \O_X^+(X))$ is a (Tate-)Huber pair.}.
\]
for any analytic adic space $X$.
\end{rmk}

\begin{defn} A {\it Tate affinoid adic space} is an object of the category $\mathbf{AS}$ that is isomorphic to $\Spa(A, A^+)$ for a complete Tate-Huber pair $(A, A^+)$.
\end{defn}

In the following, if we consider a Tate affinoid adic space $X=\Spa(A, A^+)$, we implicitly assume that $(A, A^+)$ is a {\it complete} Tate-Huber pair.

\begin{rmk} In general, there are analytic affinoid adic spaces that are not isomorphic to $\Spa(A, A^+)$ for any complete Tate-Huber pair $(A,A^+)$. The analytic condition implies the existence of a pseudo-uniformizer only {\it locally} on $\Spa(A, A^+)$, but it does not necessarily exist globally. See \cite[Example 1.5.7]{KedAr} for an explicit example of an analytic affinoid adic space that is not a Tate affinoid.
\end{rmk}

\subsection{Finite and Topologically Finite Type Morphisms of Adic Spaces}
\begin{defn}\label{defn:huber-topologically-finite-type} We say that a morphism of complete Tate-Huber pairs $(A, A^+) \to (B, B^+)$ is {\it topologically of finite type}, if there is a surjective quotient map $f\colon A \langle T_1,\dots,T_n \rangle \to B$ such that $B^+$ is integral over $A^+\langle T_1, \dots, T_n\rangle$. 
\end{defn}

\begin{rmk} This definition coincides with the definition of topologically finite type morphism of Huber pairs from \cite[page 533, before Lemma 3.3]{H1}. This is stated in \cite[Lemma 3.3 (iii)]{H1} and it is proven in \cite[Proposition 15.3.3]{Seminar}. 
\end{rmk}

\begin{rmk}\label{rmk:open-mapping} It turns out that any continuous surjective morphism $f\colon C \to B$ of complete Tate rings is a quotient mapping. Moreover, it is actually an open map; this is the content of the Banach Open Mapping Theorem \cite[Lemma 2.4 (i)]{H1}. 
\end{rmk}

There are crucial properties of topologically finite type morphisms that makes them behave similarly to finite type morphisms: 

\begin{lemma}\label{lemma:tft-useful} Let $f\colon (A, A^+) \to (B, B^+)$ and $g\colon (B, B^+) \to (C, C^+)$ be continuous homomorphisms of complete Tate-Huber pairs. If $f$ and $g$ are topologically finite type morphisms then so is $g\circ f$, and if $g\circ f$ is topologically finite type then so is $g$.
\end{lemma}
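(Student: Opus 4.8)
The plan is to verify the two assertions separately, and in each to check the ring-theoretic part and the integrality part of Definition~\ref{defn:huber-topologically-finite-type} in turn. The key tool throughout is the Banach Open Mapping Theorem (Remark~\ref{rmk:open-mapping}): a continuous surjection of complete Tate rings is automatically a quotient map, so to exhibit a topologically finite type presentation it is enough to produce a \emph{continuous surjection} from a Tate algebra, and one never has to check openness by hand. I write $R_\bullet$ for the image in the target of the relevant ring of restricted power series with integral coefficients.

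For the composition, suppose $f$ and $g$ are topologically of finite type, with presentations $A\langle S_1,\dots,S_m\rangle \twoheadrightarrow B$ and $B\langle T_1,\dots,T_n\rangle \twoheadrightarrow C$. The composite $A\langle S,T\rangle = A\langle S\rangle\langle T\rangle \to B\langle T\rangle \twoheadrightarrow C$ is a continuous surjection — the first arrow is the base change of the surjection $A\langle S\rangle \to B$ along the adjunction of the free variables $T$, and is surjective because $A\langle S\rangle \to B$ is open — hence a quotient map presenting $C$ over $A$. It remains to show $C^+$ is integral over $R := \mathrm{im}(A^+\langle S,T\rangle)$. Since $C^+$ is integral over $R_C := \mathrm{im}(B^+\langle T\rangle)$ by hypothesis, by transitivity of integral extensions it suffices to prove that $R_C$ is integral over $R$.

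This last point is the main obstacle. The ring $R_C$ is topologically generated over $R$ by $g(B^+)$ together with the images $c_j$ of the $T_j$; each element of $g(B^+)$ is integral over $R$ (as $B^+$ is integral over $\mathrm{im}(A^+\langle S\rangle)$, which maps into $R$) and each $c_j$ lies in $R$, but a \emph{convergent series} of integral elements need not be integral, so a naive integral-closure argument fails. I would circumvent this completion issue by not arguing with integral closures at all: using the reformulation of the integrality condition in terms of module-finite rings of definition (cf.\ Remark~\ref{rmk:top-finite-type-integral-finite}), the claim becomes purely that if $B_0$ is a ring of definition of $B$ finite over $\mathrm{im}(A_0\langle S\rangle)$ and $C_0$ is a ring of definition of $C$ finite over $\mathrm{im}(B_0\langle T\rangle)$, then $C_0$ is finite over $\mathrm{im}(A_0\langle S,T\rangle)$. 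This is transitivity of module-finiteness, where completeness is harmless because finite modules over the complete ring $A_0\langle S,T\rangle$ are automatically complete; translating back gives that $C^+$ is integral over $R$, so $g\circ f$ is topologically of finite type.

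The cancellation is by contrast routine and free of completion subtleties. Suppose $g\circ f$ is topologically of finite type, with presentation $A\langle T_1,\dots,T_n\rangle \twoheadrightarrow C$ and $C^+$ integral over $R_A := \mathrm{im}(A^+\langle T\rangle)$. This map factors as $A\langle T\rangle \to B\langle T\rangle \to C$, the second arrow restricting to $g$ on $B$ and sending $T_j$ to $c_j$; since the composite is surjective, so is $B\langle T\rangle \to C$, which is therefore a quotient map presenting $C$ over $B$. For the integrality condition, $f(A^+)\subseteq B^+$ yields a factorization $A^+\langle T\rangle \to B^+\langle T\rangle \to C$, whence $R_A \subseteq \mathrm{im}(B^+\langle T\rangle)$; as $C^+$ is already integral over the \emph{smaller} ring $R_A$, it is a fortiori integral over $\mathrm{im}(B^+\langle T\rangle)$. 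Thus $g$ is topologically of finite type, the point being that here we only enlarge the base of an integral extension. In summary, the only delicate step is the integrality part of the composition, which I would settle via the module-finite-rings-of-definition reformulation rather than through topological integral closures.
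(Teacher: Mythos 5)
The paper gives no argument for this lemma at all --- it simply quotes \cite[Lemma 3.3 (iv)]{H1} --- so you are proving it from scratch. Your cancellation half is correct (the one point you elide is that $c_j=\pi(T_j)$ must be power-bounded to invoke the universal property of $B\langle T\rangle$; this holds since $\{T_j^m\}_m$ is bounded and continuous homomorphisms of complete Tate rings preserve boundedness). In the composition half, the ring-theoretic part and the reduction of the integrality part to ``$R_C$ is integral over $R$'' are also correct. The gap is in your proposed resolution of that last point: there is no ``reformulation of the integrality condition in terms of module-finite rings of definition.'' Lemma~\ref{lemma:top-finite-type-integral-finite} and Remark~\ref{rmk:top-finite-type-integral-finite} apply only when the plus-ring of the target is integral over the image of the plus-ring of the source (the \emph{finite} situation), which fails for a general topologically finite type morphism: for $(\Q_p,\Z_p)\to(\Q_p\langle X\rangle,\Z_p\langle X\rangle)$ the element $X\in B^+$ is not integral over $\Z_p$. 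Worse, no reformulation purely in terms of rings of definition can exist, because rings of definition are bounded while the condition in Definition~\ref{defn:huber-topologically-finite-type} constrains $C^+$, which is in general unbounded and need not be integral over \emph{any} ring of definition: $\Z_p\langle X\rangle$ is not integral over the ring of definition $\Z_p+p\Z_p\langle X\rangle$ of $\Q_p\langle X\rangle$ (reduce a putative monic equation for $X$ modulo $p$ to get a monic identity in $(\Z/p)[X]$ with constant lower coefficients, which is absurd). Note also that the statement you reduce to is vacuous: one may always take $C_0=\mathrm{im}(A_0\langle S,T\rangle)$, which is a ring of definition (open, bounded image) and tautologically finite over itself. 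So ``translating back'' from finiteness of $C_0$ to integrality of $C^+$ over $R$ is a non sequitur; the plus-rings have simply dropped out of your bookkeeping.

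The irony is that the obstacle you set out to circumvent is not actually there. The subring $D=\mathrm{im}(A^+\langle S\rangle\to B)$ is \emph{open} in $B$, because $A^+\langle S\rangle$ is open in $A\langle S\rangle$ and the presentation $A\langle S\rangle\to B$ is an open map (Remark~\ref{rmk:open-mapping}). Hence if $\beta=\sum_\nu b_\nu T^\nu\in B^+\langle T\rangle$, the null family $(b_\nu)$ lies in $D$ for all but finitely many $\nu$, so $\beta$ decomposes as a tail with null coefficients in $D$ plus a \emph{polynomial} with finitely many coefficients in $B^+$. The tail lies in $\mathrm{im}\bigl(A^+\langle S,T\rangle\to B\langle T\rangle\bigr)$ by the same null-sequence lifting along an open surjection that you used to prove $A\langle S,T\rangle\to B\langle T\rangle$ surjective; and each exceptional term $b_\nu T^\nu$ is integral over $D[T]\subseteq \mathrm{im}\bigl(A^+\langle S,T\rangle\to B\langle T\rangle\bigr)$, since a monic equation $b^n+d_{n-1}b^{n-1}+\dots+d_0=0$ with $d_i\in D$ yields $(bT^\nu)^n+(d_{n-1}T^\nu)(bT^\nu)^{n-1}+\dots+d_0T^{n\nu}=0$. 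As finite sums and products of integral elements are integral, $\beta$ is integral over $\mathrm{im}\bigl(A^+\langle S,T\rangle\to B\langle T\rangle\bigr)$; applying the homomorphism $B\langle T\rangle\to C$ shows $R_C$ is integral over $R$, and transitivity of integrality finishes your argument exactly as you had set it up. No infinite sums of integral elements ever have to be confronted: openness of $D$, which is the whole point of insisting on \emph{quotient} presentations, is the missing idea.
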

\begin{proof}
This is \cite[Lemma 3.3 (iv)]{H1}.
\end{proof}

\begin{defn}\label{adic-tft} A morphism of analytic adic spaces $f\colon  X \to Y$ is called {\it locally of topologically finite type}, if there is an open covering of $Y$ by Tate affinoids $\{V_{i}\}_{i\in I}$ and an open covering of $X$ by Tate affinoids $\{U_i\}_{i\in I}$ such that $f(U_i) \subset V_i$, and $(\O_Y(V_i), \O_Y^+(V_i)) \to (\O_X(U_i), \O_X^+(U_i))$ is topologically of finite type (in the sense of Definition \ref{defn:huber-topologically-finite-type}). If a morphism $f$ is locally of topologically finite type and quasi-compact, it is called {\it topologically finite type}.
\end{defn}

The relation of Definition~\ref{adic-tft} to Definition~\ref{defn:huber-topologically-finite-type} for affinoid $X$ and $Y$ is addressed in Theorem~\ref{global-tft} under some noetherian condition. 

\begin{defn}\label{defn:Huber-finite} We say that a morphism of complete Tate-Huber pairs $(A, A^+) \to (B, B^+)$ is {\it finite} if the ring homomorphism $A \to B$ is finite and the ring homomorphism $A^+ \to B^+$ is integral.
\end{defn}

\begin{rmk} Our definition coincides with the definition in Huber's book \cite[(1.4.2)]{H3} due to the following (easy) Lemma.
\end{rmk}

\begin{lemma}\label{lemma:huber-finite-finite-type} A finite morphism of complete Tate-Huber pairs $f\colon (A, A^+) \to (B, B^+)$ is of topologically finite type.
\end{lemma}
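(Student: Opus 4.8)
The plan is to imitate the standard algebraic argument (that a finite algebra is of finite type) while keeping track of the topology. Write $(A, A^+) \to (B, B^+)$ for the given finite morphism, so that $A \to B$ is module-finite and $A^+ \to B^+$ is integral. Since both pairs are complete Tate, $A$ has a pseudo-uniformizer $\varpi$, i.e. a topologically nilpotent unit; because $A \to B$ is continuous, the image of $\varpi$ (which I still denote $\varpi$) is a topologically nilpotent unit of $B$, hence a pseudo-uniformizer of $B$ as well. First I would choose $A$-module generators $b_1, \dots, b_n$ of $B$, which is possible as $A \to B$ is finite.

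The one genuinely topological point is that the tautological assignment $T_i \mapsto b_i$ need not define a continuous map, since the $b_i$ need not be power-bounded; this is the main obstacle, and I would resolve it by rescaling. Since $B^+ \subseteq B^\circ$ is open and $\varpi^m b_i \to 0$ as $m \to \infty$, for $m \gg 0$ all the elements $c_i \coloneqq \varpi^m b_i$ lie in $B^+$; and as $\varpi$ is a unit in $A$, the $c_i$ generate the same $A$-submodule as the $b_i$, namely all of $B$. The elements $c_i \in B^+ \subseteq B^\circ$ are power-bounded, so by the universal property of restricted power series there is a unique continuous $A$-algebra homomorphism
\[
\phi \colon A\langle T_1, \dots, T_n\rangle \to B, \qquad T_i \mapsto c_i.
\]
Its image is an $A$-subalgebra containing $A$ and all the $c_i$, hence contains $A + \sum_i A c_i = B$, so $\phi$ is surjective. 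As both $A\langle T_1, \dots, T_n\rangle$ and $B$ are complete Tate, the Banach Open Mapping Theorem (Remark~\ref{rmk:open-mapping}) shows that $\phi$ is automatically a quotient map.

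It then remains to check the integrality condition on the $+$-rings appearing in Definition~\ref{defn:huber-topologically-finite-type}. Since $f(A^+) \subseteq B^+$ (as $f$ is a morphism of Huber pairs) and each $c_i \in B^+$, and since $B^+$ is a closed subring of $B$ (an open subgroup is closed), every convergent series in the $c_i$ with coefficients in $A^+$ lands in $B^+$; hence $\phi\bigl(A^+\langle T_1, \dots, T_n\rangle\bigr) \subseteq B^+$. By the definition of a finite morphism, $A^+ \to B^+$ is integral, so $B^+$ is integral over $f(A^+) = \phi(A^+)$, and a fortiori over the larger subring $\phi\bigl(A^+\langle T_1, \dots, T_n\rangle\bigr)$. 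This is precisely the integrality requirement, so $\phi$ exhibits $(A, A^+) \to (B, B^+)$ as topologically of finite type. The only nontrivial input is thus the rescaling by $\varpi$ that converts arbitrary module generators into power-bounded ones lying in $B^+$; everything else is formal.
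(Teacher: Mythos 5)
Your proof is correct and follows essentially the same route as the paper's: rescale a finite set of $A$-module generators by powers of the pseudo-uniformizer so they land in $B^+$, invoke the universal property of restricted power series to get a continuous surjection $A\langle T_1,\dots,T_n\rangle \to B$ (open by the Banach Open Mapping Theorem), and note that $B^+$ is integral over the image of $A^+\langle T_1,\dots,T_n\rangle$ since it is already integral over $f(A^+)$ by the definition of a finite morphism. Your justification of the rescaling step and of the containment $\phi\bigl(A^+\langle T_1,\dots,T_n\rangle\bigr)\subseteq B^+$ is slightly more detailed than the paper's, but the argument is the same.
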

\begin{proof}
We choose a set $(y_1, \dots, y_m)$ of $A$-module generators for $B$. After multiplying by some power of a pseudo-uniformizer $\varpi$ we can assume that $y_i\in B^+$ for all $i$. Then we use the universal property \cite[Lemma 3.5 (i)]{H1} to define the continuous surjective morphism
\[
g\colon A\langle T_1, \dots, T_m \rangle \to B
\]
as the unique continuous $A$-linear homomorphism such that $f(T_i)=y_i$. It is easily seen to be surjective, and it is open by Remark~\ref{rmk:open-mapping}. Moreover, $B$ is integral over $A^+\langle T_1,  \dots, T_m \rangle$ since it is even integral over $A^+$ by the definition of finiteness.
\end{proof}

\begin{lemma}\label{lemma:top-finite-type-integral-finite} Let $f\colon(A, A^+) \to (B, B^+)$ be a topologically finite type morphism of complete Tate-Huber pairs such that $B^+$ is integral over $A^+$. Then there exist rings of definition $A_0 \subset A$ and $B_0\subset B$ such that $f(A_0) \subset B_0$ and $B_0$ is finite over $A_0$. In particular, $(A, A^+) \to (B, B^+)$ is finite.
\end{lemma}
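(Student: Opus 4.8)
The plan is to reduce everything to one explicit choice of compatible rings of definition, exploiting the two hypotheses separately: the topologically finite type presentation controls the topology, while the integrality of $B^+$ over $A^+$ upgrades the topological generators to genuinely \emph{integral} generators. The candidate will simply be the $A_0$-subalgebra of $B$ generated by the images of the coordinates.

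First I would fix a pseudo-uniformizer $\varpi \in A$ and a ring of definition $A_0 \subset A$; after replacing $A_0$ by $A_0 \cap A^+$ (still open and bounded, hence a ring of definition by \cite[Proposition 1.1]{H0}) and raising $\varpi$ to a power, I may assume $A_0 \subseteq A^+$ and $\varpi \in A_0$. Since $A$ is complete and $A_0$ is open, hence closed, $A_0$ is $\varpi$-adically complete. Using the topologically finite type hypothesis, choose the open surjection $p \colon A\langle T_1, \dots, T_n\rangle \twoheadrightarrow B$ from Definition~\ref{defn:huber-topologically-finite-type} and set $x_i \coloneqq p(T_i) \in B^+$ (the containment in $B^+$ is part of the definition, exactly as in Step~$0$ of Lemma~\ref{lemma:adic-Artin-Tate}). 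The extra hypothesis that $B^+$ is integral over $A^+$ now provides monic integral equations $x_i^{d_i} + \sum_{j<d_i} a_{i,j} x_i^{j} = 0$ with $a_{i,j}\in A^+$. As there are finitely many coefficients, all power-bounded, I enlarge $A_0$ to the ring of definition generated by $A_0$ together with the $a_{i,j}$; this keeps $A_0 \subseteq A^+$ and arranges $a_{i,j}\in A_0$, so that each $x_i$ becomes integral over $A_0$.

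The key step is then to show that the candidate $B_0 \coloneqq A_0[x_1,\dots,x_n] \subseteq B$ is simultaneously module-finite over $A_0$ and a ring of definition of $B$. Finiteness is immediate: the integral relations rewrite $x_i^{d_i}\in \sum_{j<d_i}A_0x_i^{j}$, so $B_0 = \sum_{\beta:\,\beta_i < d_i} A_0\, x^{\beta}$ is generated by the finitely many monomials $x^{\beta}$ with all $\beta_i < d_i$; in particular $B_0$ is a bounded $A_0$-module. The heart of the matter is that $B_0$ is \emph{open}, for which I claim $B_0 = p\bigl(A_0\langle T_1,\dots,T_n\rangle\bigr)$. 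The inclusion $B_0 \subseteq p(A_0\langle T\rangle)$ is clear since $A_0[T]\subseteq A_0\langle T\rangle$ and $p(A_0[T])=B_0$. For the reverse inclusion, given $g = \sum_\alpha c_\alpha T^\alpha \in A_0\langle T\rangle$ (so $c_\alpha \to 0$), I rewrite each monomial $x^\alpha = \sum_\beta \lambda_{\alpha,\beta} x^\beta$ with $\lambda_{\alpha,\beta}\in A_0$ and $\beta$ ranging over the finite set $\{\beta_i < d_i\}$; because $c_\alpha \to 0$ while the $\lambda_{\alpha,\beta}$ lie in the bounded set $A_0$, each series $\sum_\alpha c_\alpha \lambda_{\alpha,\beta}$ converges in the complete ring $A_0$, whence $p(g) = \sum_\beta\bigl(\sum_\alpha c_\alpha\lambda_{\alpha,\beta}\bigr)x^\beta \in B_0$. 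Thus $B_0 = p(A_0\langle T\rangle)$, which is open because $A_0\langle T\rangle$ is a ring of definition of $A\langle T\rangle$ and $p$ is open by the Banach Open Mapping Theorem (Remark~\ref{rmk:open-mapping}). Hence $B_0$ is open and bounded, i.e. a ring of definition of $B$ containing $p(A_0)$, and it is finite over $A_0$.

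Finally I would conclude. Since $\varpi = p(\varpi) \in B_0$ is a pseudo-uniformizer of $B$, inverting it gives $B = B_0[\tfrac1\varpi]$ finite over $A = A_0[\tfrac1\varpi]$, so $A \to B$ is finite; together with the hypothesis that $A^+ \to B^+$ is integral, this is exactly the statement that $(A,A^+)\to(B,B^+)$ is finite in the sense of Definition~\ref{defn:Huber-finite}. I expect the only genuinely delicate point to be the openness of $B_0$: one must resist the tempting shortcut ``finite modules over a complete ring are closed'', which can fail over a non-noetherian $A_0$, and instead use the convergence rewriting above, which crucially combines $c_\alpha \to 0$ with the boundedness of $A_0$ and the integral relations.
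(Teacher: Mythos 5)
Your proof is correct and follows essentially the same route as the paper: you enlarge a ring of definition $A_0\subseteq A^+$ by the finitely many coefficients of the monic equations for the $x_i$, and your $B_0$ is literally the paper's candidate $p\bigl(A_0\langle T_1,\dots,T_n\rangle\bigr)$, with the same conclusion via $A=A_0[\tfrac1\varpi]\to B=B_0[\tfrac1\varpi]$. The only difference is localized at the finiteness step: the paper proves $A_0\to p(A_0\langle T\rangle)$ is finite by reducing mod $\varpi$ and invoking completeness plus successive approximation, whereas you show directly, by a convergence/rearrangement argument using $c_\alpha\to 0$ and the boundedness of $A_0$, that the visibly finite $A_0$-module $A_0[x_1,\dots,x_n]$ already exhausts $p(A_0\langle T\rangle)$ --- both arguments are valid, and yours correctly avoids the false shortcut that finite submodules of complete rings are automatically closed.
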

\begin{proof}  
We use Remark~\ref{rmk:open-mapping} to find an open, surjective morphism 
\[
h\colon A\langle T_1, \dots, T_n \rangle \twoheadrightarrow B.
\]
Clearly $B^+$ is integral over $A^+\langle T_1, \dots, T_n \rangle$. The topological generators $b_i\coloneqq h(T_i) \in B^+$ are integral over $A^{+}$. \smallskip

Pick monic polynomials $F_i\in A^+[T]$ such that $F_i(b_i)=0$ for all $i$. We look at the coefficients $\{a_{i,j}\}\in A^+ \subset A^\circ$ of the polynomials $F_i$. There are only finitely many of them. We claim that we can find a pair of definition $\left(A_0, \varpi \right) \subset A^+$ such that $A_0$ contains every $a_{i,j}$. Indeed, we pick any ring of definition $A'_0$ in $A^+$ and consider the subring generated by $A'_0$ and every $a_{i,j}$. It is easy to see that the resulting ring is open and bounded in $A$, so it is a ring of definition by \cite[Proposition 1.1]{H0}.  \smallskip

Now we define the ring of definition $\left(B_0, \varpi\right)$ as the image $h\left(A_0\langle T_1, \dots, T_n \rangle\right)$. It is open because $h$ is open, and it is bounded because any morphism of Tate rings preserves boundedness. \smallskip

We claim that the natural morphism $A_0 \to B_0$ is finite. It suffices to prove that it is finite mod $\varpi$ by successive approximation and completeness. However, it is clearly finite type mod $\varpi$ since it coincides with the composition:
\[
A_0/\varpi A_0 \to \left(A_0/\varpi A_0\right)[T_1, \dots, T_n] \twoheadrightarrow B_0/\varpi B_0,
\]
and it is integral since $B_0/\varpi B_0$ is algebraically generated over $A_0/\varpi A_0$ by the residue classes $\ov{b_1}, \dots, \ov{b_n}$ that are integral over $A_0/\varpi A_0$ by construction. Thus this map is integral and finite type, hence finite.  \smallskip

Finally, $(A, A^+) \to (B, B^+)$ is finite since $A \to B$ is equal to the finite map 
\[
A_0\left[\frac{1}{\varpi}\right] \to B_0\left[\frac{1}{\varpi}\right].
\qedhere
\]
\end{proof}

\begin{rmk}\label{rmk:top-finite-type-integral-finite} The proof of Lemma~\ref{lemma:top-finite-type-integral-finite} actually shows more. We can choose $B_0$ to contain any finite set of elements $x_1, \dots, x_m\in B^+$. Indeed, the proof just goes through if one replaces $h\colon A\langle T_1, \dots, T_n \rangle \to B$ at the beginning of the proof with the continuous $A$-algebra morphism
\[
h'\colon A\langle T_1, \dots, T_n\rangle \langle X_1, \dots, X_m\rangle \to B
\]
satisfying $h'(T_i)=b_i$ and $h'(X_j)=x_j$. Existence of such a morphism follows from the universal property of restricted power series (see \cite[Lemma 3.5(i)]{H1})
\end{rmk}

\begin{lemma}\label{lemma:huber-finite-iso} Let $f\colon (A, A^+) \to (B, B^+)$ be a finite morphism of complete Tate-Huber pairs. If $f$ induces a bijection $A\simeq B$ then $f$ is an isomorphism of Tate-Huber pairs.
\end{lemma}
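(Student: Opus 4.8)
The plan is to reduce the whole statement to the single observation that a ring of integral elements is integrally closed. Since $f\colon A \to B$ is a continuous homomorphism of complete Tate rings that is assumed to induce an isomorphism on the underlying rings, I would first upgrade this to a \emph{topological} isomorphism: by the Banach Open Mapping Theorem (Remark~\ref{rmk:open-mapping}, i.e.\ \cite[Lemma 2.4 (i)]{H1}), a continuous bijective homomorphism of complete Tate rings is automatically open, hence a homeomorphism. This lets me identify $A$ and $B$ as topological rings via $f$ and treat $f$ as the identity, so that the only remaining content is the equality $A^+ = B^+$ of the two rings of integral elements sitting inside the common ring $A = B$.

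Next I would record the relevant inclusions. On one hand, the condition that $f$ is a morphism of Tate-Huber pairs gives $f(A^+) \subset B^+$, which after the identification reads $A^+ \subset B^+$. On the other hand, finiteness of $f$ (Definition~\ref{defn:Huber-finite}) says precisely that $A^+ \to B^+$ is integral, i.e.\ every element of $B^+$ is integral over $A^+$. Finally, since $(B, B^+)$ is a Tate-Huber pair we have $B^+ \subset B^\circ$, and the topological isomorphism identifies $B^\circ$ with $A^\circ$; thus $B^+ \subset A^\circ$ inside $A = B$.

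The key step, and really the crux of the argument, is to combine integrality with the fact that $A^+$ is by definition an integrally closed subring of $A^\circ$ (being the ring of integral elements of the Tate-Huber pair $(A, A^+)$). Since every element of $B^+$ lies in $A^\circ$ and is integral over $A^+$, integral closedness forces $B^+ \subset A^+$. Together with the reverse inclusion $A^+ \subset B^+$ from the previous paragraph this yields $A^+ = B^+$, so $f$ is an isomorphism of Tate-Huber pairs. I do not expect any serious obstacle here; the only subtlety worth flagging is the upgrade from an algebraic isomorphism $A \simeq B$ to a topological one, which is exactly what the open mapping theorem supplies, and the bookkeeping that $B^+$ lands in $A^\circ$ so that the integral-closedness hypothesis on $A^+$ applies.
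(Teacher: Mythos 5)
Your proof is correct and takes essentially the same approach as the paper's: both upgrade the algebraic isomorphism $A \simeq B$ to a topological one via the Banach Open Mapping Theorem (Remark~\ref{rmk:open-mapping}) and then deduce $A^+ = B^+$ from the integrality of $A^+ \to B^+$ (given by finiteness) combined with the integral closedness of $A^+$. Your additional bookkeeping that $B^+ \subset A^\circ$ is harmless and merely makes explicit what the paper's one-line argument leaves implicit.
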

\begin{proof}
We note that $B^+$ is integral over $A^+$ by the definition of a finite morphism, and $f$ is open by Remark~\ref{rmk:open-mapping}. An open continuous bijection is a homeomorphism. So we are only left to show that $f$ induces a bijection $A^+\simeq B^+$. Now note that $A^+$ is integrally closed in $A=B$ and $B^+$ is integral over $A^+$ because $f$ is assumed to be finite. Thus, $A^+=B^+$ finishing the proof. 
\end{proof}

\begin{defn}\label{defn:adic-finite-global} A morphism of analytic adic spaces $f\colon  X \to Y$ is called {\it finite}, if there is a covering of $Y$ by Tate affinoids $\{V_{i}\}_{i\in I}$ such that each $U_i\coloneqq f^{-1}(V_i)$ is an open Tate affinoid subset of $X$, and the natural morphism $(\O_Y(V_i), \O_Y^+(V_i)) \to (\O_X(U_i), \O_X^+(U_i))$ is finite (in the sense of Definition \ref{defn:Huber-finite}) for all $i$.
\end{defn}

The relation between Definition~\ref{defn:adic-finite-global} and Definition~\ref{defn:Huber-finite} in the case of affinoid $X$ and $Y$ is addressed in Theorem~\ref{thm:global-finite} under some noetherian constraints.

\begin{defn} A Tate-Huber pair $(A, A^+)$ is called {\it strongly noetherian} if $A\langle T_1, \dots, T_n \rangle$ is noetherian for all $n$.
\end{defn}

\begin{lemma}\label{noeth-preserve} Let $(A, A^+)$ be a strongly noetherian complete Tate-Huber pair. A topologically finite type complete $(A, A^+)$-Tate-Huber pair $(B, B^+)$ is strongly noetherian as well.
\end{lemma}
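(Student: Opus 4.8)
The plan is to reduce strong noetherianity of $B$ to that of $A$ by exhibiting each $B\langle S_1, \dots, S_m\rangle$ as an \emph{algebraic} quotient of a noetherian ring, so that the topology only enters at a single point. First I would unwind Definition~\ref{defn:huber-topologically-finite-type}: since $(B, B^+)$ is topologically finite type over $(A, A^+)$, there is a continuous surjection
\[
f\colon A\langle T_1, \dots, T_n\rangle \twoheadrightarrow B.
\]
Fix $m \geq 0$. Using the identification $A\langle T_1, \dots, T_n\rangle\langle S_1, \dots, S_m\rangle = A\langle T_1, \dots, T_n, S_1, \dots, S_m\rangle$, I would consider the natural continuous map
\[
F\colon A\langle T_1, \dots, T_n, S_1, \dots, S_m\rangle \to B\langle S_1, \dots, S_m\rangle
\]
extending $f$ by $S_j \mapsto S_j$, and argue that $F$ is surjective.

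The surjectivity of $F$ is the key point, and the only place where the topology really intervenes. By the Banach Open Mapping Theorem (Remark~\ref{rmk:open-mapping}), $f$ is open, so if $\{\varpi^k A_0\langle T\rangle\}_k$ is the fundamental system of neighborhoods of $0$ coming from a ring of definition $A_0 \subset A$ and a pseudo-uniformizer $\varpi$, then $\{f(\varpi^k A_0\langle T\rangle)\}_k$ is a fundamental system of neighborhoods of $0$ in $B$. Given $g = \sum_\alpha b_\alpha S^\alpha \in B\langle S_1,\dots,S_m\rangle$ (so $b_\alpha \to 0$), I would set $n(\alpha) := \max\{k : b_\alpha \in f(\varpi^k A_0\langle T\rangle)\}$; because $b_\alpha \to 0$, only finitely many $\alpha$ fail to lie in any fixed $f(\varpi^k A_0\langle T\rangle)$, so $n(\alpha)\to \infty$ as $|\alpha|\to\infty$. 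Lifting each $b_\alpha$ to some $c_\alpha \in \varpi^{n(\alpha)} A_0\langle T\rangle$ with $f(c_\alpha)=b_\alpha$ then produces a null sequence $(c_\alpha)$, whence $\sum_\alpha c_\alpha S^\alpha$ is a genuine restricted power series in $A\langle T_1, \dots, T_n, S_1, \dots, S_m\rangle$ mapping onto $g$.

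Finally, I would conclude purely algebraically: since $A$ is strongly noetherian, the source $A\langle T_1, \dots, T_n, S_1, \dots, S_m\rangle$ is noetherian, and a surjective ring homomorphism carries the noetherian property to its target, so $B\langle S_1, \dots, S_m\rangle$ is noetherian. As $m$ was arbitrary, $(B, B^+)$ is strongly noetherian. The only real obstacle is the lifting argument establishing surjectivity of $F$; everything else is formal, and the integral subrings $A^+, B^+$ play no role here, since strong noetherianity is a property of the rings $A\langle \cdot\rangle$ and $B\langle \cdot \rangle$ alone.
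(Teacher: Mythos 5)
Your proof is correct, and it is worth noting that the paper itself gives no argument for this lemma: it simply cites \cite[Corollary 3.4]{H1}. So your proposal supplies a self-contained proof of a fact the paper outsources to Huber. Your route --- realize $B\langle S_1,\dots,S_m\rangle$ as an algebraic quotient of $A\langle T_1,\dots,T_n,S_1,\dots,S_m\rangle$ and then use that a surjective image of a noetherian ring is noetherian --- is the standard one, and the lifting argument you use for surjectivity of $F$ is the very same device the paper deploys in Lemma~\ref{example-closed} to show that $A\langle T_1,\dots,T_n\rangle \to (A/I)\langle T_1,\dots,T_n\rangle$ is surjective; there it is run for the open quotient $A\to A/I$, whereas you run it for the open surjection $f\colon A\langle T_1,\dots,T_n\rangle \twoheadrightarrow B$ furnished by Definition~\ref{defn:huber-topologically-finite-type} and Remark~\ref{rmk:open-mapping}. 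Two cosmetic repairs: your $n(\alpha)=\max\{k : b_\alpha\in f(\varpi^k A_0\langle T\rangle)\}$ fails to exist exactly when $b_\alpha=0$ (note $f(\varpi^k A_0\langle T\rangle)=f(\varpi)^k f(A_0\langle T\rangle)$ and $\bigcap_k f(\varpi)^k f(A_0\langle T\rangle)=0$ by separatedness of $B$), so either lift zero coefficients by $0$ or cap $n(\alpha)$ at $|\alpha|$; and to justify $F\bigl(\sum_\alpha c_\alpha S^\alpha\bigr)=\sum_\alpha f(c_\alpha)S^\alpha$ you should define $F$ via the universal property of restricted power series (\cite[Lemma 3.5(i)]{H1}) and pass to the limit of partial sums using continuity. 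Neither point affects correctness. What your approach buys is independence from a black-boxed reference, at the cost of a page of bookkeeping; what the paper's citation buys is brevity. You are also right that $A^+$ and $B^+$ are irrelevant here, since strong noetherianity is a condition on the Tate rings alone.
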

\begin{proof}
This is proven in \cite[Corrolary 3.4]{H1}.
\end{proof}

\begin{defn}\label{defn:strongly-noetherian-space} An analytic adic space $S$ is {\it locally strongly noetherian} if every point $x\in S$ has an affinoid open neighborhood isomorphic to $\Spa(A, A^+)$ for some strongly noetherian complete Tate-Huber pair $(A, A^+)$. 

An analytic adic space $S$ is {\it strongly noetherian} if it is locally strongly noetherian and quasi-compact.
\end{defn}

\begin{lemma}\label{lemma:strongly-noetherian-Tate-affinoid} A Tate affinoid analytic adic space $\Spa(A, A^+)$ is strongly noetherian if and only if $(A, A^+)$ is a strongly noetherian Tate-Huber pair.
\end{lemma}
\begin{proof}
    The ``if'' direction is clear. Now suppose that $X\coloneqq \Spa(A, A^+)$ is strongly noetherian as an analytic adic space. We wish to show that the Tate-Huber pair $(A, A^+)$ is strongly noetherian.\smallskip
    
    By assumption, each point $x\in X$ has an affinoid open neighborhood $U_x=\Spa(A_x, A_x^+)$ with a strongly complete Tate-Huber pair $(A_x, A_x^+)$. Since rational subdomains form a basis of topology of $X$ and strong noetherianess is preserved by passing to rational subdomains (see Lemma~\ref{noeth-preserve}), we can assume that each $U_x\subset X$ is a rational subdomain. Then the claim follows from \cite[Corollary 1.4.19]{KedAr} and sheafiness of $A$.
\end{proof}

%\begin{defn}\label{defn:strongly-noetherian-Tate-affinoid} A Tate affinoid $\Spa(A, A^+)$ is called {\it strongly noetherian} if is isomorphic to $\Spa(A, A^+)$ for a strongly noetherian Tate-Huber pair $(A, A^+)$.
%\end{defn}

%\begin{rmk}\label{st-noet-loc?} It is not known (to the author) if a strongly noetherian Tate affinoid adic space is always isomorphic to $\Spa(A, A^+)$ for a strongly noetherian complete Tate-Huber pair $(A, A^+)$.
%\end{rmk}

\begin{thm}\label{global-tft} Let $f\colon \Spa(B, B^+) \to \Spa(A, A^+)$ be a topologically finite type morphism of strongly noetherian Tate affinoids. Then the corresponding map
\[
f^{\#}\colon (A, A^+) \to (B, B^+)
\]
is topologically finite type.
\end{thm}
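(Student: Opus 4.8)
The plan is to deduce the global (affine) finiteness conditions from the local-on-target ones by combining Huber's foundational results for strongly noetherian pairs with the coherence theory of affinoids. Throughout I would use that $X=\Spa(B,B^+)$ and $Y=\Spa(A,A^+)$ are quasi-compact, so the defining coverings may be taken finite: write $Y=\bigcup_{i=1}^m V_i$ with $V_i=\Spa(A_i,A_i^+)$ rational subdomains, and (in the finite case) $f^{-1}(V_i)=\Spa(B_i,B_i^+)$ with $A_i\to B_i$ module-finite and $A_i^+\to B_i^+$ integral. Since $(A,A^+)$ is strongly noetherian, $\O_X$ and $\O_X^+$ are sheaves (sheafiness theorem \cite[Theorem 2.5]{H1}) and the rational covering is faithfully flat, i.e. $A\to\prod_i A_i$ is faithfully flat as a consequence of Tate acyclicity; these two facts let me recover $A,A^+,B,B^+$ as equalizers over the covering and will be the main descent tools.

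For the finite case I would first prove module-finiteness and then handle the integral condition on the $+$-rings. The pushforward $f_*\O_X$ is an $\O_Y$-module which on each $V_i$ equals the finite $A_i$-module $B_i$; as $A$ is strongly noetherian, Kiehl's coherence theory for affinoids shows $f_*\O_X$ is coherent and acyclic, whence $B=\Gamma(Y,f_*\O_X)$ is a finite $A$-module. For the $+$-rings, let $C\subseteq B$ be the integral closure of $A^+$ in $B$; since every $x\in X$ maps to $Y$, the restriction $v_x|_A$ is bounded by $1$ on $A^+$, and the usual valuative estimate gives $C\subseteq B^+$. Locally $B_i^+$ is integrally closed in $B_i$ and integral over $A_i^+$, hence equals the integral closure of $A_i^+$ in $B_i$; using that integral closure commutes with the flat localizations $A\to A_i$ (here $A$ noetherian is essential) one identifies $C\otimes_A A_i$ with $B_i^+$, and similarly over the double overlaps. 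As $C$ is a finite $A$-module its associated sheaf is acyclic, so comparing equalizers over the faithfully flat cover with the sheaf $\O_X^+$ yields $C=B^+$. Thus $A^+\to B^+$ is integral and, together with module-finiteness, $f^\#$ is finite in the sense of Definition~\ref{defn:Huber-finite}.

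For the topologically finite type case it suffices, by Lemma~\ref{lemma:huber-finite-finite-type}, to treat a locally topologically finite type $f$. Covering $X$ by finitely many rational subdomains $U_{ij}=\Spa(B_{ij},B_{ij}^+)$ mapping topologically-finite-type into $V_i$, transitivity (Lemma~\ref{lemma:tft-useful}) together with the topological finite type-ness of rational subdomains shows each $B_{ij}$ is topologically of finite type over $A$ (and strongly noetherian by Lemma~\ref{noeth-preserve}). The task is then to assemble a single surjection $A\langle S_1,\dots,S_N\rangle\twoheadrightarrow B$ with $B^+$ integral over $A^+\langle S_1,\dots,S_N\rangle$; the surjection is automatically a topological quotient map by the Banach Open Mapping Theorem (Remark~\ref{rmk:open-mapping}), so only the algebraic surjectivity and the integrality need attention. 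Here lies the main obstacle: the local topological generators live in the $\O_X(U_{ij})$ and need not descend to global sections, so one cannot simply read off generators of $B$. I would resolve this by passing to compatible rings of definition $A_0\subseteq A^+$, $B_0\subseteq B^+$ and a pseudo-uniformizer $\varpi$, reducing by successive approximation (\cite[\href{https://stacks.math.columbia.edu/tag/031D}{Tag 031D}]{stacks-project}) to showing that $B_0/\varpi B_0$ is a finite-type $(A_0/\varpi A_0)$-algebra; the rational covering of $X$ induces a covering of the reduction over which finite generation is known locally, and coherence and noetherian finiteness of the special fibres then produce finitely many global generators. Once the presentation exists, the integrality of $B^+$ over $A^+\langle S_1,\dots,S_N\rangle$ follows from the local integral data exactly as in the finite case, and $f^\#$ is topologically of finite type by Definition~\ref{defn:huber-topologically-finite-type}.
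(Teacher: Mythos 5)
The paper itself offers no argument for Theorem~\ref{global-tft}: it is quoted outright from Huber's book (\cite[3.6.20]{H2} for the finite case, \cite[3.3.23]{H2} for the topologically finite type case). So your proposal attempts strictly more than the paper does, and it has genuine gaps in both halves.

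\textbf{Finite case.} Your module-finiteness step is sound: $f_*\O_X$ restricted to $V_i$ is $\widetilde{B_i}$ (using Lemma~\ref{complete-base-change}), so Kiehl's theorem, i.e.\ Theorem~\ref{thm:huber-coherent-sheaves}(\ref{thm:huber-coherent-sheaves-1}), gives that $B$ is a finite $A$-module (beware, though, that the paper cites that theorem to the very same \cite[3.6.20]{H2}, so inside this paper's logical structure this is not an independent input). The $+$-ring step, however, fails as written. Let $C$ be the integral closure of $A^+$ in $B$. First, $C$ is only an $A^+$-subalgebra of $B$, \emph{not} an $A$-submodule, so the expression $C\otimes_A A_i$ and the phrase ``$C$ is a finite $A$-module [with] acyclic [associated sheaf]'' are not meaningful; nor can you assume $C$ is module-finite over the non-noetherian ring $A^+$ (that is a Japanese/excellence-type property). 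Second, the principle ``integral closure commutes with flat localization because $A$ is noetherian'' is not valid: $A\to A_i$ is a \emph{completed} localization, and commutation of integral closure with completion is precisely an excellence-type issue. Third, the identification you want is simply false: for $f=\mathrm{id}$ one has $C=A^+$, and the $A_i$-span of its image in $A_i$ is all of $A_i$, not $A_i^+$; likewise $C\cdot A_i^+\subseteq B_i^+$ is in general strict. The repair is different and needs no base change of integral closures: $(B,C)$ is a complete Huber pair, finite over $(A,A^+)$ in the sense of Definition~\ref{defn:Huber-finite}, and the preimage of $V_i=Y\left(\frac{f_1,\dots,f_n}{g}\right)$ in $\Spa(B,C)$ is $\Spa(B_i,D_i)$ with $D_i$ the integral closure in $B_i$ of the image of $C\left[\frac{f_1}{g},\dots,\frac{f_n}{g}\right]$; since $C$ is integral over $A^+$, this equals the integral closure of the image of $A^+\left[\frac{f_1}{g},\dots,\frac{f_n}{g}\right]$, i.e.\ the integral closure of $A_i^+$ in $B_i$, which is $B_i^+$. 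Hence $\Spa(B,B^+)\to\Spa(B,C)$ is an isomorphism over each $V_i$, hence an isomorphism, and taking global sections of $\O^+$ (\cite[Proposition 1.6]{H1}) gives $B^+=C$.

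\textbf{Topologically finite type case.} Here the gap is the essential one. As you yourself identify, everything reduces to manufacturing a single continuous surjection $A\langle S_1,\dots,S_N\rangle\twoheadrightarrow B$ (with the accompanying integrality of $B^+$), but the mechanism you sketch does not exist. The reductions $B_{ij,0}/\varpi B_{ij,0}$ of the rational subdomains do \emph{not} form an open cover of $\Spec\left(B_0/\varpi B_0\right)$: passing from a rational covering of the generic fibre to a Zariski covering of a model requires admissible blow-ups (Raynaud's theory, which moreover is not available over a general strongly noetherian base), not reduction mod $\varpi$. And the special fibre $B_0/\varpi B_0$ is typically non-noetherian (already for $B_0=\O_{\mathbf{C}_p}\langle T\rangle$), so ``noetherian finiteness of the special fibres'' is unavailable, and there is no coherent sheaf in sight whose global sections would produce the generators. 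This glueing of topological generators is exactly the hard content of \cite[3.3.23]{H2}, and your proposal leaves it unproved; the final claim that integrality of $B^+$ over $A^+\langle S_1,\dots,S_N\rangle$ ``follows exactly as in the finite case'' then inherits the flaw described above.
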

\begin{proof}
This is proven in \cite[Satz 3.3.23]{H2}.
\end{proof}

\begin{thm}\label{thm:global-finite} Let $(A, A^+)$ be a strongly noetherian Tate-Huber pair, and $f\colon X \to \Spa(A, A^+)$ a finite morphism. Then $X$ is affinoid and the morphism $(A, A^+) \to (\O_X(X), \O_X^+(X))$ is finite. 
\end{thm}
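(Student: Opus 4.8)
The plan is to prove that $X$ is affinoid, identifying it with $\Spa(B,B^+)$ for $B=\O_X(X)$ and $B^+=\O_X^+(X)$, and then to obtain the finiteness of $f^\#$ as a formal consequence of the affinoid-to-affinoid statement Theorem~\ref{global-tft}. The real content is therefore the affinoidness of $X$.

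First I would unwind Definition~\ref{defn:adic-finite-global}. Since $Y=\Spa(A,A^+)$ is quasi-compact and rational subsets form a basis of its topology, I can extract a \emph{finite} covering $Y=\bigcup_{i} V_i$ by rational subdomains $V_i=\Spa(A_i,A_i^+)$ (so each $A_i$ is a rational localization of $A$) such that each preimage $f^{-1}(V_i)=\Spa(B_i,B_i^+)$ is affinoid and $(A_i,A_i^+)\to(B_i,B_i^+)$ is finite; in particular each $B_i$ is a finite $A_i$-module. Next I would assemble these into the sheaf of $\O_Y$-algebras $\mathcal F:=f_*\O_X$: on each $V_i$ it is the coherent module attached to the finite $A_i$-module $B_i$, and since $\O_X$ is a sheaf these glue, so $\mathcal F$ is a coherent $\O_Y$-module. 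Because $A$ is strongly noetherian, Kiehl's theorem (the equivalence between coherent $\O_Y$-modules and finite $A$-modules on a strongly noetherian Tate affinoid, together with the Tate acyclicity of the covering $\{V_i\}$) yields a finite $A$-module $B$ with $\mathcal F\cong\widetilde B$ and $B\otimes_A A_i\cong B_i$. Here $B=\mathcal F(Y)=\O_X(X)$ inherits the structure of an $A$-algebra that is finite as an $A$-module.

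With $B$ in hand, set $B^+:=\O_X^+(X)$. Then $(B,B^+)$ is a complete Tate-Huber pair, and $B$ is strongly noetherian because $B\langle T_1,\dots,T_n\rangle\cong B\otimes_A A\langle T_1,\dots,T_n\rangle$ is finite over the noetherian ring $A\langle T_1,\dots,T_n\rangle$; in particular $(B,B^+)$ is sheafy and $\Spa(B,B^+)$ is an adic space. The universal property of $\Spa$ (Remark~\ref{grp-act-adic}) produces a canonical morphism $g\colon X\to\Spa(B,B^+)$ over $Y$, and I would check that $g$ is an isomorphism by restricting over each $V_i$: the base change of $\Spa(B,B^+)$ along the rational inclusion $V_i\hookrightarrow Y$ is $\Spa(B\otimes_A A_i,\,-)=\Spa(B_i,B_i^+)$, where the identification of $+$-rings uses that $\O_X^+$ is a sheaf so that the rational localization of $B^+$ recovers $\O_X^+(f^{-1}(V_i))=B_i^+$; this matches $f^{-1}(V_i)=X\times_Y V_i$. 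Hence $g$ is an isomorphism on the covering $\{V_i\}$ and therefore globally, so $X\cong\Spa(B,B^+)$ is affinoid.

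Finally, $X$ and $Y$ are now both strongly noetherian Tate affinoids and $f$ is finite by hypothesis, so Theorem~\ref{global-tft} applies and gives that $f^\#\colon(A,A^+)\to(B,B^+)=(\O_X(X),\O_X^+(X))$ is finite, as desired. I expect the main obstacle to be the coherence-and-finiteness step, namely realizing $f_*\O_X$ as the sheaf associated to a \emph{single} finite $A$-module; this is exactly where the strong noetherian hypothesis is indispensable (through Kiehl's theorem and the sheafiness/acyclicity it provides). The secondary point requiring care is the bookkeeping of the integral $+$-structures during the gluing, which I would handle entirely via the sheaf property of $\O_X^+$ rather than by a separate integrality computation, precisely so that the concluding appeal to Theorem~\ref{global-tft} absorbs the verification that $A^+\to B^+$ is integral.
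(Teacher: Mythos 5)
The paper offers no argument for this theorem at all: its ``proof'' is the citation \cite[3.6.20]{H2}. Your proposal is therefore a genuinely different route, and its skeleton is sound: you derive the statement from Theorem~\ref{thm:huber-coherent-sheaves}(\ref{thm:huber-coherent-sheaves-1}) (the Kiehl-type classification of coherent sheaves) together with Theorem~\ref{global-tft}, after realizing $f_*\O_X$ as a coherent sheaf of algebras via a finite cover by rational subdomains and Corollary~\ref{cor:complete-localization}. This is indeed how such results are proven, and every auxiliary fact you invoke (stability of finiteness under rational localization, $B\langle T_1,\dots,T_n\rangle\cong B\otimes_A A\langle T_1,\dots,T_n\rangle$ via Lemma~\ref{complete-base-change}, sheafiness of $(B,B^+)$ from strong noetherianness) is available in the paper's appendix. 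One caveat you should be aware of: both of your main inputs carry the \emph{same} citation \cite[3.6.20]{H2} as the theorem you are proving, so within the paper your argument is a legitimate (non-circular, since each quoted theorem stands on its own) derivation of one consequence of Huber's theorem from others, rather than an independent proof of Huber's result.

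Two steps are glossed in a way that needs repair. First, before Remark~\ref{grp-act-adic} can produce the morphism $g\colon X\to\Spa(B,B^+)$, you must know that $(\O_X(X),\O_X^+(X))$, with the topology it carries as global sections, is a complete Tate--Huber pair whose topology agrees with the natural $A$-module topology on the finite module $B$; this is not automatic from the $\O_Y$-module isomorphism $f_*\O_X\cong\widetilde{B}$, but it can be established exactly as in Lemma~\ref{lemma:huber-closed-tate-topology} ($X$ is quasi-compact and quasi-separated, being finite over an affinoid) and then the two complete Tate topologies are identified by the open mapping theorem (Remark~\ref{rmk:open-mapping}). Second, the identification of $+$-rings over each $V_i$ does \emph{not} follow from ``the sheaf property of $\O_X^+$'': sheafiness only gives that the rational-localization $+$-ring of $B^+$ maps \emph{into} $B_i^+$, and the reverse containment is the substantive one. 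It does hold, but the mechanism is integrality: since $f$ is a morphism of adic spaces, $(A,A^+)\to(B,B^+)$ respects the $+$-rings, hence so does its rational localization $(A_i,A_i^+)\to\bigl(B\langle\tfrac{f_1}{s},\dots\rangle,B\langle\tfrac{f_1}{s},\dots\rangle^+\bigr)$; and $B_i^+$ is integral over the image of $A_i^+$ by Definition~\ref{defn:Huber-finite} applied to $f$ over $V_i$, so the ring isomorphism $B\langle\tfrac{f_1}{s},\dots\rangle\cong B_i$ underlies a \emph{finite} morphism of Huber pairs, which Lemma~\ref{lemma:huber-finite-iso} then upgrades to an isomorphism of pairs. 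Your concluding remark that Theorem~\ref{global-tft} ``absorbs'' the integrality of $A^+\to B^+$ is correct for the final global statement, but it cannot substitute for this local check, which is needed earlier to know that $g$ is an isomorphism at all. With these two patches the proof goes through.
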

\begin{proof}
This follows from the combination of \cite[Satz 3.6.20 and Korollar 3.12.12]{H2}. 
\end{proof}

\begin{rmk} We do not know Theorem~\ref{global-tft} or Theorem~\ref{thm:global-finite} hold without the extra strong noetherianness assumption. 
\end{rmk}

\subsection{Completed Tensor Products}

The main goal of this section is to prove that under certain assumptions, completed tensor products of Tate rings coincide with usual tensor products. This should be well-known to the experts, but it seems difficult to extract the proof from the existing literature. \smallskip

For the rest of the section, we fix a complete Tate-Huber pair $\left(A, A^+\right)$ with a choice of a pair of definition $(A_0, \varpi)$. We recall the notion of ``the natural $A$-module topology'' for a finite $A$-module $M$:

\begin{defn} A topological $A$-module structure on $M$ is {\it natural} if any $A$-linear map $M\to N$ to a topological $A$-module $N$ is continuous. 
\end{defn}
By considering the identity morphism $\rm{Id}\colon M\to M$, it is clear that the natural $A$-module topology is unique, if it exists. It is not, a priori, clear if any module admits a natural topology. However, it turns out that the natural topology actually always exists on a finite $A$-module.

%\begin{rmk}\label{rmk:natural-topology-construction} Suppose that an $A$-module $M$ has a topology $\tau$ (that, a priori, is not assumed to define a topological $A$-module structure) such that any $A$-linear morphism $M \to N$ to a topological $A$-module $N$ is continuous. Then it is easy to see that $\tau$ defines a natural topological $A$-module structure.
%\end{rmk}

\begin{lemma}\label{existence-natural} Let $M$ be a finite $A$-module. There is a topology on $M$ that satisfies the definition of the natural $A$-module topology.
\end{lemma}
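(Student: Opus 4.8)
The plan is to build the natural topology explicitly from a finite presentation and then verify the universal property. First I would choose a surjection of $A$-modules $\phi\colon A^n \twoheadrightarrow M$, which exists since $M$ is finite over $A$. Equipping $A^n$ with the product topology gives it the structure of a topological $A$-module, and I would endow $M$ with the quotient topology induced by $\phi$. The claim is that this topology witnesses the lemma. Note that no Hausdorff (or closedness of $K\coloneqq \ker\phi$) hypothesis is needed, since the definition of the natural topology makes no separation requirement.

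The first thing to verify is that the quotient topology makes $M$ a topological $A$-module. The crucial input is that $\phi$, being the quotient map of a topological abelian group by the subgroup $K = \ker\phi$, is open. Since a product of open maps is open (and the identity is open), the maps $\phi\times\phi\colon A^n\times A^n \to M\times M$ and $\mathrm{id}_A\times\phi\colon A\times A^n \to A\times M$ are open surjections. Continuity of addition $A^n\times A^n \to A^n$ and of scalar multiplication $A\times A^n \to A^n$, composed with $\phi$, then descends through these open surjections to give continuity of addition and scalar multiplication on $M$; this is where one genuinely uses the topological-group structure rather than merely the underlying set.

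Next I would check the universal property. Let $g\colon M\to N$ be any $A$-linear map to a topological $A$-module $N$. The composite $g\circ\phi\colon A^n\to N$ is $A$-linear, and any $A$-linear map out of the free module $A^n$ is continuous: writing $g\circ\phi$ as $(a_1,\dots,a_n)\mapsto \sum_i a_i\,n_i$ with $n_i\coloneqq g(\phi(e_i))$, each term is continuous by continuity of scalar multiplication in $N$, and the finite sum is continuous by continuity of addition in $N$. Since $M$ carries the quotient topology and $g\circ\phi$ is continuous, $g$ itself is continuous. This is precisely the defining property of the natural topology.

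Finally, independence of the chosen presentation is automatic: the uniqueness statement recorded just before the lemma shows that any two topologies satisfying the universal property coincide, so the construction above produces a well-defined topology regardless of the choice of $\phi$. I do not expect a serious obstacle here; the only point requiring care is the openness of $\phi$ and the resulting descent of continuity of the structure maps of $M$.
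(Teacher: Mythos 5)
Your proof is correct and takes essentially the same route as the paper: choose a surjection $A^n \twoheadrightarrow M$ from a finite free module with the product topology, give $M$ the quotient topology, and verify the universal property using that any $A$-linear map out of $A^n$ into a topological $A$-module is automatically continuous. The only difference is that you carefully justify, via openness of the quotient map and of its products, that the quotient topology makes $M$ a topological $A$-module --- a step the paper's proof dismisses as clear --- so your write-up is, if anything, more complete.
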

\begin{proof}
First of all, we claim that the product topology on a finite free module $A^n$ is the natural $A$-module topology on it. Indeed, it suffices to prove the claim in the case $n=1$ by the universal property of direct products. But any $A$-linear map $A \to N$ to a topological $A$-module is clearly continuous. 

Now we deal the case of an arbitrary finitely generated $M$. We choose a surjective morphism $f\colon A^n \to M$ and provide $M$ with the quotient topology. This is clearly a topological $A$-module structure. We want to show that any $A$-linear morphisms $g\colon M\to N$ to a topological $A$-module $N$ is continuous. We consider the diagram:
\[
\begin{tikzcd}
A^n\arrow{d}{f} \arrow{rd}{h} & \\
M \arrow{r}{g} &N.
\end{tikzcd}
\]
Then for any open $U\subset N$ we see that $f^{-1}(g^{-1}(U))=h^{-1}(U)$ is open since $h$ is continuous by the argument above. The definition of quotient topology implies that $g^{-1}(U)$ is open as well. Thus $g$ is indeed continuous.
\end{proof}

\begin{rmk}\label{rmk:natural-topology-warn} We warn the reader that the natural topology on a finite $A$-module may not be complete as $A$ may have non-closed ideals. 
\end{rmk}

\begin{lemma}\label{finite-natural} Let $M$ be a finite, complete, first countable topological $A$-module. Then the topology on $M$ is the natural $A$-module topology. If $(B, B^+)$ is a finite complete $(A,A^+)$-Tate-Huber pair, then there is a ring of definition $B_0$ and a surjective $A$-linear morphism $p\colon A^n\to B$ with $p(A_0^n)=B_0$.
\end{lemma}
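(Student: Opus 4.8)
The plan is to prove the two assertions in turn, reducing the second to the first. Throughout, fix a finite generating set $m_1,\dots,m_n$ of $M$, write $q\colon A^n \to M$ for the associated $A$-linear surjection, and set $M_0 \coloneqq q(A_0^n) = A_0 m_1 + \dots + A_0 m_n$.

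For the first assertion, recall from Lemma~\ref{existence-natural} that the natural $A$-module topology $\tau_{\mathrm{nat}}$ exists and is the quotient topology along $q$; since $\{\varpi^k A_0^n\}_k$ is a neighbourhood basis of $0$ in $A^n$ and quotient maps of topological groups are open, $\tau_{\mathrm{nat}}$ is precisely the $\varpi$-adic topology defined by the lattice $M_0$. Applying the defining property of $\tau_{\mathrm{nat}}$ to the identity map into $(M,\tau)$ (the given complete topology) shows $\tau$ is coarser than $\tau_{\mathrm{nat}}$, so $q$ remains continuous as a surjection $A^n \to (M,\tau)$. The key point is then that this continuous surjection of complete (Banach) $A$-modules is \emph{open}, by the module version of the Banach Open Mapping Theorem (cf. \cite[Lemma 2.4 (i)]{H1}). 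Openness of $q$ forces $\tau_{\mathrm{nat}}$ to be coarser than $\tau$ as well, whence $\tau = \tau_{\mathrm{nat}}$; in particular $M_0$ is a bounded open lattice and the topology on $M$ is $\varpi$-adic with respect to it.

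For the second assertion I would apply the first part to the finite $A$-module $B$ (finiteness comes from Definition~\ref{defn:Huber-finite}). Choose $A$-module generators $\beta_1 = 1, \beta_2,\dots,\beta_r \in B^+$ of $B$; by the first part $L \coloneqq \sum_i A_0\beta_i$ is a bounded open $A_0$-lattice. Since $L$ is open and $L\cdot L$ is bounded, some power $\varpi^N$ satisfies $\varpi^N(L\cdot L)\subseteq L$, so we may write $\varpi^N\beta_i\beta_j = \sum_k a_{ij}^k\beta_k$ with $a_{ij}^k\in A_0$. Setting $\gamma_1 = 1$ and $\gamma_i = \varpi^N\beta_i$ for $i\geq 2$, a direct check with these relations (keeping the unit $\gamma_1$ separate so that $\varpi^N a_{ij}^1\in A_0$) shows $B_0 \coloneqq A_0 + \sum_{i\geq 2} A_0\gamma_i$ is a unital $A_0$-subalgebra of $B$; it is open and bounded since $\varpi^N L \subseteq B_0 \subseteq L$, hence a ring of definition by \cite[Proposition 1.1]{H0}. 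Finally the $A$-linear map $p\colon A^r \to B$ with $p(e_1)=1$ and $p(e_i)=\gamma_i$ is surjective (its image is $A + \sum_{i\geq 2} A\beta_i = B$, as $\varpi$ is invertible in $A$) and satisfies $p(A_0^r)=B_0$ by construction.

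The main obstacle is the open mapping input in the first assertion. Everything else is bookkeeping, but the identification $\tau = \tau_{\mathrm{nat}}$ genuinely requires the completeness hypothesis on $M$: over a non-noetherian $A$ a finite module need not be complete, and when it is not, the natural topology is itself incomplete (cf. Remark~\ref{rmk:natural-topology-warn}). One must therefore take care that the Banach Open Mapping Theorem is invoked in the module setting (where completeness and first countability make the underlying topological groups Baire), not merely for Tate rings. Once the topology of a finite complete module is pinned down to be $\varpi$-adic with respect to an $A_0$-lattice, the rescaling argument in the second part is routine, the only delicate point being to produce a \emph{unital} ring of definition that is finite over the \emph{prescribed} $A_0$ rather than over some auxiliary ring of definition.
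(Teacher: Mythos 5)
Your proof is correct. The first assertion is handled exactly as in the paper: both arguments observe that the $A$-linear surjection $A^n \to (M,\tau)$ is continuous (you route this through the universal property of $\tau_{\mathrm{nat}}$, the paper through the topological-module structure of $M$, which is the same fact), and then both invoke the open mapping theorem \cite[Lemma 2.4(i)]{H1} for complete first-countable topological $A$-modules to conclude that $\tau$ is the quotient topology, i.e.\ the natural one.

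The second assertion is where you genuinely diverge. The paper quotes Lemma~\ref{lemma:top-finite-type-integral-finite}: since a finite morphism of complete Tate--Huber pairs is topologically of finite type (Lemma~\ref{lemma:huber-finite-finite-type}) with $B^+$ integral over $A^+$, that lemma supplies compatible rings of definition with $B_0$ finite over a ring of definition of $A$, and $p$ is then the map determined by $A_0$-module generators of $B_0$. You instead bootstrap from the first assertion: the lattice $L=\sum_i A_0\beta_i$ spanned by generators $\beta_i\in B^+$ (with $\beta_1=1$) is open and bounded because the topology on $B$ is the quotient topology, and the rescaling $\gamma_i=\varpi^N\beta_i$ absorbs the structure constants into $A_0$ (using $\varpi\in A_0$, so $\varpi^N a^1_{ij}\in A_0$), producing the ring of definition $B_0=A_0+\sum_{i\ge 2}A_0\gamma_i$ with $p(A_0^r)=B_0$ directly. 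Both routes are sound, but yours buys something you correctly flag: Lemma~\ref{lemma:top-finite-type-integral-finite} \emph{constructs} its ring of definition of $A$ (it must contain the coefficients of the chosen monic equations), so the paper's proof, read literally, establishes the claim for \emph{some} ring of definition of $A$ rather than for the $A_0$ fixed at the start of the section, whereas your rescaling works with the prescribed $A_0$ and matches the statement as written. This discrepancy is harmless for the paper's applications---in Lemma~\ref{lemma:finite-tensor-product} the topology on $B\otimes_A C$ does not depend on the choice of compatible rings of definition, by \cite[2.4.18]{H2}---but your argument avoids it altogether, at the cost of redoing by hand the multiplicative-closure bookkeeping that the paper outsources to Lemma~\ref{lemma:top-finite-type-integral-finite}.
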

\begin{proof}
In the case of a finite complete first countable topological $A$-module $M$, any surjection $A^n \to M$ must be open by \cite[Lemma 2.4(i)]{H1}. Thus $M$ carries the quotient topology. This topology satisfies the condition of the natural topology by the argument in the last paragraph of Lemma~\ref{existence-natural}. \smallskip

As for the second claim, we use Lemma~\ref{lemma:top-finite-type-integral-finite} to find rings of definition $A_0, B_0$ such that $B_0$ is finite over $A_0$. Choose some generators $b_1, \dots, b_n$ for $B_0$ over $A_0$ and consider the morphism $p\colon A^n \to B$ that sends $\left(a_1, \dots, a_n\right)$ to $a_1b_1+\dots+a_nb_n$. Then clearly $p(A_0^n)=B_0$.
\end{proof}

Finally, we recall that given two morphisms $f\colon A \to B$ and $g\colon A\to C$ of Tate rings there is a canonical way to topologize the tensor product $B\otimes_A C$. Namely, we pick some rings of definition $B_0\subset B$ and $C_0 \subset C$ such that $f(A_0)\subset B_0$ and $g(A_0) \subset C_0$. Then we topologize $A\otimes_B C$ by requiring the image $(B\otimes_A C)_0 \coloneqq \operatorname{Im}(B_0\otimes_{A_0} C_0 \to B\otimes_A C)$ with its $\varpi$-adic topology to be a ring of definition in $B\otimes_A C$. Then it is straightforward\footnote{See \cite[Proposition 2.4.18]{H2} or \cite[Theorem 5.5.4]{Seminar}.} to see that the Tate ring $B\otimes_A C$ ring satisfies the expected universal property in the category of Tate rings. In particular, this shows that this construction does not depend on the choice of rings of definition $A_0, B_0, C_0$. But we warn the reader that $B\otimes_A C$ need not be (separated and) complete even if $A, B$ and $C$ are; its completion is denoted by $B\wdh{\otimes}_A C$.

\begin{lemma}\label{lemma:finite-tensor-product} Let $f\colon \left(A, A^+\right) \to \left(B, B^+\right)$ be a finite morphism of complete Tate-Huber pairs, and let $g\colon A \to C$ be any morphisms of Tate rings. Then the topologized tensor product $B\otimes_A C$ has the natural $C$-module topology.
\end{lemma}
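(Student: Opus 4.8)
The plan is to compare two explicit neighborhood bases of $0$ in $B\otimes_A C$ --- the one defining Huber's tensor-product topology and the one defining the natural $C$-module topology of Lemma~\ref{existence-natural} --- and to show they literally coincide. Since $B$ is finite over $A$, the module $B\otimes_A C$ is finite over $C$, so by Lemma~\ref{existence-natural} its natural $C$-module topology is the quotient topology along any surjection from a finite free $C$-module; the whole point is to realize that quotient topology by a presentation adapted to a ring of definition.

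First I would use finiteness to fix good generators. Since $f$ is finite, Lemma~\ref{finite-natural} (together with Lemma~\ref{lemma:top-finite-type-integral-finite}) lets me choose rings of definition $A_0\subset A$ and $B_0\subset B$ and elements $b_1,\dots,b_n\in B_0$ that generate $B$ over $A$ and satisfy $B_0=\sum_{i=1}^n A_0 b_i$. Using \cite[Corollary 1.3(ii)]{H0} I would then pick a ring of definition $C_0\subset C$ with $g(A_0)\subset C_0$; since Huber's tensor-product topology is independent of the chosen rings of definition (\cite[2.4.18]{H2}), I may compute it with this compatible triple $(A_0,B_0,C_0)$. Because $b_1\otimes 1,\dots,b_n\otimes 1$ generate $B\otimes_A C$ as a $C$-module, the $C$-linear surjection $q\colon C^n\to B\otimes_A C$, $e_i\mapsto b_i\otimes 1$, presents the natural $C$-module topology as the quotient topology, for which $\{q(\varpi^k C_0^n)\}_{k\geq 0}$ is a neighborhood basis of $0$ (quotient maps of topological groups are open).

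The heart of the argument is the identity $(B\otimes_A C)_0=q(C_0^n)$, where $(B\otimes_A C)_0=\operatorname{Im}(B_0\otimes_{A_0} C_0\to B\otimes_A C)$ is Huber's ring of definition. The inclusion $\supseteq$ is immediate since each $b_i\otimes c$ with $c\in C_0$ lies in the image. For $\subseteq$ I would take a generator $b\otimes c$ with $b=\sum_i a_i b_i$, $a_i\in A_0$, $c\in C_0$, and push the scalars across the tensor product over $A$: $b\otimes c=\sum_i b_i\otimes g(a_i)c$ with $g(a_i)c\in C_0$, so $b\otimes c\in\sum_i C_0(b_i\otimes 1)=q(C_0^n)$. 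Granting this identity, the tensor-product topology has neighborhood basis $\{\varpi^k(B\otimes_A C)_0\}=\{q(\varpi^k C_0^n)\}$, which is exactly the basis for the natural topology, so the two topologies agree.

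The main obstacle is bookkeeping rather than anything conceptual: I must ensure the three rings of definition can be chosen simultaneously, with $B_0$ coming from the finite presentation of Lemma~\ref{finite-natural} and $C_0$ compatible with the resulting $A_0$, and I must invoke \cite[2.4.18]{H2} to know that this particular choice still computes Huber's topology. Once the compatible triple is in place, the scalar-pushing computation is routine and the equality of neighborhood bases is immediate.
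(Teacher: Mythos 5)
Your proof is correct and takes essentially the same route as the paper: both arguments hinge on the identity $(B\otimes_A C)_0 = q(C_0^n)$ for the surjection $q\colon C^n \to B\otimes_A C$ induced by the adapted presentation of Lemma~\ref{finite-natural}, together with independence of Huber's topology from the choice of compatible rings of definition. The only cosmetic difference is that the paper deduces the identity from right-exactness of $-\otimes_{A_0}C_0$ applied to $A_0^n\twoheadrightarrow B_0$ and phrases the conclusion as openness of $p_C$, whereas you verify it by pushing scalars across the tensor product and compare neighborhood bases of $0$ directly.
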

We note that this is not automatic from Lemma~\ref{finite-natural} since $B\otimes_A C$ is not necessarily complete.
\begin{proof}
We use Lemma~\ref{finite-natural} to find a ring of definition $B_0\subset B$ and a surjection $p\colon A^n\to B$ such that $p(A_0^n)=B_0$. Then after tensoring it with $C$ we get a surjective morphism
$C^n \to B\otimes_A C$, and tensoring the surjection $A_0^n\to B_0$ with $C_0$ we get a surjection $C_0^n\to B_0\otimes_{A_0}C_0$. Combining these, we get a commutative diagram:
\[
\begin{tikzcd}
 & C^n \arrow[ld, two heads, "p_C"] &C_0^n  \arrow[l, hook']  \arrow[d, two heads] \\
B\otimes_A C   & \arrow[l, hook'] (B\otimes_A C)_0& B_0\otimes_{A_0} C_0. \arrow[l, two heads]
\end{tikzcd}
\]
By definition, $(B\otimes_A C)_0$ with its $\varpi$-adic topology is open in $B\otimes_A C$, so \[p_C|_{C_0^n}\colon C_0^n \to B\otimes_A C\] is open onto an open image. Hence, $p_C$ is also open, so $B\otimes_A C$ has the quotient topology via $p_C$ as desired.
\end{proof}

\begin{lemma}\label{complete-base-change} Let $f\colon (A, A^+)\to (B, B^+)$ be a finite morphism of complete Tate-Huber pairs with noetherian $A$. Suppose that $A \to C$ is a continuous morphism of noetherian, complete Tate rings. Then the natural morphism $B\otimes_A C \to B\wdh{\otimes}_A C$ is a topological isomorphism.
\end{lemma}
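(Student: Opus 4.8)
The plan is to show that the ordinary tensor product $B\otimes_A C$ is already complete and Hausdorff. Since $B\wdh{\otimes}_A C$ is by definition the separated completion of $B\otimes_A C$, the natural map will then automatically be a topological isomorphism, so the entire content of the lemma is the completeness of $B\otimes_A C$.

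First I would record that $B\otimes_A C$ is a finite $C$-module: indeed $B$ is a finite $A$-module by the definition of a finite morphism (Definition~\ref{defn:Huber-finite}), and module-finiteness is stable under base change. Moreover, by Lemma~\ref{lemma:finite-tensor-product} the topologized tensor product $B\otimes_A C$ carries the natural $C$-module topology. Thus the assertion reduces to the following general statement: a finite module $M$ over a noetherian complete Tate ring $C$, equipped with its natural topology (Lemma~\ref{existence-natural}), is complete and Hausdorff.

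To prove this, I would choose a finite presentation $C^m \xrightarrow{\phi} C^n \to M \to 0$, which exists because $C$ is noetherian. By Lemma~\ref{finite-natural} the natural topology on the finite free module $C^n$ is the product topology, and $C^n$ is complete; by the construction of the natural topology in Lemma~\ref{existence-natural}, the natural topology on $M$ is the quotient topology induced by the surjection $C^n \twoheadrightarrow M$. Hence it suffices to show that $N\coloneqq \operatorname{Im}(\phi)\subset C^n$ is closed, for then $M \cong C^n/N$ is the quotient of the complete, metrizable (first countable, via the basis $\{\varpi^k C_0^n\}$) topological group $C^n$ by a closed subgroup, and such a quotient is again complete and Hausdorff.

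The main obstacle is exactly this closedness statement. Over an arbitrary complete Tate ring the natural topology on a finite module need not be complete (Remark~\ref{rmk:natural-topology-warn}), and it is precisely the noetherianness of $C$ that forces every finitely generated submodule of $C^n$ to be closed; this is where the hypotheses are genuinely used. I would extract this from Huber's theory of noetherian Tate rings (cf.\ \cite{H2}): it is the analogue, for general noetherian complete Tate rings, of the classical closedness of ideals in affinoid algebras, and it gives at once that finite $C$-modules are complete in their canonical topology. Granting it, $N$ is closed, so $M=B\otimes_A C$ is complete and Hausdorff, and therefore the completion map $B\otimes_A C \to B\wdh{\otimes}_A C$ is a topological isomorphism, as claimed.
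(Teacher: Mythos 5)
Your proposal is correct and follows essentially the same route as the paper: reduce via Lemma~\ref{lemma:finite-tensor-product} to the fact that a finite module over a noetherian complete Tate ring is complete (and separated) in its natural topology. The paper simply cites \cite[Lemma 2.4(ii)]{H1} for that fact, whereas you sketch its standard proof (closedness of submodules of $C^n$ plus quotient of a complete first-countable group by a closed subgroup) before appealing to Huber for the key closedness statement, so the content is the same.
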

\begin{proof}
Lemma~\ref{lemma:finite-tensor-product} implies that $B\otimes_A C$ carries the natural $C$-module topology. Then we use \cite[Lemma 2.4(ii)]{H1} to conclude that $B\otimes_A C$ is already complete, so the completion map $B\otimes_A C \to B\wdh{\otimes}_A C$ is a topological isomorphism.
\end{proof}

\begin{cor}\label{cor:complete-localization} Let $f\colon (A, A^+) \to (B, B^+)$ be a finite morphism of complete Tate-Huber pairs with a strongly noetherian Tate ring $A$. Then the natural morphism 
\[
B\otimes_A A\left\langle \frac{f_1}{g}, \dots, \frac{f_n}{g}\right\rangle \to B\left\langle\frac{f_1}{g}, \dots, \frac{f_n}{g}\right\rangle
\] is a topological isomorphism for any choice of elements $f_1, \dots, f_n, g \in A$ generating the unit ideal in $A$.
\end{cor}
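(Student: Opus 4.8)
The plan is to reduce the statement to Lemma~\ref{complete-base-change} by recognizing the rational localization of $B$ as a completed base change of the rational localization of $A$. I would set $C \coloneqq A\langle \frac{f_1}{g}, \dots, \frac{f_n}{g}\rangle$, the rational localization of $A$ along the given data. Since $f_1, \dots, f_n, g$ generate the unit ideal in $A$, this is a genuine complete Tate ring and the structural map $A \to C$ is continuous. As $C$ is topologically of finite type over $A$ (it is a quotient of $A\langle T_1, \dots, T_n \rangle$) and $A$ is strongly noetherian, Lemma~\ref{noeth-preserve} guarantees that $C$ is itself strongly noetherian, hence in particular noetherian and complete. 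Thus all the hypotheses of Lemma~\ref{complete-base-change} are met for the finite morphism $f\colon (A, A^+) \to (B, B^+)$ together with the continuous map $A \to C$.

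First I would apply Lemma~\ref{complete-base-change} to conclude that the natural map
\[
B\otimes_A C \to B\wdh{\otimes}_A C
\]
is a topological isomorphism; equivalently, the topologized tensor product $B\otimes_A C$ is already complete. (This ultimately rests on Lemma~\ref{lemma:finite-tensor-product}, which identifies the topology on $B\otimes_A C$ with the natural $C$-module topology, so that finiteness forces completeness.)

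Next I would invoke the base-change compatibility of rational localizations: for the morphism $A \to B$ and the images of $f_1, \dots, f_n, g$ in $B$ (which again generate the unit ideal there), there is a canonical topological identification
\[
B\wdh{\otimes}_A A\left\langle \frac{f_1}{g}, \dots, \frac{f_n}{g}\right\rangle \simeq B\left\langle \frac{f_1}{g}, \dots, \frac{f_n}{g}\right\rangle.
\]
This is the standard fact that forming a rational subdomain commutes with pullback along a morphism of Tate-Huber pairs: both sides corepresent the same functor on complete Tate $B$-algebras in which the image of $g$ becomes invertible and each $f_i/g$ is power-bounded. Composing the two displayed isomorphisms recovers exactly the natural map in the statement and exhibits it as a topological isomorphism.

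The main obstacle is the second step, namely the clean identification of $B\wdh{\otimes}_A A\langle \frac{f_1}{g}, \dots, \frac{f_n}{g}\rangle$ with $B\langle \frac{f_1}{g}, \dots, \frac{f_n}{g}\rangle$; everything else is a routine verification that the hypotheses of the cited lemmas hold. The key leverage provided by Lemma~\ref{complete-base-change} is that the ordinary tensor product is \emph{already} complete, which is precisely what allows us to drop the hat and read off the asserted isomorphism for $B\otimes_A A\langle \frac{f_1}{g}, \dots, \frac{f_n}{g}\rangle$ rather than merely its completion.
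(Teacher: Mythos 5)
Your proposal is correct and follows essentially the same route as the paper: set $C=A\langle \frac{f_1}{g},\dots,\frac{f_n}{g}\rangle$, note it is a noetherian complete Tate ring (the paper cites Huber directly where you route through Lemma~\ref{noeth-preserve}; both rest on the same fact), apply Lemma~\ref{complete-base-change} to drop the completion, and identify $B\wdh{\otimes}_A C$ with $B\langle \frac{f_1}{g},\dots,\frac{f_n}{g}\rangle$ via the universal properties of topologized tensor products, completions, and completed rational localizations.
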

\begin{proof}
First of all, we note that $A\left\langle \frac{f_1}{g}, \dots, \frac{f_n}{g}\right\rangle$ is a complete Tate ring. Moreover, it is noetherian by Lemma~\ref{noeth-preserve} so we can apply Lemma~\ref{complete-base-change} with $C=A\left\langle \frac{f_1}{g}, \dots, \frac{f_n}{g}\right\rangle$. Thus the question is reduced to showing that the natural morphism
\[
B\wdh{\otimes}_A A\left\langle \frac{f_1}{g}, \dots, \frac{f_n}{g}\right\rangle \to B\left\langle\frac{f_1}{g}, \dots, \frac{f_n}{g}\right\rangle
\]
is a topological isomorphism. But this easily follows from the universal properties of topologized tensor products (see \cite[Proposition 2.4.18]{H2} or \cite[Theorem 5.5.4]{Seminar}), completions (see \cite[Proposition 7.2.2]{Seminar}), and completed rational localizations (see \cite[(1.2) on p. 517]{H1}).
\end{proof}

\subsection{Flat Morphisms of Adic Spaces}
We discuss the notion of a flat morphism of adic spaces. This notion is not discussed much in the existing literature, so we provide the reader with some facts that we are using in the paper. 

\begin{defn}\label{defn:huber-flat} A morphism of analytic adic spaces $f\colon  X \to Y$ is called {\it flat}, if the natural morphism $\O_{Y, f(x)} \to \O_{X,x}$ is flat for any point $x\in X$.
\end{defn}

Similarly to the case of formal schemes, we will soon describe flatness of strongly noetherian Tate affinoids in more concrete terms.

\begin{lemma}\label{flat-adic-preliminary} Let $X=\Spa(A, A^+)$ be a strongly noetherian Tate affinoid adic space, and let $x\in X$ be a point corresponding to a valuation $v$ with support $\mathfrak p$. Then the natural morphism $r_x\colon A_{\mathfrak p} \to \O_{X,x}$ is faithfully flat.
\end{lemma}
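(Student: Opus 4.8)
The plan is to prove that $r_x$ is a \emph{flat local homomorphism of local rings}, since such a homomorphism is automatically faithfully flat. Recall that the stalk is the filtered colimit $\O_{X,x} = \colim_{x \in U} \O_X(U)$ over rational subdomains $U \ni x$, and write $v = v_x$ for the valuation attached to $x$, so that $\supp(v) = \mathfrak p$. First I would check that the structure map $A \to \O_{X,x}$ factors through the localization $A_{\mathfrak p}$, which amounts to showing that every $s \in A \setminus \mathfrak p$ maps to a unit in $\O_{X,x}$. Fix such an $s$; then $v(s) \neq 0$. Since $X$ is Tate, $\varpi$ is topologically nilpotent and invertible, so continuity of $v$ (Lemma~\ref{cont}) gives that the values $v(\varpi^n)$ are cofinal in $\Gamma_v$; hence there is $n$ with $v(\varpi^n) \le v(s)$, i.e. $x$ lies in the rational subdomain $U_n \coloneqq \{y : |\varpi^n(y)| \le |s(y)|\}$ (a rational subdomain because $(\varpi^n, s) = A$). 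In $\O_X(U_n) = A\langle \frac{\varpi^n}{s}\rangle$ the relation $s \cdot T = \varpi^n$ makes $s$ invertible (as $\varpi^n$ is a unit), so $s$ is a unit in $\O_{X,x}$. By the universal property of localization this yields the factorization $A \to A_{\mathfrak p} \xrightarrow{r_x} \O_{X,x}$. The same argument applied to any $f$ with $v_x(f) \neq 0$ shows that $f$ is a unit in $\O_{X,x}$; consequently $\O_{X,x}$ is local with maximal ideal $\mathfrak m_x = \{f : v_x(f) = 0\}$, and the preimage of $\mathfrak m_x$ in $A$ is exactly $\supp(v) = \mathfrak p$. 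In particular $r_x$ is a local homomorphism.

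Next I would establish flatness of $r_x$. Each rational localization $A \to \O_X(U)$ is flat --- this is where strong noetherianity enters, via \cite[Case II.1.(iv) on p. 530]{H1} --- and rational subdomains containing $x$ are stable under finite intersection, so the filtered colimit $\O_{X,x}$ is flat over $A$. To descend this flatness along $A \to A_{\mathfrak p}$, note that the factorization above makes every element of $A \setminus \mathfrak p$ act invertibly on $\O_{X,x}$, so $\O_{X,x} \simeq A_{\mathfrak p} \otimes_A \O_{X,x}$; then for any $A_{\mathfrak p}$-module $M$ one has the canonical identification $M \otimes_{A_{\mathfrak p}} \O_{X,x} \simeq M \otimes_A \O_{X,x}$, and since the underlying sequence of $A$-modules of an exact sequence of $A_{\mathfrak p}$-modules is again exact, $A$-flatness of $\O_{X,x}$ gives $A_{\mathfrak p}$-flatness.

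Combining the two steps, $r_x \colon A_{\mathfrak p} \to \O_{X,x}$ is a flat local homomorphism of local rings, and is therefore faithfully flat: flatness together with $\mathfrak p A_{\mathfrak p} \cdot \O_{X,x} \subseteq \mathfrak m_x \ne \O_{X,x}$ forces the closed fibre to be nonzero. I expect the main obstacle to be the first step --- the factorization and the identification of $\mathfrak m_x$ --- since it rests on the rational-subdomain description of the stalk together with the cofinality of $v(\varpi^n)$; the flatness descent is then a formal consequence of Huber's flatness of rational localizations and the elementary localization lemma for tensor products.
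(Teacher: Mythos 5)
Your proof is correct and follows essentially the same route as the paper: flatness of $A \to \O_{X,x}$ as a filtered colimit of Huber's flat rational localizations, descent to $A_{\mathfrak p}$ via the tensor identity $M \otimes_{A_{\mathfrak p}} \O_{X,x} \simeq M \otimes_A \O_{X,x}$, and faithful flatness from locality of $r_x$ using $\mathfrak m_x = \{f \in \O_{X,x} \mid v_x(f) = 0\}$. The only difference is that you explicitly verify (via cofinality of $v(\varpi^n)$ in $\Gamma_v$ and the rational subdomains $\{\,|\varpi^n| \le |s|\,\}$) that every $s \in A \setminus \mathfrak p$ becomes a unit in $\O_{X,x}$, so that $r_x$ is well-defined --- a point the paper leaves implicit in calling $r_x$ ``the natural morphism.''
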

\begin{proof}
We note that rational subdomains form a basis of the topology on an affinoid space, so $\O_{X,x}$ is equal to the filtered colimit of $\O_X(U)$ over all rational subdomains in $X$ containing $x$. We use \cite[(II.1), (iv) on page 530]{H1} to note that $A \to \O_X(U)$ is flat for each such $U$. Since flatness is preserved by filtered colimits, we conclude that $A \to \O_{X, x}$ is flat. Note that this implies that $A_{\mathfrak p} \to \O_{X,x}$ is flat as well. Indeed, this easily follows from the fact that for any $A_{\mathfrak p}$-module $M$ we have isomorphisms
\[
M\otimes_{A_{\mathfrak p}} \O_{X,x}\cong(M\otimes_A A_{\mathfrak p})\otimes_{A_{\mathfrak p}} \O_{X,x}\cong M\otimes_A \O_{X,x}.
\]

The discussion above shows that $r_x\colon A_{\mathfrak p} \to \O_{X,x}$ is flat, but we also need to show that it is faithfully flat. In order to prove this claim it suffices to show that $A_{\mathfrak p} \to \O_{X,x}$ is a local ring homomorphism. We recall that the maximal ideal $\mathfrak m_{x} \subset \O_{X,x}$ is given as
\[
\mathfrak m_{x}=\left\{ f\in \O_{X,x} | \ v\left(f\right)=0 \right\}.
\]
We need to show $r_x(\mathfrak pA_{\mathfrak p}) \subset \mathfrak m_x$. We pick any element $h\in \mathfrak pA_{\mathfrak p}$. It can be written as $f/s$ for $f\in \mathfrak p$ and $s\in A\setminus \mathfrak p$, and we need to check that $v\left(\frac{f}{s}\right)=0$. The very definition of $\mathfrak p$ as the support of $v$ implies that $v\left(f\right)=0$ and $v\left(s\right)\neq 0$. Thus 
\[
v\left( \frac{f}{s}\right)=\frac{v(f)}{v(s)}=0.
\qedhere
\]
\end{proof}

\begin{lemma}\label{lemma:huber-flat-affinoids} Let $f\colon  \Spa(B, B^+) \to \Spa(A, A^+)$ be a flat morphism of strongly noetherian Tate affinoid adic spaces. The natural morphism $f^\sharp\colon A \to B$ is flat as well.
\end{lemma}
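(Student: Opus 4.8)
The plan is to reduce the global flatness of $A \to B$ to a pointwise statement and then descend flatness along the faithfully flat maps supplied by Lemma~\ref{flat-adic-preliminary}. First I would note that $A$ and $B$ are noetherian (take $n=0$ in the strongly noetherian hypothesis), so flatness of the ring map $A \to B$ can be tested on the $B$-module $B$ one maximal ideal at a time: since flatness of a module is local on the target and is detected at maximal ideals, $A \to B$ is flat if and only if for every maximal ideal $\mathfrak m \subset B$, writing $\mathfrak p \coloneqq \mathfrak m \cap A$, the localization $A_{\mathfrak p} \to B_{\mathfrak m}$ is flat (here one uses that the $A$-action on $B_{\mathfrak m}$ factors through $A_{\mathfrak p}$, so flatness over $A$ and over $A_{\mathfrak p}$ coincide).

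Next I would realize each such $\mathfrak m$ as the support of a point of $X$. Every maximal ideal of the complete Tate ring $B$ is closed (for instance because $B$ is noetherian), so $B/\mathfrak m$ is a complete Tate field; endowing it with the integral closure $(B/\mathfrak m)^+$ of the image of $B^+$ produces a nonzero complete Tate--Huber pair, whose adic spectrum is therefore nonempty. Any point of $\Spa(B/\mathfrak m, (B/\mathfrak m)^+)$ yields a point $x \in X$ with $\supp(x) = \mathfrak m$ and $v_x(B^+) \le 1$, and its image $f(x) \in Y$ has support $\mathfrak p = \mathfrak m \cap A$. Lemma~\ref{flat-adic-preliminary} then supplies a commutative square
\[
\begin{tikzcd}
A_{\mathfrak p} \arrow{r} \arrow{d}{r_{f(x)}} & B_{\mathfrak m} \arrow{d}{r_x}\\
\O_{Y,f(x)} \arrow{r} & \O_{X,x}
\end{tikzcd}
\]
in which the vertical maps $r_{f(x)}$ and $r_x$ are faithfully flat, the top horizontal map is the localization $A_{\mathfrak p} \to B_{\mathfrak m}$ we wish to control, and the bottom horizontal map $\O_{Y,f(x)} \to \O_{X,x}$ is flat by the hypothesis that $f$ is flat (Definition~\ref{defn:huber-flat}).

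Finally I would consider the composite $A_{\mathfrak p} \to \O_{Y,f(x)} \to \O_{X,x}$: it is flat as a composite of flat maps, and by commutativity of the square it equals $A_{\mathfrak p} \to B_{\mathfrak m} \to \O_{X,x}$. Since $B_{\mathfrak m} \to \O_{X,x}$ is faithfully flat, the standard descent of flatness along faithfully flat ring maps (if $g \circ h$ is flat and $g$ is faithfully flat, then $h$ is flat) forces $A_{\mathfrak p} \to B_{\mathfrak m}$ to be flat; running this over all $\mathfrak m$ gives the lemma. The step I expect to be the main obstacle is the middle one: one must ensure that enough primes of $B$ — at least all of its maximal ideals — actually occur as supports of points of $X$, and one must verify that the square above genuinely commutes, i.e. that the localization maps $r_{f(x)}$ and $r_x$ are compatible with the map $f^\#$ on stalks. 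The concluding descent step is purely formal.
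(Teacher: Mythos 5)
Your proposal is correct and follows essentially the same route as the paper: reduce to flatness of $A_{\mathfrak p} \to B_{\mathfrak m}$ at each maximal ideal $\mathfrak m \subset B$, realize $\mathfrak m$ as the support of a point $x \in \Spa(B, B^+)$, and then descend flatness of $\O_{Y,f(x)} \to \O_{X,x}$ through the commutative square furnished by Lemma~\ref{flat-adic-preliminary}, using that $B_{\mathfrak m} \to \O_{X,x}$ is faithfully flat. The only cosmetic difference is that the paper cites \cite[Lemma 1.4]{H1} for the existence of a point of $\Spa(B,B^+)$ supported at $\mathfrak m$, whereas you rederive it from closedness of $\mathfrak m$ and nonemptiness of the adic spectrum of the nonzero complete Tate--Huber pair $(B/\mathfrak m, (B/\mathfrak m)^+)$, which is a valid substitute.
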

\begin{proof}
We start the proof by noting that \cite[Lemma 1.4]{H1} implies that for any maximal ideal $\mathfrak m \subset B$ there is a valuation $v\in \Spa(B, B^+)$ such that $\supp(v)=\mathfrak m$. It is easy to see that 
\[
\supp \left(w\right)=(f^\#)^{-1}\left(\mathfrak m\right)=:\mathfrak p,
\] 
where $w=f(x)\in \Spa(A, A^+)$. We use Lemma \ref{flat-adic-preliminary} to conclude that we have a commutative square
\[
\begin{tikzcd}
B_{\mathfrak m} \arrow{r}{r_{\mathfrak m}} & \O_{X,v}\\
A_{\mathfrak p} \arrow{u}{f^\sharp_{\mathfrak p}} \arrow{r}{r_{\mathfrak p}} &\O_{Y, w} \arrow{u}{f^\#_w}
\end{tikzcd}
\]
with $r_{\mathfrak m}$ and $r_{\mathfrak p}$ being faithfully flat. It is easy to see now that flatness of $f^\#_w$ implies flatness of $f^\sharp_{\mathfrak p}$. Finally we note that $\mathfrak m$ was an arbitrary maximal ideal in $B$, so $f^\sharp\colon A \to B$ is flat. %This shows the ``if'' part.  \\
\end{proof}

\begin{rmk} We warn the reader that it is unknown whether Lemma~\ref{lemma:huber-flat-affinoids} remains true if one drops the strongly noetherian hypothesis. Even the case of rational embeddings is open. However, there are some positive results in this direction in \cite[\textsection 2.4]{KedLiu2}.
\end{rmk}

\begin{rmk}\label{rmk:classical-enough} Let $K$ be a complete rank-$1$ valuation field, and 
\[
    f\colon X=\Spa(B, B^\circ) \to Y=\Spa(A, A^\circ)
\]
a morphism of topologically finite type affinoid adic $\Spa(K, \O_K)$-spaces. Then $f$ sends classical points to classical points (i.e., $\frak{p}$ defined in the proof of Lemma~\ref{lemma:huber-flat-affinoids} is maximal). In particular, the proof of Lemma~\ref{lemma:huber-flat-affinoids} shows that $A\to B$ is flat if 
\[
f_{f(x)}^\sharp \colon \O_{Y, f(x)} \to \O_{X, x}
\]
is flat for any {\it classical} point $x\in X$.
\end{rmk}

At this point, we have not given any non-trivial example of a flat morphism. The next lemma gives the main example of interest:

\begin{lemma}\label{lemma:other-direction} Let $K$ be a complete rank-$1$ valuation ring, and $f^\sharp \colon A \to B$ be a flat morphism of topologically finite type $K$-algebras. Then the corresponding morphism $f\colon X=\Spa(B, B^\circ) \to Y=\Spa(A, A^\circ)$ is flat.
\end{lemma}
\begin{proof}
    {\it Step $1$. The natural morphism $\O_{Y, f(x)} \to \O_{X, x}$ is flat for any {\it classical} point $x\in X$}: Let $x$ correspond to a maximal ideal $\mathfrak{m}\subset B$, and $y=f(x)$ to a maximal ideal $\mathfrak{n} \subset A$. Then \cite[Proposition 4.1/2]{B} implies that 
    \[
    \wdh{A}_{\mathfrak{n}}\simeq \wdh{\O}_{Y, y}, \ \wdh{B}_{\frak{m}} \simeq \wdh{\O}_{X,x}
    \]
    where all completions are taken with respect to the corresponding maximal ideal. Furthermore, the rings $A_{\mathfrak{n}}$, $B_{\frak{m}}$, $\O_{Y, y}$, and $\O_{X, x}$ are noetherian by \cite[Proposition 3.1/3(i) and 4.1/6]{B}. Therefore, \cite[\href{https://stacks.math.columbia.edu/tag/0523}{Tag 0523}]{stacks-project} ensures that flatness of $A_{\frak{n}} \to B_{\frak{m}}$ implies flatness of $\O_{Y, y}\to \O_{X, x}$.
    
    {\it Step $2$. Finish the argument}: We pick a point $x\in X$ with the image $y=f(x)$. Then the morphism
    \[
    f^\sharp_y \colon \O_{Y, y} \to \O_{X, x}
    \]
    can be rewritten as 
    \[
    \colim_{y\in V\subset Y} \O_Y(V) \to \colim_{x\in U \subset X} \O_{X}(U).
    \]
    Therefore, it suffices to show that the natural morphism
    \[
    \O_Y(V) \to \O_X(U)
    \]
    is flat for any open affinoids $V\subset Y$, $U\subset X$ such that $f(U)\subset V$. Now Step~$1$ ensures that the morphism
    \[
    \O_{V, f(u)} \to \O_{U, u}
    \]
    is flat for any classical $u\in U$. Then Remark~\ref{rmk:classical-enough} ensures that $\O_V(V) \to \O_U(U)$ is flat finishing the proof. 
\end{proof}

\begin{rmk} One can also show that any smooth morphism $f\colon X \to Y$ of locally strongly noetherian adic spaces is flat. The proof is similar to Step~$2$ of the proof of Lemma~\ref{lemma:other-direction} using \cite[Lemma 1.1.19(a)]{KedAr} and \cite[Lemma 1.7.6]{H3} for a cofinal system of opens $V\subset Y$, $U\subset X$ such that $f(U)\subset V$.
\end{rmk}

\begin{rmk} One can also show that $X_L \to X$ is flat for any extension of non-archimedean fields $K\subset L$ and a rigid-analytic $K$-space $X$. Again, the main input is to show that the moprhism
\[
A \to A\wdh{\otimes}_K L
\]
is flat for any $K$-affinoid algebra $A$. This is worked out in \cite[Lemma 1.1.5]{C-irreducible}.
\end{rmk}

\begin{rmk} We do not know if flatness of $A \to B$ implies flatness of $\Spa(B, B^+) \to \Spa(A, A^+)$  in general (even under the strongly noetherian assumption).
\end{rmk}

\begin{lemma}\label{lemma:huber-flat-descent} Let $f\colon X=\Spa(B, B^+) \to Y=\Spa(A, A^+)$ be a finite morphism of strongly noetherian Tate affinoids, and $g\colon Z=\Spa(C, C^+) \to \Spa(A, A^+)$ be a surjective flat morphism of strongly noetherian Tate affinoids. Then $f$ is an isomorphism if and only if $f'\colon X\times_Y Z \to Z$ is. 
\end{lemma}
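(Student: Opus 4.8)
The plan is to translate the statement into algebra and reduce it to faithfully flat descent of isomorphisms combined with Lemma~\ref{lemma:huber-finite-iso}. The forward implication is formal: isomorphisms are stable under base change, so if $f$ is an isomorphism then so is $f'$. For the converse, write the two morphisms on the level of complete Tate-Huber pairs as the finite map $g^\#\colon (A, A^+) \to (B, B^+)$ and the flat surjection $(A, A^+) \to (C, C^+)$. Since $f$ is finite, $B$ is a finite $A$-module, and $A$, $C$ are noetherian complete Tate rings; hence Lemma~\ref{complete-base-change} identifies $B\wdh{\otimes}_A C$ with the ordinary tensor product $B\otimes_A C$, which is therefore already complete. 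Thus $X\times_Y Z\simeq \Spa(B\otimes_A C, (B\otimes_A C)^+)$ (strongly noetherian by Lemma~\ref{noeth-preserve}, hence a genuine affinoid adic space), and $f'$ corresponds to the finite morphism of complete Tate-Huber pairs $(C, C^+) \to (B\otimes_A C, (B\otimes_A C)^+)$. By full faithfulness of $\Spa$ (Remark~\ref{grp-act-adic}), the hypothesis that $f'$ is an isomorphism says precisely that this pair morphism is an isomorphism; in particular $C \to B\otimes_A C$ is an isomorphism of rings.

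Next I would establish that $A \to C$ is \emph{faithfully} flat. Flatness is Lemma~\ref{lemma:huber-flat-affinoids}. For faithfulness it suffices to check that $\m C \neq C$ for every maximal ideal $\m \subset A$. Given such an $\m$, the cited \cite[Lemma 1.4]{H1} produces a point $w \in \Spa(A, A^+)$ with $\supp(w) = \m$; surjectivity of $g$ lifts $w$ to some $z \in \Spa(C, C^+)$, and setting $\q \coloneqq \supp(z)$ one has $(g^\#)^{-1}(\q) = \supp(g(z)) = \m$, so $\q$ lies over $\m$ and $\m C \subseteq \q \subsetneq C$. Combined with flatness, this gives that $A \to C$ is faithfully flat.

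Finally, observe that $C \to B\otimes_A C$ is exactly the base change of the $A$-linear map $A \to B$ along the faithfully flat map $A \to C$. Faithfully flat descent of isomorphisms then upgrades the isomorphism $C \xrightarrow{\sim} B\otimes_A C$ to a ring isomorphism $A \xrightarrow{\sim} B$: the kernel and cokernel of $A \to B$ vanish after the faithfully flat base change, hence vanish. Since $f$ is a finite morphism of complete Tate-Huber pairs inducing a ring isomorphism $A \simeq B$, Lemma~\ref{lemma:huber-finite-iso} promotes it to an isomorphism of Tate-Huber pairs $(A, A^+)\simeq (B, B^+)$, so $f$ is an isomorphism of adic spaces, as desired.

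The routine bookkeeping (completeness and strong noetherianity of $B\otimes_A C$, the identity $\supp(g(z)) = (g^\#)^{-1}(\supp z)$) is straightforward. The one step requiring genuine care is the passage from topological surjectivity of $g$ to faithful flatness of $A \to C$: one must know that every maximal ideal of $A$ arises as the support of an adic point, which is exactly the content of \cite[Lemma 1.4]{H1}, together with the fact that supports pull back correctly along $g$. This is the heart of the argument, and everything else is formal manipulation of the results already at our disposal.
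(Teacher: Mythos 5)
Your proposal is correct and follows essentially the same route as the paper's proof: both reduce to algebra via Lemma~\ref{lemma:huber-finite-iso}, replace the completed tensor product $B\wdh{\otimes}_A C$ by $B\otimes_A C$ using Lemma~\ref{complete-base-change}, and conclude by faithfully flat descent, with flatness of $A \to C$ from Lemma~\ref{lemma:huber-flat-affinoids} and faithfulness from surjectivity of $g$ together with \cite[Lemma 1.4]{H1}. The only cosmetic differences are that you spell out the faithfulness argument (lifting maximal ideals through supports of adic points) that the paper leaves to the citation, and you handle the two implications separately (base change for the forward direction, descent for the converse) where the paper treats both at once by applying Lemma~\ref{lemma:huber-finite-iso} to $f$ and to the finite morphism $f'$ simultaneously.
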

\begin{proof}
We note that $f'$ is finite by \cite[Lemma 1.4.5(i)]{H3}, so Lemma~\ref{lemma:huber-finite-iso} ensures that it suffices to show that $A \to B$ is a (topological) isomorphism if and only if $C \to C\wdh{\otimes}_A B$ is. We can ignore topologies by Remark~\ref{rmk:open-mapping}. \smallskip

Now we note that Lemma~\ref{complete-base-change} gives that $C\otimes_A B \simeq C\wdh{\otimes}_A B$. Thus, it suffices to show that $A\to B$ is an isomorphism if and only if $C \to C\otimes_A B$ is. This follows from the usual faithfully flat descent as $A \to C$ is flat by Lemma~\ref{lemma:huber-flat-affinoids}, and therefore faithfully flat by \cite[Lemma 1.4]{H1}.
\end{proof}

\subsection{Coherent Sheaves}

We review the basic theory of coherent sheaves on locally strongly noetherian adic spaces. \smallskip

We first recall the construction of the $\O_X$-module $\widetilde{M}$ on a strongly noetherian Tate affinoid $X=\Spa(A, A^+)$ associated to a finite $A$-module $M$. For each rational subset $U \subset X$, we have 
\[
\widetilde{M}(U)=\O_X(U)\otimes_{A} M;
\]
\cite[Theorem 2.5]{H1} guarantees that this assignment is indeed a sheaf. 

\begin{defn} An $\O_X$-module $\F$ on a locally strongly noetherian analytic adic space $X$ is {\it coherent} if there is an open covering $X=\cup_{i\in I} U_i$ by strongly noetherian Tate affinoids such that $\F|_{U_i} \cong \widetilde{M}_i$ for a finite $\O_X(U_i)$-module $M_i$.
\end{defn}

\begin{thm}\label{thm:huber-coherent-sheaves} Let $X=\Spa(A, A^+)$ be a strongly noetherian Tate affinoid, and $\F$ a coherent $\O_X$-module. Then
\begin{enumerate}
    \item\label{thm:huber-coherent-sheaves-1} there is a unique finite $A$-module $M$ such that $\F \cong \widetilde{M}$.
    \item\label{thm:huber-coherent-sheaves-2} $\rm{H}^i(X, \F)=0$ for $i\geq 1$.
\end{enumerate}
\end{thm}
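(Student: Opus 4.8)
The plan is to prove this as the adic analogue of the Tate--Kiehl theory of coherent sheaves on affinoids, carrying the topologies along by means of the completed-tensor-product results of the previous subsection. Uniqueness in part~(\ref{thm:huber-coherent-sheaves-1}) is immediate: if $\F \cong \widetilde{M}$, then evaluating on the rational subset $U = X$ gives $M \cong \widetilde{M}(X) = \O_X(X) \otimes_A M = A \otimes_A M = M$, so $M$ is forced to be the global sections $\F(X)$. Hence both the finiteness assertion and the isomorphism in~(\ref{thm:huber-coherent-sheaves-1}) must be proved for the single candidate $M \coloneqq \F(X)$, and the whole theorem rests on two inputs: acyclicity (part~(\ref{thm:huber-coherent-sheaves-2})) and a glueing/finiteness statement.

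For the vanishing in part~(\ref{thm:huber-coherent-sheaves-2}) I would first reduce sheaf cohomology to \v{C}ech cohomology. Since rational subsets form a basis of the topology, are quasi-compact, and $X$ itself is quasi-compact, a standard argument replaces $\rm{H}^i(X, \F)$ by $\check{\rm{H}}^i$ computed on finite coverings by rational subsets. The heart is then the exactness of the augmented \v{C}ech complex $\check{C}^\bullet_{\mathrm{aug}}(\mathfrak U, \widetilde{M})$ for such a covering $\mathfrak U$. By the usual combinatorial refinement (every finite rational covering is dominated by an iterated Laurent covering, and acyclicity propagates along refinements via the \v{C}ech double complex) it suffices to treat a Laurent covering generated by a single $f \in A$: with $U_+ = \{|f| \le 1\}$, $U_- = \{|f| \ge 1\}$, and $U_0 = U_+ \cap U_-$, I must show that
\[
0 \to M \to \big(\O_X(U_+)\otimes_A M\big) \oplus \big(\O_X(U_-)\otimes_A M\big) \to \O_X(U_0)\otimes_A M \to 0
\]
is exact, where I have used that $\widetilde{M}(U) = \O_X(U)\otimes_A M$ by the definition recalled before the theorem.

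The case $M = A$ is the acyclicity of $\O_X$ itself, which I would prove by the explicit Laurent-series splitting: a section over $U_0$ decomposes into its nonnegative and negative powers of $f$, lying respectively in $\O_X(U_+)$ and $\O_X(U_-)$, and this exhibits the complex as strictly exact with an $A$-linear, continuous contracting homotopy. The decisive point is that this homotopy is $A$-linear and continuous, so it survives application of $-\wdh{\otimes}_A M$; and since $M$ is finite over the strongly noetherian ring $A$ while each $\O_X(U)$ is $A$-flat and noetherian (by \cite[(II.1) on page 530]{H1} and Corollary~\ref{noeth-sub}), Lemma~\ref{lemma:finite-tensor-product} and Lemma~\ref{complete-base-change} identify $\O_X(U)\wdh{\otimes}_A M$ with $\O_X(U)\otimes_A M = \widetilde{M}(U)$. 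Thus the complex for $\widetilde{M}$ is again split exact, yielding the required acyclicity and hence part~(\ref{thm:huber-coherent-sheaves-2}). Finally, for existence in part~(\ref{thm:huber-coherent-sheaves-1}) I take a finite rational covering $\mathfrak U = \{U_i\}$ with $\F|_{U_i} \cong \widetilde{M_i}$; by the acyclicity just shown, $M = \F(X) = \ker\!\big(\prod_i M_i \to \prod_{i,j} \F(U_{ij})\big)$, and the comparison map $\widetilde{M} \to \F$ is an isomorphism once $\widetilde{M}(U_i) = \O_X(U_i)\otimes_A M \cong M_i$ together with finiteness of $M$ over $A$ is known.

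The main obstacle is precisely this last glueing/finiteness step (Kiehl's theorem): proving that the globally defined module $\F(X)$ is finite over $A$ and recovers all the local modules $M_i$. This is where the topology genuinely enters, via an approximation argument in which the covering is refined so that the transition isomorphisms over the overlaps are close to ones defined over $A$, and one then passes to the limit using completeness and noetherianness of $A$; the convergence of this approximation is exactly the place where strong noetherianness is indispensable. I note that both parts are also established directly by Huber in \cite[\textsection 2]{H2}, so in the paper it would be legitimate to cite that reference, the sketch above being a reproduction of the underlying argument.
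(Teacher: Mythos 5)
The paper's own ``proof'' of this theorem is nothing more than two citations: part~(\ref{thm:huber-coherent-sheaves-1}) is quoted from \cite[3.6.20]{H2} and part~(\ref{thm:huber-coherent-sheaves-2}) from \cite[Theorem 2.5]{H1}. Since your final paragraph falls back on exactly this (``both parts are established by Huber, so citing that reference is legitimate''), your proposal agrees with the paper where it matters; everything before that is an expository reconstruction of the cited proofs, which the paper does not attempt. Your reconstruction of part~(\ref{thm:huber-coherent-sheaves-2}) --- \v{C}ech reduction over rational covers, refinement to one-element Laurent coverings, the explicit splitting of $0 \to A \to \O_X(U_+)\oplus\O_X(U_-) \to \O_X(U_0) \to 0$, then passage to $\widetilde{M}$ --- is indeed the Tate-style argument underlying \cite[Theorem 2.5]{H1}. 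Two corrections there. First, the lemmas you invoke to identify completed with ordinary tensor products, Lemma~\ref{lemma:finite-tensor-product} and Lemma~\ref{complete-base-change}, are stated for finite morphisms of complete Tate--Huber pairs, i.e.\ for finite $A$-\emph{algebras}, not finite $A$-\emph{modules}; the module statement you actually need follows instead from Lemma~\ref{finite-natural} together with \cite[Lemma 2.4(ii)]{H1}. Second, the topology can be avoided entirely at this step: once acyclicity of $\O_X$ is known, every term of the bounded exact augmented \v{C}ech complex is a \emph{flat} $A$-module by \cite[(II.1), (iv) on page 530]{H1}, hence all its syzygies are flat, and tensoring with $M$ over $A$ preserves exactness --- no contracting homotopy, continuity, or completion argument is required.

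The genuine gap, if your text is read as a self-contained proof rather than as commentary on a citation, is in part~(\ref{thm:huber-coherent-sheaves-1}): the existence statement is Kiehl's glueing/finiteness theorem, and you only \emph{describe} its proof (``refine the covering so that the transition isomorphisms are close to ones defined over $A$ and pass to the limit using completeness and noetherianness'') without carrying it out. You flag this yourself as ``the main obstacle,'' and it is: that approximation argument is the entire mathematical content of \cite[3.6.20]{H2}, and nothing in your sketch of part~(\ref{thm:huber-coherent-sheaves-2}) substitutes for it (acyclicity alone gives you the candidate module $M=\F(X)$ and the comparison map $\widetilde{M}\to\F$, but neither the finiteness of $M$ over $A$ nor the isomorphy of $\widetilde{M}(U_i)\to M_i$). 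So as a blind proof the proposal is incomplete at precisely the hardest point; it becomes complete only because --- exactly like the paper --- it ultimately defers that point to Huber.
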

\begin{proof}
(\ref{thm:huber-coherent-sheaves-1}) is shown in \cite[Theorem 1.4.18]{KedAr} (see also \cite[Definition 1.4.5]{KedAr} for a definition of $\mathbf{PCoh}_A$), and (\ref{thm:huber-coherent-sheaves-2}) is shown in \cite[Theorem 2.5]{H1}.
\end{proof}

\begin{cor}\label{cor:huber-coherent-sheaves} Let $f\colon X \to Y$ be a finite morphism of locally strongly noetherian adic spaces. Then
\begin{enumerate}
    \item\label{cor:huber-coherent-sheaves-1} coherent $\O_Y$-modules are closed under kernels, cokernels, and extensions in $\mathbf{Mod}_{\O_Y}$;
    \item\label{cor:huber-coherent-sheaves-2} for any coherent $\O_X$-module $\F$, $f_*\F$ is a coherent $\O_Y$-module.   
\end{enumerate}
\end{cor}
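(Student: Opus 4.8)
The plan is to reduce both statements to commutative algebra over a strongly noetherian Tate ring via the dictionary between coherent sheaves and finite modules recorded in Theorem~\ref{thm:huber-coherent-sheaves}. Both assertions are local on $Y$, so I would first replace $Y$ by a strongly noetherian Tate affinoid $\Spa(A, A^+)$; then Theorem~\ref{thm:huber-coherent-sheaves}(\ref{thm:huber-coherent-sheaves-1}) writes every coherent $\O_Y$-module as $\widetilde M$ for a finite $A$-module $M$, with $M=\F(Y)$ recovering the module. The technical engine throughout is that the functor $M\mapsto \widetilde M$ is exact: for every rational subset $U\subset Y$ the map $A\to \O_Y(U)$ is flat (by \cite[(II.1), (iv) on page 530]{H1}, as used in Lemma~\ref{flat-adic-preliminary}), so $\widetilde M(U)=\O_Y(U)\otimes_A M$ sends short exact sequences of finite $A$-modules to short exact sequences of sheaves once one checks exactness on the basis of rational subsets.

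For part~(\ref{cor:huber-coherent-sheaves-1}), given a map $\phi\colon \F\to \G$ of coherent sheaves I would write $\F=\widetilde M$, $\G=\widetilde N$, so that $\phi$ corresponds to an $A$-linear map $M\to N$ on global sections. Since $A$ is noetherian, $K=\ker(M\to N)$ and $C=\operatorname{coker}(M\to N)$ are finite, and exactness of $\widetilde{(-)}$ identifies $\ker\phi=\widetilde K$ and $\operatorname{coker}\phi=\widetilde C$ (for the cokernel one notes that the presheaf cokernel already agrees with $\widetilde C$ on rational subsets, so its sheafification is $\widetilde C$). For an extension $0\to \F'\to \F\to \F''\to 0$ with $\F'=\widetilde{M'}$ and $\F''=\widetilde{M''}$, I would take global sections and use the vanishing $\rH^1(Y,\F')=0$ from Theorem~\ref{thm:huber-coherent-sheaves}(\ref{thm:huber-coherent-sheaves-2}) to obtain a short exact sequence $0\to M'\to M\to M''\to 0$ with $M:=\F(Y)$ finite over $A$. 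Comparing $0\to \widetilde{M'}\to \widetilde M\to \widetilde{M''}\to 0$ with the given sequence through the natural map $\widetilde M\to \F$ and invoking the five lemma in $\mathbf{Mod}_{\O_Y}$ then forces $\widetilde M\cong \F$, so $\F$ is coherent.

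For part~(\ref{cor:huber-coherent-sheaves-2}) I would use Definition~\ref{defn:adic-finite-global} to reduce to $Y=\Spa(A,A^+)$ a strongly noetherian Tate affinoid and $X=\Spa(B,B^+)$ with $A\to B$ finite; then $B$ is strongly noetherian by Lemma~\ref{noeth-preserve} (a finite morphism is topologically of finite type by Lemma~\ref{lemma:huber-finite-finite-type}). Writing $\F=\widetilde N$ for a finite $B$-module $N$, finiteness of $A\to B$ makes $N$ a finite $A$-module, and I claim $f_*\F$ is the coherent $\O_Y$-module attached to $N$. To check this on a rational subset $U\subset Y$, I would compute $(f_*\F)(U)=\F(f^{-1}(U))=\O_X(f^{-1}(U))\otimes_B N$, use Corollary~\ref{cor:complete-localization} to identify $\O_X(f^{-1}(U))\cong B\otimes_A \O_Y(U)$, and conclude by base-change associativity that $\O_X(f^{-1}(U))\otimes_B N\cong \O_Y(U)\otimes_A N=\widetilde N(U)$. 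As these identifications are compatible with restriction on the basis of rational subsets, the natural map realizes $f_*\F$ as the coherent sheaf $\widetilde N$ on $Y$.

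The main obstacle, and the only genuinely analytic input, is this compatibility of finite pushforward with rational localization: one must know that the structure ring $\O_X(f^{-1}(U))$ over a rational subdomain is computed by an honest (uncompleted) tensor product $B\otimes_A \O_Y(U)$, so that $f_*\widetilde N$ localizes correctly. This is exactly Corollary~\ref{cor:complete-localization}, which rests on the fact that the topologized tensor product of $B$ (finite over $A$) with the rational localization is already complete. Once this base-change statement is available, both parts follow formally from the equivalence of Theorem~\ref{thm:huber-coherent-sheaves} together with flatness of rational localizations.
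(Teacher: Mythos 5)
Your overall route is the same as the paper's: localize to a strongly noetherian Tate affinoid base, use the dictionary of Theorem~\ref{thm:huber-coherent-sheaves} between coherent sheaves and finite modules, exactness of $M\mapsto \widetilde M$ coming from flatness of $A\to\O_Y(U)$ for rational $U$, and the base-change isomorphism of Corollary~\ref{cor:complete-localization} for the pushforward computation. Your part~(\ref{cor:huber-coherent-sheaves-1}) (kernels/cokernels via noetherianness of $A$; extensions via the $\rH^1$-vanishing of Theorem~\ref{thm:huber-coherent-sheaves}(\ref{thm:huber-coherent-sheaves-2}) and the five lemma) and the module computation in part~(\ref{cor:huber-coherent-sheaves-2}) are correct and are exactly what the paper's terse proof intends.

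There is, however, a gap in your reduction step for part~(\ref{cor:huber-coherent-sheaves-2}). You claim that Definition~\ref{defn:adic-finite-global} lets you assume $Y=\Spa(A,A^+)$ is a \emph{strongly noetherian} Tate affinoid with $X=\Spa(B,B^+)$ and $A\to B$ finite, but the definition does not give this: it only provides \emph{some} covering of $Y$ by Tate affinoids $V_i$ with affinoid preimages and finite ring maps, and these $V_i$ need not be strongly noetherian. By Remark~\ref{st-noet-loc?} one cannot promote a Tate affinoid sitting inside a locally strongly noetherian space to a strongly noetherian one, and without strong noetherianness of the chart every tool you use afterwards (Theorem~\ref{thm:huber-coherent-sheaves}, Lemma~\ref{noeth-preserve}, Lemma~\ref{complete-base-change}, Corollary~\ref{cor:complete-localization}) is unavailable. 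Conversely, coherence of $f_*\F$ must, by the paper's definition of coherence, be exhibited on a covering of $Y$ by strongly noetherian Tate affinoids $W_j$, and it is then no longer a definition-chase that $f^{-1}(W_j)$ is affinoid with $\O_Y(W_j)\to\O_X(f^{-1}(W_j))$ finite: this is precisely Theorem~\ref{thm:global-finite} (Huber's result), which is the one ingredient the paper's proof cites that is absent from your argument. (One can alternatively avoid Theorem~\ref{thm:global-finite} by a double-covering argument: shrink to opens $U$ that are rational in some $V_i$ and contained in some $W_j$, note $U$ is strongly noetherian by Corollary~\ref{noeth-sub}, that $f^{-1}(U)$ is rational in $f^{-1}(V_i)$ hence affinoid, and that the ring map stays finite by the completed-tensor argument behind Lemma~\ref{complete-base-change}; but this, too, is a genuine argument that your proposal does not supply.) Once this reduction is justified, your proof goes through and coincides with the paper's.
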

\begin{proof}
It suffices to prove the claim under the additional assumption that $Y$ is a strongly noetherian Tate affinoid. Now both parts easily follow from Theorem~\ref{thm:global-finite}, Theorem~\ref{thm:huber-coherent-sheaves} and flatness of $\O_Y(Y) \to \O_Y(U)$ for a rational subdomain $U\subset Y$ \cite[(II.1), (iv) on page 530]{H1}.  
\end{proof}

\subsection{Closed Immersions}
In this section we discuss the notion of closed immersion in the context of locally strongly noetherian adic spaces. 

%\begin{defn} We say that a morphism $f\colon X \to Y$ of analytic adic spaces is an {\it embedding} (resp. {\it closed embedding}, resp. {\it open emebdding}, resp. {\it locally closed embedding}) if $|f|\colon |X| \to |Y|$ is a homeomorphism of $|X|$ onto its image in $|Y|$ (resp. onto its closed image, resp. open image, resp. its locally closed image).
%\end{defn}

\begin{defn} We say that a morphism $f\colon X\to Y$ of analytic adic spaces is an {\it open immersion} if $f$ is a homeomorphism of $X$ onto an open subset of $Y$, and the map $f^{-1}\O_Y \to \O_X$ is an isomorphism.
\end{defn}

\begin{rmk} Remark~\ref{rmk:huber-forgetful-conservative} ensures that $f\colon X \to Y$ is an open immersion if and only if $f$ is an isomorphism onto an open adic subspace of $Y$. 
\end{rmk}

\begin{defn} We say that a morphism $f\colon X \to Y$ of locally strongly noetherian analytic adic spaces is a {\it closed immersion} if $f$ is a homeomorphism of $X$ onto a closed subset of $Y$, the map $\O_Y \to f_*\O_X$ is surjective, and the kernel $\mathcal I\coloneqq \ker (\O_Y \to f_*\O_X)$ is coherent.
\end{defn}

\begin{rmk}\label{rmk:support-closed-immersion} If $i\colon X \to Y$ is a closed immersion of (locally strongly noetherian) adic spaces with $\mathcal {I}=\ker (\O_Y \to i_*\O_X)$, then there is a set-theoretic identification:
\[
|X|=\{y\in Y \ | \ (i_*\O_{X})_y \not\simeq 0\} = \{y\in Y \ | \ \mathcal{I}_y\not\simeq \O_{Y,y}\}.
\]
\end{rmk}

\begin{lemma}\label{lemma:huber-closed-tate-topology} Let $Y=\Spa(A, A^+)$ be a strongly noetherian Tate affinoid\footnote{Recall that we always implicitly assume that $(A, A^+)$ is complete.}, and $i\colon X \to Y$ a closed immersion. Then $B\coloneqq \O_X(X)$ is a complete Tate ring, and the natural morphism $i^*\colon A \to B$ is a topological quotient morphism.
\end{lemma}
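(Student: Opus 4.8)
The plan is to reduce the statement to commutative algebra via the theory of coherent sheaves, and then to pin down the topology on $\O_X(X)$ using the \emph{uniqueness} of the natural module topology on a finite complete module. Since $Y=\Spa(A,A^+)$ is a strongly noetherian Tate affinoid, $A$ is noetherian. By the definition of a closed immersion the ideal sheaf $\mathcal{I}=\ker(\O_Y\to i_*\O_X)$ is coherent, so Theorem~\ref{thm:huber-coherent-sheaves}(\ref{thm:huber-coherent-sheaves-1}) gives $\mathcal{I}=\widetilde{I}$ for a finitely generated ideal $I\subseteq A$ with $I=\Gamma(Y,\mathcal{I})$. The short exact sequence $0\to\mathcal{I}\to\O_Y\to i_*\O_X\to 0$ exhibits $i_*\O_X$ as coherent by Corollary~\ref{cor:huber-coherent-sheaves}(\ref{cor:huber-coherent-sheaves-1}), and in fact $i_*\O_X\cong\O_Y/\mathcal{I}\cong\widetilde{A/I}$ since $M\mapsto\widetilde{M}$ is exact on finite modules. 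Taking global sections and using $\mathrm{H}^1(Y,\mathcal{I})=0$ (Theorem~\ref{thm:huber-coherent-sheaves}(\ref{thm:huber-coherent-sheaves-2})) yields $B=\O_X(X)=\Gamma(Y,i_*\O_X)\cong A/I$ as $A$-algebras, with $i^*$ the natural surjection $A\to A/I$. In particular $i^*$ is surjective and $B$ is a finite $A$-module.

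Next I would identify the topology. As $X$ is an analytic adic space it is covered by Tate affinoids, so $\O_X(X)$, being the equalizer of the (complete, separated) structure rings over such a cover, is itself a complete and separated topological ring. Thus $B$ is a finite \emph{complete} $A$-module (via the continuous map $i^*$), and Lemma~\ref{finite-natural} forces its topology to be the natural $A$-module topology. For the cyclic module $A/I$ the construction of the natural topology in Lemma~\ref{existence-natural} is precisely the quotient topology induced by $A\to A/I$; hence the intrinsic topology on $B=\O_X(X)$ coincides with the quotient topology of $i^*$. In particular $i^*\colon A\to B$ is a continuous surjection carrying the quotient topology, i.e.\ a topological quotient morphism.

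It remains to check that $B$ is a complete Tate ring. Completeness and separatedness were observed above, and separatedness means the quotient topology on $A/I$ is Hausdorff, so $I$ is a closed ideal. The image of a pseudo-uniformizer $\varpi$ of $A$ is a unit in $B=A/I$ (it is already a unit in $A$) and is topologically nilpotent (as $i^*$ is continuous), so $B$ has a topologically nilpotent unit; being the quotient of the Huber ring $A$ by the closed ideal $I$, it is a Huber ring with ring of definition the image of one in $A$, and together with the topologically nilpotent unit this makes $B$ a complete Tate ring. As a sanity check one may instead invoke the Banach Open Mapping Theorem (Remark~\ref{rmk:open-mapping}) for the continuous surjection $A\to B$ of complete Tate rings to re-derive the openness of $i^*$.

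The crux, and the step I expect to be the main obstacle, is the comparison of the \emph{intrinsic} adic-space topology on $\O_X(X)$ with the algebraic quotient topology of $A/I$: the whole argument hinges on the fact that a finite complete $A$-module admits no topology other than the natural one (Lemma~\ref{finite-natural}), which is what rigidifies the coherent-sheaf identification $i_*\O_X\cong\widetilde{A/I}$ into a statement about topological rings. Once this is granted, the purely ring-theoretic identification $B\cong A/I$ and the verification of the Tate and completeness properties are comparatively routine.
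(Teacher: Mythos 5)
Your proof is correct, and it reaches the conclusion by a genuinely different route than the paper's. The paper argues topologically from the start: since $X$ is closed in the affinoid $Y$, it is quasi-compact and quasi-separated, hence admits a finite affinoid cover $X=\cup_{i=1}^n U_i$ with $U_i=\Spa(B_i,B_i^+)$; the paper constructs a ring of definition $B_0=\left(\prod_{i=1}^n B_{i,0}\right)\cap B$ by hand (so $B$ is Tate), shows $B$ is complete by exhibiting it as a closed subring of $\prod_{i=1}^n B_i$, deduces surjectivity of $i^*$ from $\mathrm{H}^1(Y,\mathcal{I})=0$ (the only part of Theorem~\ref{thm:huber-coherent-sheaves} it uses), and only then gets openness of $i^*$ from the Banach Open Mapping Theorem (Remark~\ref{rmk:open-mapping}) applied to a continuous surjection of complete Tate rings. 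You instead front-load the classification of coherent sheaves, Theorem~\ref{thm:huber-coherent-sheaves}(\ref{thm:huber-coherent-sheaves-1}), to obtain the algebraic identification $i_*\O_X\cong\widetilde{A/I}$, hence $B\cong A/I$ as a finite $A$-module and $i^*$ surjective before touching the topology, and then use the rigidity of topologies on finite complete modules (Lemma~\ref{finite-natural} together with the construction in Lemma~\ref{existence-natural}, which hide the same Open Mapping Theorem at the module level) to force the intrinsic topology on $B$ to be the quotient topology; Tate-ness then falls out of the quotient presentation exactly as in Lemma~\ref{example-closed}. The trade-off: the paper's route avoids the deeper input \cite[3.6.20]{H2} behind Theorem~\ref{thm:huber-coherent-sheaves}(\ref{thm:huber-coherent-sheaves-1}) and never needs to know in advance that $B$ is $A$-finite, while your route is shorter on the topological side and makes Corollary~\ref{cor:huber-closed-classification} nearly immediate. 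Two points you should make explicit: the affinoid cover in your completeness step can and should be taken finite (quasi-compactness and quasi-separatedness of $X$, as in the paper), since this is what makes $B$ first-countable, which is implicitly needed when the open mapping theorem is invoked inside Lemma~\ref{finite-natural}; and your identification of the intrinsic topology on $\O_X(X)$ with the equalizer/subspace topology coming from the cover is an assertion about $\O_X$ as a sheaf of topological rings, though the paper's own proof makes the same assertion, so this does not put you below its standard of rigor.
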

\begin{proof}
The natural morphism $A \to B$ is clearly continuous as $\O_Y \to i_*\O_X$ is a morphism of sheaves of {\it topological} rings. Therefore, any topologically nilpotent unit $\varpi_A \in A$ defines a topologically nilpotent unit $\varpi\coloneqq i^*(\varpi_A)\in B$. \smallskip

We now show that $B$ is a Tate ring. Since $X$ is closed in an affinoid, we conclude that $X$ is quasi-compact and quasi-separated. So we choose a finite covering $X=\cup_{i=1}^n U_i$ by open affinoid $U_i=\Spa(B_i, B_i^+)$. Then 
\[
B \subset \prod_{i=1}^n B_i
\]
and the topology on $B$ coincides with the subspace topology. Each $B_i$ admits a ring of definition $B_{i, 0}$, and we can assume that the topology on every $B_{i, 0}$ is the $\varpi$-adic topology (possibly after replacing $\varpi_A$ with a power). We claim that 
\[
B_0 \coloneqq \left(\prod_{i=1}^n B_{i, 0}\right) \cap B =\prod_{i=1}^n \left(B_{i, 0} \cap B \right)
\]
is a ring of definition in $B$. It suffices to show the topology on $B_0$ induced from $\prod_{i=1}^n B_{i, 0}$ coincides with the $\varpi$-adic topology. This follows from the equalities
\[
\varpi^n \left((\prod_{i=1}^n B_{i, 0}) \cap B\right) = \left(\varpi^n\prod_{i=1}^n B_{i, 0}\right) \cap B
\]
that, in turn, follow from the fact that $\varpi$ is invertible in $B$. \smallskip

Now we address completeness of $B$. By a similar reason as above, we see that there is a short exact sequence
\[
0 \to B \xr{d} \prod_{i=1}^n B_i \xr{a} \prod_{i, j} \O_X(U_i\cap U_j)
\]
such that $d$ is a topological embedding and $a$ is continuous. Using that $X$ is quasi-separated, we can cover each $U_i \cap U_j$ by a finite number of affinoids $V_{i, j, k}$. Thus, we get a short exact sequence
\[
0 \to B \xr{d} \prod_{i=1}^n B_i \xr{b} \prod_{i, j, k} \O_X(V_{i,j,k})
\]
such that $d$ is a topological embedding and $b$ is continuous. Every $B_i=\O_X(U_i)$ and $\O_X(V_{i, j, k})$ is complete by construction. Therefore, we conclude that $B$ is closed inside the complete Tate ring $\prod_{i=1}^n B_i$. Thus, it is also complete. \smallskip

Finally, we show that $A \to B$ is a topological quotient morphism. We consider the short exact sequence
\[
0 \to \mathcal{I} \to \O_Y \to i_*\O_X \to 0.
\]
Theorem~\ref{thm:huber-coherent-sheaves}(\ref{thm:huber-coherent-sheaves-2}) ensures that $\rm{H}^1(Y, \mathcal{I})=0$, so $A\to B$ is surjective. Now $A \to B$ is a surjective continuous morphism of complete Tate rings, so it is open by Remark~\ref{rmk:open-mapping}. In particular, it is a topological quotient morphism. 
\end{proof}

\begin{lemma}\label{example-closed} Let $(A, A^+)$ be a strongly noetherian complete Tate-Huber pair, and let $I$ be an ideal in $A$. We define $(A^+/I\cap A^+)^c$ to be the integral closure of $A^+/(I\cap A^+)$ in $A/I$. Then $(A/I, (A^+/I\cap A^+)^c)$ is a complete strongly noetherian Tate-Huber pair, and the morphism 
\[
\Spa(A/I, (A^+/I\cap A^+)^c) \to \Spa(A, A^+)
\]
is a closed immersion
\end{lemma}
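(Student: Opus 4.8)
The plan is to set $B \coloneqq A/I$ and $B^+ \coloneqq (A^+/(I\cap A^+))^c$, and to verify the two assertions in turn, leaning throughout on the coherent sheaf formalism of Theorem~\ref{thm:huber-coherent-sheaves} and Corollary~\ref{cor:huber-coherent-sheaves}.

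First I would show that $(B, B^+)$ is a complete strongly noetherian Tate-Huber pair. Since $A$ is strongly noetherian it is in particular noetherian (take $n=0$), and as $A$ is complete every ideal of $A$ is closed; hence $I$ is closed and $B = A/I$, equipped with the quotient topology, is a complete Hausdorff Tate ring whose pseudo-uniformizer is the image $\bar\varpi$ of a pseudo-uniformizer $\varpi$ of $A$ (it is a topologically nilpotent unit because $\varpi$ is). The quotient map $q\colon A \to B$ is open, so the image $q(A^+) = A^+/(I\cap A^+)$ is open in $B$; since $q$ is continuous it carries the bounded set $A^+ \subseteq A^\circ$ into $B^\circ$, and as $B^\circ$ is integrally closed in $B$ the integral closure $B^+$ of $q(A^+)$ stays inside $B^\circ$. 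Thus $B^+$ is an open, integrally closed subring of $B^\circ$, i.e. $(B, B^+)$ is a Tate-Huber pair, complete because $B$ is. Finally, $A \to B$ is finite and $A^+ \to B^+$ is integral by construction, so $(A, A^+) \to (B, B^+)$ is a finite morphism of complete Tate-Huber pairs in the sense of Definition~\ref{defn:Huber-finite}; by Lemma~\ref{lemma:huber-finite-finite-type} it is topologically of finite type, and then Lemma~\ref{noeth-preserve} shows that $(B, B^+)$ is strongly noetherian. In particular $(B, B^+)$ is sheafy by \cite[Theorem 2.5]{H1}, so $\Spa(B, B^+)$ is an adic space.

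Next I would analyze the induced morphism $i\colon X \coloneqq \Spa(B, B^+) \to Y \coloneqq \Spa(A, A^+)$. Taking the trivial covering $\{Y\}$ (with $i^{-1}(Y) = X$ affinoid) exhibits $i$ as a finite morphism of adic spaces in the sense of Definition~\ref{defn:adic-finite-global}. Finiteness has three consequences I would exploit. (i) By \cite[Lemma 1.4.5(ii)]{H3} $i$ is a closed map, so its image is closed in $Y$. (ii) Because $q\colon A \to B$ is surjective, a valuation on $B$ is the same datum as a valuation on $A$ whose support contains $I$; hence $i$ is injective, and being continuous, injective, and closed, it is a homeomorphism onto its (closed) image. (iii) By Corollary~\ref{cor:huber-coherent-sheaves} and Theorem~\ref{thm:huber-coherent-sheaves} the pushforward $i_*\O_X$ is the coherent sheaf $\widetilde B$ attached to the finite $A$-module $B$ (on a rational subdomain $U \subseteq Y$ one has $(i_*\O_X)(U) = \O_X(i^{-1}(U)) = \O_Y(U)\otimes_A B$, using Corollary~\ref{cor:complete-localization} to drop the completion). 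Thus $\O_Y = \widetilde A \to i_*\O_X = \widetilde B$ is the sheafification of $A \to B$; flatness of the rational localizations $\O_Y(U)$ over $A$ (\cite[(II.1), (iv) on page 530]{H1}) makes $M \mapsto \widetilde M$ exact on $0 \to I \to A \to B \to 0$, so this map is surjective with kernel $\widetilde I$. Since $A$ is noetherian, $I$ is a finite $A$-module and $\widetilde I$ is coherent. Together with (i) and (ii), this verifies all three clauses in the definition of a closed immersion.

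The main obstacle I anticipate is the opening step that $I$ is closed, and hence that $B = A/I$ is a complete Hausdorff Tate ring: this is what makes the quotient topology well-behaved, and it rests on the standard but nontrivial fact that ideals in a complete noetherian Tate ring are closed (equivalently, that finite modules over such a ring carry a canonical complete topology in which submodules are closed). Once completeness of $B$ is secured, the remaining verifications — that $B^+$ is a legitimate ring of integral elements, that $(B,B^+)$ is strongly noetherian, and that $i$ is a closed immersion — follow formally from finiteness together with the coherent-sheaf results already established in this appendix. A minor bookkeeping point to keep straight is that $A^+$ maps into $B^+$ (it lands in the integral closure, which is exactly how $B^+$ was defined), so that the morphism of pairs, and hence $i$, is actually defined.
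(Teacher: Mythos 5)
Your proposal is correct, but it reaches the conclusion by a genuinely different route than the paper in two of the three main steps. For strong noetherianity of $A/I$, the paper works by hand: it proves that $A\langle T_1,\dots,T_n\rangle \to (A/I)\langle T_1,\dots,T_n\rangle$ is surjective by lifting null-systems coefficientwise, and then \emph{reuses} that explicit surjectivity to prove $\O_Y \to i_*\O_X$ is surjective on rational subdomains via a diagram chase. You instead observe that $(A,A^+)\to(B,B^+)$ is a finite morphism of complete Tate--Huber pairs, invoke Lemma~\ref{lemma:huber-finite-finite-type} to get that it is topologically of finite type, and then Lemma~\ref{noeth-preserve} to get strong noetherianity --- shorter and more conceptual, and there is no circularity since neither lemma depends on the statement being proved. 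For the topological part, the paper identifies the image with $V(I)$ and shows each $V(f)$ is closed because its complement is the union of the rational subdomains $Y(\varpi^n/f)$; you instead use finiteness of $i$ together with \cite[Lemma 1.4.5(ii)]{H3} (closedness of finite morphisms, a citation the paper itself uses elsewhere) plus injectivity. For surjectivity of $\O_Y\to i_*\O_X$ and coherence of the kernel, you identify $i_*\O_X\cong\widetilde B$ via Corollary~\ref{cor:complete-localization} and use flatness of rational localizations to get exactness of $0\to\widetilde I\to\widetilde A\to\widetilde B\to 0$, whereas the paper gets coherence from Corollary~\ref{cor:huber-coherent-sheaves} and surjectivity from its power-series computation. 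The trade-off: the paper's argument is self-contained at the level of restricted power series; yours leans on the finite-morphism formalism of the appendix but is cleaner and would generalize more readily. Two cosmetic inaccuracies worth fixing: $A^+$ need not be a \emph{bounded} subset of $A$ (what you actually need, and what is true, is that a continuous homomorphism of Tate rings sends power-bounded elements to power-bounded elements, so $q(A^+)\subset B^\circ$); and your opening inverts the logic slightly --- the real input is exactly the paper's citation \cite[Lemma 2.4(ii)]{H1} that the natural (= quotient) topology on the finite $A$-module $A/I$ is complete and Hausdorff, from which closedness of $I$ follows, rather than the other way around.
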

\begin{proof}
First of all, we note that $A/I$ is complete by \cite[Proposition 2.4(ii)]{H1} and the natural morphism $p\colon A \to A/I$ is open. Now we show that $(A/I, (A^+/I\cap A^+)^c)$ is also a Tate-Huber pair. We choose a pair of definition $(A_0, \varpi)$ with $\varpi$ being a pseudo-uniformizer in $A$. Then openness of $p$ implies that $p(A_0)$ is open in $A/I$. Moreover, its quotient topology coincides with the $p(\varpi)$-adic topology, so it is a ring of definition in $A/I$. Also, $p(\varpi)$ is a topologically nilpotent unit in $A/I$, so $A/I$ is a Tate ring. A similar argument shows that $(A^+/I\cap A^+)^c$ is an open subring of $A/I$ that is contained in $(A/I)^{\circ}$.  \smallskip

We claim that $A/I$ is strongly noetherian. It suffices to show that
\[
A\left\langle T_1, \dots, T_n \right\rangle \to \left(A/I\right)\left\langle T_1, \dots, T_n \right\rangle
\]
is surjective for each $n\geq 1$. By induction, it suffices to prove the claim for $n=1$. We pick an element $f\in (A/I)\langle T \rangle$; it can be written as $f=\sum_i \ov{a_i}T^i$ for some $a_i\in A$ such that $\{\ov{a_i}\}$ is a null-system in $A/I$. This means that for any $m$ there is $N_m$ such that: 
\[
\ov{a_i}\in p(\varpi)^mp(A_0)
\]
for any $i\geq N_m$. Thus we can we can find a sequence $(b_i)$ of elements of $A$ such that $\ov{b_i}=\ov{a_i}$ for any $i\geq 0$ and $b_i\in \varpi^mA_0$ for any $i\geq N_m$. This means that $\sum_i b_iT^i$ lies in $A\langle T\rangle$ and its image in $\left(A/I\right)\langle T\rangle$ coincides with $f$. \smallskip

Now we check that the natural morphism $i\colon X\coloneqq\Spa(A/I, (A^+/I\cap A^+)^c) \to Y\coloneqq \Spa(A, A^+)$ is a closed immersion. Firstly, we note that topologically we have an equality
\[
i(X) = V(I)\coloneqq \{ x\in \Spa(A, A^+) \ | \ v_x(I)=0\}
\]
with $v_x$ being the valuation corresponding to a point $x$. We show that this set is closed. It suffices to show that the set
\[
V(f)\coloneqq \{ x\in \Spa(A, A^+) \ | \ v_x(f)=0\}
\]
is closed for any $f\in I$ since $V(I)=\cap_{f\in I} V(f)$. And $V(f)$ is closed as its complement is equal to the union of the rational subdomains:
\[
    Y\setminus V(f) = \bigcup_{n\in \N} Y\left(\frac{\varpi^n}{f}\right).
\]

 We also need to check that the map $\O_Y \to i_*\O_X$ is surjective with coherent kernel. Clearly, $i\colon X=\Spa(A/I, A^+/(I\cap A^+)^c) \to Y=\Spa(A, A^+)$ is finite, so $i_*\O_X$ is a coherent $\O_Y$-module by Corollary~\ref{cor:huber-coherent-sheaves}(\ref{cor:huber-coherent-sheaves-2}). Thus, Corollary~\ref{cor:huber-coherent-sheaves}(\ref{cor:huber-coherent-sheaves-1}) ensures that $\ker(\O_Y \to i_*\O_X)$ is coherent. \smallskip
 
Now we show that $\O_Y \to i_*\O_X$ is surjective. It suffices to show that for any rational subdomain $U=Y\left(\frac{f_1}{g}, \dots, \frac{f_n}{g}\right)$ the morphism $\O_Y(U) \to (i_*\O_X)(U)$ is surjective. This boils down to showing that the map
\[
A\left\langle \frac{f_1}{g}, \dots, \frac{f_n}{g} \right\rangle \to (A/I)\left\langle \frac{f_1}{g}, \dots, \frac{f_n}{g}\right\rangle
\]
is surjective. Consider the commutative diagram
\[
\begin{tikzcd}
A\left\langle T_1, \dots, T_n \right\rangle \arrow[r, two heads] \arrow[d, two heads] & \left(A/I\right)\left\langle T_1, \dots, T_n \right\rangle \arrow[d, two heads] \\
A\left\langle \frac{f_1}{g}, \dots, \frac{f_n}{g} \right\rangle \arrow{r} & (A/I)\left\langle \frac{f_1}{g}, \dots, \frac{f_n}{g}\right\rangle
\end{tikzcd}
\]
where the upper horizontal arrow is surjective by the discussion above. This implies that the lower horizontal arrow is surjective as well. 
\end{proof}

\begin{lemma}\label{lemma:huber-closed-commutes-affinoid} Let $f\colon \Spa(B, B^+) \to \Spa(A, A^+)$ be a morphism of strongly noetherian Tate affinoids, and $I\subset A$ an ideal. Then the natural morphism 
\[
\Spa \left(B/IB, (B^+/B^+\cap IB)^c\right) \to \Spa(B, B^+)\times_{\Spa(A, A^+)} \Spa\left(A/I, (A^+/I\cap A^+)^c\right).
\]
is an isomorphism.
\end{lemma}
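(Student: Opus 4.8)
The plan is to identify the fibre product on the right-hand side with an explicit affinoid adic space and then to check that the canonical comparison map is the resulting isomorphism. First I would record the natural morphism: applying Lemma~\ref{example-closed} to the pair $(B,B^+)$ and the ideal $IB \subset B$ shows that $\Spa(B/IB, (B^+/(B^+\cap IB))^c) \to \Spa(B,B^+)$ is a closed immersion, while the homomorphism $A \to B$ induces a continuous map $(A/I, (A^+/(I\cap A^+))^c) \to (B/IB, (B^+/(B^+\cap IB))^c)$ and hence a morphism to $\Spa(A/I, (A^+/(I\cap A^+))^c)$; these two are compatible over $\Spa(A,A^+)$, so the universal property of the fibre product yields the morphism in the statement. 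It then remains to see that it is an isomorphism.

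Next I would note that all the pairs in sight are strongly noetherian — in particular $(B/IB,(B^+/(B^+\cap IB))^c)$ is a complete strongly noetherian Tate-Huber pair by Lemma~\ref{example-closed} — hence sheafy, so the fibre product of affinoid adic spaces is computed as the $\Spa$ of the pushout of complete Tate-Huber pairs. Writing $C \coloneqq A/I$ and $C^+ \coloneqq (A^+/(I\cap A^+))^c$, this gives
\[
\Spa(B,B^+) \times_{\Spa(A,A^+)} \Spa(C,C^+) \simeq \Spa(D, D^+),
\]
where $D = B \wdh{\otimes}_A C$ and $D^+$ is the integral closure in $D$ of the image of $B^+ \otimes_{A^+} C^+$ (see \cite[2.4.18]{H2} and the universal property of topologized tensor products). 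The whole problem thus reduces to the two ring-theoretic identifications $D \simeq B/IB$ and $D^+ \simeq (B^+/(B^+\cap IB))^c$.

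For the first identification I would observe that $(A,A^+) \to (C,C^+)$ is finite in the sense of Definition~\ref{defn:Huber-finite}: the map $A \to A/I$ is surjective, hence module-finite, and $A^+ \to C^+$ is integral by the very definition of $C^+$. Since $A$ is noetherian and $B$ is a noetherian complete Tate ring, Lemma~\ref{complete-base-change} applies (to the finite morphism $A \to A/I$ and the continuous morphism $A \to B$) and produces a topological isomorphism $B \wdh{\otimes}_A C \simeq B \otimes_A C = B \otimes_A (A/I) = B/IB$, the last ring carrying its quotient topology, which by Lemma~\ref{lemma:finite-tensor-product} is the natural one. Hence $D \simeq B/IB$ as complete Tate rings.

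For the $+$-ring I would trace the images inside $D \simeq B/IB$. The image of $B^+$ is $(B^+ + IB)/IB \simeq B^+/(B^+\cap IB)$, which is open in $B/IB$; the image of $C^+$ is integral over the image of $A^+/(I\cap A^+)$, and that latter image factors as $A^+ \to B^+ \to B/IB$, so it lies inside the image of $B^+$. Consequently the subring of $D$ generated by the images of $B^+$ and $C^+$ is integral over $B^+/(B^+\cap IB)$, and therefore its integral closure in $D = B/IB$ coincides with the integral closure of $B^+/(B^+\cap IB)$ in $B/IB$, i.e. with $(B^+/(B^+\cap IB))^c$. This yields $D^+ \simeq (B^+/(B^+\cap IB))^c$, after which the comparison map is seen to be the identity and hence an isomorphism. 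The main obstacle is precisely this bookkeeping for $D^+$: one must use the description of the pushout $+$-ring as an integral closure together with the defining integrality of $C^+$ over the image of $A^+$ to collapse everything onto the integral closure over $B^+/(B^+\cap IB)$, whereas the identification $D \simeq B/IB$ is immediate once the finiteness of $A \to A/I$ is invoked to replace the completed tensor product by the ordinary one.
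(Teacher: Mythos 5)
Your proof is correct and takes essentially the same route as the paper: both identify the fibre product with $\Spa$ of the pushout of complete Tate--Huber pairs, invoke Lemma~\ref{complete-base-change} for the finite morphism $A \to A/I$ to replace the completed tensor product by $B\otimes_A (A/I) \simeq B/IB$ (topologically, via the natural-topology lemmas), and then collapse the integral closures to identify the $+$-rings with $(B^+/(B^+\cap IB))^c$. Your only additions are the explicit construction of the comparison morphism via Lemma~\ref{example-closed} and the universal property, and a more verbose bookkeeping for $D^+$, which the paper carries out in a condensed chain of equalities.
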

\begin{proof}
Lemma~\ref{complete-base-change} applied to the finite morphism $A \to A/I$ ensures that\footnote{In the formula below, $(A/I\otimes_A B)^+$ stands for the integral closure of $\rm{Im}\left(A^+/I\cap A^+)^c \otimes_{A^+} B^+ \to A/I\otimes_A B\right)$ inside $A/I\otimes_A B$.} 
\[
 \Spa\left(B, B^+\right)\times_{\Spa(A, A^+)} \Spa\left( A/I, (A^+/I\cap A^+)^c\right) \simeq \Spa\left(A/I\otimes_A B, (A/I\otimes_A B)^+\right).
\]
    
Lemma~\ref{example-closed} implies that $(A/I, (A^+/I\cap A^+)^c)$ is a complete Tate-Huber pair. Thus Lemma~\ref{lemma:finite-tensor-product} ensures that the tensor product topology on $(A/I)\otimes_A B$ coincides with the natural topology. Lemma~\ref{example-closed} also implies that $B/IB$ is a complete Tate ring, in particular, its topology is first countable. Thus, Lemma~\ref{finite-natural} ensures that its topology coincides with the natural topology. Therefore, the universal property of the natural topology guarantees that the canonical algebraic isomorphism $\left(A/I\right)\otimes_A B \simeq B/IB$ preserves topologies on both sides. \smallskip

Now we recall that 
\begin{align*}
 \left(\left(A/I\right)\otimes_A B\right)^+ & =  \rm{Im}\left( \left(A^+/I\cap A^+\right)^c\otimes_{A^+} B^+ \to \left(A/I\right)\otimes_A B\right)^c \\
 & = \rm{Im}\left( B^+/(I\cap A^+)B^+ \to B/IB\right)^c.
\end{align*}
This admits a natural morphism 
\[
\rm{Im}\left( B^+/\left(I\cap A^+\right)B^+ \to B/IB\right)^c \to \left(B^+/\left(B^+\cap IB\right)\right)^c
\]
that is both injective and surjective. This implies that
\begin{align*}
    \Spa\left(B, B^+\right)\times_{\Spa(A, A^+)} \Spa\left(\left(A/I\right), A^+/(I\cap A^+)^c\right) & \simeq \Spa\left(\left(A/I\right)\otimes_A B, (A/I\otimes_A B)^+\right) \\
    & \simeq \Spa(B/IB, (B^+/(B^+\cap IB))^c).
    \qedhere
\end{align*}
\end{proof}

\begin{cor}\label{cor:huber-top} Let $Y=\Spa(A, A^+)$ be a strongly noetherian Tate affinoid, $I\subset A$ an ideal, and $X=\Spa(A/I, (A^+/I\cap A^+)^c)$. Then the natural map
\[
\widetilde{I} \to \ker (\O_Y \to i_*\O_X)
\]
is an isomorphism.
\end{cor}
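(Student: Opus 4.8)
The plan is to identify both the source and the target of the natural map as coherent $\O_Y$-modules and then to verify that the map is an isomorphism after passing to global sections, using Theorem~\ref{thm:huber-coherent-sheaves}.

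First I would record that $A$ is noetherian (take $n=0$ in the definition of strongly noetherian), so the ideal $I\subset A$ is a finite $A$-module and $\widetilde{I}$ is a coherent $\O_Y$-module. On the other side, Lemma~\ref{example-closed} shows that $i\colon X\to Y$ is a closed immersion, so the kernel $\mathcal{K}\coloneqq \ker(\O_Y \to i_*\O_X)$ is coherent by definition. The natural map $\widetilde{I}\to \mathcal{K}$ is the one induced by the inclusion $I\hookrightarrow A$: the composite $\widetilde{I}\to \O_Y \to i_*\O_X$ vanishes because $I$ maps to $0$ in $A/I=\O_X(X)$, so $\widetilde{I}\to \O_Y$ factors through $\mathcal{K}$.

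Next I would reduce to global sections. By Theorem~\ref{thm:huber-coherent-sheaves}(\ref{thm:huber-coherent-sheaves-1}) together with Corollary~\ref{cor:huber-coherent-sheaves}(\ref{cor:huber-coherent-sheaves-1}), the global-sections functor $\F \mapsto \F(Y)$ is an exact equivalence between coherent $\O_Y$-modules and finite $A$-modules with quasi-inverse $M\mapsto \widetilde{M}$ (exactness uses the vanishing in Theorem~\ref{thm:huber-coherent-sheaves}(\ref{thm:huber-coherent-sheaves-2}), and $\widetilde{M}(Y)=M$). In particular a morphism of coherent $\O_Y$-modules is an isomorphism as soon as it is one on $Y$-sections, so it suffices to show that $\widetilde{I}(Y)\to \mathcal{K}(Y)$ is an isomorphism.

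Finally, the computation of global sections is immediate. One has $\widetilde{I}(Y)=\O_Y(Y)\otimes_A I = I$. For the target, left-exactness of global sections applied to $0\to \mathcal{K}\to \O_Y \to i_*\O_X \to 0$ gives $\mathcal{K}(Y)=\ker\big(A \to \O_X(X)\big)=\ker(A\to A/I)=I$, where $\O_X(X)=A/I$ because $X=\Spa(A/I,(A^+/I\cap A^+)^c)$. Under these identifications the map $\widetilde{I}(Y)\to \mathcal{K}(Y)$ is the identity of $I\subset A$, and we are done. The only point requiring the full strength of the cited results is the passage from an isomorphism on $Y$-sections to an isomorphism of sheaves; I expect this to be the (mild) crux, and one can bypass it entirely by checking on a basis of rational subdomains $U$, where Lemma~\ref{lemma:huber-closed-commutes-affinoid} identifies $\O_X(i^{-1}(U))$ with $\O_Y(U)/I\O_Y(U)$ and flatness of $A\to\O_Y(U)$ (\cite[(II.1), (iv) on page 530]{H1}) gives $\O_Y(U)\otimes_A I \xrightarrow{\sim} I\O_Y(U)=\mathcal{K}(U)$.
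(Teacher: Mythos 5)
Your proposal is correct, and its primary route is genuinely different from the paper's; in fact the paper's entire proof is what you relegate to your closing ``bypass'' remark. The paper simply checks the map on the basis of rational subdomains $U \subset Y$: by Lemma~\ref{lemma:huber-closed-commutes-affinoid}, $i^{-1}(U) = U \times_Y X$ is $\Spa$ of $\O_Y(U)/I\O_Y(U)$ (with the appropriate ring of integral elements), so $\ker\bigl(\O_Y(U) \to (i_*\O_X)(U)\bigr) = I\O_Y(U)$, and $A$-flatness of $\O_Y(U)$ gives $\widetilde{I}(U) = I\otimes_A \O_Y(U) \simeq I\O_Y(U)$; that is the whole argument. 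Your main route instead observes that both sides are coherent (the source because $A$ is noetherian, the target by Lemma~\ref{example-closed} and the definition of closed immersion) and invokes the equivalence between coherent $\O_Y$-modules and finite $A$-modules coming from Theorem~\ref{thm:huber-coherent-sheaves}(\ref{thm:huber-coherent-sheaves-1}), i.e.\ \cite[3.6.20]{H2}, to reduce to the trivial computation on global sections. This is sound and introduces no circularity, since neither Theorem~\ref{thm:huber-coherent-sheaves} nor Lemma~\ref{example-closed} depends on the corollary; but note that you need full faithfulness of $M \mapsto \widetilde{M}$, which is not literally stated in Theorem~\ref{thm:huber-coherent-sheaves}(\ref{thm:huber-coherent-sheaves-1}) --- it is a mild supplement, proved by noting that $\O_Y$-linearity plus compatibility with restriction forces $\phi(U) = \phi(Y)\otimes_A \mathrm{id}_{\O_Y(U)}$ on every rational $U$. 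The trade-off: the paper's local check runs on the base-change lemma and flatness alone, while your global-sections argument is conceptually cleaner (``isomorphism of finite modules, hence of sheaves'') but leans on the deeper finiteness theorem of Huber as its engine.
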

\begin{proof}
This follows from the fact that the formation of $\Spa(A/I, (A^+/I\cap A^+)^c)$ commutes with base change by Lemma~\ref{lemma:huber-closed-commutes-affinoid}, and the fact that $I\O_Y(U)=I\otimes_A \O_Y(U)$ by $A$-flatness of $\O_Y(U)$ for a rational subdomain $U\subset Y$. 
\end{proof}

\begin{cor}\label{cor:huber-closed-classification} Let $Y=\Spa(A, A^+)$ be a strongly noetherian Tate affinoid, and let $i\colon X \to Y$ be a closed immersion. Then it is isomorphic to the closed immersion from $\Spa(A/I, (A^+/I\cap A^+)^c)$ for a unique ideal $I\subset A$.
\end{cor}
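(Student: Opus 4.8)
The plan is to read the defining ideal off the kernel of $\O_Y \to i_*\O_X$ and then argue that a closed immersion into $Y$ is completely determined by this kernel ideal sheaf, forcing $X$ to agree with the model space produced in Lemma~\ref{example-closed}. First I would set $\mathcal{I} \coloneqq \ker(\O_Y \to i_*\O_X)$, which is coherent by the definition of a closed immersion. Since $Y=\Spa(A,A^+)$ is a strongly noetherian Tate affinoid, Theorem~\ref{thm:huber-coherent-sheaves}(\ref{thm:huber-coherent-sheaves-1}) produces a unique finite $A$-module $M$ with $\mathcal{I}\cong \widetilde{M}$, and necessarily $M=\Gamma(Y,\mathcal{I})$. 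From the short exact sequence $0 \to \mathcal{I} \to \O_Y \to i_*\O_X \to 0$ and the vanishing $\rm{H}^1(Y,\mathcal{I})=0$ of Theorem~\ref{thm:huber-coherent-sheaves}(\ref{thm:huber-coherent-sheaves-2}), I would identify $\Gamma(Y,\mathcal{I})=\ker(A\to \O_X(X))$; together with Lemma~\ref{lemma:huber-closed-tate-topology} (which says $A\to \O_X(X)$ is a topological quotient of complete Tate rings) this shows $I\coloneqq\Gamma(Y,\mathcal{I})$ is an ideal of $A$ and that $\mathcal{I}=\widetilde{I}$.

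Next I would compare $i$ with the model closed immersion $i'\colon X'\coloneqq \Spa(A/I,(A^+/I\cap A^+)^c)\to Y$ of Lemma~\ref{example-closed}. By Corollary~\ref{cor:huber-top} its kernel sheaf is again $\widetilde{I}=\mathcal{I}$. The key point is that a closed immersion is reconstructed canonically from its kernel ideal sheaf: by Remark~\ref{rmk:support-closed-immersion} the image $Z\subset Y$ is the support of $\O_Y/\mathcal{I}$, so $i$ and $i'$ have the same image; since a closed immersion is a homeomorphism onto $Z$, one has $\O_X\cong i^{-1}(i_*\O_X)=i^{-1}(\O_Y/\mathcal{I})$ canonically as sheaves of topological rings, and identically for $X'$. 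Finally, at a point $x\in Z$ the stalk $\O_{X,x}=\O_{Y,x}/\mathcal{I}_x$ is a nonzero local ring, so $\mathcal{I}_x\subset \mathfrak{m}_{Y,x}$ and the residue fields agree, $k(x)=k(i(x))$; the compatibility of valuations built into a morphism of $\mathcal{V}$ then forces the valuation of $X$ at $x$ to be exactly the restriction of $v_{i(x)}$. Running the same reconstruction for $X'$ shows that $X$ and $X'$ carry identical data $(Z,\,(\O_Y/\mathcal{I})|_Z,\,\{v_y\}_{y\in Z})$, which yields a canonical isomorphism $X\xrightarrow{\sim}X'$ over $Y$; as $\mathbf{AS}$ is a full subcategory of $\mathcal{V}$ (Definition~\ref{defn:adic-analytic-spaces}), this is an isomorphism of closed immersions.

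For uniqueness, if $X$ were isomorphic over $Y$ to the closed immersion attached to some ideal $J\subset A$, then by Corollary~\ref{cor:huber-top} that closed immersion would have kernel sheaf $\widetilde{J}$; but the kernel sheaf is intrinsic to $i$, so $\widetilde{J}=\mathcal{I}=\widetilde{I}$ and hence $J=\Gamma(Y,\widetilde{J})=\Gamma(Y,\widetilde{I})=I$. I expect the main obstacle to be the reconstruction step in the second paragraph, where one must verify that the structure sheaf, its topological ring structure, and \emph{especially} the valuations of the a priori abstract adic space $X$ are all genuinely recovered from $\mathcal{I}$: the algebraic and topological parts are controlled by Lemma~\ref{lemma:huber-closed-tate-topology} and the homeomorphism onto $Z$, while the valuations are pinned down by the $\mathcal{V}$-morphism compatibility once the residue fields have been matched.
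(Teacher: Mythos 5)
Your proof is correct, and its first and third paragraphs coincide with the paper's argument: identify $\mathcal{I}=\ker(\O_Y\to i_*\O_X)$ with $\widetilde{I}$ using Theorem~\ref{thm:huber-coherent-sheaves} and the vanishing of $\mathrm{H}^1$, compare against the model immersion of Lemma~\ref{example-closed} via Corollary~\ref{cor:huber-top}, and get uniqueness by recovering $I$ as $\Gamma(Y,\mathcal{I})$. Where you take a genuinely different route is the middle step, i.e.\ the mechanism producing the isomorphism. The paper first builds an honest morphism $\phi\colon X\to\Spa(A/I,(A^+/I\cap A^+)^c)$ in $\mathbf{AS}$ by feeding the topological isomorphism $A/I\simeq\O_X(X)$ of Lemma~\ref{lemma:huber-closed-tate-topology} into the adjunction of Remark~\ref{grp-act-adic}, and then checks that $\phi$ is an isomorphism using conservativity of the forgetful functor $\mathcal{V}\to\mathbf{TLRS}$ (Remark~\ref{rmk:huber-forgetful-conservative}); a by-product of conservativity is that the valuations never need to be examined. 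You never invoke the adjunction: instead you reconstruct both $X$ and the model space as the same triple $\left(Z,(\O_Y/\mathcal{I})|_Z,\{v_y\}_{y\in Z}\right)$ of $\mathcal{V}$ and appeal to fullness of $\mathbf{AS}\subset\mathcal{V}$, which is exactly what makes a $\mathcal{V}$-isomorphism between adic spaces an isomorphism of adic spaces, but which obliges you to pin down the valuations by hand. Your treatment of that point is sound: a local inclusion of valuation rings with a common fraction field is an equality, so the residue-field isomorphism $k(i(x))\simeq k(x)$ forces $v_x$ to be equivalent to $v_{i(x)}$, for $X$ and for the model space alike. The one assertion to expand is that $i_*\O_X\cong\O_Y/\mathcal{I}$ holds as sheaves of \emph{topological} rings: this needs Lemma~\ref{lemma:huber-closed-tate-topology} applied over every rational subdomain $U\subset Y$ (legitimate because the restriction of a closed immersion over an open subspace is again a closed immersion, and $U$ is a strongly noetherian Tate affinoid by Corollary~\ref{noeth-sub}), together with $\mathrm{H}^1(U,\mathcal{I}|_U)=0$ to compute the sections of $i_*\O_X$ over $U$; this is the same input the paper compresses into the last sentence of its proof, and you correctly flag it as the delicate point, so I regard this as a matter of detail rather than a gap.
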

\begin{proof}
Uniqueness of $I$ is easy: Corollary~\ref{cor:huber-top} implies that, for a closed immersion 
\[
X=\Spa(A/I, (A^+/I\cap A^+)^c) \to Y=\Spa(A, A^+),
\]
we can recover $I$ as $\Gamma(Y, \mathcal{I})$ for $\mathcal{I}=\ker(\O_Y \to i_*\O_X)$. \smallskip

Now we show existence of $I$. We consider the short exact sequence
\[
0 \to \mathcal{I} \to \O_Y \to i_*\O_X \to 0.
\]
Theorem~\ref{thm:huber-coherent-sheaves}(\ref{thm:huber-coherent-sheaves-1}) implies that $\mathcal{I}\cong \widetilde{I}$ for an ideal $I\subset A$. Moreover, Lemma~\ref{lemma:huber-closed-tate-topology} ensures that $\O_X(X)$ is a complete Tate ring and $\O_X(X)\simeq A/I$ topologically. This isomorphism induces a natural morphism $\phi\colon X \to \Spa(A/I, (A^+/I\cap A^+)^c)$ by Remark~\ref{grp-act-adic}. \smallskip

We first show that $\phi$ is a homeomorphism. Since both $X$ and $\Spa(A/I, (A^+/I\cap A^+)^c)$ are topologically closed subsets of $\Spa(A, A^+)$, it is sufficient to show that $\phi$ is a bijection. Now Remark~\ref{rmk:support-closed-immersion} and Corollary~\ref{cor:huber-top}
imply that both $X$ and $\Spa(A/I, (A^+/I\cap A^+)^c)$ can be topologically identified with the set 
\[
\{y\in Y \ | \ \mathcal{J}_y \not\simeq \O_{Y, y}\}. 
\]

Now we use Remark~\ref{rmk:huber-forgetful-conservative} to ensure that it suffices to show that 
\[
\phi^\#\colon \O_{\Spa(A/I, (A^+/I\cap A^+)^c)} \to \phi_*\O_X
\]
is an isomorphism of sheaves of topological rings. Since $i'\colon \Spa(A/I, (A^+/I\cap A^+)^c) \to \Spa(A, A^+)$ is topologically a closed immersion, it suffices to show that $\phi^\#$ is an isomorphism after applying $i'_*$, i.e. it suffices to show that the natural morphism
\[
i'_*\O_{\Spa(A/I, (A^+/I\cap A^+)^c)} \to i_*\O_X
\]
is an isomorphism of sheaves of topological rings. Corollary~\ref{cor:huber-top} implies that this is an algebraic isomorphism. We use Remark~\ref{rmk:open-mapping} and Lemma~\ref{lemma:huber-closed-tate-topology}\footnote{And an obvious observation that restriction of a closed immersion over an open subspace of the target is again a closed immersion.} to handle the topological aspect of the isomorphism.
\end{proof}

\begin{cor}\label{cor:closed-immersion-preserved} Let $i\colon X \to Y$ be a closed immersion of locally strongly noetherian adic spaces. Then
\begin{enumerate}
    \item\label{cor:closed-immersion-preserved-fiber-product} for any locally topologically finite type morphism $Z \to Y$, the fiber product $Z\times_Y X \to Z$ is a closed immersion,
    \item\label{cor:closed-immersion-preserved-composition} for any closed immersion $i'\colon Z \to X$, the composition $i \circ i' \colon Z \to Y$ is a closed immersion.
\end{enumerate}
\end{cor}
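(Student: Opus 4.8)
The plan is to handle the two assertions separately, in each case reducing to the explicit affinoid description of closed immersions furnished by Corollary~\ref{cor:huber-closed-classification}. The single structural fact I will use repeatedly is that a closed immersion of locally strongly noetherian adic spaces is a \emph{finite} morphism: locally on the target it is isomorphic to a standard closed immersion $\Spa(A/I,(A^+/I\cap A^+)^c)\to\Spa(A,A^+)$, which is finite by the proof of Lemma~\ref{example-closed}, and finiteness is local on the target.

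For part~(\ref{cor:closed-immersion-preserved-fiber-product}) (base change), I would first note that being a closed immersion is local on the target, so it suffices to produce, around each $z\in Z$ with image $y\in Y$, a convenient affinoid neighborhood over which the projection $Z\times_Y X\to Z$ is standard. Using Corollary~\ref{cor:huber-closed-classification} together with Corollary~\ref{noeth-sub}, I can shrink $Y$ to a strongly noetherian Tate affinoid $V=\Spa(A,A^+)$ containing $y$ over which $i$ is identified with the standard closed immersion attached to an ideal $I\subset A$. Since $Z\to Y$ is locally topologically finite type, Lemma~\ref{noeth-preserve} shows that $Z$ is locally strongly noetherian, so I can pick a strongly noetherian Tate affinoid $W=\Spa(B,B^+)\subset Z$ around $z$ mapping into $V$. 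Then Lemma~\ref{lemma:huber-closed-commutes-affinoid} identifies $W\times_Y X$ with $\Spa(B/IB,(B^+/(B^+\cap IB))^c)$, and Lemma~\ref{example-closed} shows the projection of this to $W$ is a closed immersion; covering $Z$ by such $W$ finishes this part.

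For part~(\ref{cor:closed-immersion-preserved-composition}) (composition), I would separate the topological and sheaf-theoretic statements. Topologically, a composite of closed embeddings is a closed embedding, so $j\coloneqq i\circ i'$ is a homeomorphism of $Z$ onto a closed subset of $Y$. For the structure sheaves, write $\mathcal{I}\coloneqq\ker(\O_Y\to i_*\O_X)$ and $\mathcal{J}\coloneqq\ker(\O_X\to i'_*\O_Z)$, both coherent by hypothesis. Since $i_*$ is exact (it is a closed embedding of spaces, so exactness is visible on stalks), applying it to $0\to\mathcal{J}\to\O_X\to i'_*\O_Z\to 0$ and composing with the surjection $\O_Y\to i_*\O_X$ shows that $\O_Y\to j_*\O_Z$ is surjective with kernel $\mathcal{K}$ fitting into an exact sequence $0\to\mathcal{I}\to\mathcal{K}\to i_*\mathcal{J}\to 0$ (here one uses that $i_*i'_*\O_Z=j_*\O_Z$ and that the first map is onto). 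Because $i$ is finite, $i_*\mathcal{J}$ is coherent by Corollary~\ref{cor:huber-coherent-sheaves}(\ref{cor:huber-coherent-sheaves-2}), and then $\mathcal{K}$ is coherent as an extension of coherent sheaves by Corollary~\ref{cor:huber-coherent-sheaves}(\ref{cor:huber-coherent-sheaves-1}); hence $j$ is a closed immersion.

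The main obstacle I anticipate lies in part~(\ref{cor:closed-immersion-preserved-fiber-product}): matching the fiber-product computation with the affinoid base-change Lemma~\ref{lemma:huber-closed-commutes-affinoid} and confirming via Lemma~\ref{noeth-preserve} that the auxiliary neighborhoods $W$ may be chosen strongly noetherian, which is exactly what makes the cited lemmas applicable. By contrast, in part~(\ref{cor:closed-immersion-preserved-composition}) the only real point is to identify the kernel of $\O_Y\to j_*\O_Z$ as the displayed extension and then invoke the coherence-closure properties, for which the finiteness of $i$ is the essential input.
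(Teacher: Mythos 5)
Your proposal is correct, and part~(\ref{cor:closed-immersion-preserved-fiber-product}) follows the paper's route exactly: localize on the target $Z$, use Corollary~\ref{cor:huber-closed-classification} to put $i$ in standard form over a strongly noetherian Tate affinoid, and conclude via Lemma~\ref{lemma:huber-closed-commutes-affinoid} and Lemma~\ref{example-closed} (the paper compresses the choice of the strongly noetherian affinoid $W$, which you justify via Lemma~\ref{noeth-preserve}, into the phrase ``it suffices to assume $X$, $Y$, $Z$ are strongly noetherian Tate affinoids''). In part~(\ref{cor:closed-immersion-preserved-composition}) you diverge from the paper in how coherence of $\mathcal{K}=\ker(\O_Y\to j_*\O_Z)$ is obtained. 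The paper first reduces to affinoid $Y$, observes that $i$ \emph{and} $i'$ are finite by Corollary~\ref{cor:huber-closed-classification}, hence so is the composite $i\circ i'$, and then deduces coherence of $(i\circ i')_*\O_Z$ from Corollary~\ref{cor:huber-coherent-sheaves}(\ref{cor:huber-coherent-sheaves-2}), so that $\mathcal{K}$ is coherent as the kernel of a map of coherent sheaves. You instead exploit exactness of $i_*$ along the closed topological embedding to build the exact sequence $0\to\mathcal{I}\to\mathcal{K}\to i_*\mathcal{J}\to 0$ and invoke closure under extensions from Corollary~\ref{cor:huber-coherent-sheaves}(\ref{cor:huber-coherent-sheaves-1}). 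Your version only needs finiteness of $i$ (not of the composite, whose finiteness the paper asserts without comment and which itself requires Theorem~\ref{thm:global-finite} to check on an affinoid cover), and it works globally without the affinoid reduction; the paper's version is shorter because pushing forward the structure sheaf along the finite composite makes the kernel argument a one-liner. Both hinge on the same two pillars: the affinoid classification of closed immersions and the coherence stability results.
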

\begin{proof}
For the purpose of proving (\ref{cor:closed-immersion-preserved-fiber-product}), we may assume that $X$, $Y$ and $Z$ are strongly noetherian Tate affinoids. Then the result follows from Lemma~\ref{lemma:huber-closed-commutes-affinoid} and Corollary~\ref{cor:huber-closed-classification}. \smallskip

Similarly to prove (\ref{cor:closed-immersion-preserved-composition}), we may assume that $Y$ is a strongly noetherian Tate affinoid. Then the same holds for $X$ and $Z$ by Corollary~\ref{cor:huber-closed-classification}. It is clear that $Z \to Y$ is a homeomorphism  onto its closed image, and that $\O_Y \to (i\circ i')_*\O_Z$ is surjective. Thus, we only need to show that the kernel of that map is coherent. It suffices to show that $(i\circ i')_*\O_Z$ is coherent. Now we note that $i$ and $i'$ are finite by Corollary~\ref{cor:huber-closed-classification}, so $i\circ i'$ is also finite. Therefore, $(i\circ i')_*\O_Z$ is coherent by Corollary~\ref{cor:huber-coherent-sheaves}(\ref{cor:huber-coherent-sheaves-2}). 
\end{proof}

\begin{defn}\label{defn:huber-immersion} We say that a morphism $f\colon X \to Y$ of analytic adic spaces is a {\it locally closed immersion} if $f$ can be factored as $j\circ i$ where $i$ is a closed immersion and $j$ is an open immersion.
\end{defn}

\begin{lemma}\label{lemma:huber-composition-of-immersions} Let $f\colon X \to Y$ and $g\colon Y \to Z$ be locally closed immersions of locally strongly noetherian adic spaces. Then so is $g\circ f$.
\end{lemma}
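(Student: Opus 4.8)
The plan is to reduce everything to a single \emph{commutation} statement: an open immersion followed by a closed immersion can be rewritten as a closed immersion followed by an open immersion. Granting this, write $f=j_1\circ i_1$ and $g=j_2\circ i_2$ with $i_1\colon X\to Y_1$, $i_2\colon Y\to Z_2$ closed immersions and $j_1\colon Y_1\to Y$, $j_2\colon Z_2\to Z$ open immersions, as provided by Definition~\ref{defn:huber-immersion}. Then $g\circ f=j_2\circ i_2\circ j_1\circ i_1$, and the middle composite $i_2\circ j_1$ (open followed by closed) can be rewritten, by the commutation statement, as $j'\circ i'$ with $j'$ open and $i'$ closed. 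Substituting gives $g\circ f=(j_2\circ j')\circ(i'\circ i_1)$. Here $j_2\circ j'$ is a composition of open immersions, hence an open immersion, and $i'\circ i_1$ is a composition of closed immersions, hence a closed immersion by Corollary~\ref{cor:closed-immersion-preserved}(\ref{cor:closed-immersion-preserved-composition}). Thus $g\circ f$ factors as a closed immersion followed by an open immersion, i.e. it is a locally closed immersion.

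It remains to prove the commutation statement, which is the heart of the argument. So let $j\colon W\to Y$ be an open immersion and $i\colon Y\to Z$ a closed immersion of locally strongly noetherian adic spaces. Since $i$ is a homeomorphism onto a closed subset, $i(Y)$ is closed in $|Z|$; and since $j$ is a homeomorphism onto an open subset, $i(j(W))$ is open in the subspace $i(Y)$. Hence $i(Y)\setminus i(j(W))$ is closed in $i(Y)$, and therefore closed in $|Z|$. I would then set $U\coloneqq Z\setminus\big(i(Y)\setminus i(j(W))\big)$, an open adic subspace of $Z$, with $j'\colon U\to Z$ the corresponding open immersion. A direct check on supports shows $i^{-1}(U)=j(W)$, so $i^{-1}(U)$ and $W$ agree as open adic subspaces of $Y$ (an open adic subspace is determined by its underlying open set, by the remark following Definition of open immersions).

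Finally, I would identify the base change: since $j'\colon U\to Z$ is an open immersion, it is in particular locally topologically finite type, so Corollary~\ref{cor:closed-immersion-preserved}(\ref{cor:closed-immersion-preserved-fiber-product}) applies to the closed immersion $i\colon Y\to Z$ and yields a closed immersion $Y\times_Z U\to U$. But $Y\times_Z U$ is the open subspace $i^{-1}(U)\cong W$ of $Y$, so this is precisely a closed immersion $i'\colon W\to U$, and by construction $i\circ j=j'\circ i'$. This is the desired rewriting. The main obstacle is exactly this commutation step: one must produce the open subspace $U\subset Z$ extending the open locus $j(W)\subset Y$ purely topologically, and then verify that the resulting map $W\to U$ carries the correct adic structure of a closed immersion, which is where the base-change stability of closed immersions (Corollary~\ref{cor:closed-immersion-preserved}(\ref{cor:closed-immersion-preserved-fiber-product})) does the real work.
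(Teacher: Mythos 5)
Your proof is correct and follows essentially the same route as the paper: both reduce to the key commutation step that an open immersion followed by a closed immersion can be rewritten as a closed immersion followed by an open immersion, and then conclude via stability of closed immersions under composition (Corollary~\ref{cor:closed-immersion-preserved}(\ref{cor:closed-immersion-preserved-composition})). Your treatment of the commutation step is in fact slightly more careful than the paper's: you construct the open set $U\subset Z$ explicitly and justify that $W\to U$ is a closed immersion via base change (Corollary~\ref{cor:closed-immersion-preserved}(\ref{cor:closed-immersion-preserved-fiber-product})), where the paper simply invokes the topology on $Y$ being induced from $Z$ and treats the restriction of a closed immersion over an open as obvious.
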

\begin{proof}
We first deal with the case $f$ an open immersion and $g$ a closed immersion. In this case the topology on $Y$ is induced from $Z$, so there is an open adic subspace $U\subset Z$ such that $X=U\cap Z=g^{-1}(U)$. Therefore, we can factor $g \circ f$ as 
\[
X \xr{a} U \xr{b} Z.
\]
We note that $a$ is a closed immersion as the restriction of the closed immersion $g$ over $U\subset Z$, and $b$ is an open immersion by construction. Hence, $g\circ f$ is indeed a locally closed immersion. \smallskip

Now we consider the general case. In this case we can factor $f$ as $j\circ i$ with a closed immersion $i$ and an open immersion $j$. Similarly, we can factor $g=j'\circ i'$ with a closed immersion $j'$ and an open immersion $i'$. The argument above implies that the composition $i'\circ j$ can be rewritten as $j''\circ i''$ for a closed immersion immersion $i''$ and an open immersion $j''$. Therefore,

\[
g\circ f = j'\circ i' \circ j \circ i = j'\circ j'' \circ i'' \circ i = (j'\circ j'') \circ (i''\circ i').
\]
Now $i'' \circ i'$ is a closed immersion by Corollary~\ref{cor:closed-immersion-preserved}(\ref{cor:closed-immersion-preserved-composition}), and clearly $j'\circ j''$ is an open immersion. Therefore, $g\circ f$ is an immersion. 
\end{proof}

\begin{rmk} The order of the open and closed immersion in Definition~\ref{defn:huber-immersion} is needed to ensure that a composition of immersions is an immersion; the same happens over $\mathbf{C}$.
\end{rmk}

\begin{lemma}\label{immersion-and-closed} Let $f\colon X \to Y$ be a locally closed immersion of analytic adic spaces such that the image $f(X)$ is closed in $Y$. Then $f$ is a closed immersion.
\end{lemma}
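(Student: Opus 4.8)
The plan is to exploit the factorization furnished by Definition~\ref{defn:huber-immersion}: write $f = j\circ i$, where $i\colon X \to U$ is a closed immersion and $j\colon U \to Y$ is an open immersion, and henceforth identify $U$ with the open adic subspace $j(U)\subset Y$. The key geometric observation is that $f(X) = j(i(X)) \subseteq U$, so that the hypothesis that $f(X)$ is closed in $Y$ yields an open covering $Y = U \cup (Y\setminus f(X))$. I would then verify the three defining conditions of a closed immersion --- that $f$ is a homeomorphism onto a closed subset, that $\O_Y \to f_*\O_X$ is surjective, and that its kernel is coherent --- by handling the topological condition globally and the two sheaf-theoretic conditions locally on this covering.

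For the topological condition I would note that $i$ is a homeomorphism of $X$ onto the closed subset $i(X)\subset U$ and that $j$ is a homeomorphism of $U$ onto the open subset $U\subset Y$; hence the composite $f = j\circ i$ is a topological embedding of $X$ onto $f(X)$, and $f(X)$ is closed in $Y$ precisely by hypothesis. No gluing of the (non-local) closedness property is needed, since closedness is supplied outright.

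Surjectivity of $\O_Y \to f_*\O_X$ and coherence of $\ker(\O_Y \to f_*\O_X)$ are both local on $Y$, so it suffices to check them over each member of the covering $\{U,\, Y\setminus f(X)\}$. Over $U$ one has $f^{-1}(U) = X$ and $f|_X = i$, so the restriction of $\O_Y \to f_*\O_X$ to $U$ is exactly $\O_U \to i_*\O_X$, which is surjective with coherent kernel because $i$ is a closed immersion. Over $W \coloneqq Y\setminus f(X)$ one has $f^{-1}(W) = \emptyset$, whence $(f_*\O_X)|_W = 0$; the map $\O_Y|_W \to 0$ is trivially surjective and its kernel is $\O_Y|_W$, which is coherent since the structure sheaf of a locally strongly noetherian adic space is a coherent module over itself (locally $\O_Y \cong \widetilde{A}$ with $A$ finite over $A$). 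Combining the two opens gives surjectivity and coherence over all of $Y$, and together with the topological statement this shows that $f$ is a closed immersion.

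The argument is essentially formal once one isolates the two kinds of conditions, and I do not expect a serious obstacle. The only genuine content is the inclusion $f(X)\subseteq U$, which, together with the closedness hypothesis, is what produces the open covering on which everything can be checked locally; the point I would state carefully is that \emph{closed immersion} decomposes into a global topological requirement --- here handed to us --- and sheaf-theoretic requirements that are local on the target.
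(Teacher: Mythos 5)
Your proof is correct and follows essentially the same route as the paper: factor $f = j\circ i$, obtain the topological embedding onto the closed subset $f(X)$ directly from the hypothesis, and verify surjectivity of $\O_Y \to f_*\O_X$ and coherence of its kernel on the open cover $Y = U \cup (Y\setminus f(X))$. If anything, your treatment of the complement is more careful than the paper's wording: the paper asserts $\mathcal{I}|_{Y\setminus f(X)}\simeq 0$, which is an evident slip since $f_*\O_X$ vanishes there and the kernel is the whole structure sheaf $\O_Y|_{Y\setminus f(X)}$ (coherent, as you note, being locally $\widetilde{A}$ for $A$ finite over itself); the paper also checks surjectivity stalkwise rather than on the cover, a purely cosmetic difference.
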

\begin{proof}
We write $f$ as a composition
\[
X \xr{i} U \xr{j} Y
\]
of a closed immersion $i$ and an open immersion $j$. Since both $i$ and $j$ are topological embeddings, the same holds for $f$. Moreover, its image is closed in $Y$ by hypothesis on $f$. So we are left to show that $\mathcal{I}\coloneqq \ker(\O_Y \to f_*\O_X)$ is coherent, and $f^\#\colon \O_Y \to f_*\O_X$ is surjective. \smallskip

We use the open covering $Y=U\cup (Y\setminus f(X))$. We know that $\mathcal{I}|_{U}$ is coherent by assumption, and it is clear that $\mathcal{I}|_{Y\setminus f(X)}\simeq 0$ is coherent. Therefore, we conclude that $\mathcal{I}$ is coherent on $Y$. \smallskip

Now we show surjectivity of $f^\sharp$. We note that since $f$ is topologically a closed embedding, we conclude that  $(f_*\O_X)_y \cong 0$ for any $y\notin f(X)$. So it suffices to check surjectivity on stalks for $y\in f(X)\subset U$. But then $f^\#_y$ is identified with
\[
\O_{Y,y} \cong \O_{U,y} \twoheadrightarrow \O_{X,y}
\]
by the assumptions on $i$ and $j$. 
%Now we show that $\mathcal I$ is locally generated by sections. We cover $Y$ by opens $U$ and $Y\setminus f(X)$. It suffices to check the claim for the restrictions of $\mathcal I$ on both of those opens. It is clear that $\mathcal I|_{Y\setminus f(X)} = \O_{Y\setminus f(X)}$, so it is generated by sections. But $\mathcal I|_{U}\cong \ker(\O_U \to i_*\O_X)$ that is generated by sections by the assumption. 
\end{proof}

%\begin{lemma} Let $f\colon X \to Y$ be a closed immersion of locally strongly noetherian analytic adic spaces, and let $g\colon Z \to Y$ be any morphism of locally strongly noetherian analytic adic spaces. Then the fiber product $X\times_Z Y$ exists in the category of analytic adic spaces, and the map $X\times_Z Y \to Z$ is a closed immersions.
%\end{lemma}
%\begin{proof}
%I need to check some topological details to see whether I can drop the noetherian hypothesis.
%As usually, the construction of fiber products is local on $Y$ and $Z$. So we may and do assume that $Y=\Spa(A, A^+)$ and $Z=\Spa(B, B^+)$ are strongly noetherian Tate affinoids, and that the ideal sheaf $\mathcal I\coloneqq \ker(\O_Y\to f_*\O_X)$ is globally generated by elements $(f_i)_{i\in I}$. We let $I\coloneqq \mathcal I(Y)$ be the global sections of $\mathcal I$.

%We claim that this implies that $X\cong \Spa(A/I, (A^+/I\cap A^+)^c)$ where $(A^+/I\cap A^+)^c$ is the integral closure of $A^+/I\cap A^+$ in $A/I$. We note that $I$ 
%\end{proof}

\subsection{Separated Morphisms of Adic Spaces}

\begin{defn} We say that a locally topologically finite type morphism $f\colon X\to Y$ of locally strongly noetherian analytic adic spaces is {\it separated}, if the diagonal morphism $\Delta_{X/Y}\colon X \to X\times_Y X$ has closed image
\end{defn}

\begin{rmk} We assume that $f$ is locally topologically finite type to ensure the existence of the fiber product $X\times_Y X$. 
\end{rmk}

\begin{lemma}\label{lemma:diagonal-immersion} Let $f\colon X\to Y$ be a locally topologically finite morphism of locally strongly noetherian analytic adic spaces. Then $\Delta_{X/Y}\colon X\to X\times_Y X$ is a locally closed immersion.
\end{lemma}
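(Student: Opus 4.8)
The plan is to reduce to the affinoid case and then glue. Since the formation of $X\times_Y X$ and of $\Delta_{X/Y}$ is compatible with passing to open subspaces, and since being a locally closed immersion may be checked after restricting to an open cover of the target that contains the image, I would first produce an open subspace $W\subset X\times_Y X$ containing $\Delta_{X/Y}(X)$ over which the diagonal becomes a closed immersion. Concretely, cover $Y$ by strongly noetherian Tate affinoids $V_i=\Spa(R_i,R_i^+)$ and cover $X$ by strongly noetherian Tate affinoids $U_\alpha=\Spa(A_\alpha,A_\alpha^+)$ such that each $U_\alpha$ maps into some $V_{i(\alpha)}$ with $U_\alpha\to V_{i(\alpha)}$ topologically of finite type; this is possible by the definition of a locally topologically finite type morphism. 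Set $W_\alpha\coloneqq U_\alpha\times_{V_{i(\alpha)}}U_\alpha$, which, since $U_\alpha\hookrightarrow X$ and $V_{i(\alpha)}\hookrightarrow Y$ are open immersions, is naturally an open subspace of $X\times_Y X$, and put $W\coloneqq\bigcup_\alpha W_\alpha$.

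The key local computation is that the diagonal $\Delta_{U_\alpha/V_{i(\alpha)}}\colon U_\alpha\to W_\alpha$ is a closed immersion. On Huber pairs it is induced by the multiplication map $m\colon A_\alpha\wdh{\otimes}_{R_{i(\alpha)}}A_\alpha\to A_\alpha$, which is surjective because $a\mapsto a\wdh{\otimes} 1$ splits it. Since $R_{i(\alpha)}$ is strongly noetherian and $A_\alpha$ is topologically finite type over it, Lemma~\ref{noeth-preserve} shows $A_\alpha$ is strongly noetherian, and applying it once more to the base change $A_\alpha\to A_\alpha\wdh{\otimes}_{R_{i(\alpha)}}A_\alpha$ shows $B\coloneqq A_\alpha\wdh{\otimes}_{R_{i(\alpha)}}A_\alpha$ is strongly noetherian. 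Writing $A_\alpha\simeq B/I$ with $I=\ker m$, Lemma~\ref{example-closed} and Corollary~\ref{cor:huber-closed-classification} identify $\Delta_{U_\alpha/V_{i(\alpha)}}$ with the closed immersion associated to $I$; the only point requiring care is the $+$-ring, where one checks that the image of $B^+$ in $A_\alpha$ equals $A_\alpha^+$ (it contains $A_\alpha^+$ and lands in $A_\alpha^+$ because $m$ is a morphism of Huber pairs) and that $A_\alpha^+$ is already integrally closed in $A_\alpha$, so that $(B^+/(I\cap B^+))^c=A_\alpha^+$.

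Finally I would glue. Because $\Delta_{X/Y}$ is a section of each projection, one has $\Delta_{X/Y}^{-1}(W_\alpha)=U_\alpha$, so the $W_\alpha$ cover $\Delta_{X/Y}(X)$ and $\Delta_{X/Y}$ maps $X$ into $W$. Being a closed immersion is local on the target, as is immediate from the definition (closedness of the image, surjectivity of $\O_W\to \Delta_*\O_X$, and coherence of the kernel can all be checked on an open cover). On each $W_\alpha$ the diagonal restricts to $\Delta_{U_\alpha/V_{i(\alpha)}}$, which is a homeomorphism onto the closed subset $\Delta(U_\alpha)=\Delta_{X/Y}(X)\cap W_\alpha$ with surjective structure map and coherent kernel; hence $\Delta_{X/Y}\colon X\to W$ is a closed immersion, $W\hookrightarrow X\times_Y X$ is an open immersion, and the composition exhibits $\Delta_{X/Y}$ as a locally closed immersion in the sense of Definition~\ref{defn:huber-immersion}.

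The main obstacle I anticipate is the affinoid step, specifically the bookkeeping of the integral $+$-rings: one must verify that the completed tensor product $A_\alpha\wdh{\otimes}_{R_{i(\alpha)}}A_\alpha$ is strongly noetherian so that the classification of closed immersions applies, and that the induced $+$-ring on the diagonal is exactly $A_\alpha^+$ rather than merely a subring integral over it. The topological and gluing steps are then routine, relying on the stability of closed immersions recorded in Corollary~\ref{cor:closed-immersion-preserved}.
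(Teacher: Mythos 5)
Your proof is correct and follows essentially the same route as the paper: cover $Y$ and $X$ by strongly noetherian Tate affinoids, observe that the union of the affinoid self-products $U_\alpha\times_{V_{i(\alpha)}}U_\alpha$ is an open subspace of $X\times_Y X$ containing the diagonal, use locality of closed immersions on the target together with $\Delta_{X/Y}^{-1}(W_\alpha)=U_\alpha$ to reduce to the affinoid case, and there identify the diagonal, via the multiplication map on completed tensor products, with the closed immersion of Lemma~\ref{example-closed}. Your additional checks (surjectivity of $m$ via the splitting $a\mapsto a\,\wdh{\otimes}\,1$, strong noetherianness of $A_\alpha\wdh{\otimes}_{R_{i(\alpha)}}A_\alpha$ via Lemma~\ref{noeth-preserve}, and the identification of the $+$-ring with $A_\alpha^+$) are exactly the details the paper leaves implicit, and you carry them out correctly.
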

\begin{proof}
We cover $Y$ by strongly noetherian Tate affinoids $(U_i)_{i\in I}$, and then we cover the pre-images $f^{-1}(U_i)$ by strongly noetherian Tate affinoids $(V_{i,j})_{j\in J_i}$. The construction of fiber products in \cite[Proposition 1.2.2 (a)]{H3} implies that $\cup_{i,j} V_{i,j} \times_{U_i} V_{i,j}$ is an open subset in $X\times_Y X$ that contains $\Delta_{X/Y}(X)$. Thus in order to show that $\Delta_{X/Y}$ is an immersion, it is suffices to show 
\[
\a\colon X\to \cup_{i,j} V_{i,j} \times_{U_i} V_{i,j}
\] 
is a closed immersion. \smallskip

Moreover, we note that $\a^{-1}(V_{i,j}\times_{U_i} V_{i,j})=V_{i,j}$ for any $i\in I, j\in J_i$. Since the notion of a closed immersion is easily seen to be local on the target, we conclude that it is enough to show that the diagonal morphism is a closed immersion for affinoid spaces $X=\Spa(B, B^+)$ and $Y=\Spa(A, A^+)$. But then the diagonal morphism $X\to X\times_Y X$ coincides with the morphism
\[
\Delta_{X/Y}\colon \Spa(B, B^+) \to \Spa\left(B\wdh{\otimes}_A B, \left(B\wdh{\otimes}_A B\right)^+\right)
\] 
induced by the natural ``multiplication morphism'' of Tate-Huber pairs
\[
m\colon \left(B\wdh{\otimes}_A B, \left(B\wdh{\otimes}_A B\right)^+\right) \to (B, B^+)
\]
with $\left(B\wdh{\otimes}_A B\right)^+$ being the integral closure of $B^+\wdh{\otimes}_{A^+} B^+$ inside $B\wdh{\otimes}_A B$. Then we see that $\Delta_{X/Y}$ is a closed immersion by Lemma~\ref{example-closed} with $I=\ker m$.
\end{proof}

\begin{cor}\label{sep-closed-immersion} Let $f\colon X \to Y$ be a locally topologically finite type, separated morphism of locally strongly noetherian analytic adic spaces. Then the diagonal morphism $\Delta_{X/Y}\colon X\to X\times_Y X$ is a closed immersion.
\end{cor}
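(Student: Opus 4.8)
The plan is to combine Lemma~\ref{lemma:diagonal-immersion} with Lemma~\ref{immersion-and-closed}, since the statement is essentially the observation that a locally closed immersion with closed image is a closed immersion, applied to the diagonal. First I would invoke Lemma~\ref{lemma:diagonal-immersion}: because $f$ is locally topologically finite type and $X$, $Y$ are locally strongly noetherian analytic adic spaces, the fiber product $X\times_Y X$ exists and the diagonal $\Delta_{X/Y}\colon X \to X\times_Y X$ is a locally closed immersion. This gives the ``immersion'' half of the hypothesis needed for Lemma~\ref{immersion-and-closed}.

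Next I would supply the ``closed image'' half directly from the definition of separatedness: by hypothesis $f$ is separated, which by definition means precisely that the image $\Delta_{X/Y}(X)$ is closed in $X\times_Y X$. With both hypotheses of Lemma~\ref{immersion-and-closed} in hand — $\Delta_{X/Y}$ a locally closed immersion whose image is closed — I would conclude that $\Delta_{X/Y}$ is a closed immersion, which is the assertion.

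I do not anticipate a genuine obstacle here, since all the substantive work has been isolated into the two cited lemmas; the only point requiring a moment's care is matching hypotheses, namely checking that ``locally topologically finite type'' (as assumed on $f$) is exactly what Lemma~\ref{lemma:diagonal-immersion} requires to guarantee both the existence of $X\times_Y X$ and that the diagonal is a locally closed immersion. The entire argument can therefore be written as the following two-line proof.

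\begin{proof}
By Lemma~\ref{lemma:diagonal-immersion}, the diagonal morphism $\Delta_{X/Y}\colon X \to X\times_Y X$ is a locally closed immersion. Since $f$ is separated, the image $\Delta_{X/Y}(X)$ is closed in $X\times_Y X$ by definition. Therefore Lemma~\ref{immersion-and-closed} implies that $\Delta_{X/Y}$ is a closed immersion.
\end{proof}
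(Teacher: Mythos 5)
Your proof is correct and is exactly the paper's argument: the paper also deduces the corollary by combining Lemma~\ref{lemma:diagonal-immersion} (the diagonal is a locally closed immersion) with Lemma~\ref{immersion-and-closed} (an immersion with closed image is a closed immersion), the closed-image hypothesis being supplied by the definition of separatedness. No gaps; your version just spells out the hypothesis-matching that the paper leaves implicit.
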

\begin{proof}
This follows from Lemma~\ref{immersion-and-closed}~and~Lemma~\ref{lemma:diagonal-immersion}.
\end{proof}

\begin{cor}\label{lemma:intersection-affinoids} Let $f\colon X \to S$ be a locally topologically finite type, separated morphism of analytic adic spaces. Suppose that $S=\Spa(A, A^+)$ is a strongly noetherian Tate affinoid, and that $U$ and $V$ are two open affinoids in $X$. Then their intersection $U\cap V$ is also an open affinoid in $X$.
\end{cor}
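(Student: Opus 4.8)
The plan is to imitate the classical argument that, for a separated morphism, the intersection of two affines is affine: I realize $U\cap V$ as the locus where the diagonal meets the open $U\times_S V$, and use that the diagonal of a separated morphism is a closed immersion to conclude that $U\cap V$ is a closed subspace of an affinoid, hence itself affinoid.

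First I would record that the diagonal $\Delta_{X/S}\colon X\to X\times_S X$ is a closed immersion; this is precisely Corollary~\ref{sep-closed-immersion}, which applies because $f$ is locally topologically finite type and separated. (The fiber product $X\times_S X$ exists by \cite[Proposition 1.2.2]{H3} since $f$ is locally topologically finite type.) Next I would check that $T\coloneqq U\times_S V$ is a strongly noetherian Tate affinoid. Since $U$ and $V$ are quasi-compact open subspaces of the locally topologically finite type $S$-space $X$, the restrictions $U\to S$ and $V\to S$ are topologically finite type, so $(\O_X(U),\O_X^+(U))$ and $(\O_X(V),\O_X^+(V))$ are strongly noetherian by Lemma~\ref{noeth-preserve}; by the construction of fiber products, $T\cong\Spa(D,D^+)$ with $D\coloneqq\O_X(U)\wdh{\otimes}_A\O_X(V)$ a topologically finite type $A$-algebra, which is again strongly noetherian by Lemma~\ref{noeth-preserve}. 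The same construction exhibits the canonical morphism $T\hookrightarrow X\times_S X$ as an open immersion.

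The heart of the matter is the identification of $U\cap V$ with a base change of the diagonal. A point of the fiber product $T\times_{X\times_S X}X$ formed along $\Delta_{X/S}$ is a pair $(x,(u,v))$ with $(u,v)=\Delta_{X/S}(x)=(x,x)$, forcing $u=v=x\in U\cap V$; conversely every point of $U\cap V$ arises this way. Thus there is a canonical isomorphism
\[
U\cap V\cong T\times_{X\times_S X}X,
\]
under which the projection to $T$ is the base change of $\Delta_{X/S}$ along the open immersion $T\hookrightarrow X\times_S X$. As open immersions are locally topologically finite type and $\Delta_{X/S}$ is a closed immersion, Corollary~\ref{cor:closed-immersion-preserved}(\ref{cor:closed-immersion-preserved-fiber-product}) shows that the resulting morphism $U\cap V\to T$ is a closed immersion.

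Finally, a closed immersion into the strongly noetherian Tate affinoid $T$ has affinoid source: by Corollary~\ref{cor:huber-closed-classification} it is isomorphic to the closed immersion associated to some ideal $I\subset D$, so $U\cap V\cong\Spa(D/I,(D^+/(I\cap D^+))^c)$ is affinoid (alternatively, the closed immersion is finite and one invokes Theorem~\ref{thm:global-finite}). Hence $U\cap V$ is an open affinoid of $X$. The step I expect to require the most care is the verification that $T=U\times_S V$ is a strongly noetherian Tate affinoid --- i.e. that $U$ and $V$ are topologically finite type over $S$ so that $D$ is topologically finite type over the strongly noetherian ring $A$ --- since the classification of closed immersions in Corollary~\ref{cor:huber-closed-classification} and Theorem~\ref{thm:global-finite} is only available over a strongly noetherian base; the identification of $U\cap V$ as a base change of the diagonal is then purely formal.
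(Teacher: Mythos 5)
Your proposal is correct and follows essentially the same route as the paper: identify $U\cap V$ with the pullback of the diagonal $\Delta_{X/S}$ (a closed immersion by Corollary~\ref{sep-closed-immersion}) over the strongly noetherian Tate affinoid $U\times_S V$, and conclude via the classification of closed immersions into strongly noetherian Tate affinoids (Corollary~\ref{cor:huber-closed-classification}). The only cosmetic difference is that you invoke Corollary~\ref{cor:closed-immersion-preserved}(\ref{cor:closed-immersion-preserved-fiber-product}) where the paper simply restricts the closed immersion over the open subspace $U\times_S V$, and note that your passage from ``$U\to S$ is topologically finite type'' to strong noetherianity of $(\O_X(U),\O_X^+(U))$ tacitly uses Theorem~\ref{global-tft}, which the paper cites explicitly.
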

\begin{proof}
Consider the following commutative Cartesian diagram:
\[
\begin{tikzcd}
U\cap V \arrow{d} \arrow{r}{i} & U\times_S V \arrow{d} \\
X \arrow{r}{\Delta_{X/S}} & X\times_S X.
\end{tikzcd}
\]
Since the map $\Delta_{X/S}$ is a closed immersion by Corollary~\ref{sep-closed-immersion}, so is its restriction $i$. Now we note that $U$ and $V$ are strongly noetherian Tate affinoids by Lemma~\ref{noeth-preserve} and Theorem~\ref{global-tft}. Then $U\times_S V$ is also a strongly noetherian Tate affinoid, so we can apply Corollary~\ref{cor:huber-closed-classification} to the map $i$ to conclude that $U\cap V$ is affinoid. 
\end{proof} 

%\printbibliography

\bibliography{quotients}

\begin{thebibliography}{EGA IV$_4$}
\expandafter\ifx\csname url\endcsname\relax
  \def\url#1{\texttt{#1}}\fi
\expandafter\ifx\csname doi\endcsname\relax
  \def\doi#1{\burlalt{doi:#1}{http://dx.doi.org/#1}}\fi
\expandafter\ifx\csname urlprefix\endcsname\relax\def\urlprefix{URL }\fi
\expandafter\ifx\csname href\endcsname\relax
  \def\href#1#2{#2}\fi
\expandafter\ifx\csname burlalt\endcsname\relax
  \def\burlalt#1#2{\href{#2}{#1}}\fi

\bibitem[AM69]{AM}
M.~F. Atiyah and I.~G. Macdonald.
\newblock {\em Introduction to commutative algebra}.
\newblock Addison-Wesley Publishing Co., Reading, Mass.-London-Don Mills, Ont.,
  1969.

\bibitem[Ber93]{Berkovich}
V.~G. Berkovich.
\newblock \'{E}tale cohomology for non-{A}rchimedean analytic spaces.
\newblock {\em Inst. Hautes \'{E}tudes Sci. Publ. Math.}, (78):5--161 (1994),
  1993.

\bibitem[BGR84]{BGR}
S.~Bosch, U.~G\"{u}ntzer, and R.~Remmert.
\newblock {\em Non-{A}rchimedean analysis}.
\newblock Springer-Verlag, Berlin, 1984.
\newblock A systematic approach to rigid analytic geometry.

\bibitem[Bha]{Bhatt-notes}
B.~Bhatt.
\newblock Lecture notes for a class on perfectoid spaces.
\newblock
  \url{https://www.math.ias.edu/~bhatt/teaching/mat679w17/lectures.pdf}.

\bibitem[Bos14]{B}
S.~Bosch.
\newblock {\em Lectures on formal and rigid geometry}, volume 2105 of {\em
  Lecture Notes in Mathematics}.
\newblock Springer, Cham, 2014.

\bibitem[Bou98]{Bou}
N.~Bourbaki.
\newblock {\em Commutative algebra. {C}hapters 1--7}.
\newblock Springer-Verlag, Berlin, 1998.
\newblock Translated from the French, Reprint of the 1989 English translation.

\bibitem[Con99]{C-irreducible}
B.~Conrad.
\newblock Irreducible components of rigid spaces.
\newblock {\em Ann. Inst. Fourier (Grenoble)}, 49(2):473--541, 1999.

\bibitem[Con06]{C-ample}
B.~Conrad.
\newblock Relative ampleness in rigid geometry.
\newblock {\em Ann. Inst. Fourier (Grenoble)}, 56(4):1049--1126, 2006.

\bibitem[CT09]{conrad-temkin}
B.~Conrad and M.~Temkin.
\newblock Non-{A}rchimedean analytification of algebraic spaces.
\newblock {\em J. Algebraic Geom.}, 18(4):731--788, 2009.

\bibitem[EGA II]{EGA2}
A.~Grothendieck.
\newblock \'{E}l\'{e}ments de g\'{e}om\'{e}trie alg\'{e}brique. {II}. \'{E}tude
  globale \'{e}l\'{e}mentaire de quelques classes de morphismes.
\newblock {\em Inst. Hautes \'{E}tudes Sci. Publ. Math.}, (8):222, 1961.

\bibitem[EGA IV$_1$]{EGA4_1}
A.~Grothendieck.
\newblock \'{E}l\'{e}ments de g\'{e}om\'{e}trie alg\'{e}brique. {IV}. \'{E}tude
  locale des sch\'{e}mas et des morphismes de sch\'{e}mas. {I}.
\newblock {\em Inst. Hautes \'{E}tudes Sci. Publ. Math.}, (20):259, 1964.

\bibitem[EGA IV$_4$]{EGA4_4}
A.~Grothendieck.
\newblock \'{E}l\'{e}ments de g\'{e}om\'{e}trie alg\'{e}brique. {IV}. \'{E}tude
  locale des sch\'{e}mas et des morphismes de sch\'{e}mas {IV}.
\newblock {\em Inst. Hautes \'{E}tudes Sci. Publ. Math.}, (32):361, 1967.

\bibitem[FK18]{FujKato}
K.~Fujiwara and F.~Kato.
\newblock {\em Foundations of rigid geometry. {I}}.
\newblock EMS Monographs in Mathematics. European Mathematical Society (EMS),
  Z\"{u}rich, 2018.

\bibitem[Han21]{Han}
D.~Hansen.
\newblock Quotients of adic spaces by finite groups.
\newblock {\em Math. Res. Letters (to appear)}, 2021.
\newblock \urlprefix\url{http://www.davidrenshawhansen.com/adicgpquotient.pdf}.

\bibitem[Hen16]{Henkel}
T.~Henkel.
\newblock A comparison of adic spaces and berkovich spaces.
\newblock \url{https://arxiv.org/abs/1610.04117}, 2016.

\bibitem[Hub93a]{H2}
R.~Huber.
\newblock {\em Bewertungsspektrum und rigide {G}eometrie}, volume~23 of {\em
  Regensburger Mathematische Schriften [Regensburg Mathematical Publications]}.
\newblock Universit\"{a}t Regensburg, Fachbereich Mathematik, Regensburg, 1993.

\bibitem[Hub93b]{H0}
R.~Huber.
\newblock Continuous valuations.
\newblock {\em Math. Z.}, 212(3):455--477, 1993.

\bibitem[Hub94]{H1}
R.~Huber.
\newblock A generalization of formal schemes and rigid analytic varieties.
\newblock {\em Math. Z.}, 217(4):513--551, 1994.

\bibitem[Hub96]{H3}
R.~Huber.
\newblock {\em \'{E}tale cohomology of rigid analytic varieties and adic
  spaces}.
\newblock Friedr. Vieweg \& Sohn, Braunschweig, 1996.

\bibitem[Ked19]{KedAr}
K.~Kedlaya.
\newblock {\em Sheaves, Stacks, and Shtukas}, volume 242 of {\em Mathematical
  Surveys and Monographs}.
\newblock American Mathematical Society, Providence, RI, 2019.
\newblock Lectures from the 2017 Arizona Winter School.

\bibitem[KL19]{KedLiu2}
K.~Kedlaya and R.~Liu.
\newblock Relative $p$-adic hodge theory, ii: Imperfect period rings.
\newblock \url{https://arxiv.org/pdf/1602.06899.pdf}, 2019.

\bibitem[Mat86]{M1}
H.~Matsumura.
\newblock {\em Commutative ring theory}, volume~8 of {\em Cambridge Studies in
  Advanced Mathematics}.
\newblock Cambridge University Press, Cambridge, 1986.
\newblock Translated from the Japanese by M. Reid.

\bibitem[Ols16]{olsson-stacks}
M.~Olsson.
\newblock {\em Algebraic spaces and stacks}, volume~62 of {\em American
  Mathematical Society Colloquium Publications}.
\newblock American Mathematical Society, Providence, RI, 2016.

\bibitem[RG71]{Raynaud-Gruson}
M.~Raynaud and L.~Gruson.
\newblock Crit\`eres de platitude et de projectivit\'{e}. {T}echniques de
  ``platification'' d'un module.
\newblock {\em Invent. Math.}, 13:1--89, 1971.

\bibitem[{Sem}15]{Seminar}
T.~L. {Seminar authors}.
\newblock Stanford learning seminar.
\newblock \url{http://virtualmath1.stanford.edu/~conrad/Perfseminar/},
  2014--2015.

\bibitem[SGA I]{SGA1}
A.~Grothendieck.
\newblock {\em Rev\^{e}tements \'{e}tales et groupe fondamental}, volume
  1960/61 of {\em S\'{e}minaire de G\'{e}om\'{e}trie Alg\'{e}brique}.
\newblock Institut des Hautes \'{E}tudes Scientifiques, Paris, 1963.

\bibitem[{Sta}21]{stacks-project}
T.~{Stacks project authors}.
\newblock The stacks project.
\newblock \url{https://stacks.math.columbia.edu}, 2021.

\bibitem[Tem00]{temkin-local}
M.~Temkin.
\newblock On local properties of non-{A}rchimedean analytic spaces.
\newblock {\em Math. Ann.}, 318(3):585--607, 2000.

\bibitem[Tem15]{temkin-notes}
M.~Temkin.
\newblock Introduction to {B}erkovich analytic spaces.
\newblock In {\em Berkovich spaces and applications}, volume 2119 of {\em
  Lecture Notes in Math.}, pages 3--66. Springer, Cham, 2015.

\bibitem[Zav21a]{Z1}
B.~Zavyalov.
\newblock Local altered uniformization of rigid-analytic spaces.
\newblock {\em Israel Journal Of Mathematics (to appear)}, 2021.
\newblock \urlprefix\url{https://arxiv.org/abs/2102.04752}.

\bibitem[Zav21b]{Zav-duality}
B.~Zavyalov.
\newblock Mod-$p$ poincar\'e duality in $p$-adic analytic geometry.
\newblock arXiv, 2021.
\newblock \urlprefix\url{https://arxiv.org/abs/2111.01830}.

\end{thebibliography}

\end{document}